\newcommand{\R}{\mathbb{R}}
\newcommand{\Su}{\mathbb{S}}
\newcommand{\C}{\mathbb{C}}
\newcommand{\N}{\mathbb{N}}
\newcommand{\Z}{\mathbb{Z}}
\newcommand{\SL}{{\rm SL}}
\newcommand{\GL}{{\rm GL}}
\newcommand{\Mat}{{\rm Mat}}
\newcommand{\tr}{\mbox{tr}}
\newcommand{\Pp}{\mathbb{P}}
\newcommand{\EE}{\mathbb{E}}
\newcommand{\Escr}{\mathscr{E}}
\newcommand{\cocycles}{\mathcal{C}}
\newcommand{\imeas}[1]{d (#1, \diags^k)}
\newcommand{\Diag}{\mathscr{D}{\rm iag}}
\newcommand{\diags}{\mathscr{D}{\rm iag}}
\newcommand{\Diagast}{\mathscr{D}{\rm iag}^\ast}
\newcommand{\pcal}{\mathscr{P}}
\newcommand{\osc}{\mathrm{osc}}
\theoremstyle{plain}
\newtheorem{theorem}{Theorem}[section]
\newtheorem{proposition}{Proposition}[section]
\newtheorem{lemma}[proposition]{Lemma}
\theoremstyle{definition}
\newtheorem{definition}{Definition}[section]
\theoremstyle{definition}
\newtheorem{remark}{Remark}[section]
\numberwithin{equation}{section}
\newcommand{\sabs}[1]{\left| #1 \right|} 
\newcommand{\abs}[1]{\bigl| #1 \bigr|} 
\newcommand{\norm}[1]{\lVert#1\rVert} 
\newcommand{\normtwo}[1]{
{\left\vert\kern-0.25ex\left\vert\kern-0.25ex\left\vert #1 
    \right\vert\kern-0.25ex\right\vert\kern-0.25ex\right\vert} } 
\newcommand{\medp}{\hat{\mathfrak{v}}}
\newcommand{\mostexpuv}{\mathfrak{v}}
\newcommand{\avg}[1]{\left< #1 \right>} 
\newcommand{\bigo}{\mathcal{O}}
\newcommand{\transl}{{T}} 
\newcommand{\medir}[1]{\mostexp^{({#1})}}
\newcommand{\mostexp}{\overline{\mathfrak{v}}}
\newcommand{\less}{\lesssim}
\newcommand{\ep}{\epsilon} 
\newcommand{\la}{\lambda}
\newcommand{\ga}{\gamma}
\newcommand{\om}{\omega}
\newcommand{\GLtwoR}{\GL_2(\R)}
\newcommand{\SLtwoR}{\SL_2(\R)}
\newsavebox\myboxA
\newsavebox\myboxB
\newlength\mylenA
\newcommand*\xoverline[2][0.75]{%
    \sbox{\myboxA}{$\m@th#2$}%
    \setbox\myboxB\null
    \ht\myboxB=\ht\myboxA%
    \dp\myboxB=\dp\myboxA%
    \wd\myboxB=#1\wd\myboxA
    \sbox\myboxB{$\m@th\overline{\copy\myboxB}$}
    \setlength\mylenA{\the\wd\myboxA}
    \addtolength\mylenA{-\the\wd\myboxB}%
    \ifdim\wd\myboxB<\wd\myboxA%
       \rlap{\hskip 0.5\mylenA\usebox\myboxB}{\usebox\myboxA}%
    \else
        \hskip -0.5\mylenA\rlap{\usebox\myboxA}{\hskip 0.5\mylenA\usebox\myboxB}%
    \fi}
\newcommand{\nldt}{\xoverline{n}}
\newcommand{\Nbar}{\xoverline{N}}
\newcommand{\LE}[1]{L^{({#1})}}  
\newcommand{\An}[1]{A^{({#1})}}  
\newcommand{\Bn}[1]{B^{({#1})}}  
\newcommand{\Dn}[1]{D^{({#1})}}  
\newcommand{\comp}{^{\complement}}
\newcommand{\FF}{\mathscr{F}}
\newcommand{\ind}{\mathds{1}}
\newcommand{\B}{\mathscr{B}}
\newcommand{\Bt}{\tilde{\mathscr{B}}}
\newcommand{\I}{\mathscr{I}}
\newcommand{\J}{\mathscr{J}}
\newcommand{\filt}{\mathcal{E}}
\newcommand{\Qscr}{\mathscr{Q}}
\newcommand{\Bscr}{\mathscr{B}}
\newcommand{\Nscr}{\mathscr{N}}
\newcommand{\Pz}{\Sigma_0}
\newcommand{\Po}{\Sigma_1}
\newcommand{\Pt}{\Sigma_2}
\newcommand{\measA}{E}
\newcommand{\measB}{F}
\newcommand\restr[2]{{
  \left.\kern-\nulldelimiterspace 
  #1 
  \vphantom{\big|} 
  \right|_{#2} 
  }}
\newcommand{\nirred}{\widehat{m}}
\newcommand{\ndiag}{m}
\newcommand{\Xfrk}{\mathfrak{X}}
\newcommand{\one}{{\bf 1}}
\newcommand{\Hscr}{\mathscr{H}}
\title[Large deviations for products of random matrices]{Large deviations for products of random  two dimensional matrices}
\date{}
\begin{document}

\author[P. Duarte]{Pedro Duarte}
\address{Departamento de Matem\'atica and CMAF\\
Faculdade de Ci\^encias\\
Universidade de Lisboa\\
Portugal 
}
\email{pduarte@ptmat.fc.ul.pt}

\author[S. Klein]{Silvius Klein}
\address{Departamento de Matem\'atica, Pontif\'icia Universidade Cat\'olica do Rio de Janeiro (PUC-Rio), Brazil}
\email{silviusk@impa.br}

\begin{abstract}   
We establish large deviation type estimates for i.i.d. pro\-ducts of two dimensional random matrices with finitely suppor\-ted probability distribution. The estimates are stable under pertur\-bations and require no irreducibility assumptions. In consequence, we obtain a uniform local modulus of continuity for the correspon\-ding Lyapunov exponent regarded as a function of the support of the distribution. This in turn has consequences on the modulus of continuity of the integrated density of states and on the localization properties of random Jacobi operators.   
\end{abstract}

\maketitle


\section{Introduction and statements}\label{intro}
Consider a multiplicative random process; that is, let $\{g_n\}_{n\in\Z}$ be a sequence of i.i.d. random matrices relative to a compactly suppor\-ted probability measure $\mu$ on the general linear group $\GL_d(\R)$, and let $g^{(n)} := g_{n-1} \ldots g_1 g_0$ denote the partial products process. The Furstenberg-Kesten theorem, the multiplicative analogue of the law of large numbers, implies that the average process
$\frac{1}{n} \, \log \norm{g^{(n)}}$ converges almost surely to a number $L^+ (\mu)$, called the (maximal) Lyapunov exponent of the process.

A natural problem in this context is to obtain a more {\em quantitative} version of the convergence $\frac{1}{n} \, \log \norm{g^{(n)}} \to L^+ (\mu)$, or in other words, to establish large deviation type estimates or other statistical properties for such multiplicative processes.  

A second problem concerns the continuity of the limit quantity, the Lyapunov exponent $L^+ (\mu)$, with respect to the input data (e.g. the measure $\mu$, or just its support), as the data varies in an appropriate sense.

Both of these problems are highly relevant in the spectral theory of the discrete random (e.g. Schr\"odinger or Jacobi) operators in mathematical physics. 

These topics were studied in the 80s by H. Furstenberg and Y. Kifer and by E. Le Page in a {\em generic} setting (i.e. under irreducibility\footnote{Irreducibility refers to the {\em non} existence of proper subspaces invariant under the closed semigroup $T_\mu$ generated by the support of the  measure $\mu$. There are different versions of this notion.} and contractive\footnote{Contractivity  refers to the existence in $T_\mu$ of matrices with arbitrarily large gaps between consecutive singular values.} assumptions on the support of the measure). Their results have served well the mathematical physics community, as they apply to the Anderson tight binding model (i.e. the discrete random   Schr\"odinger operator on the integer lattice or on a strip lattice). They do not, however, apply to all Jacobi operators, as the corresponding eigenvalue equation may lead to a multiplicative process whose underlying probability measure has {\em reducible} (non generic) support.

More recently, the continuity of the Lyapunov exponent(s) has been considered in the {\em general} (not necessarily generic) setting  by C. Bocker and M. Viana and by A. Avila, A. Eskin and M. Viana. Their results do not provide a modulus of continuity, and in consequence are not applicable to spectral theory problems.\footnote{A more detailed review of such results\textemdash including another new quantitative statement\textemdash follows.}

\vspace{5pt}

We have previously established a link between the two problems formulated above, in the general setting of linear cocycles over an ergodic base dynamical system. More precisely, we have shown that if a given cocycle satisfies certain large deviation type (LDT) estimates, which are {\em uniform} in the data, then necessarily the corresponding Lyapunov exponent (LE) varies continuously with the data, and with a modulus of continuity that depends explicitly on the strength of the LDT estimates.

In this paper we establish such uniform LDT estimates for locally constant 
$\GLtwoR$ cocycles over a full Bernoulli shift in a finite number of symbols. This corresponds to a $\GLtwoR$-valued random multiplicative process whose probability distribution is a finite sum of point masses.\footnote{We note that the case of an absolutely continuous probability distribution was already studied by E. Le Page.}  As a consequence of our general theory, we derive a local modulus of continuity (namely weak-H\"older)  for the Lyapunov exponent regarded as a function of the support of the measure. Furthermore, this implies the same local modulus of continuity for the integrated density of states (IDS) of a random Jacobi operator, near every energy level with positive Lyapunov exponent.   

\vspace{5pt}

This project was in part motivated by a question posed to the second author by G. Stolz, related to his work with J. Chapman on (dynamical) localization for certain disordered quantum spin systems (see~\cite{Chapman-Stolz}). The results in this paper partly address that question, and the approach we use has the potential to completely solve it in the future.

\vspace{5pt}

We are grateful to G. Stolz for his question, as it helped us channel our attention to a simpler yet still interesting model, and thus to a more attainable goal. We would also like to thank S. Griffiths for his help simplifying a probabilistic argument in the prison break section. 

\subsection*{A case study} Let  $\{\omega_n\}_{n\in\Z}$ be an i.i.d. sequence of random variables with common distribution $\mu$, such that $\textrm{ supp } \mu = [a, b] \subset (0, \infty)$ and  $\int  \log t \, d \mu (t)  \neq 0$. 
A toy model for the system considered in~\cite{Chapman-Stolz}  is the Jacobi operator $H_\om$ on $l^2 (\Z)$ defined as follows. If $\psi = \{\psi_n\}_{n\in\Z} \in l^2 (\Z)$,
\begin{equation}\label{toy-Stolz}
[ H_\om \, \psi ]_n := \begin{cases}
\omega_n \psi_{n-1} + \psi_{n+1} & \text{ if } n \text{ is even} \\
\psi_{n-1} + \omega_n \psi_{n+1} & \text{ if } n \text{ is odd} . 
\end{cases}
\end{equation}

The operator $H_\om$ is thus an infinite, tridiagonal, selfadjoint random matrix,  with zero diagonal and symmetric off-diagonals of the form $ \ldots, 1, \om_{-1}, 1, \om_0, 1, \om_1, 1, \ldots$

It is convenient to consider the {\em two-step} transfer matrices of the corresponding eigenvalue equation $H_\om \psi = E \, \psi$, i.e. the matrices $g_n^E$ satisfying
$$
\left[\begin{array}{@{}c@{}}
    \psi_{2n+2} \\
    \psi_{2 n + 1}  
\end{array} \right]
= g_n^E \, 
\left[\begin{array}{@{}c@{}}
    \psi_{2n} \\
    \psi_{2 n - 1}  
\end{array} \right] .
$$
 
Then  
\begin{equation}\label{toy process}
g_n^E = \begin{bmatrix}  
(E^2-1) \frac{1}{\om_n} & - E \om_n  \\
E \frac{1}{\om_n} & - \om_n 
\end{bmatrix} \, ,
\end{equation}
so for every energy parameter $E$, $\{g_n^E\}_{n\in\Z}$  is an $\SL_2(\R)$-valued i.i.d. multiplicative random process. 

Also, the maximal LE of  the operator~\eqref{toy-Stolz} at energy $E$ is half the Lyapunov exponent $L^+ (E)$ of this process.

If $E \neq 0$ it can be shown that the  support of the underlying probability measure satisfies  the irreducibility and contraction properties. This implies the positivity  (by Furstenberg's theorem) and the H\"older continuity (by Le Page's theorem) of the maximal Lyapunov exponent. 

However, when $E=0$, the  process~\eqref{toy process} is {\em diagonal}:
\begin{equation*}
g_n^0 = \begin{bmatrix}  
- \frac{1}{\om_n} & 0 \\
0 & - \om_n 
\end{bmatrix} \, ,
\end{equation*}
hence reducible, while $L^+ (0) = \abs{  \EE ( \log \om_0 ) } =  \abs{ \int \log t \, d \mu (t) }> 0 $. 

\vspace{5pt}

Therefore, in the vicinity of the energy level $E=0$,  the study of the operator~\eqref{toy-Stolz} falls outside the scope of Le Page's theorem. However, when the probability distribution $\mu$ has {\em finite support}, the results in this paper are immediately applicable and they provide a local modulus of continuity of the LE, and thus of the IDS.  This in turn implies the Wegner estimates used in the multiscale analysis that leads to the localization of the operator~\eqref{toy-Stolz}.

\subsection*{The setting} Let $\Sigma=\{1,\ldots, k\}$ be a finite space of symbols and let $p=(p_1,\ldots,p_k)$ with $p_i > 0$ for all $i=1,\ldots, k$ and $p_1+\ldots+p_k=1$ be a pobability vector.
Consider the space $X=\Sigma^\Z$ of all sequences $x=\{x_n\}_{n\in\Z}$ of symbols from $\Sigma$, endowed with the product probability measure $\Pp=p^\Z$. Then the map $T \colon X \to X$, $T \{x_n\}_{n\in\Z} := \{x_{n+1}\}_{n\in\Z}$ is an ergodic transformation called the (full) Bernoulli shift. 

Any function from $\Sigma$ to the general linear group $\GLtwoR$, that is, any $k$-tuple $(A_1,\ldots,A_k)$ of two by two invertible matrices determines the locally constant function $A\colon X \to \GLtwoR$,
$$A \{x_n\}_{n\in\Z} := A_{x_0} \, .$$
The corresponding {\em random cocycle}, or linear cocycle over the Bernoulli shift $T$ is the transformation
$$F_A \colon X\times\R^2\to X\times\R^2, \quad F_A (x, v) := (T x, A(x) v) \,. $$

We identify the transformation $F_A$ with the function $A$ and furthermore with the $k$-tuple $(A_1, \ldots, A_k)$, and refer to either of them as a cocycle. We denote by $\GLtwoR^k$ the space all such cocycles/$k$-tuples. 

%

We endow the space $ \GLtwoR^k$ with the uniform distance 
$$d (A, B) := \sup_{x \in X}  \norm{A (x) - B (x)} = \max_{1\le j \le k} \norm{A_j - B_j} ,$$
where $A=(A_1, \ldots, A_k)$ and $B=(B_1, \ldots, B_k)$.

The $n$-th iterate of a cocycle $A \in \GLtwoR^k$ is given by
\begin{align*}
\An{n} (x) &= A(T^{n-1} x) \ldots A(T x) \, A (x) \\ 
&= A_{x_{n-1}} \ldots A_{x_1} \, A_{x_0} \, ,
\end{align*}
and it encodes a product of $n$ i.i.d. random matrices. 

Given any cocycle $A$, by the Furstenberg-Kesten theorem there exist two numbers, $L^+(A)$ and $L^-(A)$, called the Lyapunov exponents of $A$, such that $\Pp$-almost surely,
$$
\lim_{n\to\infty} \frac{1}{n} \, \log \norm{\An{n}(x)} = L^+(A) \ \text{ and } \
\lim_{n\to\infty} \frac{1}{n} \, \log \norm{\An{n}(x)^{-1}}^{-1} = L^-(A) \,.
$$ 



This is the multiplicative analogue of the law of large numbers.

\vspace{5pt}

It is clear that $L^+(A) \ge L^- (A)$. When $L^+(A) > L^- (A)$, by the Oseledets multiplicative ergodic theorem, there exists a  (proper) measurable decomposition $\R^2 =\filt_A^+(x) \oplus \filt_A^-(x)$ such that $\Pp$-almost surely  $A(x)\,\filt_A^\pm(x) = \filt_A^\pm(T x)$ and
\begin{enumerate}
\item[]  
\;  $\displaystyle  \lim_{n\to\pm\infty} \frac{1}{n}\,\log \norm{\An{n}(x)\,v} =
 L^+(A)$ \ for all $v \in \filt_A^+(x) \setminus\{0\}$
\item[]  
\;  $\displaystyle  \lim_{n\to\pm\infty} \frac{1}{n}\,\log \norm{\An{n}(x)\,v} = 
 L^-(A)$ \ for all $v \in \filt_A^-(x) \setminus\{0\}$.
\end{enumerate}  

The projective line, denoted by
$\Pp=\Pp(\R^2)$, is the space of all lines ($1$-dimensional linear subspaces) in $\R^2$.
Henceforth whenever $\hat p\in\Pp$ the letter `$p$'
will stand for any non-zero vector $p\in \hat p$.
A natural metric is defined in  $\Pp$ by
$$ d(\hat p, \hat q):=\frac{\norm{p\wedge q}}{\norm{p}\,\norm{g}}
= \abs{\sin \angle (p,q)}. $$

We identify the lines $\filt^\pm_A (x)$ with points in the projective space $\Pp(\R^2)$, so the components of the Oseledets decomposition are regarded as functions $\filt_A^\pm \colon X \to \Pp (\R^2)$. 

Let $L^1 (X, \Pp (\R^2) )$ be the space of all Borel measurable functions $\filt \colon X \to \Pp (\R^2)$.  On this space we consider the distance 
$$
d (\filt_1, \filt_2) := \EE_x  \, d (\filt_1 (x), \filt_2 (x) ) \,.
$$

Note that if $A \in \GLtwoR^k$ is a cocycle with $L^+(A) > L^- (A)$, then $\filt^\pm_A \in L^1 (X, \Pp (\R^2) )$.  

\subsection*{Statements} Let us formally introduce the concept of large deviations for iterates of a linear cocycle.

\begin{definition}\label{def:LDT}
A cocycle $A \colon X \to \GL_2(\R)$ satisfies an exponential LDT estimate if there is a constant $c>0$ and for every small enough $\ep > 0$ there is $\nldt = \nldt (A, \ep) \in \N$ such that for all $n \ge \nldt$,
\begin{equation}\label{eq:LDT}
\Pp \left[  \, \left| \frac{1}{n} \log \norm{\An{n}} -  L^+ (A)  \right| > \ep \right] < e^{-c \ep^2 n} \, .
\end{equation}
\end{definition}

We say that a cocycle $A \in \GLtwoR^k$ satisfies a {\em  uniform} exponential LDT estimate if the constants\footnote{We refer to the constants $c$ and $\nldt$ as the LDT parameters of $A$. They depend on $A$, and in general they may blow up as $A$ is perturbed.} $c$ and $\nldt$ above are stable under small perturbations of $A$. We formulate this more precisely below.

\begin{definition}\label{def:unif LDT}
A cocycle $A \in \GLtwoR^k$ satisfies a uniform exponential LDT estimate if there are constants $\delta > 0$, $c > 0$ and for every small enough $\ep > 0$ there is $\nldt = \nldt (A, \ep) \in \N$ such that
\begin{equation}\label{eq:unif LDT}
\Pp \left[  \, \left| \frac{1}{n} \log \norm{\Bn{n}} - L^+  (B) \right| > \ep \right] < e^{-c \ep^2 n} 
\end{equation}
for all cocycles $B \in \GLtwoR^k$ with $d(B,A)<\delta$ and for all $n\ge \nldt$.
\end{definition}

Below we define a weaker version of the LDT estimate, where instead of the exponential, we have a sub-exponential decay of the probability of the tail event.
 
\begin{definition}\label{def:weakLDT}
A cocycle $A \colon X \to \GL_2(\R)$ satisfies a sub-exponential LDT estimate if there are constants $\nldt \in \N$ and $a, b > 0$ such that for all $n \ge \nldt$,
\begin{equation}\label{eq:weakLDT}
\Pp \left[  \, \left| \frac{1}{n} \log \norm{\An{n} } - L^+ (A)  \right| > n^{-a} \right] < e^{- n^b} \, .
\end{equation}

Such an estimate is called uniform if the LDT parameters $\nldt, a, b$ are stable under perturbations of the cocycle $A$ in $\GLtwoR^k$.
\end{definition}

\begin{definition} A cocycle $A=(A_1, \ldots, A_k) \in \GLtwoR^k$ is called diagonali\-zable if the matrices $A_1, \ldots, A_k$ are simultaneously diagonali\-zable over $\R$. 
\end{definition}

We are now ready to formulate the main result of this paper, from which everything else follows.

\begin{theorem}\label{main-uniform-ldt}
Consider the space $\GLtwoR^k$ of locally constant cocycles over a full Bernoulli shift in $k$ symbols. Let $A \in \GLtwoR^k$ be a cocycle with $L^+ (A) > L^- (A)$.

If  $A$ is diagonalizable, then it satisfies a uniform sub-exponential LDT estimate.

If $A$ is not diagonalizable, then it satisfies a uniform exponential LDT estimate.
\end{theorem}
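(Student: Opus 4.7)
My plan is to reduce the LDT statement to a concentration estimate for a Birkhoff sum of i.i.d.\ random variables, plus a correction term that measures how fast a generic initial direction is pulled toward the Oseledets top subspace under iteration of the (possibly perturbed) cocycle. Throughout I would work on the projective line $\Pp=\Pp(\R^2)$, where the cocycle acts by M\"obius transformations, and exploit the assumption $L^+(A)>L^-(A)$ to set up a dominated splitting for $A$.

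For the non-diagonalizable case, I would split into two sub-cases according to invariant subspaces of the closed semigroup generated by $A_1,\ldots,A_k$. If this semigroup is irreducible, Furstenberg's theorem gives a unique stationary measure on $\Pp$ with the contraction property, and a Le~Page-style analysis of the moment-generating function on the induced Markov chain yields an exponential LDT with constants that transfer to a full neighborhood of $A$, since irreducibility is an open condition. In the remaining sub-case the matrices $A_j$ share exactly one common eigenline $V$; in a basis containing $V$ each $A_j$ is upper triangular with diagonal entries $(a_j,b_j)$ and off-diagonal entries $c_j$, not all zero. A generic initial $\hat p\in\Pp$ is then pulled toward the top Oseledets line at an exponential rate coming from the gap $\EE\log|a_j|-\EE\log|b_j|$, while $\hat p=V$ is pushed off $V$ with positive probability at every step by the non-vanishing $c_j$'s. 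Combining this escape with Cram\'er on the diagonal part gives the exponential LDT, and perturbations either remain in the triangular form or become irreducible, both covered by the preceding analysis with uniform constants.

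For the diagonalizable case, a simultaneous change of basis reduces to $A_j=\diag(a_j,b_j)$. Then $\An{n}(x)$ is diagonal and $\log\norm{\An{n}(x)}=\max(|S_a(n)|,|S_b(n)|)$, where $S_a(n)=\sum_{i=0}^{n-1}\log|a_{x_i}|$ and similarly for $S_b$, so the classical Cram\'er theorem gives an exponential LDT for $A$ itself. The obstacle is the perturbation analysis: a small perturbation $B$ may be irreducible but with eigendirections almost parallel to the coordinate axes of $A$, forcing the Le~Page constants to blow up as $d(B,A)\to 0$. To remedy this I would pass to blocks of length $\ell=\ell(\delta)$ polynomial in $1/\delta$, at which scale $\Bn{\ell}$ behaves like an irreducible cocycle with constants polynomial in $\delta$; applying the exponential LDT at the block scale and telescoping over $n/\ell$ essentially independent blocks produces sub-exponential concentration of the form $\exp(-n^b)$ for deviations of size $n^{-a}$, where $a,b>0$ are determined by the polynomial dependence $\ell(\delta)$. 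The loss of pure exponential decay is the price for the perturbation moving through near-degenerate geometries.

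The main obstacle is the quantitative ``prison break'' step underlying the diagonalizable perturbation analysis: one must show that, with very high probability, the block product $\Bn{\ell(\delta)}$ effectively breaks any near-invariant line of $A$, in the sense that its action on $\Pp$ contracts a definite projective neighborhood by a definite factor, both bounds depending only polynomially on $\delta$, and this has to hold uniformly over all $B$ in the $\delta$-ball around $A$ regardless of whether $B$ is irreducible or still reducible. Quantifying this escape is the combinatorial-probabilistic heart of the argument; once it is in place, the remaining ingredients (Cram\'er on the block process and telescoping over blocks) are standard concentration estimates for bounded i.i.d.\ sums.
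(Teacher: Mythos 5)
Your decomposition, your identification of the perturbation near a diagonalizable cocycle as the hard case, and your ``prison break'' obstacle all match the structure of the paper's argument; the gap is in the quantitative scaling, which in this problem is not a cosmetic detail but the load-bearing part.

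You posit a block length $\ell(\delta)$ \emph{polynomial} in $1/\delta$ at which $\Bn{\ell}$ becomes effectively irreducible. The paper proves (Propositions~\ref{diag rho} and~\ref{N-rho comparison proposition}) that the escape from the near-invariant line happens after roughly $\log\rho_-(B)^{-1}\asymp\log d(B,\Diag)^{-1}$ steps, and the LDT coming from irreducibility (Theorem~\ref{quant LDT Berboulli}) becomes valid only for $n\gtrsim\bigl(\log d(B,\Diag)^{-1}\bigr)^{54}$, i.e.\ at a scale \emph{polylogarithmic} in $d(B,\Diag)^{-1}$. This is exactly what makes the rest of the argument close: the complementary small-scale LDT, obtained by transferring the estimate of a nearby diagonalizable cocycle $B_\flat$ to $B$ by proximity (Proposition~\ref{initial scale}) plus an Avalanche-Principle bootstrap (Theorem~\ref{thm:bridging}), covers $n\lesssim\bigl(\log d(B,\Diag)^{-1}\bigr)^{p}$ for any fixed $p$, because the proximity error $e^{Kn}\,d(B,B_\flat)$ is controllable only for $n$ logarithmic in $d(B,\Diag)^{-1}$. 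With a block scale polynomial in $\delta^{-1}$ --- whether $\delta$ is the neighborhood radius, in which case a cocycle $B$ with $d(B,\Diag)\ll e^{-\ell(\delta)}$ will not have escaped in $\ell(\delta)$ steps, or $\delta=d(B,\Diag)$, in which case $\ell$ lies exponentially beyond the range reachable by proximity --- the two regimes would never overlap, leaving an uncovered range of $n$ and forcing $\nldt$ to depend on $d(B,\Diag)$, which destroys the uniformity required in Definition~\ref{def:weakLDT}. A second, independent issue: your telescoping over i.i.d.\ blocks cannot be a direct Hoeffding/Cram\'er argument, since $\log\norm{\Bn{n\ell}}$ is not the sum of the block logarithms $\log\norm{\Bn{\ell}(T^{j\ell}\cdot)}$; the Avalanche Principle (Lemma~\ref{AP}) is the deterministic tool the paper uses to convert the long matrix product into an approximate Birkhoff sum over blocks, and without it (or the quasi-compact Markov-operator spectral analysis in Section~\ref{irredldt}) the reduction to an i.i.d.\ block sum is unjustified.
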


In order to formulate our result on the continuity of the LE and that of the Oseledets decomposition, we  define some specific  moduli of continuity.

\begin{definition}\label{def:modcont}
Let $(M,d)$ and $(N,d)$ be two metric spaces and let $f \colon M \to N$ be a function.

We say that $f$ is locally H\"older continuous near $a \in M$ if there are a neighborhood $\Nscr_a$ of $a$ in $M$ and constants $C<\infty$ and $\alpha>0$ such that for all $b_1, b_2 \in \Nscr_a$  we have
\begin{equation*}
d \left(f(b_1), f(b_2)\right) \le C \, d (b_1, b_2)^\alpha \,.
\end{equation*}

Moreover, $f$ is locally weak-H\"older continuous near $a \in M$ if there are a neighborhood $\Nscr_a$ of $a$ in $M$ and constants $C<\infty$, $\alpha>0$ and $\sigma \in (0,1]$ such that for all $b_1, b_2 \in \Nscr_a$  we have
\begin{equation*}
d \left(f(b_1), f(b_2)\right) \le C \, \exp \left(-\alpha \left( \log \frac{1}{d (b_1, b_2)} \right)^\sigma \right) \,.
\end{equation*}

Note that the case $\sigma=1$ corresponds to H\"older continuity.
\end{definition}

The following continuity statements are direct consequences of Theorem~\ref{main-uniform-ldt} and the abstract continuity theorem in \cite{DK-book} or~\cite{DK-31CBM}.

\begin{theorem}\label{main-continuity} Consider the space $\GLtwoR^k$ of locally constant cocycles over a full Bernoulli shift in $k$ symbols. Then the Lyapunov exponents functions $L^\pm \colon \GLtwoR^k \to \R$ are continuous at all points.\footnote{Thus we obtain another proof  of the result of C. Bocker and M. Viana on the continuity of the LE on the whole space of cocycles, in the finite support setting.}

\vspace{5pt}

Furthermore, let $A \in \GLtwoR^k$ be a cocycle with $L^+ (A) > L^- (A)$. 

If  $A$ is diagonalizable, then locally near $A$, the Lyapunov exponents $L^+$ and $L^-$ are weak-H\"older continuous. 

Moreover, locally near $A$, the maps $B \mapsto \filt^\pm_B \in  L^1 (X, \Pp (\R^2) )$ are well defined and weak-H\"older continuous.

If  $A$ is not diagonalizable, then the same results hold but with the stronger, H\"older modulus of continuity.
\end{theorem}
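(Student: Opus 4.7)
The plan is to feed the uniform LDT estimates of Theorem~\ref{main-uniform-ldt} into the abstract continuity theorem (ACT) of~\cite{DK-book,DK-31CBM}, which is designed precisely to convert uniform LDT parameters into an explicit local modulus of continuity for the Lyapunov exponents and for the Oseledets decomposition. In effect the present theorem is a transcription step: the substantive analytic work already sits inside Theorem~\ref{main-uniform-ldt}, and what remains is to organize the three conclusions according to the regime of the base cocycle $A$.

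For the unconditional continuity statement I would split into two cases. If $L^+(A)>L^-(A)$, Theorem~\ref{main-uniform-ldt} supplies a uniform LDT (exponential or sub-exponential) on a neighborhood of $A$, and the ACT immediately yields continuity of $L^\pm$ at $A$. If instead $L^+(A)=L^-(A)$, Theorem~\ref{main-uniform-ldt} is silent and I would argue by a semi-continuity sandwich. For each $n$, $B\mapsto \frac{1}{n}\EE[\log\norm{\Bn{n}}]$ is continuous, so by Fekete's lemma $L^+(B)=\inf_n \frac{1}{n}\EE[\log\norm{\Bn{n}}]$ is upper semi-continuous; symmetrically, $L^-(B)=\sup_n\bigl(-\frac{1}{n}\EE[\log\norm{\Bn{n}^{-1}}]\bigr)$ is lower semi-continuous. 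Combined with $L^-(B)\le L^+(B)$, this forces $L^+(A)\le \liminf_{B\to A}L^-(B)\le \liminf_{B\to A}L^+(B)\le \limsup_{B\to A}L^+(B)\le L^+(A)$, so both $L^\pm$ are continuous at $A$ as well.

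For the quantitative statements, assume $L^+(A)>L^-(A)$. In the diagonalizable case Theorem~\ref{main-uniform-ldt} furnishes a uniform sub-exponential LDT; plugging the parameters $a,b$ of Definition~\ref{def:weakLDT} into the ACT converts them into the constants $\alpha,\sigma\in(0,1)$ of a local weak-H\"older modulus for $L^\pm$ in the sense of Definition~\ref{def:modcont}. In the non-diagonalizable case the uniform LDT is exponential, and the corresponding instance of the ACT upgrades the modulus to genuine H\"older (the limiting case $\sigma=1$). The statement about the Oseledets decomposition map $B\mapsto \filt^\pm_B\in L^1(X,\Pp(\R^2))$ is bundled into the same ACT, which delivers continuity of the invariant directions in the $L^1$ sense with the same modulus as the one obtained for $L^\pm$; well-definedness of the map on a neighborhood of $A$ is automatic, since the already-established continuity of $L^\pm$ keeps the Lyapunov gap $L^+-L^->0$ stable under small perturbations, and hence Oseledets applies uniformly.

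The main obstacle, in truth, is not here but inside Theorem~\ref{main-uniform-ldt}: once the uniform LDT is in place, each clause of the present theorem is a direct instantiation of the abstract machinery. The only point that deserves a modicum of care is the first, global, continuity assertion, because the ACT applies only in the simple-spectrum regime, and the degenerate stratum $\{L^+=L^-\}$ must be handled separately by the semi-continuity argument above.
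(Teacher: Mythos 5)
Your argument is correct and follows the paper's own route: feed the uniform LDT of Theorem~\ref{main-uniform-ldt} into the abstract continuity theorem of~\cite{DK-book,DK-31CBM} to obtain the local weak-H\"older (resp.\ H\"older) modulus for $L^\pm$ and for the Oseledets maps near a cocycle with $L^+(A)>L^-(A)$, and handle the degenerate stratum $L^+(A)=L^-(A)$ by the standard semi-continuity sandwich (the paper states this as ``continuity at $\SLtwoR$ cocycles with zero LE is automatic,'' which is the same observation you spell out via Fekete and upper/lower semi-continuity). No gaps.
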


We note that if $A$ is a reducible cocycle with $L^+ (A) = L^- (A)$, then near $A$ the modulus of continuity of the LE may drastically deteriorate (below weak-H\"older), as shown in our recent work~\cite{DKS} with M. Santos.

\subsection*{Applications to mathematical physics}
We now describe some consequences of the continuity result above to the spectral theory of random Jacobi operators. 

Let $\{v_n\}_{n\in\Z}$ and $\{w_n\}_{n\in\Z}$ be two i.i.d. sequences of real-valued,  random variables (with possibly different distributions) such that moreover $\{v_n\}$ and $\{w_n\}$ are independent from each other. Assume also that $v_1, w_1$ are almost surely bounded and $\EE ( \log \sabs{w_1} ) > - \infty$.

The corresponding random Jacobi operator is the operator $H_\om$ on $l^2 (\Z)$ defined as follows: 
if  $\psi = \{\psi_n\}_{n\in\Z} \in l^2 (\Z)$, then for all $n\in\Z$,
\begin{equation}\label{Jacobi-op}
\big[ H_\om \psi \big]_n  := - (w_{n+1} (\om) \psi_{n+1} + w_n (\om) \psi_{n-1}) 
+ v_n (\om)  \psi_n  \, .
\end{equation}

Denote by $P_n\colon l^2(\Z)\to \R^{n}$ the coordinate projection to the range $\{0,1, 2, \ldots, n-1\} \subset \Z$, and let 
\begin{equation}
\label{H truncation}
H_\om^{(n)}  := P_n \, H_\om  \, P_n^\ast,
\end{equation}
where $P_n^\ast$ stands for the adjoint of $P_n$.
$H_\om^{(n)}$ is called the finite volume truncation of $H$.
  By ergodicity, the following almost sure limit exists
$$N (E) := \lim_{n\to\infty} \, \frac{1}{n} \, \# \big( (- \infty, E] \cap \text{ Spectrum of } H_\om^{(n)}  \big) \,,$$
and the function $E \mapsto N (E)$ is called the {\em integrated density of states} (IDS) of the ergodic operator $H_\om$ (see~\cite{David-survey}).

The LE and the IDS are related via the Thouless formula:
$$L (E) = \int_\R \log \abs{E-E'} d N (E')\,,$$ 
which essentially describes one function as the Hilbert transform of the other.
Thus while the IDS is known to always be $\log$-H\"older continuous, a stronger modulus of continuity can be derived from that of the LE.

\vspace{5pt}

We recall some notions of localization for a random operator. More details can be found in the self contained introduction~\cite{Stolz-introAL} to the topic of Anderson localization.

\begin{definition} Let $\I \subset \R$ be an open interval of energies such that $\I$ intersects the almost sure spectrum of $H_\om$.

An operator $H_\om$ satisfies {\em spectral localization} in $\I$ if almost surely $H_\om$ has pure point spectrum in $\I$ (i.e. $\I$ contains no continuous spectrum). 
 
A stronger (and also more physically relevant) property is that of {\em dynamical localization}, which ensures that solutions of the time dependent Schr\"odinger-type  equation $H_\om \psi (t) = i \, \partial_t \psi (t)$ stay localized in space, uniformly for all times.

An operator $H_\om$ satisfies (a strong form of) dynamical localization in $\I$ if for every compact subinterval $\J \subset \I$, there are constants $C < \infty$ and $\ga, \sigma >0$ such that for all $j, k \in \Z$, 
\begin{equation*}
\EE \left( \sup_{t\in\R} \, \abs{ \avg{ e_j, e^{- i t H_\om} \,  \ind_\J (H_\om) \, e_k} } \right) \le C \, e^{- \ga \sabs{j-k}^\sigma} \, ,  
\end{equation*}
where $\{e_j\}_{j\in\Z}$ is the canonical orthonormal basis in $l^2(\Z)$ and  $\ind_\J (H_\om)$ denotes the spectral projection of $H_\om$ onto the interval $\J$. Both
$e^{- i t H_\om}$ and $\ind_\J (H_\om)$ are defined via functional calculus for selfadjoint operators.
\end{definition}

\begin{theorem}\label{cont-IDS} Consider the random Jacobi operator $H_\om$ defined in~\eqref{Jacobi-op}, and assume moreover that the distributions of the random variables $\{v_n\}$, $\{w_n\}$ have finite support.

Let $E_0$ be any energy level such that $L^+ (E_0) > 0$. There exists an open interval $\I$ containing $E_0$ such that on $\I$, the Lyapunov exponent $L^+$ is positive, the integrated density of states $N$ is weak-H\"older conti\-nuous and almost surely, the operator $H_\om$ satisfies dynamical localization (provided $\I$ intersects the almost sure spectrum of $H_\om$).
\end{theorem}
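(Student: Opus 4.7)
The plan is to realize the Jacobi eigenvalue equation as a locally constant cocycle of the type studied in this paper, apply Theorems~\ref{main-uniform-ldt} and~\ref{main-continuity} together with the Thouless formula for the spectral-regularity part of the claim, and then feed these ingredients into a multiscale analysis to obtain dynamical localization.

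For each $E$ the eigenvalue equation $H_\om\,\psi = E\,\psi$ is equivalent to the first order system
\[
\begin{bmatrix} \psi_{n+1} \\ \psi_n \end{bmatrix} = A^E_n(\om)\, \begin{bmatrix} \psi_n \\ \psi_{n-1} \end{bmatrix}, \qquad A^E_n = \begin{bmatrix} (v_n-E)/w_{n+1} & - w_n/w_{n+1} \\ 1 & 0 \end{bmatrix}.
\]
Since $A^E_n$ depends only on $(v_n, w_n, w_{n+1})$ and both $\{v_n\},\{w_n\}$ are i.i.d.\ with finite support, a standard block recoding packages this as a locally constant cocycle $A^E \in \GLtwoR^k$ over a full Bernoulli shift, with $E \mapsto A^E$ real analytic. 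Because $\{w_n\}$ is i.i.d., $\EE(\log|\det A^E_n|) = \EE(\log|w_n|) - \EE(\log|w_{n+1}|) = 0$, so $L^+(A^E) + L^-(A^E) = 0$; the hypothesis $L^+(E_0) > 0$ thus yields $L^+(A^{E_0}) > L^-(A^{E_0})$. Theorem~\ref{main-continuity} applied at $A^{E_0}$ produces a neighborhood on which $L^+$ is at least weak-H\"older continuous, and pulling back along $E \mapsto A^E$ gives an open interval $\I \ni E_0$ on which $L^+(E) > 0$ and $E \mapsto L^+(E)$ is weak-H\"older. The Thouless formula $L^+(E) = \int_\R \log|E-E'|\, dN(E')$, combined with standard logarithmic-potential inversion (\`a la Craig--Simon or Goldstein--Schlag), transfers this modulus from $L^+$ to the IDS $N$ on every compact subinterval of $\I$.

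To reach localization on a compact $\J \subset \I$ that meets the almost sure spectrum, I would run a multiscale analysis of Fr\"ohlich--Spencer / von Dreifus--Klein type, adapted to atomic single-site distributions as in Carmona--Klein--Martinelli and Bourgain--Kenig. The inductive step needs: an \emph{initial scale estimate}, obtained from $L^+(E) > 0$ together with the uniform sub-exponential LDT of Theorem~\ref{main-uniform-ldt} applied uniformly in $E \in \J$ via the analytic dependence of $A^E$ on $E$, which gives exponential off-diagonal decay of finite-volume Green's functions at an initial scale with the required probability; and a \emph{Wegner estimate} of the form $\Pp(\dist(E, \sigma(H_\om^{(n)})) < \ep) \le C\, n\, \varphi(\ep)$ with $\varphi$ weak-H\"older, derived by Stollmann-type spectral averaging from the weak-H\"older IDS just produced. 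These drive the MSA induction on a suitable sequence of scales, whose standard consequences give dynamical localization on $\J$.

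The hardest point will be the Wegner estimate, since the single-site distribution is purely atomic and the classical bounded-density proof is unavailable; one must instead extract an effective finite-volume Wegner bound from the quantitative IDS regularity via a Stollmann-- or Bourgain-type averaging argument specifically tailored to Jacobi operators (as opposed to Schr\"odinger operators) with random off-diagonal entries. A secondary technicality is that the sub-exponential tail $e^{-n^b}$ appearing in~\eqref{eq:weakLDT} forces MSA scales that grow slightly faster than geometrically, but this remains compatible with dynamical localization since $e^{-n^b}$ still dominates every polynomial in $n$.
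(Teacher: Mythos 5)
Your overall plan (transfer matrices $\to$ a locally constant cocycle over a Bernoulli shift $\to$ Theorem~\ref{main-continuity} $\to$ Thouless $\to$ Wegner $\to$ MSA) is the right shape, but there is a genuine gap in the very first reduction. The transfer matrix you write down,
\[
A^E_n = \begin{bmatrix} (v_n-E)/w_{n+1} & - w_n/w_{n+1} \\ 1 & 0 \end{bmatrix},
\]
depends on $(v_n,w_n,w_{n+1})$, so consecutive matrices $A^E_n$ and $A^E_{n+1}$ share the random variable $w_{n+1}$: the sequence $\{A^E_n\}$ is \emph{not} i.i.d.\ and the process $(v_n,w_n,w_{n+1})_n$ is Markov, not Bernoulli. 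Your appeal to a ``standard block recoding'' does not resolve this --- any sliding-block recoding of this process stays Markov (consecutive blocks always overlap), and the paper's hypotheses in Theorems~\ref{main-uniform-ldt} and~\ref{main-continuity} require a genuine Bernoulli base with a cocycle depending on a single coordinate. The paper circumvents this by a coboundary conjugation: with $D_n := \diag(w_n,1)$ one writes $g^E_n = D_{n+1}^{-1}\,\tilde g^E_n\,D_n$, where $\tilde g^E_n$ depends only on $(v_n,w_n)$ and lies in $\SL_2(\R)$; since $(v_n,w_n)$ is i.i.d.\ with finite support, $\tilde g^E$ is a bona fide $\SL_2(\R)^k$-cocycle over a full shift, and the conjugation preserves the Lyapunov exponent. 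Your determinant computation $\EE\log|\det A^E_n|=0$ correctly shows $L^++L^-=0$, but it does not make the cocycle Bernoulli; you need the conjugation.

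On the localization part you take a genuinely different route: you propose to run a Fr\"ohlich--Spencer / von Dreifus--Klein MSA from scratch, supplying the initial scale estimate from the uniform LDT and a Stollmann-type Wegner estimate from the weak-H\"older IDS. The paper instead cites Theorem~5.1 of Chapman--Stolz for Jacobi block operators, observing that their Furstenberg-type assumptions enter only through positivity of the LE and H\"older continuity of the IDS, both of which are replaced here by the assumed $L^+(E_0)>0$ and the newly proved weak-H\"older modulus. Your route is self-contained but more labor-intensive (and, as you note, the atomic single-site distribution makes the Wegner step delicate); the paper's route is shorter but relies on an external black box. Both are in principle viable once the Bernoulli reduction is repaired, but as written your argument cannot invoke the paper's main theorems because the cocycle you construct does not satisfy their hypotheses.
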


We remark that the case study~\eqref{toy-Stolz} does not exactly fit the above theorem, since the sequence of `weights' $\{w_n\}_{n\in\Z}$ is given by $$ \ldots, \om_{-1}, 1, \om_0, 1, \om_1, \ldots \, ,$$ thus it is not independent. It is however Markov, hence the Thouless formula relating the LE to the IDS still holds (it holds for every ergodic system); moreover, as the LE of $H_\om$ can be obtained from the i.i.d. two-step transfer matrices $g_n^E$ in~\eqref{toy process}, its weak-H\"older continuity is ensured, and with it, all the other properties (modulus of continuity of the IDS and localization near the energy $E=0$).

\subsection*{Related results} We first review previous work on limit theorems for multiplicative random processes. E. Le Page proved  central limit theorems, as well as a large deviation principle~\cite{LP} for generic (strongly irreducible and contracting) 
$\GL_d(\R)$-cocycles.
Later P. Bougerol extended Le Page's approach, proving similar
results for Markov type random  cocycles~\cite{Bou}. Their results are asymptotic and do not provide uniformity of the large deviation parameters in the data.  In~\cite{DK-book} (see also~\cite{DK-31CBM}) we established, under similar (but weaker) assumptions, finitary (i.e. non asymptotic) and  
{\em uniform} LDT estimates for strongly mixing Markov  (so in particular also Bernoulli) cocycles.

Regarding  the continuity of the LE of random linear cocycles, the first result was obtained by 
 H. Furstenberg and Y. Kifer ~\cite{FKi}
for generic (irreducible and contracting) $\GL_d(\R)$-cocycles. In~\cite{LePage} E. Le Page proved   the H\"older continuity of the maximal LE for a one-parameter family of 
random $\GL_d(\R)$-cocycles satisfying similar generic assumptions. A new proof of Le Page's result (with a formulation for the whole space of quasi-irreducible cocycles)  was recently obtained by A. Baraviera and P. Duarte~\cite{BD}. 

The continuity in the whole space of random $\GL_2(\R)$-cocycles (without any generic assumption),
was established by C. Bocker-Neto and M. Viana in~\cite{BV}. The analogue of this result for random $\GL_d(\R)$-cocycles ($d\ge 2$) has been announced by A. Avila, A. Eskin and M. Viana (see~\cite[Note 10.7]{Viana-book}). An extension of~\cite{BV} to a particular type of cocycles
over Markov systems (particular in the sense that the cocycle still depends on one
coordinate, as in the Bernoulli case) was obtained by E. Malheiro and M. Viana in~\cite{Viana-M}. 

Very recently, and using completely different methods from ours, E.Y. Tall and M. Viana~\cite{Tall-Viana} considered  the problem of establishing a {\em pointwise} modulus of continuity for the LE in the  whole space of random $\GL_2(\R)$-cocycles. They established H\"older continuity at cocycles A with $L^+ (A) > L^- (A)$ and a weak ($\log$-H\"older) minimum modulus of continuity  at all points. Moreover, compared to Theorem~\ref{main-continuity} in this paper, the results in~\cite{Tall-Viana} apply to any compactly supported probability distribution (and not just to finitely supported ones). However, they lack uniformity (providing a pointwise rather than a local modulus of continuity), and in particular are not suitable for mathematical physics applications.

An interesting question arising from this comparison is whether a uniform (rather than pointwise) H\"older modulus of continuity holds in the vicinity of a cocycle with simple Lyapunov exponents, or else whether our weak-H\"older result is optimal. 

Other natural problems related to the results in this paper are the finite state Markov setting (which we treat in a forthcoming paper), the general compact (rather than finite) support distribution case, and the much more challenging higher dimensional cocycles setting.   

\subsection*{The proof structure} In~\cite{DK-book}, we introduced a general method\footnote{This method is based on ideas introduced by M. Goldstein and W. Schlag~\cite{GS-Holder} in their study of quasi-periodic Schr\"odinger operators.} for establishing a modulus of continuity for the LE (and for the Oseledets decomposition) of linear cocycles, that employs as a black box the availabi\-lity of uniform LDT estimates. Such estimates are to be obtained separately, using methods specific to the model of linear cocycles considered (we treat the irreducible random and the quasi-periodic models in the book). Our more recent monograph~\cite{DK-31CBM} provides a self-contained and less technical introduction to this theory, as it is concerned with the $\SL_2(\R)$ setting, which suffices for this paper.

The continuity Theorem~\ref{main-continuity} is thus a direct consequence of the large deviation estimates in Theorem~\ref{main-uniform-ldt}, while the modulus of continuity of the IDS in Theorem~\ref{cont-IDS} follows via the Thouless formula from that of the LE. Furthermore, a good enough modulus of continuity of the IDS implies a sharp enough Wegner estimate which, via multiscale analysis (a standard technique in the theory of random Schr\"odinger type operators) leads to the localization statement formulated above.       

\vspace{5pt}

We now describe the structure of the proof of the uniform LDT estimates. 

In~\cite{DK-31CBM} we derived uniform LDT estimates for {\em quasi-irreducible} cocycles, that is, for cocycles satisfying a rather weak form of irreducibility. It turns out that any random $\SL_2(\R)$ cocycle is diagonalizable or else, itself or its inverse is quasi-irreducible. This will allow us to reduce the problem to  $\SL_2(\R)$-valued diagonalizable cocycles (see Section~\ref{diagred}). 

A diagonalizable cocycle trivially satisfies LDT estimates (or any other limit theorems), and it does so uniformly within the set $\Diag$ of all diagonalizable cocycles. The problem is that this set is not open. Nearby quasi-irreducible cocycles will satisfy LDT estimates, but there is no a priori reason why their LDT parameters do not blow up while approaching $\Diag$, and herein lies the challenge of the proof. 

In fact, referring to Definition~\ref{def:weakLDT}, we will show that $a, b$ are some universal constants, and the problem of stability under perturbations of the LDT parameters will reduce to the stability of $\nldt$, the minimum number of iterates required for the LDT to start applying.
The idea of the proof is to introduce a {\em quantitative measurement} of irreducibility, and relate it to that minimum number of iterates $\nldt$.
There are different ways of measuring the irreducibility of a cocycle (see Section~\ref{irredm}, where these measurements are introduced and related).

For the purpose of this introduction, given a cocycle $B$, consider its irreducibility measurement to be the distance  $d (B, \Diag)$ to the set of diagonalizable cocycles. If the cocycle $B$ is close enough to a diagonalizable cocycle $B_\flat$, then {\em up to} a certain finite  number of iterates $\ndiag (B)$ that depends on $d (B, B_\flat)$, the LDT estimate of $B_\flat$ transfers over to $B$ by proximity. On the other hand, {\em after}  a large enough number  of iterates $\nirred (B)$, the irreducibility kicks in and it provides an LDT for $B$ (see Section~\ref{irredldt}).  
There are then two issues left to address. 

The first is to derive an explicit relationship between the irreducibi\-lity measurement $d (B, \Diag)$ of a cocycle and the minimum number of iterates $\nirred (B)$ required so that the (quantitative) LDT for quasi-irreducible cocycles applies. A key ingredient in this derivation is a probabilistic (random walk) argument which we refer to as the prison break (see Section~\ref{prison}). We note that this is the only part in this paper requiring the finite support condition on the probability distribution. 

The second  is to {\em bridge} the range of values between the iterates $\ndiag (B)$ and $\nirred (B)$. This is achieved in Section~\ref{bridging}, by performing an `almost linearization' of the multiplicative process by means of the Avalanche Principle\textemdash  a deterministic result on the product of a long chain of matrices satisfying some geometric conditions (see~\cite{DK-book, DK-31CBM}).

\section{Reduction to diagonalizable cocycles}\label{diagred}
In this section we show that establishing uniform LDT estimates for $\GLtwoR$ random cocycles and a modulus of continuity for their LE can be reduced to the setting of $\SLtwoR$ valued diagonalizable cocycles. 

\subsection*{Special linear group reduction} If $g\in\GLtwoR$, let 
$$g_\star := \frac{1}{\sqrt{\sabs{\det g}}} \, g \in \SL_2^\pm (\R) \, ,$$
where $ \SL_2^\pm (\R)$ is the group of two dimensional matrices with determinant $\pm 1$. As in this context it makes no difference whether the determinant of a matrix is $1$ or $-1$, we will disregard the sign of the determinant and simply write $\SL_2(\R)$.
 
Moreover, for a cocycle $A\in\GLtwoR^k$, denote by $A_\star$ the induced $\SLtwoR^k$ cocycle given by
$$A_\star (x) :=  \frac{1}{\sqrt{\sabs{\det A (x)}}} \, A (x) \, .$$

Note that since for any matrix $g_\star \in \SLtwoR$ the operator norm satis\-fies $\norm{g_\star^{-1}} = \norm{g_\star} \ge1$, if $A_\star\in\SLtwoR^k$, then 
$$L^+ (A_\star) \ge 0 \ge L^- (A_\star) \quad \text{and} \quad L^-(A_\star) = - L^+(A_\star) \, .$$

Because of this, from now on, if $A_\star\in\SLtwoR^k$, we will denote its maximal Lyapunov exponent $L^+(A_\star)$ by $L(A_\star)$ and refer to it as {\em the} Lyapunov exponent of $A_\star$.

\begin{proposition}\label{SL reduction}
Let $A\in\GLtwoR^k$ be a cocycle. If $A_\star$ satisfies a uniform LDT estimate in $\SLtwoR^k$, then $A$ satisfies a similar LDT estimate in $\GLtwoR^k$. Moreover, if $L \colon \SLtwoR^k \to \R$ has a certain (at most Lipschitz) modulus of continuity near $A_\star$, then $L^+ \colon \GLtwoR^k\to\R$ has the same modulus of continuity near $A$.
\end{proposition}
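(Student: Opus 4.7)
The plan is to exploit the factorization $A(x) = \sqrt{\sabs{\det A(x)}} \cdot A_\star(x)$, which by multiplicativity of the determinant yields
\begin{equation*}
\frac{1}{n}\log\norm{\An{n}(x)} = \frac{1}{2n}\sum_{j=0}^{n-1}\log\sabs{\det A(T^j x)} + \frac{1}{n}\log\norm{A_\star^{(n)}(x)},
\end{equation*}
and at the asymptotic level, combining the classical law of large numbers for the first summand with the Furstenberg--Kesten theorem for the second,
\begin{equation*}
L^+(A) = \tfrac{1}{2}\,\EE\log\sabs{\det A} + L(A_\star).
\end{equation*}
Consequently the large deviation event $\bigl\{\sabs{\frac{1}{n}\log\norm{\An{n}} - L^+(A)} > \ep\bigr\}$ is contained in the union of two events where each summand deviates from its own mean by more than $\ep/2$, and it suffices to estimate each separately.

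For the Birkhoff average of $\tfrac{1}{2}\log\sabs{\det A}$ --- a bounded, locally constant observable on a Bernoulli shift --- Hoeffding's inequality immediately provides an exponential LDT, and the resulting bound is stable under small perturbations $B$ of $A$ in $\GLtwoR^k$, since on a neighborhood of $A$ the values $\sabs{\det B(x)}$ stay bounded away from $0$ and $\infty$ and the expectation $\EE\log\sabs{\det B}$ depends Lipschitz-ly on $B$. For the second summand I would invoke the hypothesized uniform LDT for $A_\star$, after observing that the map $\GLtwoR^k \ni B \mapsto B_\star \in \SLtwoR^k$ is locally Lipschitz at $A$, so a $\delta$-ball around $A$ is sent into a $C\delta$-ball around $A_\star$. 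Intersecting the two tail bounds and redefining the constants $c$ and $\nldt$ preserves the form (exponential or sub-exponential) of the LDT estimate, establishing the first assertion.

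For the modulus of continuity claim, the same decomposition $L^+(B) = \tfrac{1}{2}\EE\log\sabs{\det B} + L(B_\star)$ expresses $L^+$ in a neighborhood of $A$ as the sum of a smooth (Lipschitz) function of $B$ and $L\circ (B \mapsto B_\star)$. Since $B \mapsto B_\star$ is locally Lipschitz, any modulus of continuity $\omega$ enjoyed by $L$ at $A_\star$ transfers to the composition at $A$ after a mere rescaling of the argument. Provided $\omega$ is at most Lipschitz --- i.e., not asymptotically better than linear, which is precisely the case for the H\"older and weak-H\"older moduli of interest in Theorem~\ref{main-continuity} --- the added Lipschitz summand is absorbed into $\omega$ without degradation, and $L^+$ inherits the same modulus near $A$. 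No serious obstacle arises in this reduction; its content is essentially algebraic, and the only bookkeeping required is to verify that stripping off the scalar factor $\sqrt{\sabs{\det A}}$ damages neither the LDT estimate nor the continuity modulus.
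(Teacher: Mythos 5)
Your proposal is correct and follows essentially the same path as the paper's proof: both exploit the decomposition $\frac{1}{n}\log\norm{\An{n}} = \frac{1}{n}\log\norm{A_\star^{(n)}} + \frac{1}{2n}\sum_{i} \log\sabs{\det A}\circ T^i$, apply Hoeffding's inequality to the Birkhoff sum term (noting the uniform almost sure bound near $A$), invoke the hypothesized uniform LDT for $A_\star$ via the local Lipschitz continuity of $B \mapsto B_\star$, and handle the modulus-of-continuity claim via the same Lipschitz observations. The only cosmetic difference is that you spell out the union-of-events step and the additive formula for $L^+$ more explicitly, which the paper leaves implicit.
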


\begin{proof}
It is easy to verify that the maps
$$\GLtwoR \ni g \mapsto \log \sabs{\det g} \in \R \quad \text{and} \quad \GLtwoR \ni g \mapsto g_\star \in \SLtwoR$$
are locally Lipschitz continuous.

This then implies that the maps
$$\GLtwoR^k \ni A \mapsto \xi_A := \log \sabs{\det A} \in L^\infty(X, \R) \quad \text{and}$$
$$\kern-9.2em \GLtwoR^k \ni A \mapsto A_\star \in \SLtwoR^k$$
are locally Lipschitz as well.

The continuity statement in the proposition is then evident. To derive the LDT statement, note that
\begin{equation}\label{SL reduction LDT}
\frac{1}{n} \, \log \norm{\An{n}} = \frac{1}{n} \, \log \norm{\An{n}_\star} + \frac{1}{2 n} \sum_{i=0}^{n-1} \xi_A \circ T^i \, .
\end{equation}

It is then enough to establish a uniform LDT estimate for the Birkhoff sum on the right hand side. 

Let us recall the Hoeffding inequality from classical probabilities (see for instance  \cite{Tao-2012}), which will be used again later.

\begin{lemma}\label{Hoeffding}
Let $\xi_0, \xi_1, \ldots, \xi_{n-1}$ be a (finite) random process and denote by  $S_n := \xi_0 + \xi_1 + \ldots + \xi_{n-1}$ the corresponding sum process.

Assume that for $0 \le j \le n-1$ the random variables $\xi_j$ are independent and that almost surely 
 $\abs{\xi_j} \le K$. Then for all $\ep > 0$ we have
$$\Pp \, \left[ \left| \frac{1}{n} S_n - \EE \left( \frac{1}{n} S_n \right) \right| > \ep \right] \le e^{- \frac{\ep^2}{2 K^2} \, n} .$$
 \end{lemma}
 
 What makes this estimate important for our purposes (more so than say, Cram\'er's large deviation principle) is its {\em uniformity} in the input data: the parameters in the estimate depend only on the almost sure bound on the process.

If in~\eqref{SL reduction LDT} we put $\xi_i := \xi_A \circ T^i$, then Hoeffding's inequality applies, and it does so uniformly for all nearby cocycles, since $B\mapsto \xi_B$ is Lipschitz, so the almost sure bound on $\xi_B$ is uniform in $B$.
\end{proof}

From now on we will only consider $\SLtwoR$ cocycles.

\subsection*{Reduction to diagonalizable cocycles} We first define a weak form of irreducibility.

Let $A \in \SLtwoR^k$ and assume that the line $\ell \subset \R^2$ is {\em invariant} under all matrices (regarded as linear transformations) $A_j$ with $j \in \Sigma$.  
In other words, $A_j \ell = \ell$ for all $j \in \Sigma$, and we can consider the restriction $\restr{A}{\ell}$ of the cocycle $A$ to the one dimensional subspace $\ell$. This restriction may be described as $\restr{A}{\ell}  \, v = \la_A  \, v$ for some function $\la_A \colon \Sigma \to \R$ and unit vector $v\in\ell$. The process $\log \norm{\restr{\An{n}}{\ell}} $ is thus additive, and by Birkhoff's ergodic theorem, the Lyapunov exponent of $\restr{A}{\ell}$ is
$L (\restr{A}{\ell}) =   \int_\Sigma \log \sabs{\la_A} \, d p = \EE \left( \log \norm{\restr{A}{\ell}} \right)$. 

Note also that from the Oseledets theorem, $L (\restr{A}{\ell}) = L^+(A) = L(A)$ or $L (\restr{A}{\ell}) = L^-(A) = - L(A)$.

\begin{definition}
A cocycle $A \in \SLtwoR^k$ is called {\em quasi-irreducible}
if
there is no line  $\ell \subset \R^2$ such that $\ell$ is invariant under all matrices $A_j$ with $j \in \Sigma$ and $L(\restr{A}{\ell})<L(A)$. 
\footnote{We later prove (see Lemma~\ref{qirred def})  that when $L(A)>0$, this is equivalent to the concept of quasi-irreducibility introduced in~\cite{Bou} (see D\'{e}finition 2.7).}
\end{definition}

\begin{definition}
A cocycle $A\in\SL_2(\R)^k$
is called {\em diagonalizable} if there exist two transversal invariant lines 
lines $\ell$ and $\ell'$, that is, 
$A_j  \ell=\ell$ and $A_j \ell'=\ell'$ for all $j\in\Sigma$. 

We denote by $\Diag$ the set of all diagonalizable cocycles in $\SLtwoR^k$ and by $\Diag^\ast$ the set of cocycles $A\in \Diag$ with $L(A)>0$.
\end{definition}

Given a cocycle $F_A\colon X\times\R^2\to X\times\R^2$,
$F_A(x,v)=(T x, A(x) v)$, determined by a function $A\colon X\to \SL_2(\R)$, its {\em inverse}   is the map 
$F_{A}^{-1}\colon X\times\R^2 \to X\times\R^2$,  \, $F_{A}^{-1}(x,v)=(T^{-1} x, A(T^{-1} x)^{-1}\,v)$.
\index{linear cocycle!inverse}
The iterates of the inverse cocycle $A^{-1}\colon X\to \SL_2(\R)$ satisfy for all $n\in\N$ and $x\in X$,
$$ (A^{-1})^{(n)}(x) = \An{n}(T^{-n} x)^{-1} =: \An{-n}(x)\;.$$

It is then clear that $L (A^{-1}) = L(A)$.

Moreover, if the line $\ell \subset \R^2$ is $A$-invariant then it is also invariant for $A^{-1}$ and
$L (\restr{A^{-1}}{\ell}) =  \int_\Sigma \log \sabs{\la^{-1}_A} \, d p = - L (\restr{A}{\ell})$.

\begin{lemma}\label{dichotomy}
For a cocycle $A\in\SLtwoR^k$, the following dichotomy holds: $A$ is diagonalizable or else, $A$ or $A^{-1}$ is quasi-irreducible. 
\end{lemma}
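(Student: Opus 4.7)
The plan is to prove the contrapositive: assuming that neither $A$ nor $A^{-1}$ is quasi-irreducible, I will exhibit two distinct (hence transversal) common invariant lines, which is exactly the definition of $A \in \Diag$.

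First I would dispose of the trivial regime $L(A)=0$. The key preparatory fact (noted just before the lemma) is the Oseledets dichotomy: for every $A$-invariant line $\ell \subset \R^2$, one has $L(\restr{A}{\ell}) \in \{L(A), -L(A)\}$. When $L(A)=0$ both values coincide with $L(A)$, so no $A$-invariant line $\ell$ can satisfy $L(\restr{A}{\ell}) < L(A)$; thus $A$ is automatically quasi-irreducible and the lemma holds vacuously. So we may assume $L(A) > 0$.

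Now suppose $A$ is not quasi-irreducible. Then there exists a common $A$-invariant line $\ell$ with $L(\restr{A}{\ell}) < L(A)$, and by the dichotomy the strict inequality pins the value: $L(\restr{A}{\ell}) = -L(A)$. Symmetrically, suppose $A^{-1}$ is not quasi-irreducible. Since a line is $A$-invariant if and only if it is $A^{-1}$-invariant, we obtain a common $A$-invariant line $\ell'$ with $L(\restr{A^{-1}}{\ell'}) < L(A^{-1}) = L(A)$. Using the identity $L(\restr{A^{-1}}{\ell'}) = -L(\restr{A}{\ell'})$ (which follows from the same Birkhoff computation as in the definition of $L(\restr{A}{\ell})$, with $\la_A$ replaced by $\la_A^{-1}$), this becomes $L(\restr{A}{\ell'}) > -L(A)$, and the dichotomy forces $L(\restr{A}{\ell'}) = L(A)$.

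Finally, because $L(A) > 0$ the two restriction exponents $-L(A)$ and $L(A)$ are distinct, so $\ell \neq \ell'$. In $\R^2$ any two distinct lines are transversal, so $\R^2 = \ell \oplus \ell'$ is a decomposition into common invariant lines, proving that $A$ is diagonalizable. There is no substantial obstacle here; the only subtle point is keeping straight which direction the strict inequality goes for $A$ versus $A^{-1}$, which is what forces $\ell$ and $\ell'$ to carry opposite Oseledets exponents and hence to be distinct.
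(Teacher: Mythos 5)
Your proof is correct and uses the same essential ingredients as the paper's proof (the Oseledets dichotomy $L(\restr{A}{\ell}) \in \{\pm L(A)\}$ and the identity $L(\restr{A^{-1}}{\ell}) = -L(\restr{A}{\ell})$), merely running the contrapositive in the opposite direction: the paper starts from non-diagonalizability and cases on the number of invariant lines, while you start from the failure of quasi-irreducibility of both $A$ and $A^{-1}$ and manufacture two invariant lines carrying opposite exponents, hence distinct. Your explicit dispatch of the $L(A)=0$ regime is a mild streamlining that the paper leaves implicit, but it is not a genuinely different route.
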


\begin{proof} Let $A\in\SLtwoR^k$ be a non diagonalizable cocycle. Then either $A$ has no invariant lines, so in particular it is quasi-irreducible, or it has exactly one invariant line $\ell$. In this case, either $L(\restr{A}{\ell}) = L(A)$, so $A$ is quasi-irreducible, or $L(\restr{A}{\ell})= - L(A)$. But then $\ell$ is also invariant for $A^{-1}$ and $L(A^{-1}) = L(A) = - L(\restr{A}{\ell}) = L (\restr{A^{-1}}{\ell})$, so $A^{-1}$ is quasi-irreducible.
\end{proof}

\begin{lemma}\label{inverse ldt} If a cocycle $A\in\SLtwoR^k$ satisfies a uniform LDT estimate, then $A^{-1}$ satisfies the same uniform LDT estimate.
\end{lemma}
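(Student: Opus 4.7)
The plan is to reduce the LDT estimate for $A^{-1}$ directly to that of $A$ via two elementary ingredients. First, for every $g \in \SLtwoR$ one has $\norm{g^{-1}} = \norm{g}$: the two singular values of $g$ are $\sigma$ and $\sigma^{-1}$ with $\sigma \ge 1$, so the operator norm of both $g$ and $g^{-1}$ equals $\sigma$. Second, the Bernoulli measure $\Pp$ is invariant under $T$, hence under every iterate $T^{-n}$.

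Combining these with the formula $(A^{-1})^{(n)}(x) = \An{n}(T^{-n} x)^{-1}$ recalled just before the lemma, I would write
\begin{equation*}
\log \norm{(A^{-1})^{(n)}(x)} \;=\; \log \norm{\An{n}(T^{-n} x)^{-1}} \;=\; \log \norm{\An{n}(T^{-n} x)}.
\end{equation*}
Since $L(A^{-1}) = L(A)$ (already observed in the excerpt), the event whose probability appears in the LDT for $A^{-1}$ coincides with the preimage under $T^{-n}$ of the corresponding event for $A$. The measure-preserving property of $T^{-n}$ then gives the exact identity
\begin{equation*}
\Pp\!\left[\,\abs{\tfrac{1}{n}\log\norm{(A^{-1})^{(n)}} - L(A^{-1})} > \ep \right] \;=\; \Pp\!\left[\,\abs{\tfrac{1}{n}\log\norm{\An{n}} - L(A)} > \ep \right],
\end{equation*}
and the right hand side is bounded by the LDT estimate for $A$.

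For the uniform version, I would use that the inversion map $\SLtwoR^k \ni B \mapsto B^{-1} \in \SLtwoR^k$ is locally bi-Lipschitz in a neighborhood of $A$. Let $\delta>0$ be the perturbation radius from the uniform LDT of $A$, and let $L$ be the local Lipschitz constant of inversion near $A^{-1}$. Given any cocycle $C \in \SLtwoR^k$ with $d(C, A^{-1}) < \delta' := \min\{\delta/L, r\}$ (where $r$ is small enough that inversion is $L$-Lipschitz on $B(A^{-1}, r)$), set $B := C^{-1}$. Then $d(B, A) = d(C^{-1}, (A^{-1})^{-1}) \le L \cdot d(C, A^{-1}) < \delta$, so $B$ lies in the neighborhood where the uniform LDT of $A$ applies. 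Applying the identity above to $B$ in place of $A$ transfers its LDT to $C = B^{-1}$, with the same constants $c$ (or $a, b$) and the same threshold $\nldt(A,\ep)$, and with perturbation radius $\delta'$.

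I do not anticipate a serious obstacle here: the argument is purely formal once one records the two ingredients above, and works verbatim for either the exponential LDT of Definition~\ref{def:LDT}--\ref{def:unif LDT} or the sub-exponential LDT of Definition~\ref{def:weakLDT}, since the substitution only relabels the underlying sample point and does not alter the deviation threshold. The only minor care needed is to shrink the perturbation radius by the Lipschitz constant of inversion, as above, so that perturbations of $A^{-1}$ correspond to admissible perturbations of $A$.
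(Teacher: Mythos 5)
Your argument is correct and is essentially the paper's own proof: the identity $(B^{-1})^{(n)}(x)=\Bn{n}(T^{-n}x)^{-1}$ together with $\norm{g^{-1}}=\norm{g}$ in $\SLtwoR$, $L(B^{-1})=L(B)$, and the $T$-invariance of $\Pp$ give equality of the two deviation probabilities, while local bi-Lipschitz continuity of inversion handles uniformity over a (slightly shrunk) neighborhood. You merely spell out a couple of steps the paper leaves implicit, such as the equality of operator norms and the explicit choice of the smaller radius $\delta'$.
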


\begin{proof}
It is easy to see that the map $\SLtwoR^k \ni B \mapsto B^{-1} \in \SLtwoR^k$ is locally bi-Lipschitz. 
Then for any cocycle $B^{-1}$ near $A^{-1}$ (which ensures the proximity of $B$ to $A$), since
$$ (B^{-1})^{(n)}(x) = \Bn{n}(T^{-n} x)^{-1} \quad \text{ and } \quad L(B^{-1}) = L(B) \, ,$$
it follows that the event
\begin{align*}
 \left[  \, \left| \frac{1}{n} \log \norm{(B^{-1})^{(n)}} - L (B^{-1})  \right| > n^{-a} \right]  & = \\
&  \kern-4em   T^{-n} \, \left[  \, \left| \frac{1}{n} \log \norm{\Bn{n} } - L (B)  \right| > n^{-a} \right]  .
 \end{align*}

We then conclude that
\begin{align*}
\Pp \,  \left[  \, \left| \frac{1}{n} \log \norm{(B^{-1})^{(n)}} - L (B^{-1})  \right| > n^{-a} \right]  & = \\
&  \kern-4em   \Pp \, \left[  \, \left| \frac{1}{n} \log \norm{\Bn{n} } - L (B)  \right| > n^{-a} \right] ,
 \end{align*}
which shows that $B^{-1}$ satisfies the same LDT as $B$.
\end{proof}

%
%
%

\begin{remark}\label{reduction diag remark}
We have already established exponential uniform LDT estimates for quasi-irreducible cocycles (see Theorem 4.1 in~\cite{DK-31CBM}). Thus the two lemmas above reduce the proof of Theorem~\ref{main-uniform-ldt} to establishing (sub-exponential) uniform (relative to the whole space $\SLtwoR^k$) LDT estimates for diagonalizable $\SLtwoR$ cocycles. 
\end{remark}

\subsection*{LDT in the set of diagonalizable cocycles}
 
%
%
%

We derive a uniform  LDT estimate {\em within} the set $\Diag^\ast$ of diagonalizable cocycles with posi\-tive Lyapunov exponent.


\begin{theorem}\label{ldt diag}
Let $A \in \Diag$ with $L(A)>0$. There are constants $\delta=\delta(A)>0$, $c=c(A)>0$ and $C=C(A)<\infty$ such that if $D\in\Diag$ is any diagonalizable cocycle with $d(D,A)<\delta$, then
\begin{equation*}\label{ldt diag eq}
\Pp \left[  \left| \frac{1}{n} \, \log \norm{D^{(n)} } - L(D)  \right| > \ep  \right] \le e^{- c(A)\,\ep^2 \, n}  
\end{equation*}
for all $\ep >0$ and $n\ge \frac{C(A)}{\ep}$. 
\end{theorem}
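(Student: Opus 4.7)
The plan is to exploit the simultaneous diagonalization of $A$ to reduce the multiplicative problem to a Birkhoff sum of bounded i.i.d.\ random variables, to which Hoeffding's inequality (Lemma~\ref{Hoeffding}) applies uniformly. The main obstacle is controlling the diagonalizing conjugation as $D$ varies in $\Diag$ near $A$; this is handled using the hypothesis $L(A)>0$.

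First, fix $A=(A_1,\ldots,A_k)\in\Diag$ with $L(A)>0$ and write $A_j=P\,\diag(\la_j,\la_j^{-1})\,P^{-1}$ for a common $P\in\GLtwoR$ and real $\la_j\neq 0$. Because $L(A)=\abs{\EE\log\abs{\la_{x_0}}}>0$, not all $\la_j$ equal $\pm 1$; in particular, some $A_{j_0}$ has two distinct real eigenvalues, hence two well-defined real eigendirections. By continuity of the roots of the characteristic polynomial and of the associated spectral projections, any $D=(D_1,\ldots,D_k)$ sufficiently close to $A$ has $D_{j_0}$ with distinct real eigenvalues and eigendirections close to those of $A_{j_0}$. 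If moreover $D\in\Diag$, then the common real diagonalizing basis of $D$ must coincide (up to scaling) with the eigenbasis of $D_{j_0}$, so there exists $P_D$ close to $P$ with $D_j=P_D\,\diag(\mu_j,\mu_j^{-1})\,P_D^{-1}$ and real $\mu_j$ close to $\la_j$. Consequently, on some small neighborhood $\Nscr_A\subset\Diag$ of $A$ we have uniform bounds $\norm{P_D}\,\norm{P_D^{-1}}\le M=M(A)$, $\abs{\log\abs{\mu_j}}\le K=K(A)$, and $L(D)=\abs{\EE\log\abs{\mu_{x_0}}}\ge L(A)/2$.

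Set $\xi_D(x):=\log\abs{\mu_{x_0}}$, an i.i.d.\ bounded process, and $S_n\xi_D(x):=\sum_{k=0}^{n-1}\xi_D(T^k x)$. Since $\Dn{n}(x)=P_D\,\diag\bigl(m_n(x),m_n(x)^{-1}\bigr)\,P_D^{-1}$ with $m_n(x):=\prod_{k=0}^{n-1}\mu_{x_k}$, the diagonal matrix has operator norm $e^{\abs{S_n\xi_D(x)}}$, and the sandwich by $P_D^{\pm 1}$ yields
$$\bigl|\,\log\norm{\Dn{n}(x)}-\abs{S_n\xi_D(x)}\,\bigr|\le \log M.$$
Hoeffding's inequality applied to $\xi_D$ gives
$$\Pp\!\left[\,\abs{\tfrac{1}{n}S_n\xi_D-\EE(\xi_D)}>\tfrac{\ep}{2}\right]\le e^{-c\ep^2 n},$$
with $c=c(A)$ depending only on $K$. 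For $\ep\le L(A)/4$, on the complementary event $\tfrac{1}{n}S_n\xi_D$ has the same sign as $\EE(\xi_D)$, so $\abs{\tfrac{1}{n}\abs{S_n\xi_D}-L(D)}\le\ep/2$. Choosing $C(A):=2\log M$, the condition $n\ge C(A)/\ep$ forces $(\log M)/n\le\ep/2$, and combining yields $\abs{\tfrac{1}{n}\log\norm{\Dn{n}}-L(D)}\le\ep$ off an event of probability at most $e^{-c\ep^2 n}$. The range $\ep>L(A)/4$ is absorbed by adjusting constants.

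The delicate step is the second paragraph: uniform boundedness of the condition number $\norm{P_D}\,\norm{P_D^{-1}}$ on a neighborhood of $A$ inside $\Diag$. The hypothesis $L(A)>0$ is used precisely here, singling out a ``non-degenerate'' matrix $A_{j_0}$ whose two distinct eigendirections remain well-separated under small perturbations and thereby pin down the common diagonalizing basis of any nearby $D\in\Diag$. Once this continuity is in hand, the remainder is the classical deviation bound for bounded i.i.d.\ sums, combined with the lower bound on $\abs{\EE\xi_D}$ that keeps the absolute value in $\tfrac{1}{n}\abs{S_n\xi_D}$ benign.
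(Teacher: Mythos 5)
Your proof is correct and follows essentially the same strategy as the paper's: use $L(A)>0$ to single out a hyperbolic component matrix whose simple eigenvalues pin the simultaneous diagonalizing basis, deduce that the conjugating matrix depends continuously on $D\in\Diag$ near $A$ (hence uniformly bounded condition number), absorb the conjugation error into the $\ep/2$ budget for $n\ge C(A)/\ep$, and apply Hoeffding to the bounded i.i.d.\ variables $\log|\mu_{x_0}|$. The only cosmetic difference is your sign argument restricted to $\ep\le L(A)/4$; the paper avoids the case split by applying the reverse triangle inequality $\bigl||a|-|b|\bigr|\le|a-b|$ directly, which works for all $\ep>0$.
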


\begin{proof}
Let $A=(A_1,\ldots,A_k)\in\Diagast$. This is equivalent to the exis\-tence of a matrix $p_A\in\GL_2(\R)$ and a diagonal cocycle $D_A\colon X\to\SL_2(\R)$, such that 
\begin{equation}\label{diag formula}
A(x) = p_A \, D_A (x) \, p_A^{-1} \quad \text{for all } \ x\in X \, .
\end{equation}

We write
$$D_A (x) = \begin{bmatrix}  \theta_A (x) & 0 \\ 0 & \theta_A (x)^{-1} \end{bmatrix} \, ,$$
where $\theta_A\colon X\to\R\setminus\{0\}$ is a locally constant function (in other words, it can be identified with a vector $(\theta_{A,1}, \ldots, \theta_{A,k}) \in\R^k$).

The numbers $\theta_A (x)$ and $ \theta_A (x)^{-1}$ are the eigenvalues of the matrix $A(x)$, while  the columns of the matrix $p_A$ (which simultaneously  diagona\-lizes $A(x), x\in X$) are corresponding eigenvectors. We show that we can choose $p_A$ and $D_A$ so that $A\mapsto p_A$ and for all $x\in X$, $A\mapsto D_A(x)$ are locally continuous (in fact, locally analytic).

First note that
\begin{equation}\label{L diag}
L (A) = \abs{ \EE \, \log \sabs{\theta_A} } = \Big|  \sum_{j=1}^k  p_j  \, \log \sabs{\theta_{A, j}} \big| \, .
\end{equation}

Indeed, for all $x\in X$ we have $\An{n} (x) = p_A \, D_A^{(n)} (x) \, p_A^{-1}$, and 
$$D_A^{(n)} (x) = \begin{bmatrix}  \theta^{(n)}_A (x) & 0 \\ 0 & \theta^{(n)}_A (x)^{-1} \end{bmatrix} \quad \text{where} \quad \theta^{(n)}_A (x) := \prod_{j=n-1}^{0} \theta_A (T^j x) .$$

Thus $$\norm{ D_A^{(n)} (x)  } = \max \big\{  \abs{ \theta^{(n)}_A (x) }, \abs{ \theta^{(n)}_A (x)^{-1} } \big\}$$ and it follows that for all $x\in X$,
\begin{equation}\label{diag bound}
\left| \frac{1}{n} \, \log \norm{\An{n} (x) }  - \Big| \frac{1}{n} \, \sum_{j=0}^{n-1} \log \sabs{\theta_A (T^j x) } \Big|  \right| \le \frac{1}{n}  \left( \log \norm{p_A} + \log \norm{p_A^{-1}} \right) \, .
\end{equation}

Applying the Furstenberg-Kesten theorem to the cocycle $A$ and the Birkhoff ergodic theorem to the observable $\log \sabs{\theta_A}$, we derive~\eqref{L diag}.

Then since $L(A)>0$ and $L (A) = \Big|  \sum_{j=1}^k  p_j  \, \log \sabs{\theta_{A, j}} \big|$, there is an index $j$ such that $\sabs{\theta_{A,j}}\neq 1$. Therefore, at least one of the matrices $A_j$, say $A_1$ is hyperbolic. 

In particular, its eigenvalues $\theta_{A,1}$ and $\theta_{A,1}^{-1}$ are {\em simple}. As a consequence of the general perturbation theory of linear operators (see~\cite{Kato}, or the more modern approach~\cite{Benoit-lin-op}) we have the following. As we perturb the cocycle $A$ in a small enough neighborhood in $\Diagast$, its first component $A_1$ is perturbed in a small neighborhood in $\GLtwoR$, hence the corresponding eigenvalues $\theta_{A,1}$ and $\theta_{A,1}^{-1}$ remain simple (and they can be chosen to depend analytically on $A_1$). Furthermore, we can choose the corresponding eigenvectors $u_{A,1}^+$ and $u_{A,1}^-$ to (locally) depend analytically on $A_1$, hence in particular they depend Lipschitz continuously on $A$.  
The matrix $p_A$ whose column vectors are $u_{A,1}^+$ and $u_{A,1}^-$ diagonalizes $A_1$, hence it diagonalizes $A_1, \ldots, A_k$. 

We conclude that in~\eqref{diag formula} we can choose the matrix $p_A$ in such a way that $\Diagast \ni A\mapsto p_A$ is locally Lipschitz continuous, and in particular it is locally bounded. Combined with the fact that the function $\GLtwoR\ni g \mapsto g^{-1} \in \GLtwoR$ is locally Lipschitz continuous, we derive the same property for  $\Diagast \ni A\mapsto p_A^{-1}$. 

Finally, since $D_A = p_A^{-1} \, A \, p_A$, the map $\Diagast \ni A\mapsto D_A$ is also locally Lipschitz continuous, hence $\Diagast \ni A\mapsto \log \sabs{\theta_A}$ are locally Lipschitz continuous and bounded as well.

From the above it follows in particular that there is a locally bounded function $C \colon \Diagast\to (0, \infty)$ such that $\left( \log \norm{p_A} + \log \norm{p_A^{-1}} \right) \le C(A)$ and $\abs{\log \sabs{\theta_A} } \le C(A)$. 

Therefore, ~\eqref{diag bound} implies that for all $x\in X$, 
\begin{equation}\label{diag eq2}
\left| \frac{1}{n} \, \log \norm{\An{n} (x) }  - \Big| \frac{1}{n} \, \sum_{j=0}^{n-1} \log \sabs{\theta_A (T^j x) } \Big|  \right| \le \frac{C(A)}{n} \le \frac{\ep}{2} 
\end{equation}
if $n \ge \frac{2 C(A)}{\ep}$.

Furthermore, the i.i.d. random variables $\xi_{A,j} := \log \sabs{\theta_A \circ T^j}$ are bounded by $C(A)$, hence Hoeffding's inequality in Lemma~\ref{Hoeffding} implies 
\begin{equation}\label{diag eq3}
\Pp \left[ \, \left|   \frac{1}{n} \, \sum_{j=0}^{n-1} \log \sabs{\theta_A (T^j x) }   -  \EE \, \log \sabs{\theta_A}  \right| \ge \frac{\ep}{2} \right] \le  
 e^{- \frac{\ep^2}{8 C(A)^2} \, n} \, .
\end{equation}

The conclusion follows by combining \eqref{L diag}, \eqref{diag eq2} and \eqref{diag eq3}.
\end{proof}

\begin{remark}\label{continuity LE in Diag}
From the above considerations we also conclude that the restriction $L \colon \Diagast \to \R$ of the  Lyapunov exponent  is a locally Lipschitz continuous function.
That is because $L(A) =  \abs{ \EE \, \log \sabs{\theta_A} }$ and $A\mapsto \log \sabs{\theta_A}$ is locally Lipschitz. Furthermore, $L$ is continuous on $\Diag$, as continuity at $\SLtwoR$ cocycles with zero LE is automatic.
\end{remark}

\section{Irreducibility measurements}
\label{irredm}

\newcommand{\Vscr}{\mathscr{N}}
\newcommand{\Dscr}{\mathscr{D}}

In this section we introduce two {\em irreducibility measurements} $\rho(B)$ and $N(B)$ for a cocycle $B$,
which we then relate to the distance
$d(B,\Diag)$ of the cocycle $B$ to the space of diagonalizable cocycles.


\begin{lemma}\label{qirred def}
Let $A$ be an $SL_2(\R)$ valued cocycle with $L(A)>0$. The following statements are equivalent:
\begin{enumerate}[(i)]
\item $A$ is quasi-irreducible.
\item $\displaystyle \frac{1}{n} \, \log \norm{\An{n} v} \to L(A)$ \ $\Pp$-almost surely, for all $v\in\R^2$, $v\neq 0$.
\end{enumerate}
\end{lemma}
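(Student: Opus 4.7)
The plan is to treat the two directions separately; the nontrivial one is (i)~$\Rightarrow$~(ii), which I reduce to exhibiting a deterministic invariant line that contradicts quasi-irreducibility. The direction (ii)~$\Rightarrow$~(i) is a short contrapositive: if $A$ has an invariant line $\ell$ with $L(\restr{A}{\ell}) < L(A)$, the Oseledets dichotomy $\{\pm L(A)\}$ forces $L(\restr{A}{\ell}) = -L(A)$; for any $v_0 \in \ell\setminus\{0\}$ the iterate $\An{n}(x)v_0$ stays on $\ell$, and Birkhoff applied to the scalar additive cocycle $\log\sabs{\lambda_A}$ yields $\frac{1}{n}\log\|\An{n}v_0\| \to -L(A) \ne L(A)$ a.s., violating (ii).

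For (i)~$\Rightarrow$~(ii) I again argue by contrapositive: suppose there exists $v_0 \ne 0$ with $\Pp\bigl(\frac{1}{n}\log\|\An{n}v_0\|\not\to L(A)\bigr)>0$. Since $L(A)>0$, the Oseledets splitting $\R^2 = \filt_A^+(x)\oplus\filt_A^-(x)$ is proper and the limit lies in $\{\pm L(A)\}$ a.s., so in fact $\Pp(v_0\in\filt_A^-(x))>0$. Push $\Pp$ forward to obtain a probability measure $\nu := (\filt_A^-)_*\Pp$ on $\Pp(\R^2)$ with an atom at $\R v_0$. Using that $A$ is locally constant, so $A(x) = A_{x_0}$ while $\filt_A^-(T x)$ depends only on $(x_1, x_2, \ldots)$, the invariance relation $\filt_A^-(x) = A_{x_0}^{-1}\filt_A^-(T x)$ together with the independence of $x_0$ from $\filt_A^-(Tx)$ yields the stationarity identity
\[
\nu \;=\; \sum_{j=1}^{k} p_j\, (A_j^{-1})_* \nu .
\]
Let $\alpha^* := \max_\ell \nu(\{\ell\}) > 0$ and $F := \{\ell : \nu(\{\ell\}) = \alpha^*\}$, a finite nonempty set. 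The identity $\alpha^* = \nu(\{\ell\}) = \sum_j p_j\, \nu(\{A_j \ell\})$ combined with the maximality of $\alpha^*$ forces $A_j \ell \in F$ for every $j$ and every $\ell \in F$; thus $\{A_1, \ldots, A_k\}$ permutes the finite set $F$.

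To conclude I split on $|F|$. If $|F| = 1$, the unique line $\ell_* \in F$ is invariant under every $A_j$, so $L(\restr{A}{\ell_*}) \in \{\pm L(A)\}$; the value $+L(A)$ is impossible because on the positive-probability event $\{\filt_A^-(x) = \ell_*\}$ any $v \in \ell_*$ would satisfy $\frac{1}{n}\log\|\An{n}v\| \to -L(A)$, which under $L(A)>0$ is incompatible with $+L(A)$. Hence $L(\restr{A}{\ell_*}) = -L(A) < L(A)$, contradicting quasi-irreducibility. If $|F| \ge 2$, any $v$ in $\bigcup_{\ell\in F} \ell$ stays in $\bigcup_{\ell \in F} \ell$ under iteration, and Birkhoff on the finite-state Markov chain on $F \times \Sigma$ produces a deterministic growth rate $R \in \{\pm L(A)\}$ (via Oseledets); the value $R = +L(A)$ contradicts the positive atom mass at $\R v$, while $R = -L(A)$ would force $\nu$ to concentrate fully on a single element of $F$, contradicting $|F|\ge 2$ and $\nu(\{\ell\}) = \alpha^*$ for all $\ell \in F$. \textbf{The main obstacle} is exactly this upgrade from ``$v_0 \in \filt_A^-$ with positive probability'' to a \emph{deterministic} invariant line: it depends crucially on both the stationarity of $\nu$ (supplied by the locally constant structure of $A$ over the Bernoulli shift) and the hypothesis $L(A) > 0$, the latter being needed to rule out the permutation scenario $|F| \ge 2$.
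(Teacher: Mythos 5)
Your proposal is correct, and for the substantive direction (i)~$\Rightarrow$~(ii) it takes a genuinely different route from the paper. The paper invokes Theorem~A of Furstenberg--Kifer, which already delivers that the growth exponent $\la_v$ of each fixed vector is $\Pp$-a.s.\ constant; from that input the authors define the \emph{linear} subspace $S=\{v:\, v\in\filt_A^-(x)\ \text{a.s.}\}\cup\{0\}$, prove its cocycle-invariance via the disintegration of $\Pp$, and dichotomize on $\dim S\in\{0,1,2\}$. You instead push $\Pp$ forward under $\filt_A^-$ to a stationary measure $\nu$ on $\Pp(\R^2)$, observe that the set $F$ of maximal-mass atoms is finite and permuted by the $A_j$'s, and re-derive the deterministic-growth-rate input for vectors on $F$-lines from a finite Markov chain / Birkhoff argument rather than from Furstenberg--Kifer. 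Both routes produce the forbidden invariant line. What yours buys is self-containedness (no citation of a deep limit theorem); what the paper's buys is cleanliness, since determinism for \emph{every} vector is granted at the outset and the set $S$ is automatically linear.

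One step in your write-up needs tightening but does not derail the argument. ``Birkhoff on the finite-state Markov chain on $F\times\Sigma$'' assumes a single deterministic rate $R$ on all of $F$, but the chain on $F$ need not be irreducible. You should restrict to a single communicating class $F_0\subset F$ (the orbit of one atom under the permutation group generated by the $A_j|_F$'s); on such an orbit the chain \emph{is} irreducible, precisely because the $A_j$ act as permutations of a finite set and the semigroup they generate there is a group. Within $F_0$ the deterministic rate $R_0\in\{\pm L(A)\}$ is legitimate; $R_0=L(A)$ contradicts $\nu(\{\ell\})=\alpha^*>0$, and $R_0=-L(A)$ forces each $\ell\in F_0$ to equal $\filt_A^-(x)$ a.s., hence $|F_0|=1$ and then in fact $\nu=\delta_{\ell_*}$ so $|F|=1$. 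With this repair your proof is sound. You should also make explicit (as the paper does via passing to the one-sided shift) that $\filt_A^-$ is measurable with respect to the future coordinates; you allude to this when deriving the stationarity identity, so this is a matter of stating, not of substance.
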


\begin{proof} The implication (ii)$\implies$(i) is obvious. If there is an $A$-invariant line $\ell$, picking $v\in\ell$, $v\neq 0$, the following holds $\Pp$-almost surely:
$L (\restr{A}{\ell}) = \lim_{n\to\infty} \, \frac{1}{n} \, \log \norm{\An{n} (x) v} = L (A)$,
so $L (\restr{A}{\ell}) = L(A)$.

Now we prove the implication (i)$\implies$(ii).  

Let $F\subset X$ be a $T$-invariant Borel set with full probability,
$\Pp (F)=1$, consisting of Oseledets regular points. Thus for all $x\in F$ we have the Oseledets decomposition 
$\R^2= \filt^+(x)\oplus \filt^-(x)$ into proper subspaces, which are invariant under the cocycle action, i.e., $A(x) \filt^\pm(x)=\filt^\pm(T x)$. Moreover, given any $x\in F$
and  $v\in\R^2$, $v\neq 0$, 
$$\lim_{n\to\infty} \, \frac{1}{n} \, \log \norm{\An{n} (x) v} = 
\begin{cases}
- L(A) & \text{ if } v \in \filt^-(x) ,\\
\ \  L(A) &   \text{ if } v \notin \filt^-(x) .
\end{cases}
$$
 
 On the other hand, by Theorem A in~\cite{FKi}, for every $v\in\R^2$, $v\neq 0$, there is a constant $\la_v$ and a set $F_v\subset X$ with $\Pp (F_v)=1$ such that for all $x\in F_v$,
 $$\lim_{n\to\infty} \, \frac{1}{n} \, \log \norm{\An{n} (x) v} = \la_v \, .$$
 
 Hence for all nonzero vectors $v$, $\la_v = L(A)$ or $\la_v = - L(A)$  and 
\begin{align*}
\text{either}  & \  \lim_{n\to\infty} \, \frac{1}{n} \, \log \norm{\An{n} (x) v} = \ L(A)   \quad \ \text{ for all } x \in F_v \, , \\
\text{or }  &  \ \lim_{n\to\infty} \, \frac{1}{n} \, \log \norm{\An{n} (x) v} = - L(A)   \quad \text{ for all } x \in F_v \, .
\end{align*}

Consider the set
\begin{align*}
S & :=\left\{ v\in \R^2\colon \, v\in \filt^-(x) \quad \Pp\text{-a.s.} \,\right\} \\
& = \Big\{ v\in \R^2\colon \, \lim_{n\to\infty} \, \frac{1}{n} \, \log \norm{\An{n} (x) v} = - L(A)  \quad \Pp\text{-a.s.} \, \Big\} \cup \{0\} \, .
\end{align*}

It is easy to see that $S$ is a linear subspace. We prove that $S$ is invariant under the cocycle. 
Because locally constant random cocycles over full Bernoulli shifts factor through the one-sided shift,  without loss of generality we may assume that  $T\colon X\to X$ is the lateral shift  with $X=\Sigma^\N$.

Given $i\in\Sigma$ consider the conditional probability measure, $\Pp_i(F):= \Pp( F\vert x_0=i)$,
defined for Borel sets $F\subset X$. By definition the family 
of probability measures $\{\Pp_i\}_{i\in\Sigma}$ is the desintegration of $\Pp$ through the canonical
projection $\pi:X\to \Sigma$, $\pi(x):=x_0$.
Because $\Pp$ is the product measure $p^\N$, it follows that $\Pp_i(F)=\Pp(F_i)$, where  for any Borel set $F\subset X$ we define
$F_i:=\{ x\in X\colon i x\in F\}$.

Now, given $v\in S$ with $v\neq 0$, consider the full probability  event
$F^v:=\{x\in X\colon v\in \filt^-(x) \}$.
If $i\in \Sigma$, since
$$1=\Pp(F^v)=\int_{\Sigma}  \Pp_j(F^v)\,dp(j)  = \int_{\Sigma} \Pp(F^v_j)\,dp(j)   $$
the event
$F^{v}_i =\{ x\in X\colon v\in \filt^-(ix)  \}$ has also probability one. By invariance of the Oseledets splitting, if
$v\in \filt^-(ix) $  then
$A_i v \in \filt^-(x) $, which implies that 
$A_i v\in \filt^-(x)$  $\Pp$-almost surely, thus
proving that $A_i\, v\in S$.
Therefore, $S$ is an $A$-invariant linear subspace of $\R^2$.

If $\dim S = 2$, then $\Pp$-almost surely, $\filt^- (x) = \R^2$, which contradicts the fact that $L(A)>0$. If $\dim (S)=1$ then $S$ is an $A$-invariant line and 
picking $v\in S$, $v\neq 0$, $\Pp$-almost surely we have
$$L (\restr{A}{S}) = \lim_{n\to\infty} \, \frac{1}{n} \, \log \norm{\An{n} (x) v} = - L (A) .$$
Thus $L (\restr{A}{S}) = - L(A)$, which contradicts the quasi-irreducibility of $A$. \\
It follows that $S=\{0\}$, so for every $v\neq 0$,
$\frac{1}{n} \log \norm{\An{n} (x) v} \to - L (A)$
does not happen $\Pp$-almost surely. But then, from the preceding dichotomy, $ \frac{1}{n} \, \log \norm{\An{n} (x) v} \to L (A)$ must happen $\Pp$-almost surely.
\end{proof}

\begin{lemma}\label{qirred unif conv}
Let $A \in \SL_2 (\R)^k$ be a cocycle. If $A$ is quasi-irreducible and $L(A)>0$, then
\begin{equation*}
\lim_{n\to\infty} \, \min_{ v \in \Su^1} \, \EE  \left[ \frac{1}{n} \, \log \norm{\An{n} v} \right]  = L (A) .
\end{equation*}
\end{lemma}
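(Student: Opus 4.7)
The plan is to write $\Phi_n(\hat v) := \EE\bigl[\frac{1}{n}\log\norm{\An{n}v}\bigr]$ as the integral of a fixed continuous function on $\Pp$ against a probability measure on $\Pp$, and then exploit compactness of $\mathcal{P}(\Pp)$ together with Lemma~\ref{qirred def}. The upper bound $\limsup_n \min_{\hat v} \Phi_n(\hat v) \le L(A)$ will be immediate: for any fixed $\hat v_0 \in \Pp$, Lemma~\ref{qirred def}(ii) combined with the uniform bound $\sabs{\frac{1}{n}\log\norm{\An{n}v_0}} \le \max_j \max\{\log\norm{A_j},\log\norm{A_j^{-1}}\}$ and dominated convergence give $\Phi_n(\hat v_0) \to L(A)$, and $\min_{\hat v}\Phi_n(\hat v) \le \Phi_n(\hat v_0)$.

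For the much more delicate lower bound I would employ a Furstenberg-style stationary-measure argument. Set $\phi_A(\hat w) := \EE_x \log\norm{A(x) w}$, which is continuous on $\Pp$, and let $\nu_k^{(v)}$ denote the law on $\Pp$ of the random direction $\widehat{\An{k}(x) v}$. Telescoping yields the identity
\begin{equation*}
 \Phi_n(\hat v) = \int_{\Pp} \phi_A \, d\bar\nu_n^{(v)}, \quad \text{where } \bar\nu_n^{(v)} := \frac{1}{n}\sum_{k=0}^{n-1} \nu_k^{(v)}.
\end{equation*}
Since $A_*\nu_k^{(v)} = \nu_{k+1}^{(v)}$, one obtains the near-invariance $A_*\bar\nu_n^{(v)} - \bar\nu_n^{(v)} = (\nu_n^{(v)} - \delta_{\hat v})/n$, whose total variation is bounded by $2/n$ \emph{uniformly} in the starting point $v$. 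Assuming by contradiction that $\Phi_{n_k}(\hat v_{n_k}) \to \lambda < L(A)$ along a sequence of minimizers $\hat v_{n_k}$, weak-$\ast$ compactness of $\mathcal{P}(\Pp)$ lets me pass to a subsequence with $\bar\nu_{n_k}^{(v_{n_k})} \rightharpoonup \nu$. The uniform near-invariance then forces $A_*\nu = \nu$ (the operator $A_*$ is weak-$\ast$ continuous, being an average of the finitely many continuous projective maps induced by $A_1,\ldots,A_k$), and weak-$\ast$ continuity of $\mu \mapsto \int \phi_A \, d\mu$ gives $\int \phi_A \, d\nu = \lambda$.

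To close the loop I will show that \emph{every} $A$-stationary $\nu \in \mathcal{P}(\Pp)$ satisfies $\int \phi_A \, d\nu = L(A)$: iterating the definition by stationarity gives $\int \phi_A \, d\nu = \int \Phi_n \, d\nu$ for every $n \ge 1$, and then Lemma~\ref{qirred def}(ii) combined with dominated convergence forces $\int \Phi_n \, d\nu \to L(A)$, delivering $\lambda = L(A)$ and the desired contradiction. The main obstacle I anticipate is the double passage to the limit in the diagonal-like argument, namely showing that the weak-$\ast$ limit $\nu$ is genuinely stationary when the initial points $\hat v_{n_k}$ vary with $k$\textemdash which is precisely why the near-invariance estimate must be made uniform in $v$\textemdash and then invoking quasi-irreducibility in exactly the right form (via Lemma~\ref{qirred def}(ii), which rules out any stationary measure charging the ``contracting'' directions) to conclude that every $A$-stationary measure yields Lyapunov value $L(A)$ rather than $-L(A)$.
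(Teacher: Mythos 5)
Your proof is correct but takes a genuinely different route from the one in the paper. The paper also argues by contradiction, but it uses compactness of the unit circle $\Su^1$ to extract a convergent subsequence of the minimizing unit vectors $v_n\to v$, and then invokes the singular-value/Oseledets machinery from \cite[Chapter~4]{DK-book}: one shows $\Pp$-a.s.\ that $\liminf_n \norm{\An{n}v_n}/\norm{\An{n}}>0$ because $v$ cannot lie in the slow subspace $\filt^-(x)=\medir{\infty}(A)(x)^\perp$ except on a null set, which is where quasi-irreducibility (in the form of the triviality of the invariant subspace $S$ from the proof of Lemma~\ref{qirred def}) enters; a dominated-convergence step then yields the contradiction. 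You instead use weak-$\ast$ compactness of $\mathcal{P}(\Pp)$ rather than compactness of $\Su^1$: you encode the expectation $\Phi_n(\hat v)$ as $\int\phi_A\,d\bar\nu_n^{(v)}$ via telescoping, observe the uniform-in-$v$ near-invariance $\norm{A_\ast\bar\nu_n^{(v)}-\bar\nu_n^{(v)}}_{TV}\le 2/n$, pass to a weak-$\ast$ limit $\nu$ which is thus genuinely $A$-stationary, and then prove that quasi-irreducibility forces $\int\phi_A\,d\nu=L(A)$ for every $A$-stationary $\nu$ by combining the identity $\int\phi_A\,d\nu=\int\Phi_n\,d\nu$ with the pointwise limit $\Phi_n(\hat v)\to L(A)$ from Lemma~\ref{qirred def}(ii) and dominated convergence. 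The paper's route buys a concrete geometric picture (the limit vector avoids the Oseledets slow direction almost surely) at the cost of relying on the most-expanding-direction construction from the monograph; your route is more self-contained and ergodic-theoretic, and it isolates cleanly the role of quasi-irreducibility as the statement that \emph{all} stationary measures on $\Pp$ integrate $\phi_A$ to $L(A)$, which is essentially the Furstenberg formula under the weakest possible irreducibility hypothesis.
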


\begin{proof} Fix any vector $v\in\Su^1$.
Using the previous lemma, we have that
$$\frac{1}{n} \, \log \norm{\An{n} v} \to L(A) \quad \Pp\text{-almost surely} .$$
Since the functions $\frac{1}{n}\,\log \norm{A^{(n)} v}$ are uniformly bounded by $\log \norm{A}_\infty<\infty$, by the dominated convergence theorem,
$ \frac{1}{n}\,\EE \left[ \,
\log \norm{A^{(n)} \,v}
\,\right]$ converges to $L(A)$.

Assume now that this convergence is not uniform (in $v\in\Su^1$),
in order to get a contradiction. This assumption implies the existence of a sequence of unit vectors $v_n\in \R^2$ and a positive number $\delta>0$ such that for all large $n$,
\begin{equation}\label{contrad irred}
 \frac{1}{n}\,\EE\left[ \,
\log \norm{A^{(n)} \,v_n}
\,\right] \leq L(A)-\delta . 
\end{equation}

We claim now (and postpone the motivation for the end of the proof) that $\Pp$-almost surely,
\begin{equation}\label{liminf bound}
\liminf_{n\to\infty} 
\frac{\norm{ A^{(n)}\,v_n}}{\norm{ A^{(n)}}} >0 .
\end{equation}

Then $\Pp$-almost surely,
$ \frac{1}{n}\, \log 
\frac{\norm{ A^{(n)}\,v_n}}{\norm{ A^{(n)}}} \to 0$.
Therefore, using again the  dominated convergence theorem, we get
\begin{align*}
\lim_{n\to \infty} \frac{1}{n}\,\EE\left[ \,
\log \norm{ A^{(n)} \,v_n}
\,\right] &= \lim_{n\to \infty} \frac{1}{n}\,\EE\left[ \,
\log \norm{ A^{(n)} }
\,\right]\\
&\quad + \frac{1}{n}\,\EE\left[ \,\log 
\frac{\norm{ A^{(n)}\,v_n}}{\norm{ A^{(n)}}}\,\right]\\
& = L(A)+0 =L(A) \, ,
\end{align*}
which contradicts~\eqref{contrad irred}.

To complete the proof we verify the claim~\eqref{liminf bound}. The argument depends upon certain geometrical considerations related to our proof of the Oseledets multiplicative ergodic theorem in \cite[Chapter 4]{DK-book}.\footnote{This claim, in a more general setting, is also proven in~\cite[Proposition 2.8]{Bou}, using ingredients in the proof of the Oseledets multiplicative ergodic theorem given by Ledrappier~\cite{Ledrappier-MET}.}\\
We review below these considerations (see also Section 2.2 in ~\cite{DK-book}). 

Given a matrix $g\in\SL_2(\R)$, if $\norm{g} > 1$ then its singular values are distinct, so its singular directions (most and least expanding) are  well defined. The same holds also for the transpose matrix $g^*$. Then there are two orthonormal singular bases of $\R^2$, $\{ \mostexpuv_+ (g), \mostexpuv_- (g) \}$ and $\{ \mostexpuv_+ (g^*), \mostexpuv_- (g^*) \}$ such that
$$g \, \mostexpuv_+ (g) = \norm{g} \, \mostexpuv (g^*) \quad \text{and} \quad g \, \mostexpuv_- (g) = \norm{g}^{-1} \, \mostexpuv (g^*) \, .$$

If $w \in \R^2$ is any other vector, then using the Pythagorean's theorem, 
$$\frac{\norm{g \, w}}{\norm{g}} \ge \abs{ w \cdot  \mostexpuv_+ (g)} \, .$$

Denote also by $\medp_+ (g) \in \Pp (\R^2)$ the projective point corresponding to the unit vector $\mostexpuv_+ (g)$.
With these notations, given the cocycle $A$, 
we define the sequence of partial functions
 $\mostexp^{(n)}(A)\colon X\to\Pp(\R^2)$, $n\ge1$ by
$$ \mostexp^{(n)}(A)(x):=
\left\{ \begin{array}{lll}
\medp_+ (A^{(n)}(x)) & & \text{ if }  \norm{A^{(n)}(x)}>1 \\
\text{undefined} &  &  \text{otherwise}. 
\end{array} \right. $$

Since $L(A)>0$, by Proposition 4.4 in~\cite{DK-book}, this sequence converges
$\Pp$-almost surely to a (total) measurable function  
$\mostexp^{(\infty)}(A)\colon X\to\Pp(\R^2)$,
$$  \mostexp^{(\infty)}(A)(x):= \lim_{n\to +\infty} \mostexp^{(n)}(A)(x) . $$ 

As it turns out from our proof of the Oseledets theorem (see the beginning of the proof of Theorem 4.4 in~\cite{DK-book}), the Oseledets subspace corresponding to the second Lyapunov exponent $- L(A)$ is the orthogonal complement of the most expanding direction of the cocycle $A$, that is, $\Pp$-almost surely, $\filt^- (x) = \medir{\infty}(A) (x)^\perp$.

Claim~\eqref{liminf bound} now follows immediately. By the compactness of the unit circle we can assume that
the sequence $\{v_n\}$ converges to a unit vector $v\in\R^2$.
Then for $\Pp$-almost every $x$, and for $n$ large enough,
$$ \frac{\norm{A^{(n)} (x) \,v_n}}{\norm{A^{(n)} (x)}} \geq  \abs{ v_n \cdot \mostexpuv_+ \left( \An{n} (x) \right) }
\to  \abs{ v \cdot \medir{\infty}(A) (x)} .$$

But if $v\in\Su^1$, then 
$ \displaystyle v \, \cdot \, \medir{\infty}(A) (x) >0$, since otherwise we would have $v \in \medir{\infty}(A) (x)^\perp=\filt^-(x)$, which happens with probability zero. 
\end{proof}


\begin{definition}
\label{def N(A)}
Let $A$ be a  quasi-irreducible cocycle  with $L(A)>0$.
We define $N (A)$ to be the least $n \in \N$ such that for all    $v \in \Su^1$ one has
$$\EE  \left[ \frac{1}{n} \, \log \norm{\An{n} v} \right]  > \frac{1}{2} \, L (A) \, .$$
\end{definition}

Next we show that if $A$ is instead a diagonalizable cocycle, then Lemma~\ref{qirred unif conv} does not hold, and as $B\to A$, where $B$ is quasi-irreducible, the measurement $N (B) \to \infty$.

\begin{proposition}\label{N goes to infinity}
Let $A$ be a diagonalizable cocycle with $L(A)>0$. 
\begin{enumerate}
\item There is $v_A\in\Su^1$ such that for all $n\ge1$,
\begin{equation*}
 \EE  \left( \frac{1}{n} \, \log \norm{\An{n} v_A} \right)  = - L(A)  \, .
\end{equation*}
\item There are a constant $c = c (A)>0$ and a neighborhood $\Vscr_A$ of $A$ such that if $B \in \Vscr_A$ is a quasi-irreducible cocycle with $L(B)>0$, then
\begin{equation*}\label{N to infty}
N (B) \ge c \, \log \left( d (B, \Diag)^{-1} \right) \, .
\end{equation*}

In particular,
$N (B) \ge c \, \log \left( d (B, A)^{-1} \right)  \to 0$ as the neighborhood $\Vscr_A$ shrinks. 
\end{enumerate}
\end{proposition}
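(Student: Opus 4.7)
\emph{Part (1).} Since $A \in \Diagast$, by~\eqref{diag formula} we may write $A(x) = p_A \, D_A(x) \, p_A^{-1}$ with $D_A(x) = \diag(\theta_A(x), \theta_A(x)^{-1})$; using~\eqref{L diag}, after renaming the two diagonal entries if necessary, $\EE \log |\theta_A^{-1}| = -L(A)$. Let $v_A \in \Su^1$ be a unit vector in the invariant line on which the cocycle acts by multiplication by $\theta_A(x)^{-1}$. Then $\log \norm{\An{n}(x) v_A}$ is the Birkhoff sum of $\log |\theta_A^{-1}|$, whose mean is $-L(A)$, giving $\EE \bigl[\tfrac{1}{n}\log\norm{\An{n} v_A}\bigr] = -L(A)$ for every $n \geq 1$.

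\emph{Part (2), setup.} Fix a neighborhood $\Vscr_A$ of $A$ small enough that operator norms of all cocycles within distance $3\,d(\cdot,A)$ of $\Vscr_A$ are uniformly bounded by some $M = M(A)$ and, by Remark~\ref{continuity LE in Diag}, every $D \in \Diag$ in that enlargement satisfies $L(D) \geq L(A)/2$. Let $B \in \Vscr_A$ be quasi-irreducible with $L(B) > 0$, and pick $D \in \Diag$ with $d(B, D) \leq 2\, d(B, \Diag)$; then $d(D, A) \leq 3\, d(B, A)$ is still small. By Part (1) applied to $D$ there is $v_D \in \Su^1$ with $\EE \log \norm{D^{(n)} v_D} = -n\, L(D)$ for every $n$. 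To conclude $N(B) \geq c(A) \log d(B, \Diag)^{-1}$, it suffices by Definition~\ref{def N(A)} to show that $\EE \bigl[\tfrac{1}{n}\log\norm{\Bn{n} v_D}\bigr] < \tfrac{1}{2} L(B)$ for all such $n$.

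\emph{Key estimate.} The argument combines three ingredients. First, the telescoping bound $\norm{\Bn{n} - D^{(n)}} \leq n\, M^{n-1}\, d(B, D)$. Second, the elementary inequality $\log(a+b) \leq \log a + b/a$ for $a > 0$, $b \geq 0$, which applied to $a = \norm{D^{(n)} v_D}$ and $b = n M^{n-1} d(B, D)$ gives
\begin{equation*}
\log\norm{\Bn{n} v_D} \;\leq\; \log\norm{D^{(n)} v_D} \;+\; \frac{n\, M^{n-1}\, d(B, D)}{\norm{D^{(n)} v_D}} \, .
\end{equation*}
Third, since $v_D$ is a common eigenvector of each $D(x)$, $\norm{D^{(n)} v_D}^{-1}$ is a product of i.i.d.\ factors and therefore $\EE \norm{D^{(n)} v_D}^{-1} = (\EE |\mu_D|^{-1})^n \leq M^n$, where $\mu_D$ denotes the eigenvalue of $D$ along $v_D$ (so $|\mu_D|^{-1}$ is the other eigenvalue, bounded by $M$). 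Taking expectations,
\begin{equation*}
\tfrac{1}{n}\,\EE\log\norm{\Bn{n} v_D} \;\leq\; -L(D) + M^{2n-1}\, d(B, D) \;\leq\; -\tfrac{L(A)}{2} + M^{2n-1}\, d(B, D) \, .
\end{equation*}
Choosing $c(A) := 1/(4 \log M)$ and possibly shrinking $\Vscr_A$ further, one checks that for $n \leq c(A) \log d(B, \Diag)^{-1}$ the correction term $M^{2n-1} d(B, D)$ is at most $L(A)/4$, so the right hand side is strictly negative and in particular $< \tfrac{1}{2} L(B)$. The final assertion follows from $d(B, \Diag) \leq d(B, A) \to 0$ as $B \to A$.

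\emph{Main obstacle.} The subtle point is the logarithmic control of the perturbation: a naive pathwise estimate of $|\log\norm{\Bn{n} v_D} - \log\norm{D^{(n)} v_D}|$ fails, because $\norm{D^{(n)} v_D}$ can be as small as $e^{-n L(D)}$, which amplifies the relative perturbation $n M^{n-1} d(B, D)/\norm{D^{(n)} v_D}$ by a factor $e^{n L(D)}$. The combination of the inequality $\log(a+b) \leq \log a + b/a$ with the i.i.d.\ product structure of $\norm{D^{(n)} v_D}^{-1}$ (which comes from $v_D$ being a genuine eigenvector, not merely an Oseledets direction) trades that inconvenient pointwise ratio for a clean exponential expectation bound, and this is what ultimately yields the logarithmic dependence of $N(B)$ on $d(B, \Diag)^{-1}$.
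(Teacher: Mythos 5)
Your proof is correct and follows the same strategy as the paper's: apply Part~(1) to a nearby $D\in\Diag$, show that $\EE\bigl[\tfrac{1}{n}\log\norm{\Bn{n}v_D}\bigr]$ stays close to $-L(D)$, and conclude that it remains below $\tfrac{1}{2}L(B)$ for all $n$ up to a multiple of $\log d(B,\Diag)^{-1}$. One remark: your ``main obstacle'' paragraph is a misdiagnosis. The pathwise estimate does not fail---the paper applies the mean value theorem pointwise in $x$, together with the deterministic lower bound $\norm{\Bn{n}(x)v},\norm{D^{(n)}(x)v}\geq L^{-n}$ (valid for every $x$, every unit $v$ and any $\SL_2$-valued cocycle with norms bounded by $L$), which yields
\begin{equation*}
\left|\tfrac{1}{n}\log\norm{\Bn{n}(x)v} - \tfrac{1}{n}\log\norm{D^{(n)}(x)v}\right| \leq L^{2n}\,d(B,D)
\end{equation*}
uniformly in $x$, the same order of bound as your $M^{2n-1}\,d(B,D)$. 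Your observation that $\EE\norm{D^{(n)}v_D}^{-1}=(\EE\sabs{\mu_D}^{-1})^n$ is a pleasant feature of a genuine eigenvector, but it buys nothing here since you still bound $\EE\sabs{\mu_D}^{-1}\leq M$, matching the pointwise supremum. The inequality $\log(a+b)\leq\log a+b/a$ is valid and slightly more economical because it gives a one-sided comparison directly, but the paper's mean-value-theorem route is what is actually used and works equally well.
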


\begin{proof} Following the same approach used in the proof of Theorem~\ref{ldt diag}, there are unit vectors $u^+, u^-$ and there is a locally constant function $\theta \colon X \to \R\setminus \{0\}$ such that for all $x\in X$,
$$A (x) \, u^+ = \theta (x) \, u^+ \quad \text{and} \quad  A (x) \, u^- = \theta (x)^{-1} \, u^- \, .$$ 

Then for all $n\ge1$,
\begin{align*}
\frac{1}{n} \, \log \norm{\An{n} (x) \, u^+} & = \frac{1}{n} \, \sum_{j=0}^{n-1} \log \, \abs{\theta (T^j x) } \ \text{and}\\
\frac{1}{n} \, \log \norm{\An{n} (x) \, u^-} & = - \frac{1}{n} \, \sum_{j=0}^{n-1} \log \, \abs{\theta (T^j x) } .
\end{align*}

By the Oseledets theorem and the law of large numbers, as $n\to\infty$, one of the two quantities above converges to $L(A)= \abs{ \EE \left( \log \sabs{\theta} \right)}$ and the other to $- L(A)$. With no loss of generality, we may assume that it is the former quantity that converges to $- L(A)$, and so put $v_A := u^+$. 

By the invariance of the measure, for all $n\ge1$,
$$\EE \left( \frac{1}{n} \, \sum_{j=0}^{n-1} \log \, \abs{\theta \circ T^j  } \right) = \EE \left( \log \sabs{\theta} \right) \, .$$

We conclude that for all $n\ge 1$ we have
$$\EE \left( \frac{1}{n} \, \log \norm{\An{n}  \, v_A} \right) =  \EE \left( \log \sabs{\theta} \right) = - L(A) \,,$$
which completes the proof of the first item.

To prove the second item, we fix a neighborhood $\Vscr_A$ of $A$ such that, by Remark~\ref{continuity LE in Diag}, we have that $L(D) > \frac{1}{2} L(A)$ for all diagonalizable cocycles $D\in\Vscr_A$. 
Let $1<L<\infty$ be such that for all $B\in\Vscr_A$,
$$\max_{x\in X} \, \max \left\{ \norm{B(x)}, \norm{B(x)^{-1}} \right\} < L$$
and put $K := \log L$.

We decrease, if necessary, the neighborhood $\Vscr_A$, to ensure that, if $\delta$ denotes its diameter, then 
$\delta <  \left(L(A)/2\right)^2$.

Fix a quasi-irreducible cocycle $B\in\Vscr_A$, with $L(B)>0$. Let $D$ be {\em any} diagonalizable cocycle in $\Vscr_A$.

Then $\Pp$-almost surely and for every unit vector $v$ and for all $n\ge 1$, by the mean value theorem we have
\begin{align*}
\left| \frac{1}{n} \, \log \norm{\Bn{n} (x) \, v} -  \frac{1}{n} \, \log \norm{D^{(n)} (x) \, v}    \right| & \le
\frac{1}{n} \, \frac{ \left| \norm{\Bn{n} (x) \, v }  -  \norm{D^{(n)} (x) \, v} \right| }{ \min \{ \norm{\Bn{n} (x) \, v}, \norm{D^{(n)} (x) \, v}   \} }  \, . 
\end{align*}

Moreover,
\begin{align*}
 \norm{\Bn{n} (x) \, v -  D^{(n)} (x) \, v }  & \\
&   \kern-8em  \le \sum_{j=0}^{n-1} \, \norm{B (T^{n-1} x)} \ldots  \norm{ B (T^j x) - D (T^j x)}  \ldots \norm{D (x)} \\ 
&   \kern-8em < n \,  L^n \, d (B, D) \, ,
\end{align*}
while 
\begin{align*}
\norm{\Bn{n} (x) \, v } & \ge \norm{\Bn{n} (x)^{-1}}^{-1} \ge L^{-n} \quad  \text{and} \\
 \norm{D^{(n)} (x) \, v} & \ge \norm{\Dn{n} (x)^{-1}}^{-1} \ge L^{-n} \, .
\end{align*}

Therefore,
$$\left| \frac{1}{n} \log \norm{\Bn{n} (x) \, v} -  \frac{1}{n} \log \norm{D^{(n)} (x) \, v}  \right| \le L^{2 n} \, d (B, D) = e^{2 K n} \, d (B, D) \, .$$

Choosing $v=v_D$, taking expectations and using item 1, we have that for all $n\ge 1$,
\begin{align*}
\EE \left( \frac{1}{n} \, \norm{\Bn{n}  \, v_D } \right) & \le \EE \left( \frac{1}{n} \, \norm{\Dn{n} \, v_D } \right)  + e^{2 K n} \, d (B, D) \\
& = - L(D)  + e^{2 K n} \, d (B, D) \le 0 \, ,
\end{align*}
provided 
$$e^{2 K n} < d (B, D)^{-1/2} < \frac{L(A)}{2} \, d (B, D)^{-1} \, .$$

Therefore, we conclude that for all integers $n \le \frac{1}{4 K} \, \log d (B, D)^{-1}$,
$$\EE \left( \frac{1}{n} \, \norm{\Bn{n}\, v_D } \right) \le 0 < \frac{L(B)}{2} \, .$$

This shows that $N(B) >  \frac{1}{4 K} \, \log d (B, D)^{-1}$. But since $D$ was an arbitrary diagonalizable cocycle in a neighborhood of $A$, we conclude:
$$N (B) \ge \frac{1}{4 K} \, \log \left( d (B, \Diag)^{-1} \right) \, ,$$
which completes the proof.
\end{proof}

\bigskip

Given $A=(A_1,\ldots, A_k) \in \Diagast$, we denote by $\hat e_\pm (A)\in\Pp$ the points corresponding to the invariant lines where the Lyapunov exponent of $A$ is positive, respectively negative.
Moreover, $e_\pm (A)$ will stand for unit vectors
representing these projective points $\hat e_\pm(A)$.
Because $A\in\Diag$ there is a family of non-zero numbers $\{a_j\colon j=1,\ldots, k\}$  such that 
$A_j \, e_+(A)= a_j\, e_+(A)$ and 
$A_j \, e_-(A)= a_j^{-1}\, e_-(A)$, for all $j=1,\ldots, k$.

Then we define
$$\Sigma_H(A):=\left\{ i\in \{1,\ldots, k\}\,\colon \, \sabs{a_i} \geq e^{L(A)} \, \right\} .$$
This set is non-empty because
$L(A)=\sum_{j=1}^k p_j\,\log \sabs{a_j}>0$.

Likewise, given a hyperbolic matrix $g\in\SL_2(\R)$,
we denote by $\hat e_\pm(g)\in\Pp$ the eigen-directions
of $g$  associated to eigenvalues of $g$ with absolute value larger than $1$, respectively less than $1$.

Given $A\in \Diagast$, we define the projective cones\footnote{
With this definition,
$\overline{\Dscr_\pm(\delta)}=\Dscr_\mp(\delta^{-1})\comp$,\, for all $\delta>0$.}
\begin{align}
\label{Dscr def}
\Dscr_\pm (\delta) =\Dscr_\pm(A, \delta) :=\{ \hat x\in\Pp\colon
d(\hat x, \hat e_\pm(A)) <\delta\, d(\hat x, \hat e_\mp(A)) \,\} 
\end{align}
where the radius $\delta=\delta(A)>0$ is chosen sufficiently small so that
\begin{align}
\label{Dscr radius}
 \max_{1\leq i \leq k} \sup_{\hat v, \hat v'\in \Dscr_\pm(2\delta) } \abs{\log \norm{A_i\,v} - \log \norm{A_i\,v'}}
<\frac{1}{20}\, L(A) .
\end{align}

Take $L<\infty$ such that $\max\{ \norm{A}_\infty,\norm{A^{-1}}_\infty\}<L$. Next we fix a neighborhood $\Vscr_A:=\{B\in\cocycles \colon \norm{A-B}_\infty<\varepsilon \}$ of $A$  
in the space of cocycles $\cocycles=\SL_2(\R)^k$  such that for all $B\in\Vscr_A$:
\begin{enumerate} 
\item[N0:]  
\begin{enumerate}
	\item[(a)]  $\displaystyle  d(A,B)<  \frac{\delta}{L}\, e^{-\frac{L(A)}{4}}\left( e^{\frac{L(A)}{12}}-1 \right) $,
	\item[(b)]  $\displaystyle  d(A,B)< \frac{L^{-1}\, \sqrt{ e^{\frac{L(A)}{12}}-1}}{1+ \sqrt{e^{\frac{L(A)}{12}}-1}} $,
	\item[(c)]  $\displaystyle  d(A,B)< \delta\,L^{-1}\, {e^{-\frac{L(A)}{8}}} \,\left( 1-e^{-\frac{L(A)}{24}} \right) . $ 
\end{enumerate}

\item[N1:]  For all  $i\in \Sigma_H(A)$, the matrix $B_i$ is hyperbolic. This holds because
$B_i\approx A_i$ for all $i$ and $A_i$ is hyperbolic for $i\in \Sigma_H(A)$.
In particular the directions
$\hat e_\pm(B_i)$ are well-defined for all 
$B\in\Vscr_A$ and $i\in\Sigma_H(A)$.
\item[N2:]  $\frac{4}{3}\, L(A) > L(B)$ for all $B\in\Vscr_A$. This holds because the LE is an upper-semicontinuous function.
\item[N3:] $\norm{B}_\infty <  L$ and
$\norm{B^{-1}}_\infty <  L$.
\item[N4:] For all  $i\in \Sigma_H(A)$,  $d( \hat e_+(B_i), \hat e_-(B_i)) > \frac{1}{2}\, d( \hat e_+(A), \hat e_-(A))=:c$.
\item[N5:] 
$\displaystyle \max_{1\leq i \leq k} \sup_{\hat v, \hat v'\in \Dscr_\pm(A,2\delta)  } \abs{\log \norm{A_i\,v} - \log \norm{B_i\,v'}}
<\frac{1}{12}\, L(A) .$  This holds provided $\norm{A-B}_\infty$ is smaller than a certain fraction of $L(A)$,  because of~\eqref{Dscr radius} and the following inequality
$$ \abs{\log \norm{A_i\,v'} -\log\norm{B_i\,v'} }\leq L\, \norm{A_i-B_i}\,\norm{v'} , $$
which follows from  N3.
\end{enumerate}

Other constraints on $\Vscr_A$ will be imposed below. The statements of this section and the next refer to the neighborhood $\Vscr_A$.

Given $g\in\SL_2(\R)$ we 
denote by $\hat g\colon \Pp\to\Pp$ the projective
action of this matrix,  $\hat g \hat x := \widehat{g x}$.

Then for all $B\in\Vscr_A$ we define
\begin{align*}
 \rho_+(B) &:= \max_{i\in \Sigma_H(A)}
\max_{1\leq j\leq k} d( \hat B_j \hat e_+(B_i), \hat e_+(B_i)) \\
 \rho_-(B) &:= \max_{i\in \Sigma_H(A)}
\max_{1\leq j\leq k} d( \hat B_j \hat e_-(B_i), \hat e_-(B_i)) \\
\rho(B) &:=\max\{ \rho_-(B), \rho_+(B)\}.
\end{align*}

\begin{lemma} For all $B\in \Vscr_A$,  $\rho(B)=0$  if and only if  $B$ is diagonali\-zable. 
Moreover,
$\rho_+(B^{-1})\asymp \rho_-(B)$ and $\rho_-(B^{-1})\asymp \rho_+(B)$.
\end{lemma}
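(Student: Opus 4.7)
My plan is to handle the two assertions separately; both are reasonably short and depend only on properties recorded in the constraints N1--N5 together with basic projective geometry.

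For the equivalence $\rho(B)=0 \iff B \in \Diag$, the key point is that by N1, each $B_i$ with $i \in \Sigma_H(A)$ is hyperbolic, so its eigen-directions $\hat e_\pm(B_i)$ are well defined and, by N4, distinct. For the ``$\Leftarrow$'' direction: if $B$ is diagonalizable, its two common invariant lines must coincide with $\{\hat e_+(B_i), \hat e_-(B_i)\}$, since a hyperbolic matrix admits no other invariant projective points; hence every $B_j$ fixes $\hat e_\pm(B_i)$, giving $\rho_\pm(B)=0$. Conversely, if $\rho(B)=0$, then picking any $i \in \Sigma_H(A)$ (which is non-empty by construction), every $B_j$ fixes both distinct points $\hat e_\pm(B_i)$, so these are two common invariant lines for the $k$-tuple $B$, proving $B \in \Diag$.

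For the asymptotic comparison $\rho_+(B^{-1}) \asymp \rho_-(B)$, I would first set up two algebraic identities. First, for any hyperbolic $g \in \SL_2(\R)$ one has $\hat e_\pm(g^{-1}) = \hat e_\mp(g)$, since inversion swaps the eigenvalues of moduli $>1$ and $<1$ without altering the eigen-directions. Second, $\Sigma_H(A^{-1}) = \Sigma_H(A)$: writing $A_j e_\pm(A) = a_j^{\pm 1} e_\pm(A)$, the eigenvalue of $A_j^{-1}$ in the ``+'' direction of $A^{-1}$ (namely $e_-(A)$) equals $a_j$ itself, and $L(A^{-1})=L(A)$, so the defining condition is unchanged. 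Interpreting $\rho_+(B^{-1})$ with reference cocycle $A^{-1}$ (note $B^{-1} \in \Vscr_{A^{-1}}$ by continuity of inversion, possibly after shrinking $\Vscr_A$), these identities give
\[
\rho_+(B^{-1}) = \max_{i \in \Sigma_H(A)} \max_{1 \le j \le k} d\bigl(\hat B_j^{-1} \hat e_-(B_i),\, \hat e_-(B_i)\bigr),
\]
which differs from $\rho_-(B)$ only by having $\hat B_j^{-1}$ in place of $\hat B_j$. The proof then reduces to the projective bi-Lipschitz estimate
\[
d(\hat g^{-1} \hat p,\, \hat p) \asymp d(\hat g \hat p,\, \hat p)
\]
for $g \in \SL_2(\R)$ with $\|g\|,\|g^{-1}\| \le L$, uniformly in $\hat p \in \Pp$. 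This follows from the standard fact that $\hat g$ is $L^2$-Lipschitz, since $d(\hat g v, \hat g w) = \|v \wedge w\|/(\|g v\|\|g w\|) \le L^2\, d(\hat v, \hat w)$. Applying this with $\hat w = \hat g^{-1} \hat p$ yields $d(\hat g \hat p, \hat p) \le L^2\, d(\hat g^{-1} \hat p, \hat p)$, and the reverse inequality is symmetric. Invoking N3 to bound $\|B_j\|,\|B_j^{-1}\| \le L$ uniformly gives $\rho_+(B^{-1}) \asymp \rho_-(B)$; the identity $\rho_-(B^{-1}) \asymp \rho_+(B)$ is identical.

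I do not anticipate a real obstacle: the argument is mostly bookkeeping. The only delicate point is to parse the definition of $\rho_\pm(B^{-1})$ correctly, i.e.\ to verify that $A^{-1}$ serves as a valid reference diagonalizable cocycle for $B^{-1}$ and that the distinguished index set is preserved under inversion, $\Sigma_H(A^{-1}) = \Sigma_H(A)$, so that the whole comparison collapses to a projective Lipschitz estimate combined with the swap of eigen-directions.
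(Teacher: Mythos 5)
Your proof is correct and follows essentially the same route as the paper: use hyperbolicity of the $B_i$ ($i\in\Sigma_H(A)$) to identify the common invariant lines with $\hat e_\pm(B_i)$, and use the bi-Lipschitz estimate for the projective action (Remark~\ref{d(g x, g y)=L^2 d(x,y)}) to compare $d(\hat B_j^{-1}\hat x,\hat x)$ with $d(\hat B_j\hat x,\hat x)$ at $\hat x=\hat e_\pm(B_i)$. You make the bookkeeping facts $\hat e_\pm(g^{-1})=\hat e_\mp(g)$ and $\Sigma_H(A^{-1})=\Sigma_H(A)$ explicit (which the paper leaves implicit), and your constant $L^2$ is slightly sharper than the paper's $L^4$, though of course this has no bearing on the $\asymp$ conclusion.
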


\begin{proof}
If $\rho(B)=0$ then for some $i_0\in \Sigma_H(A)$
and all $j=1,\ldots, k$, 
$\hat B_j \hat e_+(B_{i_0}) = \hat e_+(B_{i_0})$ and
$\hat B_j \hat e_-(B_{i_0}) = \hat e_-(B_{i_0})$, which implies that $B$ is diagonalizable. Conversely, if the matrices $B_j$ are simultaneously diagonalizable there exists a pair of distinct projective points $\hat e_1$ and $\hat e_2$ which are fixed under
the action of all the matrices $B_j$. Taking $i_0\in \Sigma_H(A)$,
since the matrix $B_{i_0}$ is hyperbolic these two points must coincide with $\hat e_+(B_{i_0})$ and $\hat e_-(B_{i_0})$.
Hence $\rho(B)=\rho_-(B)=\rho_+(B)=0$.

By N3, 
 $\norm{B_j}^2\norm{B_j^{-1}}^2 \leq  L^4$ for all $B\in\Vscr_A$, 
 and hence by Remark~\ref{d(g x, g y)=L^2 d(x,y)} (see also Lemma 2.11 in~\cite{DK-book}) for every projective point $\hat x$
  $$ \frac{1}{L^4}\leq  \frac{d(  \hat B_j \hat x, \hat x)}{d(\hat x, \hat B_j^{-1} \hat x)}\leq  L^4 . $$
  Applying these inequalities to the points 
  $\hat x= \hat e_{\pm}(B_i)$ with $i\in \Sigma_H(A)$ we get
  $\rho_+(B^{-1})\leq L^4\, \rho_-(B)$, \,
  $\rho_-(B)\leq L^4\, \rho_+(B^{-1})$, \,
  $\rho_+(B)\leq L^4\, \rho_-(B^{-1})$ and
  $\rho_-(B^{-1})\leq L^4\, \rho_+(B)$, which completes the proof.
\end{proof}

\smallskip

\begin{proposition}\label{diag rho} 
\label{prop d(B, Diag) < rho(B)}
Given $A\in\Diag$ with $L(A)>0$,
there exists a constant $C=C(A)<\infty$ such that
for all $B \in\Vscr_A$,
$$ d(B,\Diag) \leq C\, \rho(B) \quad \text{ and }\quad
\rho(B)\leq C\,d(B,A) .$$
In particular,   for every $B\in\Vscr_A$, 
$$\text{either} \quad d(B,\Diag) \le C\, \rho_-(B) \quad \text{or} \quad d(B,\Diag) \le C\, \rho_-(B^{-1}) \,.$$
\end{proposition}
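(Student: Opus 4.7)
The plan is to prove the two stated inequalities separately and then deduce the ``in particular'' dichotomy as an easy corollary.

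For the easier direction $\rho(B) \leq C\, d(B,A)$: for every $i \in \Sigma_H(A)$ the matrix $A_i$ is hyperbolic with two simple real eigenvalues, so standard perturbation theory for simple eigenvalues (the same input used in the proof of Theorem~\ref{ldt diag}) gives that the eigen-directions $\hat e_\pm(B_i)$ depend Lipschitz continuously on $B_i$ throughout $\Vscr_A$. Since the projective action $(g,\hat x)\mapsto \hat g\,\hat x$ is itself locally Lipschitz, the composite map $B\mapsto d(\hat B_j\,\hat e_\pm(B_i),\,\hat e_\pm(B_i))$ is Lipschitz on $\Vscr_A$. As $A\in\Diag$, the points $\hat e_\pm(A_i)$ are fixed by every $\hat A_j$, so this distance vanishes at $B=A$. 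Taking the maximum over $i\in\Sigma_H(A)$ and $j=1,\ldots,k$ then yields $\rho(B)\leq C\,d(B,A)$.

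For the harder direction $d(B,\Diag)\leq C\,\rho(B)$: fix any $i_0\in\Sigma_H(A)$ and set $\ell_\pm := \hat e_\pm(B_{i_0})$. By condition N4 the two projective points $\ell_\pm$ are at distance at least $c=c(A)>0$, so the (ordered) basis $\{e_+(B_{i_0}),e_-(B_{i_0})\}$ of $\R^2$ has condition number uniformly bounded in $B\in\Vscr_A$. In coordinates relative to this basis, write each matrix as
$$[B_j] = \begin{pmatrix} \alpha_j & \beta_j \\ \gamma_j & \delta_j \end{pmatrix}.$$
Unwinding the definitions of $\rho_\pm$, the projective displacements translate into entry-ratio bounds of the form $|\gamma_j|/|\alpha_j|\lesssim \rho_+(B)$ and $|\beta_j|/|\delta_j|\lesssim \rho_-(B)$, while N3 gives uniform upper bounds on $|\alpha_j|,|\delta_j|$; hence $\max(|\beta_j|,|\gamma_j|)\lesssim \rho(B)$. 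Define $D_j$ to be the matrix obtained from $[B_j]$ by setting its off-diagonal entries to zero and then rescaling so that $\det D_j=\pm 1$ (a correction of order $\rho(B)^2$ since $\alpha_j\delta_j-\beta_j\gamma_j=\pm1$). Then $D=(D_1,\ldots,D_k)$ lies in $\Diag$, because every $D_j$ preserves both $\ell_+$ and $\ell_-$, and the estimate $\norm{B_j-D_j}\lesssim \rho(B)$ (using the bounded condition number of the basis to convert from entry-wise to operator norm) yields $d(B,\Diag)\leq d(B,D)\leq C\,\rho(B)$.

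The ``in particular'' statement follows at once: writing $\rho(B)=\max\{\rho_+(B),\rho_-(B)\}$ and using the preceding lemma's inequality $\rho_+(B)\leq L^4\,\rho_-(B^{-1})$, we obtain $\rho(B)\leq \max\{\rho_-(B),\,L^4\rho_-(B^{-1})\}$; the two cases give $d(B,\Diag)\leq C\rho_-(B)$ and $d(B,\Diag)\leq CL^4\rho_-(B^{-1})$ respectively, and absorbing $L^4$ into $C$ produces the claimed dichotomy.

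The main obstacle will be the precise bookkeeping in the second paragraph: relating the projective distance $d(\hat B_j\,\ell_\pm,\ell_\pm)$ to ratios of matrix entries in a non-orthonormal basis, and checking that the unit-determinant renormalization of the constructed $D_j$ is of lower order than $\rho(B)$. The hypothesis N4 is essential at this step, since a collapsing angle between $\ell_+$ and $\ell_-$ would blow up the conversion between entry-wise and operator-norm bounds and destroy the estimate.
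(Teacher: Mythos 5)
Your proposal is correct and follows the same overall strategy as the paper: the easy inequality $\rho(B)\lesssim d(B,A)$ comes from Lipschitz dependence of the eigendirections $\hat e_\pm(B_i)$ on $B_i$ for $i\in\Sigma_H(A)$, and the hard inequality $d(B,\Diag)\lesssim\rho(B)$ is proved by fixing the eigenbasis of $B_{i_0}$ for some $i_0\in\Sigma_H(A)$ and approximating each $B_j$ by a matrix that is diagonal in that basis. The dichotomy at the end is then the same one-line consequence of $\rho_+(B)\asymp\rho_-(B^{-1})$.

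Where you differ from the paper is purely in the implementation of the hard step. The paper isolates it into a standalone statement, Proposition~\ref{proximity prop}, which for a single matrix $g$ and two well-separated projective points $\hat p^\pm$ produces $g^\ast\in\SL_2(\R)$ fixing both, via the explicit construction $g'=m'm^{-1}g$ (here $m$ has columns $g p^\pm/\norm{g p^\pm}$ and $m'$ has columns $p^\pm$), and then needs a separate smooth-deformation argument (Lemma~\ref{g*g' lemma}) to restore unit determinant without losing the Lipschitz bound. Your entry-wise version in the basis $\{e_+(B_{i_0}),e_-(B_{i_0})\}$ reaches the same place more directly: the off-diagonal entries are $\bigo(\rho(B))$ by the definition of $\rho_\pm$ together with N3 (upper bound) and N4 (angle bound, hence bounded condition number of the basis), the diagonal truncation is automatically simultaneously diagonalizable, and the determinant defect is visibly $\alpha_j\delta_j-1=\beta_j\gamma_j=\bigo(\rho(B)^2)$, so the $\SL_2$ rescaling is harmless without any deformation lemma. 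One small point to note when you write this up carefully: the choice of $i_0$ does not need to be the maximizing index (nor does the paper actually use that it is) — any $i_0\in\Sigma_H(A)$ works, since $\rho_\pm(B)$ is defined as a maximum over $i\in\Sigma_H(A)$, so the relevant displacements at $i_0$ are automatically bounded by $\rho(B)$.
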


\begin{proof}
Let $L$ and $c$ be the constants definded in assumptions N3-N4 and take the constant  $C=C(L,c)$ provided by Proposition~\ref{proximity prop} below. For all $B\in\Vscr_A$ and $i\in \Sigma_H(A)$,
$d( \hat e_+(B_i), \hat e_-(B_i))> c$ and  $\norm{B}_\infty< L$. Choose the sign $\ep= \pm$ so that $\rho(B)=\rho_\ep(B)$,
and take the maximizing index  $i_0\in\Sigma_H(A)$ such that for some
$j=1,\ldots, k$,
$$ \rho (B) =  d( \hat B_{j} \hat e_\ep (B_{i_0}), \hat e_\ep(B_{i_0}))  .$$
Let $\hat p^+ := \hat e_+( B_{i_0})$ and  $\hat p^- := \hat e_-( B_{i_0})$. By Proposition~\ref{proximity prop}
for every $i=1,\ldots, k$ there exists $B_i^\ast\in \SL_2(\R)$ such that
$\hat B_i^\ast \hat p^+ = \hat p^+$, $\hat B_i^\ast \hat p^- = \hat p^-$   and
$\norm{B_i^\ast-B_i}\leq C\, \rho(B)$.
Hence $B^\ast=(B_1^\ast, \ldots, B_k^\ast)\in \cocycles=\SL_2(\R)^k$ is a diagonalizable cocycle such that
$$ d(B,\Diag) \leq \norm{B-B^\ast}_\infty \leq C\, \rho(B). $$
Finally we need to prove that $\rho(B)\lesssim d(A,B)$.
For $i\in \Sigma_H$, since $A_i$ is hyperbolic its eigenvalues are simple and  the function $B_i\mapsto \hat e_\pm(B_i)$ is analytic on a neighborhood of $A_i$. Hence there exists $C=C(A)<\infty$ such that 
$d(\hat e_\pm(B_i), \hat e_\pm(A_i))\leq C\, d(A,B)$, for all $i\in\Sigma_H$ and $B\in\Vscr_A$.
Using that the projective action of any matrix $A_j$ has Lipschitz constant $\mathrm{Lip}(\hat A_j)\leq \norm{A_j}^2$, we obtain for all 
$i\in\Sigma_H$ and $1\leq j\leq k$,
\begin{align*}
d(\hat B_j \hat e_\pm(B_i), \hat e_\pm(B_i))  \leq \, & \
d(\hat B_j \hat e_\pm(B_i), \hat A_j \hat e_\pm(B_i)) 
+
d(\hat A_j \hat e_\pm(B_i),  \hat e_\pm(A_i)) \\
& +  d( \hat e_\pm(A_i),  \hat e_\pm(B_i)) \\
\leq \, &
\norm{ B_j   e_\pm(B_i) -  A_j   e_\pm(B_i)} 
+
d(\hat A_j \hat e_\pm(B_i),  \hat A_j \hat e_\pm(A_i)) \\
& +  d( \hat e_\pm(A_i),  \hat e_\pm(B_i)) \\
\leq \, &
\norm{ B_j - A_j  } 
+ (\norm{A_j}^2+1)\,  d( \hat e_\pm(A_i),  \hat e_\pm(B_i)) \\
\leq \, & (1+(L^2+1)\, C)\,d(A,B) .
\end{align*}
Taking the maximum in $i\in\Sigma_H$ and $1\leq j\leq k$, this inequality  implies that
$\rho(B)\lesssim d(A,B) $.
\end{proof}

\begin{proposition}
\label{proximity prop}
Given $c>0$ and $1\leq L<\infty$ there exists $C=C(L,c)<\infty$  with the following property:

For any $g\in\SL_2(\R)$ and any
two points $\hat p^\pm \in\Pp$ such that $\norm{g}\leq L$ and  $d(\hat p^+, \hat p^-)\geq c$ there exists  $g^\ast \in\SL_2(\R)$ such that
$\hat g^\ast\,\hat p^+ =\hat p^+ $,
$\hat g^\ast\,\hat p^- =\hat p^- $   and
$$ \norm{g^\ast -g}\leq C\,\max\{d(\hat g\, \hat p^+, \hat p^+), 
d(\hat g \, \hat p^-, \hat p^-) \} .$$
\end{proposition}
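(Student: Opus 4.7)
The plan is to pass to a basis adapted to $\hat p^\pm$, so that the problem becomes approximating a matrix whose off-diagonal entries are small by a diagonal one. Pick unit vectors $e_\pm$ spanning $\hat p^\pm$ and form $P := [\,e_+\,|\,e_-\,]\in \GL_2(\R)$. Since $|\det P|=|\sin\angle(e_+,e_-)|=d(\hat p^+,\hat p^-)\geq c$ and $\|P\|\leq \sqrt{2}$, we have $\|P^{-1}\|\leq \sqrt{2}/c$; hence $h := P^{-1} g P$ satisfies $\|h\|,\,\|h^{-1}\|\leq M=M(L,c)$. Write
$$
h = \begin{pmatrix} a & b \\ \gamma & d \end{pmatrix}, \qquad ad-b\gamma = \det g = \pm 1.
$$

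The first step is to bound the off-diagonal entries $b,\gamma$ in terms of the projective displacements $\varepsilon_\pm := d(\hat g\hat p^\pm,\hat p^\pm)$. From $ge_+ = a e_+ + \gamma e_-$ one computes $|(ge_+)\wedge e_+| = |\gamma|\cdot|e_+\wedge e_-|$, and using $|e_+\wedge e_-|\geq c$ together with $\|ge_+\|\leq L$,
$$
\varepsilon_+ = \frac{|(ge_+) \wedge e_+|}{\|ge_+\|} \geq \frac{c}{L}\,|\gamma|,
$$
so $|\gamma|\leq (L/c)\,\varepsilon_+$. The symmetric computation with $ge_-$ yields $|b|\leq(L/c)\,\varepsilon_-$. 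Put $\eta := \max(\varepsilon_+,\varepsilon_-)$, so that $|b|,|\gamma|\leq (L/c)\,\eta$.

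The second step is to build the diagonal approximation. From $ad-b\gamma=\pm 1$ we get $|ad|\geq 1 - (L/c)^2\eta^2$, so for $\eta$ below some threshold $\eta_0=\eta_0(L,c)$ we have $|ad|\geq 1/2$; combined with $|a|,|d|\leq M$, this forces either $|a|\geq 1/(2M)$ or $|d|\geq 1/(2M)$. In the former case (the latter being symmetric), set
$$
h^* := \begin{pmatrix} a & 0 \\ 0 & (\det h)/a \end{pmatrix}, \qquad g^* := P\, h^*\, P^{-1}.
$$
Then $g^*\in \SL_2(\R)$, $g^* e_\pm$ is a scalar multiple of $e_\pm$, and so $\hat g^*\hat p^\pm = \hat p^\pm$. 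The four entries of $h-h^*$ are $0$, $b$, $\gamma$ and $d - (\det h)/a = b\gamma/a$ (using $ad-b\gamma=\det h$), whence $\|h-h^*\|\leq C_1(L,c)\,\eta$, and consequently
$$
\|g^*-g\|\leq \|P\|\,\|P^{-1}\|\,\|h-h^*\|\leq C_2(L,c)\,\eta.
$$

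When $\eta\geq \eta_0$ the bound is trivial: take $g^* := \mathrm{Id}$, which fixes every projective point, and bound $\|g^*-g\|\leq 1+L\leq ((1+L)/\eta_0)\,\eta$. Taking $C$ to be the maximum of the two constants completes the proof. The one delicate point of the argument is arranging that the $(2,2)$-entry of $h-h^*$ be \emph{quadratic} in $\eta$ rather than merely bounded; this is why we choose $d^* = (\det h)/a$ instead of $d^* = d$, and the $\SL_2$-constraint is exactly what provides the uniform lower bound on $\max(|a|,|d|)$ needed to make this substitution.
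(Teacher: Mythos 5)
Your proof is correct and takes a genuinely different, more streamlined route than the paper's. The paper works in the original coordinates: it builds matrices $m$ and $m'$ from the normalized images $g p^\pm/\norm{g p^\pm}$ and the original $p^\pm$, sets $g' = m'm^{-1}g \in \GL_2(\R)$ (which fixes both directions), bounds $\norm{g'-g}$ via estimates on $\norm{m'-m}$ and $\norm{m^{-1}}$, and then invokes a separate lemma (Lemma~\ref{g*g' lemma}, proved with Jacobi's formula for the derivative of $\det$, the mean value theorem, and a smooth-deformation argument) to renormalize $g'$ to an element of $\SL_2(\R)$ with only a controlled loss. You instead conjugate once by $P=[e_+\,|\,e_-]$ so that the two invariant directions become coordinate axes, turning the problem into: show a matrix with small off-diagonal entries is close to a determinant-matching diagonal matrix. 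The $\SL_2$ constraint $ad - b\gamma=\det g$ then does two jobs at once: it yields the lower bound $\abs{a}\gtrsim 1$ (after a trivial dichotomy on $\eta$), and it makes the diagonal-entry correction $d-(\det h)/a = b\gamma/a$ quadratic in $\eta$, so the renormalization is built into the construction and no separate determinant-renormalization lemma is needed. This buys you a shorter proof that avoids the deformation argument entirely; what the paper's formulation buys is a basis-free presentation, and its Lemma~\ref{g*g' lemma} is a reusable general-purpose tool. One minor simplification you could note: from $\abs{ad}\ge 1/2$ and $\abs{d}\le M$ you automatically get $\abs{a}\ge 1/(2M)$, so the dichotomy "either $\abs{a}\ge 1/(2M)$ or $\abs{d}\ge 1/(2M)$" is in fact a conjunction and no case split is needed.
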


\begin{proof}
Take unit vectors $p^\pm\in \hat p^\pm$ such that 
$\angle(p^+, p^-)$ is not obtuse.
Let $m\in\GL_2(\R)$ be the matrix with columns $g\,p^+/\norm{g\, p^+}$ and $g\,p^-/\norm{g\, p^-}$. Similarly, let $m'\in\GL_2(\R)$ be the matrix with columns $p^+$ and $p^-$. 
Consider the matrix $g'=m'\,m^{-1}\,g\in \GL_2(\R)$.
By construction this matrix satisfies
$	\hat g'\,\hat p^+ =\hat p^+ $ and
$\hat g'\,\hat p^- =\hat p^- $.

Next notice that
\begin{align*}
\norm{g'-g} &=  \norm{ (m'-m)\, m^{-1}\,g}
\leq \norm{ m'-m}\,\norm{ m^{-1}}\, \norm{g} .
\end{align*}
Since $\norm{g}\leq L$, we now need to derive bounds for $\norm{m'-m}$
and $\norm{m^{-1}}$.

For any matrix $a\in \Mat_2(\R)$ and in particular
for $a=m'-m$, 
$$ \norm{a}\leq 2\, \max\{\norm{a \,e_1},\norm{a \, e_2}\}  
$$
where $\{e_1,e_2\}$ stands for the canonical basis of $\R^2$. On the other hand, given $\hat x, \hat y\in \Pp$,
if we choose the unit vectors $x\in \hat x$ and $y\in \hat y$ to form a non obtuse angle then
$$ \norm{x-y} \leq \sqrt{2}\, d(\hat x, \hat y). $$
Combining these two facts we get
$$ \norm{m'-m}\leq 2\,\sqrt{2}\, \max\{d(\hat g\, \hat p^+, \hat p^+), 
d(\hat g \, \hat p^-, \hat p^-) \} .$$
For any matrix $g\in \SL_2(\R)$,
the projective map $\hat g\colon \Pp\to\Pp$ has
Lipschitz constant ${\rm Lip}(\hat g)\leq \norm{g}^2$ 
and the same is true about $g^{-1}$ because
$\norm{g^{-1}} = \norm{g}$. Hence, since in our case $\norm{g}\leq L$,
\begin{align*}
\sabs{ \det m } &= \frac{\norm{g\, p^+ \wedge g\, p^-}}{\norm{g\, p^+}\, \norm{g\, p^-} } = d(\hat g\, \hat p^+, \hat g\, \hat p^- )\geq \frac{1}{L^2}\, d(\hat p^+, \hat p^-) \geq \frac{c}{L^2}.
\end{align*}
Therefore 
$$\norm{m^{-1}}\leq \frac{\norm{m}}{\sabs{\det m}}\leq 
\frac{2}{\sabs{\det m}}\leq
\frac{2\, L^2}{c} .$$
Together these bounds imply that
$$   \norm{g'-g}\leq   4\,\sqrt{2}\, L^3\,c^{-1}\, \max\{d(\hat g\, \hat p^+, \hat p^+), 
d(\hat g \, \hat p^-, \hat p^-) \} .$$
Finally, by Lemma~\ref{g*g' lemma} below
we can replace  $g'\in \GL_2(\R)$ by
  $g^\ast:=g_\ast'\in\SL_2(\R)$ and still have 
$$   \norm{g^\ast-g}\lesssim   \max\{d(\hat g\, \hat p^+, \hat p^+), 
d(\hat g \, \hat p^-, \hat p^-) \} .$$
We finish explaining why this lemma can be applied.
First notice that the previous estimate on $\sabs{\det m}$ gives
$$ \frac{c}{L^2}\leq  \sabs{\det m}=d(\hat g \, \hat p^-, \hat g\, \hat p^-) \leq 1 .$$
Constructing a smooth deformation $m_t$ from $m_0=m'$ to $m_1=m$ by matrices with unitary columns we have
$$ \norm{m_t}\leq 2 \quad \text{ and } \quad
\frac{c}{L^2}\leq \sabs{\det m_t}\leq 1 .$$
Hence $g_t:= m'\, m_t^{-1}\, g$ is a smooth deformation
from $g$ to $g'$ such that $\norm{g_t}\leq 4\, L^3\, c^{-1}$ and 
$$\sabs{\det g_t}=\frac{\sabs{\det m'}}{\sabs{\det m_t}}
\geq d(\hat p^+, \hat p^-) \geq c . $$
This shows that Lemma~\ref{g*g' lemma}  can be applied with an appropriate new constant $\tilde L=\tilde L(L,c)<\infty$.
\end{proof}

\begin{lemma}
\label{g*g' lemma}
Given $1\leq L<\infty$ there exist $C=C(L)<\infty$  such that for all $g\in \SL_2(\R)$ with $\norm{g}< L$ and all
$g'\in \GL_2(\R)$ with $\norm{g'}<L$, $\sabs{\det g'}\geq L^{-1}$ and such that these bounds for $g'$ hold along a smooth deformation from $g$ to $g'$,  then 
$g_\ast' := \sabs{\det g'}^{-\frac{1}{2}}\, g'\in\SL_2(\R)$ satisfies
$$ \norm{g-g_\ast'}\leq C\, \norm{g-g'}. $$
\end{lemma}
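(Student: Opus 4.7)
The plan is a direct triangle-inequality calculation: write
\begin{equation*}
g - g_\ast' \;=\; (g - g') \;+\; \bigl(1 - |\det g'|^{-1/2}\bigr)\,g',
\end{equation*}
so that $\norm{g - g_\ast'} \le \norm{g - g'} + |1 - |\det g'|^{-1/2}|\cdot\norm{g'}$, and then bound the rescaling factor $|1 - |\det g'|^{-1/2}|$ linearly in $\norm{g - g'}$ using the Lipschitz continuity of $\det$ on a bounded ball together with the Lipschitz continuity of $x\mapsto x^{-1/2}$ on an interval bounded away from zero.

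The role of the smooth deformation hypothesis is what I would address first, and it is the only subtle point. Along the path $g_t$ one has $|\det g_t| \ge L^{-1} > 0$ throughout, so $\det g_t$ never vanishes and by continuity $\det g'$ has the same sign as $\det g = \pm 1$. This has two consequences: first, $g_\ast'$ lies in the same component of $\SL_2^\pm(\R)$ as $g$ (so the conclusion can hold at all\,---\,without this hypothesis, a small perturbation of $g'$ across the zero-determinant locus would send $g_\ast'$ to the opposite component and the bound would fail); second, one gets the simple comparison
\begin{equation*}
\bigl|\,|\det g'| - 1\,\bigr| \;=\; \bigl|\,|\det g'| - |\det g|\,\bigr| \;\le\; |\det g' - \det g|.
\end{equation*}

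Next I would use that $\det\colon \Mat_2(\R)\to\R$ is a polynomial and hence Lipschitz continuous on the ball $\{h : \norm{h} < L\}$, giving $|\det g - \det g'| \le C_1(L)\,\norm{g - g'}$ for some $C_1(L) < \infty$. Setting $d := |\det g'|$, one has $d \in [L^{-1}, L^2]$ because $|\det g'| \le \norm{g'}^2 < L^2$, and the previous step yields $|1 - d| \le C_1(L)\,\norm{g - g'}$. Finally, since $x\mapsto x^{-1/2}$ has derivative bounded by $\tfrac{1}{2}L^{3/2}$ on $[L^{-1}, L^2]$ (an interval containing both $1$ and $d$), the mean value theorem gives
\begin{equation*}
|1 - d^{-1/2}| \;\le\; \tfrac{1}{2}\, L^{3/2}\,|1 - d| \;\le\; \tfrac{1}{2}\, L^{3/2}\, C_1(L)\,\norm{g - g'}.
\end{equation*}

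Combining via $\norm{g'} < L$ yields $\norm{g - g_\ast'} \le \bigl(1 + \tfrac{1}{2} L^{5/2}\,C_1(L)\bigr)\,\norm{g - g'}$, so one may take $C(L) := 1 + \tfrac{1}{2} L^{5/2}\,C_1(L)$. There is no substantial obstacle: the estimate is essentially a one-variable Lipschitz argument for the rescaling $d \mapsto d^{-1/2}$, and the only place where any care is needed is in justifying that the sign of $\det g'$ matches that of $\det g$, which is exactly what the smooth deformation hypothesis provides.
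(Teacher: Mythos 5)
Your proof is correct and arrives at the same kind of explicit polynomial-in-$L$ constant, but it takes a genuinely different route from the paper. The paper applies the mean value theorem directly to the matrix-valued map $g\mapsto \sabs{\det g}^{-1/2}$, using Jacobi's formula $(D\det)_g(h)=\tr(h\,g^{-1})\det g$ to bound the derivative by $L^{5/2}$ along the deformation path, and decomposes $\norm{g-g_\ast'}\le \norm{g-\sabs{\det g'}^{-1/2}g} + \sabs{\det g'}^{-1/2}\norm{g-g'}$. You instead factor the estimate through two one-variable steps: a Lipschitz bound on $\det$ over the convex ball $\{\norm{h}<L\}$ (which only needs convexity of the ball, not the deformation), followed by a one-variable mean value theorem for $x\mapsto x^{-1/2}$ on $[L^{-1},L^2]$. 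Your decomposition $g-g_\ast' = (g-g') + (1-\sabs{\det g'}^{-1/2})g'$ differs in which factor gets rescaled, which is immaterial. One remark: your diagnosis of the role of the smooth deformation hypothesis is not quite right. The reverse triangle inequality $\bigl|\,\sabs{\det g'}-1\,\bigr|\le \sabs{\det g'-\det g}$ holds regardless of sign, so your estimate does not actually need $\det g'$ to have the same sign as $\det g$; and the convexity of the ball handles the Lipschitz step without a path. In other words, your argument goes through from the endpoint hypotheses $\norm{g'}<L$, $\sabs{\det g'}\ge L^{-1}$ alone, which is a small robustness gain over the paper's path-based MVT. The hypothesis is genuinely used in the paper's proof (to control $\norm{D\bigl(\sabs{\det\,\cdot}^{-1/2}\bigr)}$ along the path), and it is also the natural form of what is verified when the lemma is invoked inside Proposition~\ref{proximity prop}, where the relevant deformation $g_t=m'm_t^{-1}g$ is not the straight segment.
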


\begin{proof}
By Jacobi's formula for the derivative of the determinant,
$$ (D \det)_g(h)= \tr( h \,g^{-1})\, \det g,\qquad
h\in \Mat_2(\R) . $$
Hence, for all $g\in\GL_2(\R)$,
$\norm{(D \det)_g}\leq 2\,\norm{g^{-1}}\,\abs{\det g}=2\,\norm{g}$. Using the assumptions and  the Mean Value Theorem we get
$$ \abs{\sabs{\det g}^{-1/2} - \sabs{\det g'}^{-1/2} }  \leq L^{5/2}\, \norm{g-g'} . $$
 Therefore
\begin{align*}
 \norm{g-g_\ast'} &\leq
 \norm{g -  \sabs{\det g'}^{-\frac{1}{2}}\, g}
 + \sabs{\det g'}^{-\frac{1}{2}}\,\norm{g-g'} \\
 &\leq
 \abs{\sabs{\det g}^{-\frac{1}{2}} -  \sabs{\det g'}^{-\frac{1}{2}}}\,\norm{g} 
 + \sabs{\det g'}^{-\frac{1}{2}}\,\norm{g-g'} \\
 &\leq (L^{5/2}\, L + L^{1/2})\, \norm{g-g'}
\end{align*}
which proves this lemma with $C(L):= L^{7/2}+L^{1/2}$.
\end{proof}

 The next proposition establishes a relation between the
 quasi-irreducibility measurements  $\rho(B)$ and  $N(B)$.
 We postpone its proof to the next section.

\begin{proposition}
\label{N-rho comparison proposition}
Given $A\in\Diag$ with $L(A)>0$ there is a constant 
$c=c(A)>0$ such that if the neighborhood $\Vscr_A$ of $A$
is small enough then 
for all $B\in\Vscr_A$,
$$ \rho_-(B) \leq e^{-c\,N(B) }\; \text{ and }\;
\rho_+(B) \leq e^{-c\,N(B^{-1}) }  . $$
In other words,
$$ N(B)\leq    c^{-1}\, \log ( \rho_-(B)^{-1} )\;
\text{ and } \;
 N(B^{-1})\leq   c^{-1} \, \log  (\rho_+(B)^{-1})   . $$
\end{proposition}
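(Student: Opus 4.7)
The goal is to prove that for every $B\in\Vscr_A$ and every unit $v\in\Su^1$, once $n$ exceeds a threshold of size $c^{-1}\log\rho_-(B)^{-1}$ one has $\EE[\frac{1}{n}\log\|B^{(n)}v\|]>\frac{1}{2}L(B)$, which yields $N(B)\leq c^{-1}\log\rho_-(B)^{-1}$; the parallel bound involving $\rho_+$ and $N(B^{-1})$ will follow by the symmetry $\rho_+(B^{-1})\asymp\rho_-(B)$ established in the previous lemma. The strategy is to win a contest between two geometric features of a hyperbolic component $B_{i_0}$ with $i_0\in\Sigma_H(A)$ attaining the maximum in $\rho_-(B)$: the fixed point $\hat e_-(B_{i_0})$ of $\hat B_{i_0}$ is a repeller with projective multiplier $\geq e^{2L(A)}$, while the other $\hat B_j$'s displace $\hat e_-(B_{i_0})$ by at most $\rho_-(B)$. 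Together these force the projective orbit to leave any fixed neighborhood of $\hat e_-(B_{i_0})$ in time $O(\log\rho_-(B)^{-1})$.

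The first ingredient is the behavior inside the good cone. After possibly shrinking $\Vscr_A$, the hypotheses \eqref{Dscr radius}, N0 and N5 ensure that $\Dscr_+(A,2\delta)$ is forward-invariant under every $\hat B_j$ for $B\in\Vscr_A$, and that
\[ \sum_{j=1}^k p_j\,\log\|B_j\,u\| \;\geq\; L(A)\bigl(1-\tfrac{1}{12}\bigr) \;>\; \tfrac{2}{3}\,L(B) \]
for every unit $u$ whose projectivization lies in $\Dscr_+(A,2\delta)$, the last inequality using N2. By forward-invariance and the tower property, $\EE[\log\|B^{(n)}u\|]\geq\tfrac{2}{3}\,n\,L(B)$ whenever $\hat u\in\Dscr_+(A,2\delta)$.

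The second and more delicate ingredient is the escape time from the complementary cone. Working in a projective chart around $\hat e_-(B_{i_0})$, I linearize: $\hat B_{i_0}$ expands distances to $\hat e_-(B_{i_0})$ by $|a_{i_0}|^2\geq e^{2L(A)}$ (up to a small $d(B,A)$-error controlled by N0), while for $j\neq i_0$ the action $\hat B_j$ introduces an additive shift $\rho_{-,j}(B):=d(\hat B_j\hat e_-(B_{i_0}),\hat e_-(B_{i_0}))$ together with a bounded multiplicative derivative. This produces a one-step lower bound of the form $d(\hat B_j\hat u,\hat e_-(B_{i_0}))\gtrsim \rho_{-,j}(B)+\mu_j\,d(\hat u,\hat e_-(B_{i_0}))$ on $\hat u$ in the chart, with $\mu_{i_0}\geq e^{2L(A)}/2$. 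Averaging over $j$, applying $-\log$, and iterating leads to a drift inequality $\EE[-\log d(\hat B^{(n)}\hat v,\hat e_-(B_{i_0}))]\leq -\log\rho_-(B)-c\,n$ valid while the orbit remains in the chart. A Markov-type argument then shows that the hitting time $\tau$ of $\Dscr_+(A,2\delta)$ satisfies $\Pp[\tau\leq n_0]\geq p_0$ with $n_0:=\lceil c^{-1}\log\rho_-(B)^{-1}\rceil$ and constants $p_0,c>0$ depending only on $A$.

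To combine, for any unit $v$ I decompose $\log\|B^{(n)}v\|$ at the hitting time $\tau$: on $\{\tau\leq n_0\}$ the strong Markov property and the cone estimate give a conditional expectation at least $-n_0\log L+\tfrac{2}{3}(n-\tau)L(B)$ (using N3 on the first $\tau$ iterates), while on $\{\tau>n_0\}$ the deterministic bound $\log\|B^{(n)}v\|\geq -n\log L$ suffices. Choosing $n=Cn_0$ for a sufficiently large constant $C=C(A)$ makes the positive contribution dominate and yields $\EE[\log\|B^{(n)}v\|]>\tfrac{n}{2}L(B)$, hence the desired upper bound on $N(B)$. The main obstacle is the escape estimate of the previous paragraph: converting the intuitive repulsion-plus-drift picture into a drift inequality uniform in $B\in\Vscr_A$ and in the initial direction, while simultaneously controlling the linearization error and the stochastic contributions of non-$i_0$ symbols. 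This is precisely the type of random-walk exit bound packaged by the prison-break probabilistic argument announced in Section~\ref{prison} of the paper, which should supply the uniform constants required here.
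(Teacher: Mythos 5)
Your overall strategy is the right one and matches the paper's: reduce the second inequality to the first via $\rho_+(B^{-1})\asymp\rho_-(B)$, and then show that $\EE\left[\frac{1}{n}\log\norm{B^{(n)}v}\right]>\frac{1}{2}L(B)$ once $n\gtrsim\log\rho_-(B)^{-1}$, by combining an escape-time estimate for the projective walk with a drift estimate in the expanding cone. But two steps in your execution do not close.

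First, the claimed deterministic forward-invariance of $\Dscr_+(A,2\delta)$ under \emph{every} $\hat B_j$ is false. For a diagonal $A$, a symbol $j$ with $\sabs{a_j}<1$ satisfies $\hat A_j\,\Dscr_+(\delta)=\Dscr_+(\sabs{a_j}^{-2}\delta)\supset\Dscr_+(\delta)$, i.e.\ it pushes points away from $\hat e_+(A)$ toward $\hat e_-(A)$, and nearby $B_j$ do the same. This is why the paper's N10 asserts invariance only for $i\in\Sigma_H(A)$, and why re-entry into the bad region is handled \emph{probabilistically} (assumption (A4) of the prison break: the chance of ever being recaptured in $\Po$ after reaching $\Pt\comp$ is $<r/4$), not by cone invariance. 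Without invariance, your "tower property" bound $\EE[\log\norm{B^{(n)}u}]\geq\frac{2}{3}nL(B)$ for $\hat u$ in the cone, and your conditional estimate after the hitting time $\tau$, both break down.

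Second, the final combination is quantitatively insufficient. With only $\Pp[\tau\leq n_0]\geq p_0$ for an unspecified constant $p_0>0$, your decomposition gives
$\EE[\log\norm{B^{(n)}v}]\geq p_0\,\tfrac{2}{3}(n-n_0)L(B)-(1-p_0)\,n\log L-p_0\,n_0\log L$,
and both the positive and negative terms scale linearly in $n$; taking $n=Cn_0$ with $C$ large does not change their ratio, so if $p_0$ is small the negative term wins. What is actually needed (and what Proposition~\ref{prison break main} supplies) is the much stronger statement that the walk is outside $\Po$ for \emph{all} $j\geq n_B$ with probability at least $1-r$, where $r$ is the explicit threshold $r=L(A)/(42L(A)+60\log L)$ of~\eqref{def r}, tuned exactly so that the bad-set contribution $-(\log L)\Pp(\Bscr)$ is beaten by $(1-\Pp(\Bscr))\tfrac{7}{10}L(A)$. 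Your proposal acknowledges deferring the escape estimate to the prison-break section, but even granting it in the weak form you state, the bookkeeping above does not yield $N(B)\lesssim\log\rho_-(B)^{-1}$; you must import the full quantitative conclusion (permanent escape with probability $\geq 1-r$ for the specific small $r$) rather than a mere positive escape probability.
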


\begin{remark}
Combining Propositions~\ref{N goes to infinity},~\ref{diag rho} and~\ref{N-rho comparison proposition}, we conclude that all three irreducibility measurements introduced in this section, namely $N(B)$, $\rho (B)$ and $d (B, \Diag)$ are essentially equivalent. 

More precisely, let us put $N(A) := \infty$ if $A$ is diagonalizable (which,  by Proposition~\ref{N goes to infinity}, is a reasonable convention). Moreover, for any cocycle $B$, define 
$\Nbar (B) := \max \{ N(B), N (B^{-1}) \}$.

Then for any diagonalizable cocycle $A$ with $L(A)>0$, there are constants $c_1, c_2, c_3 >0$ and a neighborhood $\Vscr_A$ of $A$ such that for all $B\in\Vscr_A$, the following hold:
$$c_1 \, \Nbar (B) \le \log \rho (B)^{-1} \le c_2 \, \log d (B, \Diag)^{-1}  \le c_3 \, \Nbar (B) \, .$$
\end{remark}

\section{Prison break}
\label{prison}

Any  cocycle $A\in \cocycles=\SL_2(\R)^k$
induces a random walk on the projective space.
If the cocycle $A$ is diagonalizable with $L(A)>0$
then the random walk starting at a point $\hat x\neq \hat e_-(A)$
converges rapidly to $\hat e_+(A)$. This is not the case for  $\hat x= \hat e_-(A)$ where the random walk gets permanently stuck.
For a quasi-irreducible cocycle $B\in\cocycles$ very close to $A$ the corresponding random walk may also wander for a while near the
point $\hat e_-(A)$ after which it  rapidly moves to somewhere near $\hat e_+(A)$. It is natural to ask for how long one needs to wait until it becomes likely
to see the random walk moving in a neighborhood of $\hat e_+(A)$. The answer  depends of course on the distance from $B$ to $A$. Proposition~\ref{prison break main}
makes  a precise quantitative statement which answers this question. 

\bigskip

The next subsection provides an abstract scheme for the proof
of Proposition~\ref{prison break main}. We illustrate it with a {\em prison break metaphor}.

Imagine a prisoner whose movement in the world $\Sigma$ is modeled by a random  walk $\xi_n$. 
The confinement constraints on the prisoner's movements are
encoded in the transition probabilities of this random walk. Let $\Pz$ be the prisoner's cell,
$\Po$ be the prison area and $\Pt$ be the state where the prisoner serves his sentence.

Assume the following about the prisoner.
\begin{enumerate}
\item[(1)] The probability that he escapes from the cell $\Sigma_0$ within a time $n_0$ is small  but positive.
\item[(2)] He has a large  probability of evading the prison $\Po$ within a time $n_1$ once he is outside the cell. 
\item[(3)] The probability of him fleeing the state $\Pt$ within a time $n_2$ is again large
once he is out of prison.     
\item[(4)]  The chances of him ever being caught again in the prison $\Po$ are very slim after he  gets abroad. 
\end{enumerate}
From these assumptions one concludes that after a long enough time, of the form $q\, n_0+n_1+n_2$  for some possibly large $q\in\N$, the prisoner will very likely stay  permanently out of the jail $\Po$. 

The next subsection   quantifies this statement
in Proposition~\ref{abstract prison break}.

\newcommand{\rhoB}{\rho_B}
\newcommand{\Prob}{{\rm Prob}}
\newcommand{\Mscr}{\mathscr{M}}

\subsection*{Abstract scheme}

Let  $K$ be a stochastic kernel on a compact space of symbols $\Sigma$ and denote by $\FF$ the Borel $\sigma$-algebra of $\Sigma$. Let $\Prob(\Sigma)$ denote the space of 
probability measures on $(\Sigma,\FF)$.
Let  $\{\xi_n\}_{n\geq 0}$ be the process on $X=\Sigma^\N$  
defined by $\xi_n\{x_j\}_{j\in\N}:= x_n$.

Given  $\nu\in\Prob(\Sigma)$  there exists a unique probability measure $\Pp_\nu$  on $X=\Sigma^\N$ such that   $\{\xi_n\}_{n\geq 0}$  is a Markov
process with transition kernel $K$ and initial distribution $\nu$, in the sense that for all $\measA\in\FF$ and $x\in \Sigma$ 
\begin{enumerate}
\item $\Pp_\nu[\, \xi_n\in \measA\,\vert \, \xi_{n-1}=x \, ]= K_x(\measA)$ \, ( $n\geq 1$),
\item $\Pp_\nu[ \, \xi_0\in \measA \, ]= \nu(\measA)$.
\end{enumerate} 
To emphasize the measure $\Pp_\nu$
underlying  the process $\xi_n$ 
we will write $\xi^\nu_n$, or simply $\xi^x_n$
when $\nu=\delta_x$, instead of $\xi_n$.
We will also write $\Pp_x$ instead of $\Pp_{\delta_x}$.

Given $x\in\Sigma$ and $\measA \in\FF$ with $x\in \measA$ define
the {\em probability of escaping from $\measA$ in time  $n$}
$$ \pcal_n(x,\measA) := \Pp_x[ \text{ for some } \, 0\leq j\leq n,\; \xi^x_j\notin \measA \,]   $$
as well as the complementary {\em probability of remaining in $\measA$ for time $n$}
$$ \pcal_n^\ast(x,\measA):= \Pp_x[ \text{ for all } \, 0\leq j\leq n,\; \xi^x_j\in \measA \,] . $$

Note that  
$$  \pcal_n(x,\measA)+  \pcal_n^\ast(x,\measA)   = 1 . $$

\begin{proposition}
\label{prop pcal multip}
Given $x_0\in \measA\subset \measB\subset \Sigma$ in $\FF$,
$$ \pcal_{n+m}(x_0, \measB)\geq   \pcal_m(x_0,\measA)\,  \left(\inf_{x\in \Sigma
\setminus \measA} \pcal_n(x,\measB) \right)\;. $$
\end{proposition}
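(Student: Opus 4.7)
The plan is to use a first-passage-time decomposition. Since $\measA\subset \measB$ and $x_0\in \measA$, escaping from $\measA$ is a necessary step toward escaping from $\measB$, and once the trajectory first exits $\measA$ it sits in $\Sigma\setminus \measA$ and can then try to escape from the larger set $\measB$ during the remaining time budget.

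More concretely, introduce the stopping time
\[
\tau := \min\{\,j\geq 0\;:\;\xi^{x_0}_j \notin \measA\,\},
\]
with the convention $\tau=+\infty$ if no such $j$ exists. By definition, $\{\tau\leq m\} = \{\text{for some }0\leq j\leq m,\ \xi^{x_0}_j \notin \measA\}$, so $\Pp_{x_0}[\tau\leq m]=\pcal_m(x_0,\measA)$. On the event $\{\tau\leq m\}$ the random variable $\xi^{x_0}_\tau$ takes values in $\Sigma\setminus \measA$.

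The next step is to use the strong Markov property of $\{\xi_n\}$ with respect to $\tau$. Conditionally on $\tau=j$ and $\xi^{x_0}_j = y\in \Sigma\setminus \measA$, the process $(\xi^{x_0}_{j+i})_{i\geq 0}$ has the law of $\xi^{y}_i$ under $\Pp_y$. In particular, the conditional probability that $\xi^{x_0}_{j+i}\notin \measB$ for some $0\leq i\leq n$ equals $\pcal_n(y,\measB)$, and hence is bounded below by $\inf_{y\in \Sigma\setminus \measA}\pcal_n(y,\measB)$. Since $j+i\leq m+n$ on this event, such an escape from $\measB$ also certifies escape from $\measB$ within time $n+m$ starting from $x_0$. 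Summing (integrating) over the disjoint events $\{\tau=j,\ \xi^{x_0}_\tau=y\}$ for $j\leq m$ gives
\[
\pcal_{n+m}(x_0,\measB)\;\geq\;\Pp_{x_0}[\tau\leq m]\cdot \inf_{y\in \Sigma\setminus \measA}\pcal_n(y,\measB) \;=\; \pcal_m(x_0,\measA)\cdot \inf_{y\in \Sigma\setminus \measA}\pcal_n(y,\measB),
\]
which is the desired inequality.

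The only delicate point is the application of the strong Markov property at the stopping time $\tau$ on a general compact $\Sigma$ with a stochastic kernel; the cleanest way to avoid regularity issues is to bypass strong Markov altogether and instead do a direct expansion in $j$, decomposing the event $\{\tau\leq m\}$ as $\bigsqcup_{j=0}^{m}\{\xi^{x_0}_0,\dots,\xi^{x_0}_{j-1}\in \measA,\ \xi^{x_0}_j \notin \measA\}$, conditioning on $\xi^{x_0}_j=y$, and applying the ordinary Markov property at the deterministic time $j$ to the shifted process. This reduces the argument to the elementary identity
\[
\Pp_{x_0}\!\left[\,\xi^{x_0}_{j+i}\notin \measB\text{ for some }0\leq i\leq n\ \Big|\ \xi^{x_0}_j=y\,\right]=\pcal_n(y,\measB),
\]
after which the bound follows by lower-bounding the conditional probability by the infimum and resumming over $j$ and $y$.
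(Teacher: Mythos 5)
Your argument is correct and is essentially the same as the paper's: the paper introduces exactly the stopping time $\tilde{\j}=\min\{l\geq 0: \xi^{x_0}_l\notin\measA\}$ together with $\tilde{x}=\xi^{x_0}_{\tilde{\j}}$, decomposes the escape event over $\tilde{\j}=l$ for $l=0,\dots,m$, applies the Markov property, and bounds the conditional escape probability from $\measB$ by the infimum over $\Sigma\setminus\measA$. Your closing remark about replacing the strong Markov property with a deterministic-time decomposition is precisely what the paper does by summing over $l$.
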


\begin{proof}
Define a stopping time $\tilde{\j}\colon X\to \N\cup\{\infty\}$,
$$ \tilde{\j} := \min \{ l\geq 0\colon \xi^{x_0}_l\notin \measA\}$$
together with a random variable
$\tilde{x}\colon X\to\Sigma$ such that
$\tilde{x} = \xi^{x_0}_{\tilde{\j}}$ whenever $\tilde \j<\infty$. By construction
$\tilde{x}$ takes values in $\Sigma\setminus \measA$ if
$\tilde \j<\infty$.

\begin{align*}
\pcal_{n+m}(x_0, \measB)&\geq  \Pp_{x_0}\left[ \tilde{\j}\leq m \, \text{ and for some }\, 0\leq j\leq n,\; \xi^{\tilde{x}}_j\notin \measB \right]\\
&= \sum_{l=0}^m  \Pp_{x_0}\left[ \text{ for some }\, 0\leq j\leq n,\; \xi^{\tilde{x}}_j\notin \measB \, \vert\, \tilde{\j}=l \right]\, \Pp_{x_0}[\tilde{\j}=l] \\
&\geq \left(\inf_{x\in \Sigma
\setminus \measA} \pcal_n(x,\measB) \right)  \sum_{l=0}^m  \Pp_{x_0}[\tilde{\j}=l] \\
&=   \left(\inf_{x\in \Sigma
\setminus \measA} \pcal_n(x,\measB) \right)\, \pcal_m(x_0,\measA)\;. 
\end{align*}
\end{proof}

Given $\measA\in\FF$ we also define
$$ \pcal_n^\ast(\measA):=\sup_{x\in \measA} \pcal_n^\ast(x,\measA).$$

\begin{proposition}
\label{P* sub-multip}
$\pcal_{n+m}^\ast(\measA)\leq 
\pcal_{n}^\ast(\measA)\,
\pcal_{m}^\ast(\measA)$ for all $n,m\geq 0$.
\end{proposition}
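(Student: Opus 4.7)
The plan is to decompose the event of staying in $\measA$ up to time $n+m$ as the intersection of two events\textemdash staying in $\measA$ up to time $n$, and then, from wherever the process finds itself at time $n$ (which, on the first event, must be a point of $\measA$), staying in $\measA$ for a further $m$ steps\textemdash and then to exploit the Markov property to factor the probability through the conditional distribution at time $n$.

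More concretely, fix $x \in \measA$. I would write
\begin{align*}
\pcal^\ast_{n+m}(x,\measA)
&= \Pp_x\!\left[\xi_j \in \measA \text{ for all } 0 \le j \le n+m\right] \\
&= \EE_x\!\left[ \ind_{\{\xi_j \in \measA,\, 0 \le j \le n\}} \cdot \Pp_x\!\left[\xi_j \in \measA,\, n \le j \le n+m \,\big|\, \FF_n\right]\right],
\end{align*}
where $\FF_n$ is the natural filtration of the process. By the Markov property, on the event $\{\xi_n = y\}$ the conditional probability inside the expectation equals $\pcal^\ast_m(y,\measA)$. Since on the outer event $\xi_n$ automatically lies in $\measA$, one has $\pcal^\ast_m(\xi_n,\measA) \le \pcal^\ast_m(\measA)$ almost surely on that event, so the inner factor can be pulled out as a uniform bound:
\begin{equation*}
\pcal^\ast_{n+m}(x,\measA) \le \pcal^\ast_m(\measA) \cdot \Pp_x\!\left[\xi_j \in \measA \text{ for all } 0 \le j \le n\right] = \pcal^\ast_m(\measA)\,\pcal^\ast_n(x,\measA).
\end{equation*}
Taking the supremum over $x \in \measA$ on the left-hand side gives $\pcal^\ast_{n+m}(\measA) \le \pcal^\ast_m(\measA)\,\pcal^\ast_n(\measA)$, as required.

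No real obstacle is expected: the only mild subtlety is making sure the Markov property is invoked cleanly, namely that after conditioning on $\xi_n = y$, the shifted process $(\xi_{n+j})_{j\ge 0}$ has the same law as $\xi^y$ under $\Pp_y$, so that the probability of remaining in $\measA$ for a further $m$ steps is literally $\pcal^\ast_m(y,\measA)$. Once that identification is made, the sub-multiplicativity is immediate from the bound $\pcal^\ast_m(y,\measA) \le \pcal^\ast_m(\measA)$ valid for every $y \in \measA$.
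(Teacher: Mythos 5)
Your proof is correct and follows essentially the same route as the paper's: condition at time $n$, use the Markov property to identify the conditional probability of remaining in $\measA$ for $m$ more steps starting from $\xi_n=y\in\measA$ with $\pcal^\ast_m(y,\measA)$, bound it uniformly by $\pcal^\ast_m(\measA)$, and integrate. The paper phrases the conditioning via the pushforward $\Pp_x^{(n)}$ of the first $n+1$ coordinates rather than via $\EE_x[\ind\cdot\Pp_x[\cdot\,\vert\,\FF_n]]$, but this is only a notational difference.
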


\begin{proof}
Let  $X_n= \Sigma^{n+1}$ and denote by
$\pi_n\colon X\to X_n$ the canonical  projection
$\pi_n\{x_j\}_{j\in\N}=(x_0,x_1,\ldots, x_n)$.
Let $\Pp_x^{(n)}:=(\pi_n)_\ast \Pp_x$ and note that
$\Pp_x^{(n)}(\{x\}\times E^n)=\pcal_n^\ast(x,E)$.

%

Define the family of events
$$  E_{n,m}^x:=\left[ \, \xi_j^x\in E, \quad \forall\, n+1\leq j\leq n+m  \,   \right] . $$

Using the law of total probability and then the fact that $\{\xi_n^x\}$ is a Markov chain, we obtain the following:
\begin{align*}
\pcal_{n+m}^\ast(x,E) &= \Pp_x[ \text{ for all } \, 0\leq j\leq n+m,\; \xi^x_j\in \measA \,]\\
& \kern-2.5em = \int_{\{x\}\times E^n}
\Pp_x\left[  E_{n, m}^x \, \vert \, \xi_j^x=x_j, \, \forall  \, 0\le  j \leq n  \right]\, d\Pp_x^{(n)}(x_0, x_1,\ldots, x_n)\\
& \kern-2.5em = \int_{\{x\}\times E^n}
\Pp_x\left[  E_{n, m}^x \, \vert \, \xi_n^x=x_n  \right]\, d\Pp_x^{(n)}(x_0, x_1,\ldots, x_n) .
\end{align*}

But given any $y\in E$, since $\{\xi_n^x\}$ is a Markov chain,
\begin{align*}
\Pp_x\left[  E_{n, m}^x \, \vert \, \xi_n^x=y  \right] & = \Pp_x\left[  \xi_{n+1}^x\in E, \ldots,  \xi_{n+m}^x\in E \, \vert \, \xi_n^x=y  \right] \\
& =  \Pp_y\left[  \xi_{1}^y\in E, \ldots, \xi_{m}^y\in E   \right] = \pcal_{m}^\ast(y, E) \le \pcal_{m}^\ast(E) . 
\end{align*}

Combining this with the previous identity we have:
$$\pcal_{n+m}^\ast(x,E) \le \pcal_{m}^\ast(E) \, \Pp_x^{(n)}( \{x\}\times E^n) =  \pcal_{m}^\ast(E) \, \pcal_{n}^\ast(x, E) ,$$
and the conclusion follows by taking the supremum over $x\in E$.
\end{proof}

\bigskip

Consider now the prisoner's context:
three measurable sets 
$\Pz\subset \Po\subset \Pt\in\FF$,
the cell, the prison and the state, respectively.

\begin{proposition}
\label{abstract prison break}
Given $r>0$  assume that:
\begin{enumerate}
\item[(A1)] $\pcal_{n_0}(x,\Pz)\geq b_0>0$ for all $x\in \Pz$,
\item[(A2)] $\pcal_{n_1}(x,\Po)\ge 1-r/4$ for all $x\in \Po\setminus \Pz$,
\item[(A3)] $\pcal_{n_2} (x,\Pt)\ge 1-r/4$  for $x\in \Pt\setminus \Po$,
\item[(A4)] $\pcal_n^\ast(x, \Po\comp)\ge 1-r/4$ for all $x\in \Pt\comp$ and $n\geq 0$.
\end{enumerate}
If 
$(1-b_0)^{q} <r/4$   for some    $q \in\N$ 
then setting $ N:=q\, n_0+ n_1+  n_2$,
one has that for all $x\in\Sigma$, 
$$ \Pp_x\left[\; \exists \,  j\geq N, \; \xi^x_j\in \Po  \; \right] <r . $$
\end{proposition}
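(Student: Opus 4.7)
The plan is to define three successive stopping times recording when the walker escapes each of the nested sets, and then bound by a union bound the four possible ``failure modes'' that could keep the walker from being permanently out of $\Po$ after time $N$.

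Introduce
\[
\tau_1 := \min\{j\ge 0 : \xi_j^x \notin \Pz\},\quad
\tau_2 := \min\{j\ge \tau_1 : \xi_j^x \notin \Po\},\quad
\tau_3 := \min\{j\ge \tau_2 : \xi_j^x \notin \Pt\},
\]
(with the value $\infty$ if no such $j$ exists), so that $\tau_1\le\tau_2\le\tau_3$ and $\xi_{\tau_i}^x$ lies outside the corresponding set on $\{\tau_i<\infty\}$. Set
\[
B_1 := \{\tau_1 > q n_0\},\quad B_2 := \{\tau_2 > \tau_1 + n_1\},\quad
B_3 := \{\tau_3 > \tau_2 + n_2\},\quad B_4 := \{\exists\, j\ge\tau_3:\,\xi_j^x\in \Po\}.
\]
The key observation is that if none of the $B_i$ occurs, then $\tau_3 \le q n_0 + n_1 + n_2 = N$ and $\xi_j^x\notin\Po$ for all $j\ge\tau_3$, hence for all $j\ge N$. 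Consequently
$\{\exists j\ge N:\, \xi_j^x\in\Po\} \subseteq B_1\cup B_2\cup B_3\cup B_4$,
and it suffices to bound each $\Pp_x[B_i]$ by $r/4$.

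For $B_1$, if $x\notin\Pz$ then $\tau_1=0$ and $\Pp_x[B_1]=0$; if $x\in\Pz$ then (A1) gives $\pcal_{n_0}^\ast(\Pz)\le 1-b_0$, and iterating Proposition~\ref{P* sub-multip} yields $\Pp_x[B_1]\le \pcal_{qn_0}^\ast(\Pz) \le (1-b_0)^q < r/4$. The remaining three bounds are all instances of the strong Markov property applied at the respective stopping time, combined with a case split on which piece of the partition $\xi_{\tau_i}^x$ falls into: on $\{\tau_1<\infty\}$, either $\xi_{\tau_1}^x\notin\Po$ (whence $\tau_2=\tau_1$ and $B_2$ fails trivially) or $\xi_{\tau_1}^x\in\Po\setminus\Pz$, and in the latter case (A2) bounds the conditional probability of $B_2$ by $r/4$. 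The argument for $B_3$ is identical using (A3). For $B_4$, the strong Markov property at $\tau_3$ places us at some point of $\Pt\comp$; applying (A4) with $n$ arbitrary and then using continuity of probability along the decreasing sequence $\{\forall\,0\le j\le n,\,\xi_j\in \Po\comp\}$ shows the probability of ever returning to $\Po$ is at most $r/4$. Summing the four bounds gives $\Pp_x[\exists j\ge N:\, \xi_j^x\in\Po] < r$, with strictness inherited from the strict bound on $B_1$.

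The main (and essentially only) point requiring care is the rigorous application of the strong Markov property at $\tau_1,\tau_2,\tau_3$ together with the correct case analysis on the location of $\xi_{\tau_i}^x$; everything else reduces to unpacking definitions and invoking the hypotheses (A1)--(A4) verbatim.
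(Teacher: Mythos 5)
Your proposal is correct, and it follows a genuinely different route from the paper's. The paper's proof builds the escape probability \emph{multiplicatively}: first it bounds $\pcal_{q n_0}^\ast(\Pz) < (1-b_0)^q < r/4$ via Proposition~\ref{P* sub-multip}, then it chains (A2) and (A3) through Proposition~\ref{prop pcal multip} to obtain $\pcal_N(x,\Pt)\ge(1-r/4)^3$ for $x\in\Pt$, and finally it uses a stopping time at the first exit from $\Pt$ together with (A4) to conclude $\Pp_x[\forall j\ge N,\ \xi_j^x\in\Po\comp]\ge(1-r/4)^4>1-r$. Your proof instead decomposes the bad event into four explicit ``failure modes'' $B_1,\dots,B_4$ attached to the three successive escape stopping times $\tau_1,\tau_2,\tau_3$, bounds each $\Pp_x[B_i]\le r/4$ (strictly for $B_1$), and applies a union bound. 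Both arguments rely on the same two ingredients --- Proposition~\ref{P* sub-multip} for $B_1$/the cell, and the strong Markov property at an exit time for the remaining steps --- but you apply the strong Markov property directly and separately at $\tau_1,\tau_2,\tau_3$, thereby dispensing with Proposition~\ref{prop pcal multip} as an intermediate lemma (whose own proof is the stopping-time argument you invoke inline). The trade-off: the paper's multiplicative route yields the numerically slightly sharper estimate $1-(1-r/4)^4 < r$, while your union bound lands on the weaker but sufficient $4\cdot(r/4)=r$; conversely, your decomposition is more symmetric and makes the four hypotheses (A1)--(A4) map one-to-one onto the four events being controlled, which arguably makes the logic more transparent. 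The one point to flesh out if this were to replace the paper's proof is the rigorous statement of the strong Markov property you are using (the transition kernel $K$ is stationary, and $\tau_1\le\tau_2\le\tau_3$ are genuine stopping times, so this is standard), together with the case split on the location of $\xi_{\tau_i}^x$ --- as you yourself flag.
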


\begin{proof}
From assumption  (A1) we get
$\pcal_{n_0}^\ast(x,\Pz)<1-b_0$ for all $x\in\Pz$.
Taking the sup this implies that $\pcal_{n_0}^\ast(\Pz)<1-b_0$.
By Proposition~\ref{P* sub-multip},
$$ \pcal_{q\, n_0}^\ast(\Pz) < (1-b_0)^q<r/4 . $$
Thus 
$\pcal_{q\, n_0}(x,\Pz)\geq 1-r/4$ for all $x\in \Pz$.

Combining this fact with assumptions (A2)-(A3)  and applying Proposition~\ref{prop pcal multip} we get that for all $x\in \Pt$,
\begin{equation}
\label{last escape}
 \pcal_{N}(x,\Pt)\geq (1-r/4)^3 .
\end{equation}

Finally,  the claim in the proposition reduces to showing that
$$ \Pp\left[\; \xi^{x_0}_j \in  \Po\comp,\; \forall \,  j\geq N  \;  \right] \ge \left(1-\frac{r}{4}\right)^4 > 1-r $$
for all $x_0\in\Sigma$.

If $x_0\in \Pt\comp$ this follows from (A4) because
 $1-\frac{r}{4} > 1-r$.

Otherwise, let $x_0\in \Pt$ and consider the stopping time $\tilde{\j}\colon X\to\N$ defined by
$$ \tilde{\j}:= \min \{ l\geq 0\colon \xi^{x_0}_l\notin \Pt\} .$$
Define also the random variable
$\tilde{x}\colon X\to\Sigma$,
$\tilde{x}:= \xi^{x_0}_{\tilde{\j}}$. By construction
$\tilde{x}$ takes values in $\Pt\comp$. 
Then,  using  assumption (A4) and~\eqref{last escape},
\begin{align*}
\Pp\left[\, \xi^{x_0}_j \in  \Po\comp,\, \forall   j\geq  N \,  \right] 
&\geq  \Pp_{x_0}\left[ \tilde{\j}\leq N \, \text{ and for all }\,  j\geq 0,\; \xi^{\tilde{x}}_j\in \Po\comp \right]\\
&= \sum_{l=1}^N  \Pp_{x_0}\left[ \text{ for all }\, j\geq 0 ,\; \xi^{\tilde{x}}_j\in \Po\comp \, \vert\, \tilde{\j}=l \right]\, \Pp_{x_0}[\tilde{\j}=l] \\
&\geq \left(\inf_{l\in\N } \,\inf_{x\in \Pt\comp} \pcal_l^\ast(x,\Po\comp) \right)  \sum_{l=1}^N  \Pp_{x_0}[\tilde{\j}=l] \\
&=   \left( \inf_{l\in\N } \, \inf_{x\in \Pt\comp } \pcal_l^\ast(x,\Po\comp) \right)\, \pcal_N(x_0,\Pt)\\
&\geq (1-r/4)\, (1-r/4)^3 = (1-r/4)^4>1-r\;. 
\end{align*}

\end{proof}

\subsection*{The setting}
A  given probability vector $p=(p_1,\ldots, p_k)\in \R_+^k$
and a space of symbols $\Sigma=\{1,\ldots, k\}$ determine the Bernoulli measure $\Pp_p:=p^\Z$
in the space of sequences $X=\Sigma^\Z$, which  is invariant under the full shift map
$T\colon X\to X$.
Assume now that $A=(A_1,\ldots, A_k)\in\cocycles=\SL_2(\R)^k$ is a diagonali\-za\-ble cocycle with $L(A)>0$ and  $B=(B_1,\ldots, B_k)\in \Vscr_A$ is a nearby cocycle such that
$\rho(B)= \rho_-(B)>0$,
which henceforth will be denoted by $\rho_B\footnote{The case where $\rho(B)= \rho_+(B)>0$ reduces to the previous one applied to the inverse cocycle $B^{-1}$ and will not addressed here.}$.

Given $\hat x\in\Pp$, consider the random walk $\{ \xi^{\hat x}_n=\xi^{\hat x}_n(B)\colon X\to\Pp\}_{n\geq 0}$  defined by
$$ \xi^{\hat x}_n \{\omega_j\}_{j\in\Z}  :=
\hat B_{\omega_{n-1}}\,\ldots \, \hat B_{\omega_1}\, \hat B_{\omega_0}\, \hat x .$$

\smallskip

Let $C=C(A)$ be $\frac{10}{9}$ of the homonymous constant in Proposition~\ref{prop d(B, Diag) < rho(B)}. Take a diagonalizable cocycle   $D=(D_1,\ldots, D_k)\in\Diag$ which nearly minimizes the distance to $B$, say $d(B,D)<\frac{10}{9}\,d(B,\diags)$.
By Proposition~\ref{prop d(B, Diag) < rho(B)}, 
$d(B,D)\leq C \,\rho_B$ and $\rho_B\leq C\,d(A,B)$.
We can assume that $d(B,D)<d(A,B)$ for otherwise we would take $D=A$. Hence
\begin{equation}
\label{d(A,D)}
d(A,D)\leq d(A,B)+d(B,D)< 2\, d(A,B) . 
\end{equation} 
Figure~\ref{pic} illustrates the relative positions of  $A$, $B$ and $D$ in $\Vscr_A$.

Up to a conjugation  we may assume that  $D$ is  diagonal with
$$ D_i=\begin{bmatrix}
d_i & 0 \\ 0 & d_i^{-1}
\end{bmatrix}\qquad \text{ for }\; 1\leq i\leq k $$
and so $\hat e_+(D)=(1:0)$ and $\hat e_-(D)=(0:1)$.

We will be using the following two projective coordinate systems
$\psi_\pm:\Pp\to \R\cup\{\infty\}$ defined respectively by
$\psi_-(x:y):= {x/y}$ and $\psi_+(x:y):= {y/x}$. 
Since $D$ is diagonal we have the following. With respect to $\psi_+$, the projective point $\hat e_+ (D)$ corresponds to $0$, the projective point $\hat e_- (D)$ corresponds to $\infty$, while  with respect to $\psi_-$, the r\^oles are reversed. Moreover, $\psi_\pm \left( \Dscr_\pm(D, r) \right) =(-r, r)$,
where the projective cones $\Dscr_\pm(D, r)$ were defined in~\eqref{Dscr def}.

\begin{figure}[h]
\begin{center}
	\includegraphics[scale=2.0]{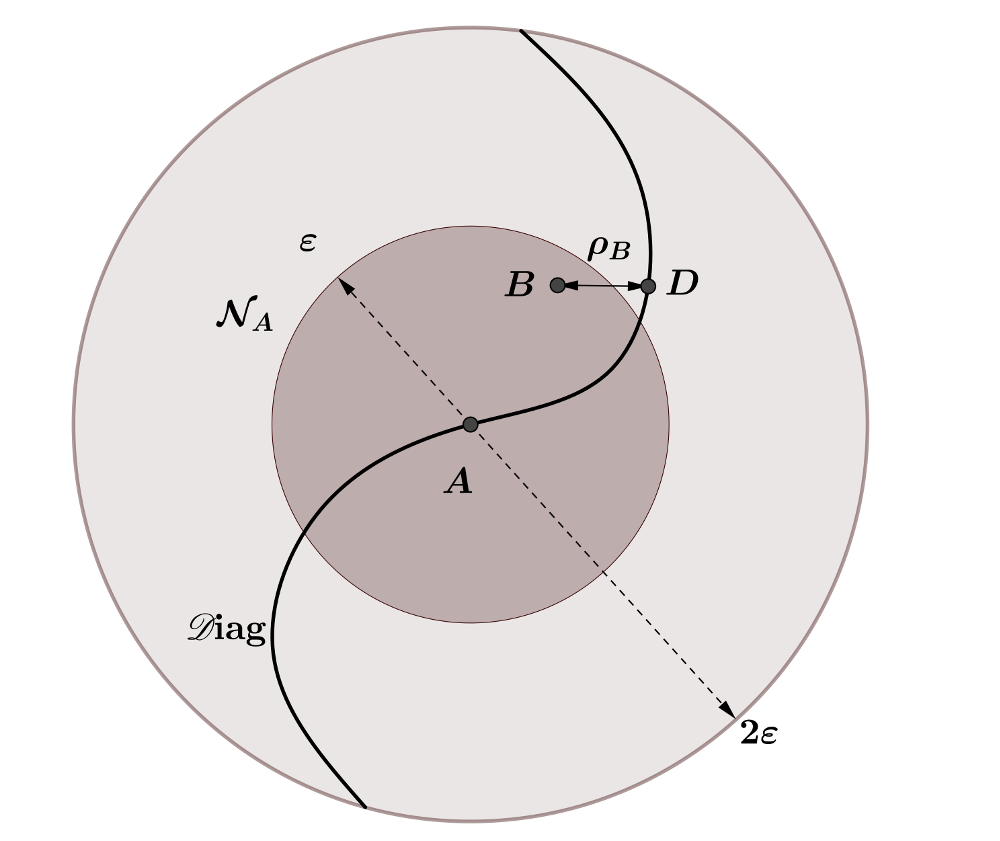}
\end{center}
\caption{Cocyles $A$, $B$ and $D$ in the neighborhood $\Vscr_A$ }
\label{pic}
\end{figure}


Proposition~\ref{N-rho comparison proposition} of the previous section, relating the measurements $N(B)$ and $\rho(B)$,
 will be proved through the  abstract prison break argument encapsulated in  Proposition~\ref{abstract prison break}.
Before specifying the sets $\Pz$, $\Po$ and $\Pt$,
and for clarity, we collect below a list of constants (depending only on $A$)  and conditions on the neighborhood $\Vscr_A$ that will be needed  in the argument (as well as to define the sets $\Sigma_i$). The following details may be skipped be  at first reading.

\smallskip

Besides the LE $L(A)$ of $A$, let  $\delta=\delta(A)$ be  the size of the cones in assumption N5, let  $L$ be the upper bound  in assumption N3 and let $C=C(A)$ be the constant introduced above, which is $\frac{10}{9}$ of the homonymous constant in Proposition~\ref{prop d(B, Diag) < rho(B)}.
Set the probability threshold
\begin{equation}
\label{def r}
r=r(A):= \frac{L(A)}{ 42 L(A) + 60 \log L }   
\end{equation} 
for the application of Proposition~\ref{abstract prison break}.
Let
\begin{equation}
\label{def M0} 
M_0:=  \frac{C\,e^{-\frac{5}{3}\, L(A)}}{1-e^{-\frac{5}{3}\, L(A)}}  + \frac{C}{1-e^{-\frac{5}{3}\, L(A)}}
 + 1  ,
\end{equation}
the parameter in Lemma~\ref{lemma d(Bj e-D, e-D)} below. 
For all $1\leq j\leq k$ set
\begin{align}
\label{def lambda tilde j}
\tilde{\lambda}_j&:= e^{2\,\log \sabs{ a_j } -\frac{L(A)}{3 }} ,\\
\label{def lambda * j}
 \lambda^\ast_j&:= e^{2\,\log \sabs{ a_j } -\frac{L(A)}{2 }} .
\end{align}
These families of numbers will be used to define two random walks
for comparison with $\xi^{\hat x}_n(B)$, see lemmas~\ref{hatBj>B*j} and~\ref{hatBj<B*j} below. Next choose $s=s(A)<\infty$ such that
\begin{align}
\label{def s}
s > \max \{ \,3\,(\tilde{\lambda}_j-\lambda_j^\ast)^{-1}\colon \,1\leq j\leq k \, \} ,\\
\label{def s1}
s > \max \{ \,(1+(\lambda_j^\ast)^{-1})\,((\lambda_j^\ast)^{-1} -\tilde{\lambda}_j^{-1})^{-1}\colon \,1\leq j\leq k \, \} . 
\end{align} 
A simple calculation gives
\begin{align*}
\lambda_j-\lambda^\ast_j \quad & \leq L^2 e^{-{L(A)\over 3}} (1- e^{-{L(A)\over 6}}) ,\\
(\lambda_j^\ast)^{-1} -\tilde{\lambda}_j^{-1})^{-1} &\leq  \frac{ L^2 e^{-{L(A)\over 3}}}{  e^{{L(A)\over 6}}-1 }  
\end{align*} 
and these bounds can be used to derive an explicit expression for $s(A)$. This  parameter $s$ is used in lemmas~\ref{hatBj>B*j} and~\ref{hatBj<B*j}.
Define also
\begin{equation}
\label{def hat c}
\hat c=\hat c(A):= \frac{1}{18}\,
\left(  \frac{L(A)}{ 2\log L + L(A)/2} 
\right)^2.
\end{equation}
The parameter $\hat c$ will be the rate of exponential decay in Hoeffding's
inequality for the random walk determined by the numbers $\lambda^\ast_j$. Next let $l_0=l_0(A)\in\N$ be the smallest integer such that
\begin{equation}
\label{def l0}
  \frac{e^{-\hat c\, l_0}}{ 1-e^{-\hat c}} 
<\frac{r}{8}  
\end{equation}
and let
\begin{equation}
\label{def kappa}
\kappa=\kappa(A):= l_0 \,\max\{-\log \lambda^\ast_j \colon 1\leq j\leq k \} .
\end{equation}
These parameters, $\hat c$ and $\kappa$, appear in the proof
of Proposition~\ref{step 2 prop}.
Finally set
\begin{align}
\label{def M}
M=M(A)&:= e^{\kappa(A)}\, s(A)\, M_0(A)   .
\end{align}


We now impose several assumptions that will further restrict
the size of the  neighborhood $\Vscr_A$. In all statements N6-N11,
 $B$ is an arbitrary cocycle in $\Vscr_A$  while $D\in\diags$ is taken, as above, near $B$.
 When we say that a certain assumption holds we mean that there exists a sufficiently small neighborhood $\Vscr_A$ 
 of $A$ such that all cocycles $B\in\Vscr_A$ satisfy that assumption.

\begin{enumerate} 
\item[N6:] $M\,(\rho_B)^{1\over 2}<\delta$. Holds because  $\rho_B\leq C\,d(A,B)$ (Proposition~\ref{prop d(B, Diag) < rho(B)}).
\item[N7:] $d(B^{-1},D^{-1}) \leq C \,\rho_B$ and  $L(D)>\frac{9}{10}\, L(A)$.
The following inequality 
$$\norm{B^{-1}-D^{-1}}_\infty\leq L^2\norm{B-D}_\infty$$
shows that if we replace $C$ by $C L^2$ then the first inequality holds without changing $\Vscr_A$. Because $D$ and $A$ are diagonalizable cocycles, in view of Remark~\ref{continuity LE in Diag}, the second assumption on $L(D)$ holds as well.
\item[N8:] $\sabs{d_i}\geq e^{L(A)/2}$ for all  
$i\in \Sigma_H(A)$, where $D=(D_1,\ldots, D_k)$ and some
coordinates are fixed where
$ D_i=\begin{bmatrix}
d_i & 0 \\ 0 & d_i^{-1}
\end{bmatrix}$. This holds by~\eqref{d(A,D)} because
 $\sabs{a_i}\geq e^{L(A)}$.

\item[N9:] $\Dscr_\pm(D,\delta)\subset \Dscr_\pm(A,2\delta)$.
Holds because $D$ is near $A$, by~\eqref{d(A,D)}.
\end{enumerate}

Given $i\in \Sigma_H(A)$, 
one has $D_i^{ \pm 1}\Dscr_\pm(\delta)\subset  \Dscr_\pm(e^{- L(A)}\delta )$. For instance if  $\hat x\in \Dscr_-(\delta)$,
using N8,
\begin{align*}
d( \hat D_i^{-1} \hat x, \hat e_-(D))&= d( \hat D_i^{-1} \hat x, \hat D_i^{-1}  \hat e_-(D)) =
\frac{d(\hat x, \hat e_-(D))}{\norm{D_i^{-1} x}\,\norm{D_i^{-1} e_-(D)}}\\
& =
\frac{d(\hat x, \hat e_-(D))}{\norm{D_i^{-1} x}\,\sabs{d_i}}\leq \delta\,
\frac{d(\hat x, \hat e_+(D))}{\norm{D_i^{-1} x}\,\sabs{d_i}}\\
&\leq \frac{\delta}{\sabs{d_i}^2}\, \frac{d(\hat x, \hat e_+(D))}{\norm{D_i^{-1} x}\,\sabs{d_i}^{-1}} \leq \frac{\delta}{\sabs{d_i}^2}\, \frac{d(\hat x, \hat e_+(D))}{\norm{D_i^{-1} x}\,\norm{D_i^{-1} e_+(D)} }\\
&= \frac{\delta}{e^{L(A)}}\, d(\hat D_i^{-1}\hat x, \hat e_+(D))
\end{align*}
which implies that $\hat D_i^{-1}\hat x\in \Dscr_-( e^{- L(A)}\,\delta)$.

\begin{enumerate}
\item[N10:] $\hat B_i^{\pm 1}\Dscr_\pm(D,\delta)\subset \Dscr_\pm(D,\delta)$ for all  
$i\in \Sigma_H(A)$. This follows 
by continuity from the previous considerations.
\item[N11:] $\log(\rhoB^{-1})> \max\{2\,\kappa, l_0\,L(A)\}$.
This holds by~\eqref{d(A,D)}. Note that $\rho_B\searrow  0$
as the size of the neighborhood $\Vscr_A$ decreases.
\end{enumerate}

\smallskip

Next consider the (projective arc) sets w.r.t. the cocycle $D$
$$\Pz:= \Dscr_-( M \rhoB) , \  \Po:= \Dscr_-(\delta^{-1}) \ \text{ and } \ \Pt:=  \Dscr_-((M \rhoB)^{-1}) .$$
 By  N6,  $M\,\rho_B<\delta$  which implies that
$$ \Pz\subset \Po \subset \Pt \subset \Pp.$$


In the prisoner's metaphor these sets
are the `cell', the `prison' and the `state'.
Throughout the rest of this section the projective cones
$\Dscr_\pm(a)$  always refer to the diagonal cocycle $D$.

\subsection*{Establishing Assumption (A1)}

\begin{proposition} 
\label{step 1 prop}
There exist constants $b_0>0$ and $n_0\in\N$,
depending on  $A$ and on the probability vector $p$, such that for all
$\hat x\in \Pz$,
$$ \Pp_{\hat x}\left[ \, \exists \, j\leq n_0\, \text{ such that }\, \xi^{\hat x}_j(B) \notin \Pz\, \right]\geq b_0. $$
\end{proposition}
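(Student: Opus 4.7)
The idea is to exhibit, for each $\hat x \in \Pz$, a specific finite word $\omega_0 \omega_1 \cdots \omega_{n_0-1} \in \Sigma^{n_0}$ of bounded length $n_0 = n_0(A)$ whose associated deterministic trajectory $\hat B_{\omega_{n_0-1}} \cdots \hat B_{\omega_0} \hat x$ lies outside $\Pz$. Since there are only finitely many such words and $p_j > 0$ for each $j$, the event that the random walk traces this word has probability at least $b_0 := (\min_j p_j)^{n_0} > 0$, which gives the required bound.

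Let $i_0 \in \Sigma_H(A)$ and $j_0 \in \{1,\ldots,k\}$ be indices realizing the maximum in the definition of $\rho_-(B)$, so $d(\hat B_{j_0} \hat e_-(B_{i_0}), \hat e_-(B_{i_0})) = \rho_B$. The main geometric ingredients are: (i) $d(\hat e_-(B_{i_0}), \hat e_-(D)) \less \rho_B$, obtained from $d(B,D) \less \rho_B$ (Proposition~\ref{prop d(B, Diag) < rho(B)}) together with the analytic dependence of eigendirections of the hyperbolic matrix $D_{i_0}$; (ii) $\hat x \in \Pz$ gives $d(\hat x, \hat e_-(D)) \less M\rho_B$ and hence, via (i), $d(\hat x, \hat e_-(B_{i_0})) \less M \rho_B$; (iii) $\hat B_{i_0}$ fixes $\hat e_-(B_{i_0})$ and expands projective distances by a factor $\geq e^{L(A)}$ in a neighborhood, by N8 and the standard identity $d(\hat g \hat p, \hat g \hat q) = d(\hat p, \hat q) \norm{p}\norm{q}/(\norm{gp}\norm{gq})$; (iv) $\mathrm{Lip}(\hat B_{j_0}) \leq \norm{B_{j_0}}^2 \leq L^2$ by N3; (v) forward iteration of $\hat B_{i_0}$ keeps the trajectory inside $\Dscr_-(D, \delta)$ by N10, so the expansion from (iii) remains uniform throughout.

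Now I split into two cases. \emph{Case 1:} $d(\hat x, \hat e_-(B_{i_0})) \geq \rho_B/(2L^2)$. Using the word $(i_0, i_0, \ldots, i_0)$ of length $m$, ingredient (iii) yields
\[ d(\hat B_{i_0}^m \hat x, \hat e_-(B_{i_0})) \geq e^{m L(A)} \cdot \frac{\rho_B}{2L^2}; \]
choosing $m \geq L(A)^{-1}\log(6 L^2 M)$ makes the right-hand side exceed $3M\rho_B$, and then (i) plus the triangle inequality imply $\hat B_{i_0}^m \hat x \notin \Pz$. \emph{Case 2:} $d(\hat x, \hat e_-(B_{i_0})) < \rho_B/(2L^2)$. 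Prepend the symbol $j_0$; ingredient (iv) and the triangle inequality give
\[ d(\hat B_{j_0}\hat x, \hat e_-(B_{i_0})) \geq \rho_B - L^2 \cdot \frac{\rho_B}{2L^2} = \frac{\rho_B}{2}, \]
so $\hat B_{j_0}\hat x$ falls into Case 1 and the previous argument applies. In both cases a word of length at most $n_0 := \lceil L(A)^{-1}\log(6L^2 M)\rceil + 1$ suffices.

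The main obstacle I foresee is translating the bound $d(\hat y, \hat e_-(B_{i_0})) \gtrsim M\rho_B$ for the final iterate $\hat y$ into the ratio-type exit condition $\hat y \notin \Dscr_-(D, M\rho_B)$, which involves a different reference point $\hat e_-(D)$ and a ratio with $d(\hat y, \hat e_+(D))$. This discrepancy is absorbed by N9 (so the trajectory stays in $\Dscr_-(A, 2\delta)$ and $d(\hat y, \hat e_+(D))$ is bounded below by an absolute constant) and by fitting the perturbation error $d(\hat e_-(B_{i_0}), \hat e_-(D)) \less \rho_B$ into the slack allowed by the choice of $M$ in~\eqref{def M}.
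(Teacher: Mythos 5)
Your argument follows essentially the same route as the paper's: pick indices $i_0 \in \Sigma_H(A)$ and $j_0$ realizing $\rho_B$, split into two cases according to whether $\hat x$ is very close to $\hat e_-(B_{i_0})$ with a threshold of order $\rho_B / L^2$, prepend $\hat B_{j_0}$ in the close case to push $\hat x$ away from $\hat e_-(B_{i_0})$ by at least $\sim \rho_B/L^2$, and then iterate $\hat B_{i_0}$ a bounded number of times to blow the distance up past $M\rho_B$, concluding with the word $(j_0, i_0, \ldots, i_0)$ or $(i_0, \ldots, i_0)$. The paper uses threshold $\rho_B/(L^2+1)$ and expansion factor $e^{5L(A)/3}$ from Lemma~\ref{contraction factor near Dscr+-}, while you use $\rho_B/(2L^2)$ and $e^{L(A)}$; these are inessential differences.

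There is, however, one genuine error in your ingredient (v). Assumption N10 reads $\hat B_i^{\pm 1}\Dscr_\pm(D,\delta)\subset \Dscr_\pm(D,\delta)$, which unpacks to $\hat B_i\Dscr_+(D,\delta)\subset\Dscr_+(D,\delta)$ and $\hat B_i^{-1}\Dscr_-(D,\delta)\subset\Dscr_-(D,\delta)$. The second statement is invariance of $\Dscr_-(D,\delta)$ under \emph{backward} iteration of $\hat B_{i_0}$; it does not give forward invariance. Indeed it cannot: a projective map that expands near $\hat e_-(D)$ must eventually send points of $\Dscr_-(D,\delta)$ out of that cone, which is exactly what the argument exploits. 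The step you were trying to justify (that the expansion estimate applies at each of the $m$ iterates) is resolved by a dichotomy instead: either all iterates $\hat B_{i_0}^l\hat x$, $l<m$, remain in $\Dscr_-(D,\delta)$ and the expansion estimate applies throughout, or some iterate leaves $\Dscr_-(D,\delta)$ at a time $l<m$ — but then, since $\Pz\subset\Dscr_-(D,\delta)$ by N6, that iterate is already outside $\Pz$, which is the required conclusion at an earlier time $l\le n_0$. (The paper glosses over this dichotomy too, but it does not invoke N10 for a false claim as you do.) Replace (v) with this observation. Finally, the obstacle you flag at the end is handled more simply: the projective metric is bounded by $1$, so $\frac{d(\hat y,\hat e_-(D))}{d(\hat y,\hat e_+(D))}\ge d(\hat y,\hat e_-(D))$, and the numerator bound $d(\hat y,\hat e_-(D))\ge M\rho_B$ alone already forces $\hat y\notin\Pz$; no lower bound on $d(\hat y,\hat e_+(D))$ is needed.
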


\smallskip

The proof of this proposition requires some lemmas.
 
\begin{remark} 
\label{d(g x, g y)=L^2 d(x,y)}
Given $g\in \GL_2(\R)$ and unit vectors
$x,y\in\R^d$,
$$ d(\hat g\, \hat x, \hat g \, \hat y )
=\frac{\norm{g x \wedge g y}}{\norm{g x}\, \norm{g y}}
=\frac{\sabs{\det g} \norm{x \wedge y}}{\norm{g x}\, \norm{g y}}
=  \frac{\sabs{\det g}}{\norm{g x}\,\norm{g y}}\,
d(\hat x, \hat y) . $$
\end{remark}

\begin{lemma} 
\label{contraction factor near Dscr+-}
Given $i\in \Sigma_H(A)$ for all $\hat x, \hat y\in \Dscr_\pm(\delta)$,
$$ d\left( B_i^{\pm 1} \,\hat x, B_i^{\pm 1} \, \hat y \right)\leq e^{-\frac{5}{3}\, L(A)} \, d(\hat x, \hat y)    .$$
\end{lemma}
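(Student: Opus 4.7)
The plan is to first compute for the nearby diagonal cocycle $D$ (in the coordinates in which $D_i$ is literally diagonal with entries $d_i,d_i^{-1}$) and then transfer the estimate to $B$ by a crude perturbation, using that $d(B,D)\leq C\,\rho_B$ can be made arbitrarily small by further shrinking $\Vscr_A$. I treat only the ``$-$'' case, as the ``$+$'' case is entirely symmetric (with $B_i,D_i$ in place of their inverses and $\Dscr_+$ in place of $\Dscr_-$).

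Step 1. Apply Remark~\ref{d(g x, g y)=L^2 d(x,y)} to $B_i^{-1}\in\SL_2(\R)$ and choose unit representatives $x,y$ of $\hat x,\hat y\in\Dscr_-(D,\delta)$. Since $\sabs{\det B_i^{-1}}=1$, one gets
$$d(\hat B_i^{-1}\hat x,\hat B_i^{-1}\hat y)\;=\;\frac{d(\hat x,\hat y)}{\norm{B_i^{-1}x}\,\norm{B_i^{-1}y}}.$$
Hence the lemma reduces to the uniform lower bound $\norm{B_i^{-1}x}\geq e^{5L(A)/6}$ for every unit $x$ with $\hat x\in\Dscr_-(D,\delta)$.

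Step 2. Derive the bound first for $D_i^{-1}$. In the chosen coordinates $\hat e_-(D)=(0:1)$ and $\hat e_+(D)=(1:0)$, so the definition \eqref{Dscr def} of $\Dscr_-(D,\delta)$ reads $\abs{x_1}<\delta\,\abs{x_2}$ for unit $x=(x_1,x_2)$, giving $x_2^2\geq 1/(1+\delta^2)$. Therefore
$$\norm{D_i^{-1}x}^2\;=\;\abs{d_i}^{-2}x_1^2+\abs{d_i}^2 x_2^2\;\geq\;\frac{\abs{d_i}^2}{1+\delta^2}.$$
Now $i\in\Sigma_H(A)$ means $\abs{a_i}\geq e^{L(A)}$, and $D$ is close to $A$ by \eqref{d(A,D)}, so a further shrinking of $\Vscr_A$ ensures $\abs{d_i}\geq e^{11L(A)/12}$ (this strengthens N8 at the cost of one additional constraint on $\Vscr_A$). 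Choosing $\delta$ small enough that $\sqrt{1+\delta^2}<e^{L(A)/60}$ then yields $\norm{D_i^{-1}x}\geq e^{9L(A)/10}$.

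Step 3. Pass to $B_i^{-1}$ perturbatively. By N3 and the identity $B_i^{-1}-D_i^{-1}=B_i^{-1}(D_i-B_i)D_i^{-1}$,
$$\norm{B_i^{-1}-D_i^{-1}}\;\leq\;L^2\,\norm{B_i-D_i}\;\leq\;L^2 C\,\rho_B,$$
using $d(B,D)\leq C\rho_B$ from Proposition~\ref{prop d(B, Diag) < rho(B)}, and $\rho_B\to 0$ as $\Vscr_A$ shrinks. Thus
$$\norm{B_i^{-1}x}\;\geq\;\norm{D_i^{-1}x}-L^2 C\,\rho_B\;\geq\; e^{5L(A)/6}$$
for $\Vscr_A$ small enough. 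Multiplying the corresponding bounds for $\hat x$ and $\hat y$ gives $\norm{B_i^{-1}x}\,\norm{B_i^{-1}y}\geq e^{5L(A)/3}$, which by Step~1 proves the lemma.

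The main obstacle is purely bookkeeping: one must shrink $\Vscr_A$ enough to absorb simultaneously the cone width factor $\sqrt{1+\delta^2}$, the deviation $\abs{d_i}-\abs{a_i}$ inherited from $d(A,D)$, and the perturbation $\norm{B_i-D_i}\leq C\rho_B$, so that the exponent $5/3$ (rather than the pristine exponent $2$ one would get directly for $A$) is still attained with strict inequality. The value $5/3$ is chosen precisely to leave enough slack for these three independent error sources while only demanding a single further reduction of $\Vscr_A$ beyond the conditions N0--N11 already listed.
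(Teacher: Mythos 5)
Your proof is correct but takes a genuinely different route from the paper's. Both begin with the same reduction, via Remark~\ref{d(g x, g y)=L^2 d(x,y)}, to the lower bound $\norm{B_i^{\mp 1}x}\geq e^{5L(A)/6}$ for unit representatives of $\hat x\in\Dscr_\pm(\delta)$, but they obtain that bound in quite different ways. The paper first uses N5 together with N9 to bound $\norm{B_i v}$ from \emph{above} by $e^{-5L(A)/6}$ for all $\hat v\in\Dscr_-(\delta)$, and then invokes the cone-invariance N10 (which guarantees $B_i^{-1}\hat x\in\Dscr_-(\delta)$) to convert that upper bound into the desired lower bound through the identity $\norm{B_i\left(B_i^{-1}x/\norm{B_i^{-1}x}\right)}=1/\norm{B_i^{-1}x}$. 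You instead compute $\norm{D_i^{-1}x}$ explicitly in the diagonalizing coordinates for $D$ and transfer the estimate to $B_i^{-1}$ by perturbation, using $\norm{B_i^{-1}-D_i^{-1}}\lesssim\rho_B$. Your route is more elementary and bypasses N10 entirely, but at the price of strengthening N8 to $\abs{d_i}\geq e^{11L(A)/12}$ and imposing the extra requirement $\sqrt{1+\delta^2}<e^{L(A)/60}$ on $\delta$; the paper's route needs no new constraints here because N5, N9 and N10 were formulated earlier precisely to package this perturbative bookkeeping once, so that later lemmas stay short. Both arguments are sound.
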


\begin{proof}
Given $i\in \Sigma_H(A)$, by N5 and N9, for any $\hat v\in \Dscr_-(\delta)$,
$$ \norm{B_i\,v} \leq e^{-\log \sabs{a_i} +\frac{L(A)}{12}}
<  e^{-\frac{5}{6}\,L(A)} . $$
Because of N10,
we  have for all $\hat v\in \Dscr_-(\delta)$,
$$ \norm{B_i^{-1}\,v} >  e^{\frac{5}{6}\,L(A)}\, \norm{B_i B_i^{-1}\, v} 
\geq   e^{\frac{5}{6}\,L(A)} $$
which in turn implies that for all $\hat x, \hat y\in\Dscr_-(\delta)$
$$ d( \hat B_i^{-1}\,\hat x, \hat B_i^{-1}\,\hat y)=\frac{d(\hat x,\hat y)}{\norm{ B_i^{-1} x}\, \norm{B_i^{-1} y}}\leq e^{-\frac{5}{3}\,L(A)}\, d(\hat x,\hat y) .$$
This proves one inequality. The other is analogous.
\end{proof}

\begin{lemma} \label{lemma d(Bj e-D, e-D)}
For all\,  $1\leq j\leq k$,
\begin{equation*}
d(\hat B_j^{\pm 1}\,\hat e_\pm (D), \hat e_\pm(D))\leq M_0\,\rhoB. 
\end{equation*} 
\end{lemma}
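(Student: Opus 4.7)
The plan is to anchor $\hat e_\pm(D)$ to the eigendirections of a nearby hyperbolic matrix $B_{i_0}$ with $i_0 \in \Sigma_H(A)$, exploiting the invariant-cone contraction provided by assumptions N1, N8, N10 together with Lemma~\ref{contraction factor near Dscr+-}. I will outline the $+$ case; the $-$ case is completely dual, replacing $B_{i_0}$ by $B_{i_0}^{-1}$ and $\Dscr_+(D,\delta)$ by $\Dscr_-(D,\delta)$.

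First I would fix any $i_0 \in \Sigma_H(A)$ and note that $\hat B_{i_0}$ acts on the invariant cone $\Dscr_+(D,\delta)$ (by N10) as a contraction of rate $e^{-\frac{5}{3}L(A)}$, by Lemma~\ref{contraction factor near Dscr+-}; since $B_{i_0}$ is hyperbolic (N1), $\hat e_+(B_{i_0})$ is its unique attracting fixed point there. A telescoping/geometric-series estimate applied to the orbit $\hat B_{i_0}^n \hat e_+(D)$ then gives
\begin{equation*}
d(\hat e_+(D),\hat e_+(B_{i_0})) \le \frac{d(\hat e_+(D),\hat B_{i_0}\hat e_+(D))}{1-e^{-\frac{5}{3}L(A)}}.
\end{equation*}
Because $\hat D_{i_0}$ fixes $\hat e_+(D)$, the numerator equals $d(\hat D_{i_0}\hat e_+(D),\hat B_{i_0}\hat e_+(D))$, which the basic projective-distance inequality bounds by $\|B_{i_0}-D_{i_0}\|/\|D_{i_0}\, e_+(D)\| = \|B_{i_0}-D_{i_0}\|/|d_{i_0}|$. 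By N8 one has $|d_{i_0}|\ge e^{L(A)/2}\ge 1$, and by the choice of $D$ near $B$ together with Proposition~\ref{prop d(B, Diag) < rho(B)} one has $\|B-D\|_\infty\le C\rho_B$. Hence
\begin{equation*}
d(\hat e_+(D),\hat e_+(B_{i_0}))\ \le\ \frac{C}{1-e^{-\frac{5}{3}L(A)}}\,\rho_B.
\end{equation*}

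Next, for each $1\le j\le k$ I would apply the triangle inequality
\begin{align*}
d(\hat B_j\hat e_+(D),\hat e_+(D)) \le \ & d(\hat B_j\hat e_+(D),\hat B_j\hat e_+(B_{i_0})) + d(\hat B_j\hat e_+(B_{i_0}),\hat e_+(B_{i_0})) \\
& + d(\hat e_+(B_{i_0}),\hat e_+(D)).
\end{align*}
The middle term is at most $\rho_+(B)\le\rho(B)=\rho_B$ straight from the definition of $\rho$, and the third was just estimated. For the first term I would use Lemma~\ref{contraction factor near Dscr+-} (contraction rate $e^{-\frac{5}{3}L(A)}$) when $j\in\Sigma_H(A)$, and the crude Lipschitz estimate $\mathrm{Lip}(\hat B_j)\le\|B_j^{-1}\|^2\le L^2$ from N3 otherwise; either way, the first term is a constant multiple of the third. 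Collecting the three contributions yields a bound of exactly the shape
\begin{equation*}
d(\hat B_j\hat e_+(D),\hat e_+(D))\le \left(\frac{C\,e^{-\frac{5}{3}L(A)}}{1-e^{-\frac{5}{3}L(A)}}+\frac{C}{1-e^{-\frac{5}{3}L(A)}}+1\right)\rho_B = M_0\,\rho_B,
\end{equation*}
after absorbing the $L$-dependent factors into $C$. The hard part is ensuring a uniform bound on the first triangle-inequality term for $j\notin\Sigma_H(A)$, where $\hat B_j$ need not preserve or contract the cone $\Dscr_+(D,\delta)$; this is resolved by the coarse Lipschitz bound via N3, at the cost of letting the $L$-dependence of Proposition~\ref{proximity prop}'s constant be propagated through $C$, which does not affect the structural form of $M_0$.
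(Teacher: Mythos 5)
Your proof follows essentially the same route as the paper's: the same triangle-inequality decomposition through $\hat e_\pm(B_{i_0})$ with $i_0\in\Sigma_H(A)$, and the intermediate bound $d(\hat e_\pm(D),\hat e_\pm(B_{i_0}))\le C\,(1-e^{-\frac{5}{3}L(A)})^{-1}\rho_B$, which you derive by a geometric-series fixed-point argument while the paper does a one-step algebraic rearrangement using that $\hat B_{i_0}$ and $\hat D_{i_0}$ fix $\hat e_\pm(B_{i_0})$ and $\hat e_\pm(D)$ respectively; this is a cosmetic variation. The one substantive departure is your handling of the first triangle term for $j\notin\Sigma_H(A)$: you correctly observe that Lemma~\ref{contraction factor near Dscr+-} gives the contraction rate $e^{-\frac{5}{3}L(A)}$ only for hyperbolic indices, and you fall back on the coarse Lipschitz bound $\mathrm{Lip}(\hat B_j)\le\norm{B_j^{-1}}^2\le L^2$ otherwise. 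This actually patches a point that the paper's own proof glosses over, since the displayed chain in the paper applies Lemma~\ref{contraction factor near Dscr+-} to an arbitrary index $j$. The price, however, is that your final constant becomes $\frac{C(L^2+1)}{1-e^{-\frac{5}{3}L(A)}}+1$, which is strictly larger than the $M_0$ defined in~\eqref{def M0}; saying you can ``absorb the $L$-dependent factors into $C$'' is not quite right, because $C$ is the fixed constant of Proposition~\ref{prop d(B, Diag) < rho(B)} and $M_0$ is computed from it. The honest conclusion is that the lemma holds with a (slightly larger) constant $M_0'=M_0'(A)$, which is entirely harmless: every downstream use of $M_0$ only requires some finite constant depending on $A$, and in the inequality constraining $s$ in Lemma~\ref{hatBj>B*j} the factor $M_0\rho_B$ cancels out anyway.
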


\begin{proof}
Because of N10, which   applies to the cocycle  $A$ as well, we have
$\hat e_\pm(A)\in \Dscr_\pm(\delta)$  and $\hat e_\pm(B_i)\in \Dscr_\pm(\delta)$ for all $i\in\Sigma_H(A)$.  We claim that for every $i\in\Sigma_H(A)$,
\begin{equation}
\label{d(Bi e-Bi, e-D)}
d( \hat e_\pm(B_i), \hat e_\pm(D))
\leq  \frac{C}{1-e^{-\frac{5}{3}\, L(A)}}\, \rhoB  . 
\end{equation} 
By Lemma~\ref{contraction factor near Dscr+-}
we have
\begin{align*}
d( \hat e_\pm(B_i), \hat e_\pm(D)) &\leq
d( \hat B_i^{\pm 1}\,  \hat e_\pm(B_i), \hat B_i^{\pm 1}\,  \hat e_\pm(D)) + 
d(\hat B_i^{\pm 1}\, \hat e_\pm(D),   \hat D_i^{\pm 1}\,\hat e_\pm(D)  )\\
&\leq  e^{-\frac{5}{3}\, L(A)}\,d( \hat e_\pm(B_i), \hat e_\pm(D)) +
\norm{B^{\pm 1}-D^{\pm 1}}_\infty,
\end{align*}
which implies by N7 that
$$ (1-e^{-\frac{5}{3}\, L(A)})\,d( \hat e_\pm(B_i), \hat e_\pm (D))\leq  C\,\rhoB  $$
and proves the claim.

Therefore,  using~\eqref{d(Bi e-Bi, e-D)},
\begin{align*}
d( \hat B_j^{\pm 1} \, \hat e_\pm(D), \hat e_\pm(D)) &\leq
d( \hat B_j^{\pm 1} \, \hat e_\pm(D) , \hat B_j^{\pm 1} \,  \hat e_\pm(B_i))\\
&\quad +
d( \hat B_j^{\pm 1} \,  \hat e_\pm(B_i),  \hat e_\pm(B_i))+
d( \hat e_\pm(B_i),   \hat e_\pm(D)  )\\
&\leq e^{-\frac{5}{3}\, L(A)}  \, d(  \hat e_\pm(D) ,  \hat e_\pm(B_i))  + \rhoB +
\frac{C}{1-e^{-\frac{5}{3}\, L(A)}}\,\rhoB ,\\
&\leq \frac{C\, e^{-\frac{5}{3}\, L(A)}}{1-e^{-\frac{5}{3}\, L(A)}}\,\rhoB  + \rhoB +
\frac{C}{1-e^{-\frac{5}{3}\, L(A)}}\,\rhoB =M_0\, \rhoB.
\end{align*}
\end{proof}

\begin{proof}[Proof of Proposition~\ref{step 1 prop}]
Fix $i\in\Sigma_H(A)$ and $1\leq j\leq k$
such that
$$ \rhoB = d(\hat B_j\, \hat e_-(B_i), \hat e_-(B_i) ) .
$$
Set $b_0= p_i^{n_0}\, p_j >0$, where ${n_0}\in\N$ is large enough but yet to be chosen depending only on $A$. We claim that for any $\hat x\in\Pz$
either $\hat B_i^{n_0}\, \hat B_j\,\hat x\notin\Pz$,
if $\hat x$ is close enough to $\hat e_-(B_i)$, or else
$\hat B_i^{n_0}\, \hat x\notin\Pz$ otherwise.
In either case the probability of these events is at least  $b_0$.

Next we need to explain how to fix ${n_0}\in\N$, depending on $A$, so that one of the above alternative claims  holds.

Given $\hat x\in \Pz$, the condition 
$d(\hat x, \hat e_-(B_i))<\frac{\rhoB}{L^2+1}$
describes the first case, in which we have
\begin{align*}
d( \hat B_j\,\hat x, \hat e_-(B_i)) &\geq 
d( \hat B_j\,\hat e_-(B_i), \hat e_-(B_i))
- d(  \hat B_j\,\hat e_-(B_i), \hat B_j\,\hat x)\\
&\geq \rhoB - L^2\, d(   \hat e_-(B_i),  \hat x)\geq
\rhoB - L^2\,\frac{\rhoB}{L^2+1} = \frac{\rhoB}{L^2+1}.
\end{align*}
Note that by  N3 and Remark~\ref{d(g x, g y)=L^2 d(x,y)}, 
$ d(\hat B_j\,\hat x, \hat B_j\,\hat y)\leq L^2\, d(\hat x, \hat y) $ 
for all $\hat x, \hat y\in \Pp$.
The previous inequality  shows that for any $\hat x\in\Pz$,
either $d(\hat x, \hat e_-(B_i))\geq \frac{\rhoB}{L^2+1}$
or else 
$d(\hat B_j\, \hat x, \hat e_-(B_i))\geq \frac{\rhoB}{L^2+1}$ and hence reduces the proof to the first case.
Assuming $d(\hat x, \hat e_-(B_i))\geq \frac{\rhoB}{L^2+1}$ we have to find ${n_0}\in\N$ such that
\begin{equation}
\label{d(Bi^q x, ei)}
d(\hat B_i^{n_0}\, \hat x, \hat e_-(B_i))\geq M'\,\rhoB
\end{equation}
with $M':= M+\frac{C}{1-e^{-\frac{5}{3}\, L(A)}}$. 
Using~\eqref{d(Bi e-Bi, e-D)} this  
implies  
\begin{align*}
\frac{d(\hat B_i^{n_0}\, \hat x, \hat e_-(D))}{d(\hat B_i^{n_0}\, \hat x, \hat e_+(D))} &\geq d(\hat B_i^{n_0}\, \hat x, \hat e_-(D))  \geq 
d(\hat B_i^{n_0}\, \hat x, \hat e_-(B_i)) - d(\hat e_-(B_i), \hat e_-(D))\\
&\geq M'\,\rhoB -\frac{C}{1-e^{-\frac{5}{3}\, L(A)}}\,\rhoB = M\,\rhoB 
\end{align*}
thus proving that $\hat B_i^{n_0}\, \hat x\notin \Pz$.

To finish, we   find ${n_0}\in\N$ such that~\eqref{d(Bi^q x, ei)} holds.
Since by N6 $\Pz\subset \Dscr_-(\delta)$,  using Lemma~\ref{contraction factor near Dscr+-} we have that
\begin{align*}
d(\hat B_i^{n_0}\, \hat x, \hat e_-(B_i))\geq
e^{ \frac{5\, {n_0}}{3}\,L(A)}\,d(  \hat x, \hat e_-(B_i))
\geq e^{ \frac{5\, {n_0}}{3}\,L(A)}\, \frac{\rhoB}{L^2+1}
\geq M'\,\rhoB
\end{align*}  
provided we take
${n_0} > \frac{3}{5}\,\frac{\log [M'\,(L^2+1)]}{L(A)}$.
Thus  we can pick ${n_0}\in\N$ depending only
on $A$ such that~\eqref{d(Bi^q x, ei)} holds
whenever $d(\hat x, \hat e_-(B_i))\geq \frac{\rhoB}{L^2+1}$.
\end{proof}

\bigskip

\subsection*{Establishing Assumption (A2)}

\begin{proposition}
\label{step 2 prop}
If $n_1(B)  :=  \lceil \frac{2}{L(A)}\,\log(\rhoB^{-1})\rceil $ then for all
$\hat x\in \Po\setminus \Pz$,
$$ \Pp_{\hat x}\left[ \, \exists \, j\leq n_1\,, \;\xi^{\hat x}_j(B) \notin \Po\, \right]\geq 1-r/4. $$
\end{proposition}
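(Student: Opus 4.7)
The plan is to pass to the affine projective chart $\psi_-$ adapted to the diagonal cocycle $D$, in which $\hat e_-(D)\leftrightarrow 0$, $\hat e_+(D)\leftrightarrow \infty$ and $\hat D_j$ acts as multiplication by $d_j^2$. In these coordinates, the sets $\Pz$, $\Po$, $\Pt$ correspond (up to bounded ratios between the $d$-metric on $\Pp$ and the chart) to $\abs{\psi_-}\lesssim M\rhoB$, $\abs{\psi_-}\lesssim \delta^{-1}$ and $\abs{\psi_-}\lesssim (M\rhoB)^{-1}$ respectively. The core input will be the comparison Lemma~\ref{hatBj>B*j}: whenever $\hat x\in\Po$ one has $\abs{\psi_-(\hat B_j\hat x)}\geq \lambda^\ast_j\,\abs{\psi_-(\hat x)}$, with $\lambda^\ast_j$ as in~\eqref{def lambda * j}; the constant $s$ fixed by~\eqref{def s}--\eqref{def s1} is precisely what makes this multiplicative lower bound hold uniformly throughout $\Po$.

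Assuming this lemma, I would argue by contradiction. Suppose that $\xi^{\hat x}_j(B)\in\Po$ for every $0\leq j\leq n_1$. Iterating the comparison at each step yields, at $j=n_1$,
\begin{equation*}
\abs{\psi_-(\hat x)}\,\prod_{i=0}^{n_1-1}\lambda^\ast_{\omega_i}\leq\abs{\psi_-(\xi^{\hat x}_{n_1}(B))}\lesssim \delta^{-1},
\end{equation*}
where $\omega_0,\ldots,\omega_{n_1-1}$ are the i.i.d.\ coordinates driving the random walk. Combining with $\abs{\psi_-(\hat x)}\gtrsim M\rhoB$ (from $\hat x\notin\Pz$) and taking logarithms gives
\begin{equation*}
\sum_{i=0}^{n_1-1}\log\lambda^\ast_{\omega_i}\leq \log\rhoB^{-1}+O(1).
\end{equation*}

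The random variables $\log\lambda^\ast_{\omega_i}$ are i.i.d., bounded in absolute value by $K:=2\log L+L(A)/2$ (using N3 and $\sabs{a_j}\leq L$), with common mean $\EE[\log\lambda^\ast_{\omega_0}]=2L(A)-L(A)/2=3L(A)/2$. Hence the expectation of the left side is $3\log\rhoB^{-1}$, and the displayed event corresponds to a downward deviation of order $L(A)\,n_1$. Hoeffding's inequality (Lemma~\ref{Hoeffding}), together with the choice of $\hat c$ in~\eqref{def hat c}, then bounds the probability of that event by $e^{-\hat c\, n_1}$. Condition~N11 forces $n_1\geq l_0$, so~\eqref{def l0} gives $e^{-\hat c n_1}\leq e^{-\hat c l_0}<r/4$, completing the argument.

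The main obstacle will be the comparison lemma itself: one must establish a \emph{pointwise} multiplicative lower bound for the $\psi_-$-expansion of $\hat B_j$ throughout the entire cone $\Po$, and not merely in a tiny neighborhood of the attracting line $\hat e_-(D)$. This forces one to quantify precisely how the perturbation $B_j-D_j$ distorts the pure multiplication $d_j^2$ induced by $\hat D_j$, and the numerical thresholds in~\eqref{def lambda * j} and~\eqref{def s}--\eqref{def s1} are engineered exactly so as to absorb this perturbative loss while keeping $\EE[\log\lambda^\ast_{\omega_0}]$ bounded below by a positive constant.
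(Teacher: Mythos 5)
Your high-level strategy matches the paper's: compare the random walk $\xi^{\hat x}_n(B)$ in the $\psi_-$-chart with the affine random walk driven by the $\lambda^\ast_j$, then invoke Hoeffding with the rate $\hat c$ and the threshold $l_0$. However, there are two genuine gaps.

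First, you mis-state Lemma~\ref{hatBj>B*j}. It does \emph{not} give a multiplicative bound $\sabs{\psi_-(\hat B_j\hat x)}\geq \lambda^\ast_j\sabs{\psi_-(\hat x)}$; it gives the \emph{affine} bound $\hat B_j(x)\geq M_0\rhoB+\lambda^\ast_j(x-M_0\rhoB)$, valid only on $[s\,M_0\rhoB,\delta^{-1}]$. The distinction is not cosmetic. The affine lower-bound maps all have the common fixed point $M_0\rhoB$, so they compose cleanly into $x\mapsto M_0\rhoB+e^{S_n}(x-M_0\rhoB)$ with \emph{zero} cumulative loss. If you instead extract a purely multiplicative per-step factor from the affine bound, the best you can do is something like $\lambda^\ast_j(1-1/s)$ (the affine version can be strictly below $\lambda^\ast_j x$ when $\lambda^\ast_j>1$, since the fixed point is $M_0\rhoB$, not $0$); over $n_1\asymp\log\rhoB^{-1}$ steps this gives a compounding loss factor $(1-1/s)^{n_1}$ with $s=s(A)$ fixed, which is not negligible as $\rhoB\to 0$. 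So the multiplicative iteration you write down does not follow from the cited lemma and, taken literally, would need a different per-step constant and a separate accounting of the accumulated loss.

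Second, and more fundamentally, the comparison lemma is only valid when the iterate's $\psi_-$-coordinate is at least $s\,M_0\rhoB$, and membership in $\Po$ does \emph{not} guarantee this (the cell $\Pz\subset\Po$ consists precisely of points with coordinate below $M\rhoB$). So the claim that you may ``iterate the comparison at each step'' under the hypothesis $\xi^{\hat x}_j\in\Po$ is unjustified: a priori the walk could drift into $\Pz$ and leave the domain of validity. The paper closes this gap by introducing the event $\Omega=[\,S_j\geq -\kappa,\ \forall j\geq 1\,]$, showing (via the choice of $\kappa$ in~\eqref{def kappa}, the deterministic bound~\eqref{kappa def} up to time $l_0$, and Hoeffding beyond) that $\Pp(\Omega)\geq 1-r/8$, and then verifying inductively on $\Omega$ that $M_0\rhoB+e^{S_n}(t-M_0\rhoB)\geq e^{-\kappa}M\rhoB=s\,M_0\rhoB$ while $\xi^{\hat x}_n\in\Po$, which is exactly what licenses the next application of the lemma. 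Your plan has no analogue of $\Omega$, so the induction is not actually set up, and the final probability count ($r/8$ for $\Omega^{\complement}$ plus $e^{-\hat c n_1}<r/8$ for the LDT event, totalling $<r/4$) is also off. These two missing ingredients are the actual content of the proof, beyond the Hoeffding step you sketch correctly.
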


Consider the projective coordinate
system $\psi=\psi_-$ introduced above, in which $\hat e_-(D)=0$ 
and $\hat e_+(D)=\infty$.

We will keep denoting by $\hat B_j$
the action of  $B_j$ expressed in the previous projective coordinate, i.e., we write $\hat B_j(x):= \psi( \hat B_j (x:1))$.
Consider the family of numbers $\{\lambda^\ast_j\}_{1\leq j\leq k}$ defined in~\eqref{def lambda * j}.
A simple calculation gives
\begin{equation}
\label{average log lambda*}
\sum_{j=1}^k p_j\,\log \lambda^\ast_j =\frac{3\,L(A)}{2} >0 . 
\end{equation}

\begin{lemma}
\label{hatBj>B*j}
For all $x\in [ s\,M_0\,\rhoB, \delta^{-1}]$ and $1\leq j\leq k$,
\begin{equation}
\label{E lambda*}
 \hat B_j(x)\geq M_0\,\rhoB + \lambda^\ast_j\,( x-M_0\,\rhoB) =: \hat B_j^\ast (x) . 
 \end{equation}
\end{lemma}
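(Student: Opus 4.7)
The plan is to express $\hat B_j$ as a Möbius transformation in the $\psi_-$ coordinate, compare it to the linear map $\hat D_j(x)=d_j^2 x$ coming from the diagonal cocycle $D$, and exploit the slope gap $d_j^2-\lambda_j^\ast>0$ together with the choice~\eqref{def s} of $s$ to absorb the Möbius nonlinearity. I would write $B_j=\begin{pmatrix}\alpha & \beta \\ \gamma & \eta\end{pmatrix}$ with $\alpha\eta-\beta\gamma=1$ and use $d(B,D)\le C\,\rho_B$ (Proposition~\ref{prop d(B, Diag) < rho(B)}) together with N3 to get the entry-wise bounds $\alpha=d_j+O(\rho_B)$, $\beta=O(\rho_B)$, $\gamma=O(\rho_B)$, $\eta=d_j^{-1}+O(\rho_B)$, with implicit constants depending only on $A$. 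Lemma~\ref{lemma d(Bj e-D, e-D)} additionally gives $|\hat B_j(0)|=|\beta/\eta|\le M_0\,\rho_B$.

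Using $\alpha\eta-\beta\gamma=1$ one rewrites
\begin{equation*}
\hat B_j(x)-d_j^2\,x\;=\;\frac{(\alpha-d_j^2\eta)\,x\;+\;\beta\;-\;d_j^2\gamma\,x^2}{\gamma\,x+\eta}\;,
\end{equation*}
whose numerator is $O(\rho_B)(1+|x|+x^2)$ and whose denominator stays bounded away from zero (by roughly $1/(2L)$) on $|x|\le \delta^{-1}$ once $\Vscr_A$ is small. Hence $|\hat B_j(x)-d_j^2 x|\le K(A)\,\rho_B$ uniformly on $[s\,M_0\,\rho_B,\delta^{-1}]$, with $K=K(A)$ depending only on $L$, $\delta$ and $C$. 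I then decompose
\begin{equation*}
\hat B_j(x)\;-\;\hat B_j^\ast(x)\;=\;\bigl[\hat B_j(x)-d_j^2 x\bigr]\;+\;(d_j^2-\lambda_j^\ast)\,x\;-\;(1-\lambda_j^\ast)\,M_0\,\rho_B\;.
\end{equation*}
From~\eqref{d(A,D)} and continuity of eigenvalues I can shrink $\Vscr_A$ to ensure $d_j^2\ge \tilde\lambda_j$, so the middle term is at least $(\tilde\lambda_j-\lambda_j^\ast)\,x$. For $x\ge s\,M_0\,\rho_B$, condition~\eqref{def s} yields $(\tilde\lambda_j-\lambda_j^\ast)\,x\ge 3\,M_0\,\rho_B$, exceeding the offset $(1-\lambda_j^\ast)\,M_0\,\rho_B\le M_0\,\rho_B$ by $2\,M_0\,\rho_B$; this surplus must absorb $|\hat B_j(x)-d_j^2 x|$, which is arranged by a final shrinking of $\Vscr_A$ that forces $K(A)\,\rho_B\le 2\,M_0\,\rho_B$ throughout the interval.

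The main obstacle is the quadratic growth in $x$ of the Möbius error, which would a priori threaten the linear gain $(\tilde\lambda_j-\lambda_j^\ast)\,x$ near the right endpoint $x=\delta^{-1}$. Because $\delta^{-1}$ is a fixed $A$-dependent constant (not shrinking with $\rho_B$), the quadratic contribution is only of order $\rho_B\,\delta^{-2}$, so a quantitative smallness condition on the diameter of $\Vscr_A$ (in the spirit of N6--N7) suffices to bring the Möbius error under the $2\,M_0\,\rho_B$ surplus and conclude the bound on the entire interval $[s\,M_0\,\rho_B,\delta^{-1}]$.
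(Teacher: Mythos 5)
Your proposal shares the key ingredients with the paper's proof (the entry estimates from Proposition~\ref{prop d(B, Diag) < rho(B)}, the bound $|\hat B_j(0)|\lesssim M_0\rho_B$ from Lemma~\ref{lemma d(Bj e-D, e-D)}, and the role of condition~\eqref{def s}), but the way you package them leaves a genuine gap. You reduce the lemma to a global bound
\[
|\hat B_j(x)-d_j^2\,x|\le K(A)\,\rho_B\qquad\text{on }[s\,M_0\,\rho_B,\delta^{-1}],
\]
and then hope to dominate this by the surplus $(\tilde\lambda_j-\lambda_j^\ast)x-(1-\lambda_j^\ast)M_0\rho_B$. At the left endpoint $x=s\,M_0\,\rho_B$ that surplus is only of order $M_0\rho_B$ (condition~\eqref{def s} gives slightly more than $2M_0\rho_B$), whereas the constant $K(A)$ you produce necessarily includes contributions of order $L\cdot\delta^{-1}$ and $L^3\cdot\delta^{-2}$ coming from the $x$-linear and $x$-quadratic pieces of the Möbius numerator evaluated against $x\le\delta^{-1}$; there is no reason for $K(A)\le 2M_0$. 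Shrinking $\Vscr_A$ does not repair this, since both $K(A)\rho_B$ and the surplus $M_0\rho_B$ scale \emph{linearly} in $\rho_B$, so their ratio is invariant. In fact, you locate the danger at the right endpoint $x=\delta^{-1}$; this is exactly backwards: there the surplus is of order one while the error is $O(\rho_B\delta^{-2})$, so shrinking $\Vscr_A$ helps. The actual critical regime is $x\approx s\,M_0\,\rho_B$, and your global estimate is too coarse precisely there.

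The way to close the gap (and what the paper does) is to avoid the global error estimate entirely. Instead of comparing $\hat B_j(x)$ to the fixed linear map $d_j^2 x$, bound the \emph{derivative}: using $\hat B_j'(x)=(d_j x+e_j)^{-2}$ together with the bound $|e_j|\le\norm{B_j e_-(D)}\le\tilde\lambda_j^{-1/2}e^{-L(A)/12}$ (from N5, N9, N10) and assumption N0(a), one gets $\hat B_j'(x)\ge\tilde\lambda_j$ uniformly on $[0,\delta^{-1}]$. Combined with $\hat B_j(0)\ge -2M_0\rho_B$ (Lemma~\ref{lemma d(Bj e-D, e-D)}), the mean value theorem gives the \emph{linear} lower bound $\hat B_j(x)\ge -2M_0\rho_B+\tilde\lambda_j x$. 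Since this lower envelope and $\hat B_j^\ast$ are both affine with slopes $\tilde\lambda_j>\lambda_j^\ast$, it suffices to verify the inequality at the single point $x=s\,M_0\,\rho_B$, and that is exactly what~\eqref{def s} guarantees. This slope-comparison argument bypasses the constant-vs-constant collision in your version.

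One further small point: the coordinate of $\hat B_j(0)$ is bounded in the paper by $2M_0\rho_B$ (not $M_0\rho_B$ as you wrote), since Lemma~\ref{lemma d(Bj e-D, e-D)} bounds the projective distance $d(\hat B_j\hat e_-(D),\hat e_-(D))$ by $M_0\rho_B$ and passing to the affine coordinate $\psi_-$ loses roughly a factor of two.
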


\begin{proof}
By Lemma~\ref{lemma d(Bj e-D, e-D)}, the projective point
$\hat B_j(\hat e_-(D))$ has coordinate
$\vert \hat B_j(0)\vert \leq 2\, M_0\,\rhoB$.

Writing $B_j=\begin{bmatrix}
b_j & c_j \\ d_j & e_j 
\end{bmatrix}$ one has
$\hat B_j(x)= (b_j\,x+ c_j)\,(d_j\, x+ e_j)^{-1}$.
Since
$B_j\in\SL_2(\R)$, $\sabs{d_j}\leq \norm{B-D}_\infty<d(A,B)$ and $0\leq x \leq \delta^{-1}$
we have
\begin{align*}
\hat B_j'(x) = \frac{1}{(d_j\, x+ e_j)^2}
\geq \frac{1}{(\sabs{e_j} + \delta^{-1}\,d(A,B))^2} .
\end{align*} 
Because  $\hat e_\pm(A)\in \Dscr_\pm(\delta)$, by N10, and also by N5 and N9,
$$ \sabs{e_j} \leq \norm{B_j\,e_-(D)}\leq e^{-\log \sabs{a_j} + \frac{L(A)}{12}}= \tilde{\lambda}_j^{-\frac{1}{2}}\,e^{-\frac{L(A)}{12}}
< \tilde{\lambda}_j^{-\frac{1}{2}} .$$
Thus for all $j=1,\ldots, k$ and $x\in [0,\delta^{-1}]$,
\begin{equation*}
\hat B_j'(x)\geq \frac{1}{(\tilde \lambda_j^{-\frac{1}{2}} e^{-\frac{L(A)}{12}}+\delta^{-1} d(A,B))^2}\geq   \frac{\tilde{\lambda}_j \, e^{\frac{L(A)}{6}}}{(1+\delta^{-1} L e^{\frac{L(A)}{4}} d(A,B))^2} .    
\end{equation*}
By N0 (a)
$$ d(A,B) < \frac{\delta}{L}\, \frac{ e^{\frac{L(A)}{12}}-1 }{e^{\frac{L(A)}{4}}}   $$
which is equivalent to
$$ \gamma= \frac{e^{\frac{L(A)}{6}}}{(1+\delta^{-1} L e^{\frac{L(A)}{4}} d(A,B))^2}>1 .$$
Therefore $\hat B_j'(x)\geq \gamma\,\tilde{\lambda}_j > \tilde{\lambda}_j $ and by the mean value theorem for all $0\leq x\leq \delta^{-1}$,
$$ \hat B_j(x) \geq \hat B_j(0) +\tilde{\lambda}_j\, x
\geq - 2\, M_0\,\rhoB +\tilde{\lambda}_j\, x.$$
Since $\hat B_j'(x)\geq \tilde{\lambda}_j >\lambda^\ast_j =
(\hat B^ \ast_j)'(x)$, to see that
$\hat B_j(x)\geq  
\hat B^ \ast_j(x)$ for all $x\in [ s\,M_0\,\rhoB, \delta^{-1}]$, it is now enough to check that
\begin{align*}
 - 2\, M_0\,\rhoB +\tilde{\lambda}_j\, (s\, M_0\,\rhoB) &\geq \hat B^\ast_j(s\, M_0\,\rhoB) \\
 &= M_0\,\rhoB + \lambda^\ast_j\,( s\, M_0\,\rhoB-M_0\,\rhoB) ,
\end{align*}
which is equivalent to
\begin{align*}
 - 2 +\tilde{\lambda}_j\, s   \geq   1 + \lambda^\ast_j\,( s-1) .
\end{align*}
Hence, because~\eqref{def s} is enough  for this, by the definition of $s=s(A)$  inequality~\eqref{E lambda*} holds for all 
$x\in [ s\,M_0\,\rhoB, \delta^{-1}]$.
\end{proof}

Consider the i.i.d. process
$\eta_n\colon X\to \R_+$,
$\eta_n\{\omega_j\}_{j\in \Z}:= \log \lambda^\ast_{\omega_j}$
and its associated sum process 
$S_n=\sum_{j=0}^{n-1}\eta_j = \sum_{j=0}^{n-1}\eta_0\circ T^j$. Note that the  processes $\{\eta_n\}$ and $\{S_n\}$ are
completely determined by the cocycle $A$.

 The probability of escaping in assumption (A2) is estimated comparing the random walks $\xi^{\hat x}_n$ and $S_n$ by means of~\eqref{E lambda*}.

\begin{proof}[Proof of Proposition~\ref{step 2 prop}]
By~\eqref{average log lambda*} one has $\EE[\eta_0]=\frac{3\,L(A)}{2}$.
By Hoeffding's inequality (Lemma~\ref{Hoeffding})  the constant
$\hat c= \hat c(A)$   in~\eqref{def hat c} is such that the following LDT estimate holds for all $n\in\N$,
\begin{equation}
\label{P[Sn<nL(A)]}
 \Pp_p[\, S_n < n\,L(A)\,] <e^{-\hat c\,n} .
 \end{equation}
 Indeed the i.i.d. process $\eta_j$ satisfies
 $$ \sabs{\eta_j} = 
 \sabs{\log \lambda^\ast_{\omega_j}}
 =\sabs{2\,\log \sabs{a_{\omega_j}}- L(A)/2  } \leq 2\log L + L(A)/2=: K$$
 while the event $[ S_n<n L(A)]$ is contained in
 the deviation set 
 $$\left[ \,\abs{\ n^{-1} S_n -\EE(n^{-1} S_n)}>\epsilon\, \right] \quad \text{ with }\quad \epsilon:=\frac{L(A)}{3}.$$
Since 
$\hat c = \frac{\epsilon^2}{2 K^2}$, the previous LDT estimate follows from Hoeffding's inequality.
 
Consider now $l_0\in \N$ and $ \kappa<\infty$ respectively
defined by~\eqref{def l0} and in~\eqref{def kappa}.
Then \,
$\sum_{j=l_0}^\infty e^{-\hat c\,j} =\frac{e^{-\hat c l_0}}{1-e^{-\hat c}}<\frac{r}{8}$ \, and 
\begin{equation}
\label{kappa def}
 \Pp_p[\, S_j\geq -\kappa,\; \forall\, j=1,\ldots, l_0\,]=1 .
 \end{equation}
This implies that the event
$\Omega= [\, S_j\geq -\kappa,\; \forall\, j\geq 1\,]$ has probability
$\Pp_p(\Omega)\geq 1-\frac{r}{8}$.


Given $\hat x\in \Po\setminus \Pz$, we have
$t=\sabs{\psi(\hat x)}\in [M\,\rhoB, \delta^{-1}]$.
Inductively, and while $\xi^{\hat x}_n\in \Po$, one obtains by successive applications of inequality~\eqref{E lambda*} that for all $\omega=\{\omega_j\}_{j\in\Z}\in \Omega$,
\begin{align*}
\sabs{\psi( \xi^{\hat x}_n(\omega))} &= \hat B_{\omega_{n-1}}\,\ldots\, 
\hat B_{\omega_{1}}\,\hat B_{\omega_{0}}(t)
\geq B^\ast_{\omega_{n-1}}\,\ldots\,  \hat B^\ast_{\omega_{1}}\,\hat B^\ast_{\omega_{0}}(t)\\
&\geq M_0\,\rhoB + e^{S_n}\,(t-M_0\,\rhoB) \\
&\geq e^{-\kappa}\, M\,\rhoB = s\, M_0\,\rhoB 
\end{align*}
which in particular justifies that we  can keep  inductively applying inequality~\eqref{E lambda*}.

Consider now the event
$\Bscr=[\,  S_{n_1} < n_1\, L(A)\,]$
which by~\eqref{P[Sn<nL(A)]} has probability
$< e^{-\hat c\,n_1}$.
By N11, $l_0 < n_1$ and by~\eqref{def l0},
$e^{-\hat c\, l_0}<\frac{r}{8}$. Then the event $\Omega'=\Omega\setminus \Bscr$ 
has probability 
$$\Pp_p(\Omega') \geq \Pp_p(\Omega)-\Pp_p(\Bscr)\geq  1- \frac{r}{8} - e^{-\hat c \, n_1} > 1- \frac{r}{4}.$$
Assume now, by contradiction, that for some $\omega\in \Omega'$ one has for all $j\leq n_1$, $\xi^{\hat x}_n=\xi^{\hat x}_n(\omega)\in\Po$.
This implies  that
$\sabs{\psi( \xi^{\hat x}_{j})}\in [M\,\rhoB,\delta^{-1}]$
for all $j\leq n_1$.  We can  assume that
the number $s=s(A)$ defined by~\eqref{def s} and~\eqref{def s1}
satisfies  $s\geq 2$. Because
$M_0\geq 1$, it follows that $M\geq M_0+1$.
Then the previous inductive chain of inequalities implies that
\begin{align*}
\sabs{\psi( \xi^{\hat x}_{n_1})} &\geq M_0\,\rhoB + e^{S_{n_1}}\,(t-M_0\,\rhoB)   \\
&\geq  e^{{n_1}\, L(A)}\,(t-M_0\,\rhoB)\geq
e^{{n_1}\, L(A)}\,(M-M_0)\,\rhoB  \\
&\geq
e^{{n_1}\, L(A)}\, \rhoB  = \frac{1}{\rhoB^2}\,\rhoB =
\rhoB^{-1} > \delta^{-1}  ,
\end{align*} 
which contradicts  $\xi^{\hat x}_{n_1}\in\Po$. 
This concludes the proof.
\end{proof}

\bigskip

\subsection*{Establishing Assumption (A3)}

\begin{proposition}
\label{step 3 prop}
If $n_2(B)  :=  \lceil\frac{1}{L(A)}\,\log(\rhoB^{-1})\rceil$ then for all
$\hat x\in \Pt\setminus \Po$,
$$ \Pp_{\hat x}\left[ \, \exists \, j\leq n_2\,, \;\xi^{\hat x}_j \notin \Pt\, \right]\geq 1-r/4. $$
\end{proposition}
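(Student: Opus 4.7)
The proof is structurally identical to that of Proposition~\ref{step 2 prop}: instead of tracking the random walk away from $\hat e_-(D)$ across the annulus $\Po\setminus\Pz$, we track it across the larger annulus $\Pt\setminus\Po = \{\hat x\in\Pp \colon \delta^{-1}\le |\psi(\hat x)|<(M\rhoB)^{-1}\}$ in the coordinate $\psi=\psi_-$, and show it reaches $|\psi|>(M\rhoB)^{-1}$ within $n_2$ iterations. The only new ingredient is the extension of the one-step comparison of Lemma~\ref{hatBj>B*j} to this enlarged range of $x$.

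First I would establish the following extension of Lemma~\ref{hatBj>B*j}: for every $x\in [s\,M_0\,\rhoB,\,(M\rhoB)^{-1}]$ and every $1\le j\le k$,
$$ \hat B_j(x)\ \ge\ M_0\,\rhoB + \lambda^\ast_j\,(x-M_0\,\rhoB). $$
Its proof follows that of Lemma~\ref{hatBj>B*j} line by line, except that the bound on the lower-left entry $d_j$ of $B_j$ must be tightened from $|d_j|\le d(A,B)$ to $|d_j|\le \|B-D\|_\infty \le C\,\rhoB$ (available from Proposition~\ref{prop d(B, Diag) < rho(B)} and the choice $d(B,D)<\tfrac{10}{9}\,d(B,\diags)$ made at the start of the section). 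This yields $|d_j\,x|\le C/M$ uniformly on the enlarged interval; combined with $|e_j|\le \tilde{\lambda}_j^{-1/2}\,e^{-L(A)/12}$ and a further smallness restriction on $\Vscr_A$ (equivalently, an additional lower bound on $M$, to be appended to the list N0--N11), one obtains $|d_j x+e_j|\le \tilde{\lambda}_j^{-1/2}$ throughout, hence $\hat B_j'(x)\ge\tilde{\lambda}_j$ on all of $[0,(M\rhoB)^{-1}]$, and the mean value theorem together with~\eqref{def s} yields the claim.

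The remainder is a near-verbatim transcription of the proof of Proposition~\ref{step 2 prop}. Set $t_0:=|\psi(\hat x)|\in[\delta^{-1},(M\rhoB)^{-1})$ and reuse the events $\Omega=[S_j\ge -\kappa\ \forall j\ge 1]$ and $\Bscr=[S_{n_2}<n_2\,L(A)]$ from there. By N11 we have $n_2\ge l_0$, so~\eqref{def l0} and~\eqref{P[Sn<nL(A)]} together give $\Pp_p(\Omega\setminus\Bscr)\ge 1-r/4$. Assume, for contradiction, that $\xi_j^{\hat x}\in\Pt$ for every $j\le n_2$; then $|\psi(\xi_j^{\hat x})|\le(M\rhoB)^{-1}$ throughout, the extended inequality applies inductively, and the lower bound $|\psi(\xi_j^{\hat x})|\ge s\,M_0\,\rhoB$ is preserved on $\Omega$ exactly as in Proposition~\ref{step 2 prop}. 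Iterating then gives, on $\Omega\setminus\Bscr$,
$$ |\psi(\xi_{n_2}^{\hat x})|\,\ge\, M_0\,\rhoB + e^{S_{n_2}}(t_0-M_0\,\rhoB)\,\ge\, \rhoB^{-1}\,(\delta^{-1}-M_0\,\rhoB)\,>\,(M\rhoB)^{-1}, $$
the final inequality using $M>2\delta$ and $\rhoB$ small enough. This contradicts $\xi_{n_2}^{\hat x}\in\Pt$, so the walk must have exited $\Pt$ by some $j\le n_2$ on $\Omega\setminus\Bscr$, yielding the required probability bound.

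The principal obstacle is precisely the extension of Lemma~\ref{hatBj>B*j}: the original bound $|d_j|\le d(A,B)$ does not suffice when $x$ is as large as $(M\rhoB)^{-1}$, because $\rhoB$ may be much smaller than $d(A,B)$. Replacing it by the sharper $|d_j|\le C\,\rhoB$ (a gain afforded by choosing the diagonalizable cocycle $D$ near-optimally close to $B$) is exactly what lets the single-step derivative estimate $\hat B_j'(x)\ge\tilde{\lambda}_j$ survive all the way up to $x=(M\rhoB)^{-1}$, and thereby makes the inductive contradiction above go through.
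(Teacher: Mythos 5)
You have correctly identified the main obstacle and proposed a plausible route around it, but the route you take is the mirror image of the paper's, and your fix contains a subtle error that the paper's choice of coordinate makes unnecessary.

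The paper works in the coordinate $\psi=\psi_+$, in which $\Pt\setminus\Po$ corresponds to $|x|\in[M\rhoB,\delta]$, a range of \emph{small} numbers near $\hat e_+(D)$. It proves a one-step estimate (Lemma~\ref{hatBj<B*j}) bounding $\hat B_j(x)$ from \emph{above} by an affine function with slope $(\lambda_j^\ast)^{-1}$, on the range $[sM_0\rhoB,(e^\kappa+1)\delta]$. Because the range is contained in $[0,\delta^{-1}]$, the crude estimate $|c_j|\le d(A,B)$ on the off-diagonal entry is already enough to keep the denominator $b_j+c_j x$ bounded away from $0$ and the derivative of the M\"obius map under control, and the iteration shows the walk \emph{contracts} toward $\hat e_+(D)$ down through $M\rhoB$. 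No sharpened bound on the off-diagonal entries, no enlarged $M$, no new neighbourhood conditions are required.

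In your $\psi_-$ picture the same annulus corresponds to $|x|\in[\delta^{-1},(M\rhoB)^{-1}]$, i.e.\ numbers that grow as $\rhoB\to 0$. The sharper bound $|d_j|\le\|B-D\|_\infty\lesssim C\rhoB$ is indeed the right idea for taming $|d_j|\,x\lesssim C/M$ on the enlarged interval, but your justification goes astray: $C/M$ is a ratio of two \emph{fixed} constants of $A$ (recall $M=e^{\kappa}\,s\,M_0$ and $M_0$ is itself proportional to $C$), so it does \emph{not} decrease when you shrink $\Vscr_A$. Making it small enough would require redefining $M$ with a larger value, which is not equivalent to a smallness restriction on the neighbourhood. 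And redefining $M$ is not free: $M$ enters the definitions of $\Pz$ and $\Pt$, hence all of N6, N11 and Propositions~\ref{step 1 prop}, \ref{step 2 prop}, \ref{step 4 prop}; you would need to re-verify that the abstract prison-break assumptions (A1) and (A4) still hold with the altered sets. A further point you do not address: the M\"obius map $\hat B_j(x)=(b_j x+c_j)/(d_j x+e_j)$ has a pole wherever $d_j x+e_j=0$, and your mean-value argument only makes sense on an interval free of poles; this imposes $|d_j|\,(M\rhoB)^{-1}<|e_j|\approx L^{-1}$, i.e.\ again $C/M$ small, for the same reasons as above. None of these issues arise in the paper's $\psi_+$ version, which is precisely why that coordinate is the natural one for (A3): the dual lemma automatically lives on the bounded interval $[0,(e^\kappa+1)\delta]\subset[0,\delta^{-1}]$, so the obstacle you flag never appears.
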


Consider the projective coordinate
system $\psi=\psi_+$ introduced above, in which  
$\Po\comp=\Dscr_-(\delta^{-1})\comp=[-\delta,\delta]$ and $\Pt\comp=\Dscr_-((M \rhoB)^{-1})\comp=[-M \rhoB,M \rhoB]$.

We will keep denoting by $\hat B_j$
the action of  $B_j$ expressed in the previous projective coordinate, i.e., we write $\hat B_j(x):= \psi( \hat B_j (1:x))$. 

Consider again the parameters $\tilde \lambda_j$,
$\lambda_j^\ast$ defined in~\eqref{def lambda tilde j} and
~\eqref{def lambda * j}.

\begin{lemma}
\label{hatBj<B*j}
For all $x\in [ s \,M_0\,\rhoB, \,(e^{\kappa}+1)\,\delta ]$ and $1\leq j\leq k$,
\begin{equation}
\label{EE lambda*}
 \hat B_j(x)\leq M_0\,\rhoB + (\lambda^\ast_j)^{-1}\,( x-M_0\,\rhoB)  =: \hat B_j^\ast (x)  . 
 \end{equation}
\end{lemma}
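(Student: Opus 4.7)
The plan is to mimic the proof of Lemma~\ref{hatBj>B*j} almost verbatim, interchanging the r\^oles of $\hat e_+(D)$ and $\hat e_-(D)$: I would work in the chart $\psi=\psi_+$, in which $\hat e_+(D)=0$ and $\hat e_-(D)=\infty$, and a diagonal $D_i$ acts \emph{contractively} near $0$ (by multiplication by $d_i^{-2}$) rather than expansively. Writing $B_j=\begin{bmatrix} b & c \\ d & e \end{bmatrix}$, the projective action reads $\hat B_j(x)=(d+ex)/(b+cx)$ with derivative $\hat B_j'(x)=1/(b+cx)^2$, and Lemma~\ref{lemma d(Bj e-D, e-D)} at $\hat e_+(D)=(1:0)$ immediately gives $|\hat B_j(0)|=|d/b|\leq 2M_0\,\rhoB$, exactly as in the previous lemma.

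The key step is the \emph{upper} bound $\hat B_j'(x)\leq (\tilde{\lambda}_j)^{-1}$ on the whole interval $[0,(e^\kappa+1)\delta]$. Since $D$ is diagonal in the chosen basis, the off-diagonal entry of $B_j$ satisfies $|c|\leq \norm{B_j-D_j}\leq d(B,D)<d(A,B)$, and analogously $|d|<d(A,B)$. Combining $\norm{B_j e_+(D)}^2=b^2+d^2$ with the lower bound $\norm{B_j e_+(D)}\geq |a_j|\,e^{-L(A)/12}$ (which, as in the previous lemma, follows from N5 and N9 applied at $v=e_+(A)$, $v'=e_+(D)\in\Dscr_+(A,2\delta)$) then gives $|b|\geq |a_j|\,e^{-L(A)/12}-d(A,B)$, so that for $x\in[0,(e^\kappa+1)\delta]$
$$|b+cx|\;\geq\; |a_j|\,e^{-L(A)/12}\,-\,d(A,B)\,\bigl(1+(e^\kappa+1)\delta\bigr).$$
A brief elementary check, exactly in the spirit of the $\gamma>1$ computation at the end of the proof of Lemma~\ref{hatBj>B*j}, will show that N0\,(b)--(c) together with N11 are strong enough to force $|b+cx|^2\geq \tilde{\lambda}_j$, yielding $\hat B_j'(x)\leq (\tilde{\lambda}_j)^{-1}$ throughout this range.

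Having secured the derivative bound, I would appeal to the mean value theorem (for the monotone function $\hat B_j$) to obtain
$$\hat B_j(x)\;\leq\;\hat B_j(0)+(\tilde{\lambda}_j)^{-1}\,x\;\leq\; 2M_0\,\rhoB+(\tilde{\lambda}_j)^{-1}\,x$$
for $x\in[0,(e^\kappa+1)\delta]$, and then compare with $\hat B_j^\ast(x)=M_0\,\rhoB+(\lambda_j^\ast)^{-1}\,(x-M_0\,\rhoB)$. Since both are affine in $x$ with the target having the \emph{larger} slope $(\lambda_j^\ast)^{-1}>(\tilde{\lambda}_j)^{-1}$, it is enough to verify the inequality at the left endpoint $x=sM_0\,\rhoB$. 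After dividing by $M_0\,\rhoB$ this reduces to
$$1+(\lambda_j^\ast)^{-1}\;\leq\; s\,\bigl((\lambda_j^\ast)^{-1}-(\tilde{\lambda}_j)^{-1}\bigr),$$
which is precisely the defining condition~\eqref{def s1} of $s=s(A)$.

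I expect the only real (and quite mild) difficulty to be ensuring that the cross term $|c|\,x$ stays strictly dominated by $|b|$ uniformly up to the upper endpoint $x=(e^\kappa+1)\delta$: because $\kappa=\kappa(A)$ may be large, the neighbourhood $\Vscr_A$ has to be shrunk so that $d(A,B)$ beats $e^{\kappa}\delta$ by the required fixed fraction, which is precisely what N0\,(b)--(c) and N11 were built to supply.
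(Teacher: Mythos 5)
Your proof is correct and follows essentially the same route as the paper: same chart $\psi_+$, same derivative bound $\hat B_j'(x)\leq\tilde\lambda_j^{-1}$ obtained from the lower bound on $|b_j|$, same mean-value-theorem step, and the same endpoint comparison reducing to condition~\eqref{def s1}. Two small imprecisions worth flagging: the paper bounds the derivative on $[0,\delta^{-1}]$ and only at the very end shrinks $\delta$ so that $(e^\kappa+1)\delta<\delta^{-1}$ (so the cross term is bounded by $\delta^{-1}d(A,B)$, not $(e^\kappa+1)\delta\,d(A,B)$ --- either works, but it is worth being explicit about which); and N11 plays no role in this lemma --- the derivative estimate needs only N0(b)--(c) together with N5, N9, N10, and the condition $(e^\kappa+1)\delta<\delta^{-1}$ is a constraint on $\delta$ alone (both $\delta$ and $\kappa$ depend only on $A$), not on $d(A,B)$.
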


\begin{proof}
By Lemma~\ref{lemma d(Bj e-D, e-D)}, the projective point
$\hat B_j(\hat e_+(D))$ has coordinate
$\vert \hat B_j(0)\vert \leq 2\, M_0\,\rhoB$.

Writing $B_j=\begin{bmatrix}
b_j & c_j \\ d_j & e_j 
\end{bmatrix}$ one has
$\hat B_j(x)= (d_j+ e_j\, x)\,(b_j+ c_j\,x)^{-1}$.
Since
$B_j\in\SL_2(\R)$, $\sabs{c_j}\leq \norm{B-D}_\infty <d(A,B)$ and $0\leq x \leq \delta^{-1}$
we have
\begin{align*}
\hat B_j'(x) = \frac{1}{(b_j+ c_j\,x)^2}
\leq \frac{1}{(\sabs{b_j} - \delta^{-1}\,d(A,B))^2} .
\end{align*} 
Because $\sabs{d_j}\leq d(A,B)$ and  
$\sabs{b_j}\geq \sabs{a_j}-d(A,B)\geq L^{-1}-d(A,B)$,
$$  \frac{\sabs{b_j}}{\sqrt{b_j^2+d_j^2}}
= \frac{1}{\sqrt{1+\frac{d_j^2}{b_j^2}}} > 
\frac{1}{\sqrt{1+\frac{d(A,B)^2}{(L^{-1}-d(A,B))^2}}} >e^{-\frac{L(A)}{24}} , $$
where the last inequality follows from N0(b).
Therefore
\begin{align*}
 \sabs{b_j} &\geq e^{-\frac{L(A)}{24}}\,\sqrt{b_j^2+ d_j^2} = e^{-\frac{L(A)}{24}}\norm{B_j\,e_+(D)}\\
 &\geq e^{-\frac{L(A)}{24}}\, e^{\log \sabs{a_j} - \frac{L(A)}{12}} =  \tilde{\lambda}_j^{\frac{1}{2}}\, e^{\frac{L(A)}{24}}> \tilde{\lambda}_j^{\frac{1}{2}}.
\end{align*}
Thus for all $j=1,\ldots, k$ and $ x\in [0,\delta^{-1}]$,
\begin{equation*}
\hat B_j'(x)\leq 
 \frac{1}{(\tilde \lambda_j^{\frac{1}{2}} e^{\frac{L(A)}{24}} - \delta^{-1} d(A,B))^2}\leq   
 \frac{\tilde{\lambda}_j^{-1} \, e^{-\frac{L(A)}{12}}}{(1-\delta^{-1} L e^{\frac{L(A)}{8}} d(A,B))^2} .    
\end{equation*}
By N0 (c)
$$  d(A,B)< \delta\,L^{-1}\, {e^{-\frac{L(A)}{8}}} \,\left( 1-e^{-\frac{L(A)}{24}} \right)  $$
which is equivalent to
$$ \gamma= \frac{e^{-\frac{L(A)}{12}}}{(1-\delta^{-1} L e^{\frac{L(A)}{8}} d(A,B))^2}<1 .$$
Therefore $\hat B_j'(x)\leq \gamma\,\tilde{\lambda}_j^{-1}< \tilde{\lambda}_j^{-1} $ and by  the mean value theorem for all $0\leq x\leq \delta^{-1}$,
$$ \hat B_j(x) \leq \hat B_j(0) +\tilde{\lambda}_j^{-1} \, x
\leq  2\, M_0\,\rhoB +\tilde{\lambda}_j^{-1} \, x.$$
Since $\hat B_j'(x)\leq \tilde{\lambda}_j^{-1} <(\lambda^\ast_j)^{-1} =
(\hat B^ \ast_j)'(x)$, inequality
$\hat B_j(x)\leq  
\hat B^ \ast_j(x)$, for all $x\in [ s\,M_0\,\rhoB, \delta^{-1}]$, follows from
\begin{align*}
 2\, M_0\,\rhoB +\tilde{\lambda}_j^{-1}\, (s\, M_0\,\rhoB) &\leq \hat B^\ast_j(s\, M_0\,\rhoB) \\
 &= M_0\,\rhoB + (\lambda^\ast_j)^{-1}\,( s\, M_0\,\rhoB-M_0\,\rhoB) ,
\end{align*}
which is equivalent to
\begin{align*}
2  +\tilde{\lambda}_j^{-1}\, s   &\leq  1 + (\lambda^\ast_j)^{-1}\,( s -1) .
\end{align*}
Hence, because~\eqref{def s1} is enough  for this, by the definition of $s=s(A)$,
inequality~\eqref{EE lambda*} holds for all 
$x\in [ s\,M_0\,\rhoB,  \delta^{-1} ] $.
Finally, because $\delta$ can be made small enough
we may assume  that $(e^\kappa+1)\,\delta<\delta^{-1}$.
\end{proof}

 The probability of escaping in assumption (A3) is estimated comparing again the random walks $\xi^{\hat x}_n$ and $S_n$, now by means of~\eqref{EE lambda*}. The argument is analogous to the one used for assumption (A2).

\begin{proof}[Proof of Proposition~\ref{step 3 prop}]

Consider the constants $\hat c=\hat c(A)>0$,  $l_0=l_0(A)$ and $\kappa=\kappa(A)<\infty$,
respectively defined in~\eqref{def hat c},~\eqref{def l0} and~\eqref{def kappa} and consider the event
$$\Omega'= [\, S_{n_2} > n_2\, L(A) \, \text{ and }\; S_j\geq -\kappa,\; \forall\, j\geq 1\,]$$ which, as we have seen in the proof of Proposition~\ref{step 2 prop},  has probability
$\Pp_p(\Omega)\geq 1-\frac{r}{4}$.

Given $\hat x\in \Pt\setminus \Po$, we have
$t=\sabs{\psi(\hat x)}\in [M\,\rhoB, \delta ]$.
Inductively, and while $\xi^{\hat x}_n\in \Pt$, one obtains by successive applications of inequality~\eqref{EE lambda*} that for all $\omega=\{\omega_j\}_{j\in\Z}\in \Omega'$,
\begin{align*}
\sabs{\psi( \xi^{\hat x}_n(\omega))} &= \hat B_{\omega_{n-1}}\,\ldots\, 
\hat B_{\omega_{1}}\,\hat B_{\omega_{0}}(t)
\leq B^\ast_{\omega_{n-1}}\,\ldots\,  \hat B^\ast_{\omega_{1}}\,\hat B^\ast_{\omega_{0}}(t)\\
&\leq M_0\,\rhoB + e^{-S_n}\,(t-M_0\,\rhoB) \\
&\leq M_0\,\rhoB  + e^{\kappa}\,\delta < \delta\, (e^\kappa +1) 
\end{align*}
which in particular justifies that we  can keep inductively applying~\eqref{EE lambda*}. The last inequality above follows from N6.

Assume now, by contradiction, that for some $\omega\in \Omega'$ one has for all $j\leq n_2$, $\xi^{\hat x}_n=\xi^{\hat x}_n(\omega)\in\Pt$.
This implies  that
$\sabs{\psi( \xi^{\hat x}_{j})}\in [M\,\rhoB,\delta]$
for all $j\leq n_2$.  Because
$M_0+\delta < M_0+1 < M$, the previous inductive chain of inequalities implies that
\begin{align*}
\sabs{\psi( \xi^{\hat x}_{n_2})} &\leq M_0\,\rhoB + e^{-S_{n_2}}\,(t-M_0\,\rhoB)   \\
&\leq   M_0\,\rhoB + e^{-{n_2}\, L(A)}\,(t-M_0\,\rhoB)\leq M_0\,\rhoB +
e^{-{n_2}\, L(A)}\,\delta  \\
&\leq
 M_0\,\rhoB +
\rhoB\,\delta  <M\,\rhoB ,
\end{align*} 
which contradicts  $\xi^{\hat x}_{n_1}\in\Pt$. 
This concludes the proof.
\end{proof}

\bigskip

\subsection*{Establishing Assumption (A4)}

\begin{proposition}
\label{step 4 prop}
For all
$\hat x\in \Pt\comp$,
$$ \Pp_{\hat x}\left[ \, \forall \, j\geq 0 \,, \;\xi^{\hat x}_j(B) \notin \Po\, \right]\geq 1-r/4. $$
\end{proposition}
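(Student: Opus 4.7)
The idea is to proceed very much as in the proof of Proposition~\ref{step 3 prop}, but now exploiting the attracting (rather than escaping) dynamics: starting from a tight neighborhood of $\hat e_+(D)$ (namely $\Pt\comp$), the random orbit should remain inside a slightly larger neighborhood of $\hat e_+(D)$ (namely $\Po\comp$) for all positive times, with high probability.

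First I would work in the projective chart $\psi=\psi_+$ used in Proposition~\ref{step 3 prop}, in which $\hat e_+(D)=0$, $\hat e_-(D)=\infty$, $\Po\comp=[-\delta,\delta]$ and $\Pt\comp=[-M\rho_B,M\rho_B]$. Setting $t:=\sabs{\psi(\hat x)}\le M\rho_B$ and $x_n:=\sabs{\psi(\xi^{\hat x}_n(B))}$, the statement reduces to
$$\Pp_{\hat x}\bigl[\,x_n\le\delta\ \text{for all } n\ge 0\,\bigr]\ge 1-r/4.$$

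Next, reuse the i.i.d.\ process $\eta_j=\log\lambda^\ast_{\omega_j}$ and its sum $S_n$, together with the event $\Omega:=\{S_j\ge -\kappa\ \text{for all } j\ge 1\}$ introduced in Proposition~\ref{step 2 prop}. By the same Hoeffding estimate and geometric summation applied there, $\Pp_p(\Omega)\ge 1-r/8>1-r/4$, and on $\Omega$ one has $e^{-S_j}\le e^\kappa$ for every $j\ge 1$.

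The core of the argument is an inductive upper bound for $x_n$ valid on $\Omega$. Whenever $x_n\in[sM_0\rho_B,(e^\kappa+1)\delta]$, Lemma~\ref{hatBj<B*j} gives
$$x_{n+1}-M_0\rho_B\le(\lambda^\ast_{\omega_n})^{-1}(x_n-M_0\rho_B).$$
I would iterate this to obtain
$$x_n\le M_0\rho_B+e^{-S_n}\,(t-M_0\rho_B)\le M_0\rho_B+e^\kappa\,M\rho_B,$$
and then invoke hypotheses N6 and N11 to conclude that this quantity is bounded by $\delta$, so $x_n\in\Po\comp$. This already justifies that the induction never pushes $x_n$ above $(e^\kappa+1)\delta$, so one side of the range $[sM_0\rho_B,(e^\kappa+1)\delta]$ is automatic.

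The main technical obstacle is the other side: handling steps at which $x_n<sM_0\rho_B$, where Lemma~\ref{hatBj<B*j} does not formally apply. For these the clean remedy is to combine the derivative bound $\hat B_j'(y)\le\tilde\lambda_j^{-1}$ established inside the proof of Lemma~\ref{hatBj<B*j} with the fixed-point estimate $\sabs{\hat B_j(0)}\le 2M_0\rho_B$ from Lemma~\ref{lemma d(Bj e-D, e-D)}, yielding $x_{n+1}\le 2M_0\rho_B+\tilde\lambda_{\omega_n}^{-1}x_n$. Equivalently, one may replace $(x_n)$ by the auxiliary envelope $y_n:=\max\{x_n,sM_0\rho_B\}$, to which the linear recursion applies at every step without a case split; the same iterated bound then gives $y_n\le\delta$ on $\Omega$, and hence $x_n\le\delta$.

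Putting everything together, $\Omega\subseteq\{\xi^{\hat x}_j(B)\in\Po\comp\ \text{for all } j\ge 0\}$, so the probability of the event in the proposition is at least $\Pp_p(\Omega)\ge 1-r/4$, as required.
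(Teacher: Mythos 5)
The route you take is genuinely different from the paper's, and it has a real gap. You attempt to prove the pointwise inclusion $\Omega\subseteq\{\xi^{\hat x}_j(B)\in\Po\comp\ \forall j\ge 0\}$, where $\Omega=\{S_j\ge-\kappa\ \forall j\ge1\}$, and you recognize the obstacle when $x_m<sM_0\rho_B$. But the envelope fix does not work as claimed: for $y_n:=\max\{x_n,sM_0\rho_B\}$ one only gets $y_{n+1}\le\max\{\hat B^\ast_{\omega_n}(y_n),\,sM_0\rho_B\}$, and the pure linear recursion $y_{n+1}\le\hat B^\ast_{\omega_n}(y_n)$ fails whenever $(\lambda^\ast_{\omega_n})^{-1}<1$ and $y_n$ is near the floor. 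Unrolling the corrected recursion gives a bound of the form $M_0\rho_B+e^{-(S_n-S_m)}(s-1)M_0\rho_B$ where $m$ is the last time the floor was hit; the event $\Omega$ controls $S_n$ but not the increments $S_n-S_m$, which can be arbitrarily negative even on $\Omega$ (take a long run of strongly expanding letters so that $S_m$ is huge, followed by a long run of contracting ones). In fact the inclusion itself appears to be false: once $x_m$ is tiny, the subsequent dynamics is essentially $x_{n+1}\approx\hat B_{\omega_n}(0)+(\text{slope})\cdot x_n$ with $|\hat B_j(0)|\asymp M_0\rho_B$, so a long bad window blows $x_n$ up to roughly $M_0\rho_B\,e^{|S_n-S_m|}$, which can exceed $\delta$ while $S_n$ stays above $-\kappa$. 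Thus no single high-probability event of the form $\{S_j\ge -\kappa\ \forall j\}$ can pointwise dominate the event in the proposition.

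The paper sidesteps exactly this by a last-exit decomposition rather than a pointwise inclusion. It partitions by the last time $i\le N$ with $\xi^{\hat x}_i\notin\Pt$ preceding a re-entry into $\Po$ (events $\Escr^N_i$). On the window $(i,i+n)$ the orbit is forced into $\Pt\setminus\Po$, so $|\psi_+(\xi^{\hat x}_l)|\in(M\rho_B,\delta]$ is guaranteed by the definition of the decomposition (not derived from the recursion), hence Lemma~\ref{hatBj<B*j} applies at every step in the window with no restarts. The relevant sum is then $S_n(T^i\omega)$, a $T^i$-shift of $S_n$; translation-invariance plus the Hoeffding tail bound \eqref{P[Sn<nL(A)]} and N11 give the desired estimate on each piece, and the law of total probability sums them. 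To repair your argument you would need a comparable localization-in-time device that confines the recursion to windows where the orbit stays above the threshold $sM_0\rho_B$ and then controls the sum only over that window.
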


\begin{proof}
Consider the projective coordinate $\psi=\psi_+$ introduced in the previous subsection.
Through this coordinate system we make the identifications $\Pt\comp \equiv \Dscr_-((M\,\rhoB)^{-1})\comp = [-M\,\rhoB, M\,\rhoB]$ and $\Po\comp \equiv \Dscr_-(\delta)\comp =[-\delta,\delta]$.

Given $\hat x\in \Pt \comp $ and $N\in\N$, consider the event
$$\Omega_N:=[\, \exists\, 1\leq j\leq N\, \text{ such that }
\, \xi^{\hat x}_j \in \Po\,] .$$
Our goal is to prove that $\Pp(\Omega_N)<r/4$\, for all $N\in\N$.

Consider the following family of events indexed in $0\leq i\leq N$
$$ \Escr^N_i:= \left[\, \xi^{\hat x}_i\notin\Pt \, \text{ and }\,
\forall\, i<l\leq N,\; \xi^{\hat x}_l \in\Pt\,  \right] .$$
Because $\hat x\notin \Pt$, these events partition the space of sequences $X$.

As before, let $S_n :=\sum_{j=0}^{n-1}\eta\circ T^{j}$ be the sum process associated with the i.i.d. process
generated by the random variable $\eta\colon X\to\R$, $\eta(\omega):=\log \lambda_{\omega_0}^\ast$.
Given $\omega\in\Omega_N\cap \Escr^N_i$, let 
$t_i:=  \sabs{\psi(\xi^{\hat x}_{i}(\omega))}<M\,\rhoB$  and $n\in [1, N - i]$ be the first integer such that 
$\xi^{\hat x}_{i+n}\in \Po$.
Proceeding inductively,  from inequality~\eqref{EE lambda*} we get that 
\begin{align*}
\delta \leq \sabs{\psi(\xi^{\hat x}_{i + n} (\omega))} &=\hat B_{\omega_{i+n-1}}\, \ldots \, 
B_{\omega_{i+1}}\, B_{\omega_{i}}(t_i)
\leq 
\hat B_{\omega_{\tilde\i+n-1}}^\ast\, \ldots \, 
B_{\omega_{i+1}}^\ast\, B_{\omega_{i}}^\ast(t_i)\\
&\leq M_0\,\rhoB + e^{-S_n(T^{ i}\omega)}\,(t_i-M_0\,\rhoB) \\
&< M_0\,\rhoB + e^{-S_n(T^{ i}\omega)}\,(M-M_0)\,\rhoB .
\end{align*} 
By N6 this implies that $e^{-S_n(T^{i}\omega)}>1$
and hence that
$$ \delta  <  e^{-S_n (T^{i}\omega)}\, M\,\rhoB . $$
In other words we have established the inclusion
$$ \Omega_N\cap \Escr^N_i\subset T^{-i}\left[\,\exists\, n\geq 1\, \text{ such that }\,  e^{-S_n}\,M\,\rhoB > \delta \,\right] , $$ 
which implies that
$$ \Pp[\,\Omega_N \, \vert \,\Escr^N_i\, ]\leq \Pp\left[\, \exists\, n\geq 1\, \text{ such that }\,
e^{-S_n}\,M\,\rhoB > \delta \,\right] . $$
Next, define 
$l=l(A):=\max\{ \log (\lambda_j^\ast)^{-1}\colon 1\leq j\leq k\}$ so that  
$S_n\geq - n\, l$ for all $n\geq 0$.
Note also that $l=\kappa/l_0>0$.
By assumption N6  we have
$M\,\rhoB^{1/2} <\delta$.
Since
\begin{align*}
e^{-S_n}\,M\,\rhoB > \delta \quad &\Leftrightarrow \quad  e^{S_n}  < \delta^{-1}\,M\,\rhoB <\rhoB^{1/2}\\
&\Rightarrow \quad 
-n\,l\leq   S_n <   \frac{1}{2}\,\log (\rhoB) <0 \\
&\Rightarrow \quad 
 n \geq  \frac{1}{2\,l}\,\log (\rhoB^{-1})\;
 \text{ and }\; S_n <  n\, L(A) .
\end{align*}
it follows that 
$$ \Pp[\,\Omega_N \, \vert \,\Escr^N_i\, ]\leq  \Pp[ \; \exists \, n\geq \frac{1}{2\,l}\,\log (\rhoB^{-1})\; \text{ such that }\; 
S_n <  n\, L(A)\; ] $$
which by~\eqref{P[Sn<nL(A)]}, ~\eqref{def l0} and assumption N11  has probability
$$ \frac{e^{- \frac{\hat c}{2\,l}\,\log (\rhoB^{-1})} }{1-e^{-\hat c}}
=
\frac{e^{- \frac{\hat c\, l_0}{2\,\kappa}\,\log (\rhoB^{-1})} }{1-e^{-\hat c}}
< \frac{e^{-  \hat c\, l_0 } }{1-e^{-\hat c}} <\frac{r}{8} <\frac{r}{4}.
$$
Finally, applying the Law of Total Probabilities
\begin{align*}
\Pp(\Omega_N) &= \sum_{i=0}^{n-1} \Pp[\,\Omega_N \, \vert \,\Escr^N_i\, ]\,
\Pp(\Escr^N_i) \\
&< \frac{r}{4}\, \sum_{i=0}^{n-1}  \Pp(\Escr^N_i) = \frac{r}{4} .
\end{align*}
\end{proof}

\subsection*{Conclusion}

We now apply the general prison break result to our setting.


\begin{proposition}
\label{prison break main}
Let $A\in \cocycles$ be a diagonalizable cocycle such that $L(A)>0$. 
There exists 
$c_0=c_0(A)<\infty$ such that 
if $B\in \Vscr_A$ with $\rho_B=\rho_-(B) >0$  and 
$n_B :=   c_0\,  \log ( \rho_B^{-1})  $ then for all $\hat x\in\Pp$
\begin{align}
\label{prison break main bound}
\Pp\left[ \, \exists j \geq n_B,\quad \xi^{\hat x}_j(B) \in \Po   \; \right] < r .
\end{align} 
\end{proposition}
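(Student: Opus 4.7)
The plan is to apply the abstract prison break Proposition~\ref{abstract prison break} directly, with the three projective arcs $\Pz = \Dscr_-(M\rhoB)$, $\Po = \Dscr_-(\delta^{-1})$, $\Pt = \Dscr_-((M\rhoB)^{-1})$ already fixed in this section, and then to repackage the resulting threshold $N$ into the claimed form $c_0(A)\,\log(\rhoB^{-1})$.

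First, I would note that the four hypotheses (A1)--(A4) of the abstract scheme have been verified in Propositions~\ref{step 1 prop},~\ref{step 2 prop},~\ref{step 3 prop}, and~\ref{step 4 prop}, with parameters
\begin{equation*}
n_0 = n_0(A),\quad b_0 = b_0(A)>0,\quad n_1 = \left\lceil \tfrac{2}{L(A)}\log(\rhoB^{-1}) \right\rceil,\quad n_2 = \left\lceil \tfrac{1}{L(A)}\log(\rhoB^{-1}) \right\rceil,
\end{equation*}
and common probability threshold $r = r(A)$ from~\eqref{def r}. Since $b_0$ depends only on $A$ and on the probability vector $p$, one may choose an integer $q = q(A)$, independent of $B$, such that $(1-b_0)^q < r/4$.

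Next, setting $N := q\,n_0 + n_1 + n_2$, Proposition~\ref{abstract prison break} immediately yields
\begin{equation*}
\Pp_{\hat x}\bigl[\,\exists\, j \geq N,\ \xi^{\hat x}_j(B) \in \Po\,\bigr] < r \qquad \text{for all } \hat x \in \Pp.
\end{equation*}
Tacitly, one checks here that $\{\xi^{\hat x}_n(B)\}_{n\geq 0}$ is the Markov chain on $\Pp$ induced by the random walk with initial distribution $\delta_{\hat x}$, so the abstract scheme genuinely applies.

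Finally, to rewrite $N$ in the required form, I invoke assumption N11, which guarantees $\log(\rhoB^{-1}) > l_0\, L(A) > 0$. This bounds $\log(\rhoB^{-1})$ below by a positive $A$-dependent constant, so the additive constant appearing in
\begin{equation*}
N \leq q\,n_0 + \tfrac{3}{L(A)}\,\log(\rhoB^{-1}) + 2
\end{equation*}
can be absorbed into a multiple of $\log(\rhoB^{-1})$. Explicitly, taking $c_0(A) := \tfrac{3}{L(A)} + \tfrac{q\,n_0 + 2}{l_0\, L(A)}$ and $n_B := c_0(A)\,\log(\rhoB^{-1})$ gives $N \leq n_B$, so the event in~\eqref{prison break main bound} is contained in the one just bounded and the proposition follows. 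I do not anticipate a substantive obstacle: given how carefully Propositions~\ref{step 1 prop}--\ref{step 4 prop} have been calibrated to the abstract scheme, this final step is essentially bookkeeping, collecting the $A$-dependent constants into a single $c_0(A)$.
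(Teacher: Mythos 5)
Your argument is correct and follows the same route as the paper: both verify that Propositions 4.1--4.4 supply hypotheses (A1)--(A4) of the abstract scheme with $n_0, b_0, n_1, n_2$ and $r$, choose $q = q(A)$ so that $(1-b_0)^q < r/4$, apply Proposition~\ref{abstract prison break} with $N = q n_0 + n_1 + n_2$, and absorb $N$ into a multiple of $\log(\rhoB^{-1})$ depending only on $A$. The only cosmetic difference is in the final bookkeeping (you use N11 to bound $q n_0 + 2$ by a multiple of $\log(\rhoB^{-1})$, whereas the paper just shrinks $\Vscr_A$ to ensure $\log\rhoB^{-1} > n_0$), but both yield a valid $c_0(A)$.
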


\begin{proof}
Take ${n_0}\in\N$ and $b_0>0$ as given by 
Proposition~\ref{step 1 prop}. Next choose $q=q(A)\in \N$
such that $q\geq \frac{1}{b_0}\, \log(4/r)$,
which implies that $(1-b_0)^q < \frac{r}{4}$.
Take $n_1=n_1(B)$ and $n_2=n_2(B)\in\N$
as given by  propositions~\ref{step 2 prop} and~\ref{step 3 prop}.

By Proposition~\ref{step 4 prop},
$\Pp_{\hat x}\left[ \, \forall \, j\geq 0 \,, \;\xi^{\hat x}_j \notin \Po\, \right]\geq 1-r/4$, for all  $\hat x \in\Pt\comp$.

Therefore all assumptions (A1)-(A4) of Proposition~\ref{abstract prison break} are satisfied, the first three  within the times $n_0$, $n_1$ and $n_2$ respectively.
Reducing the neighborhood $\Vscr_A$ we can assume that
$\log \rho_B^{-1}>n_0$ for all $B\in \Vscr_A$. Since the time $N=q\,n_0+n_1+n_2$ is bounded above by
$c_0\, \log(\rhoB^{-1})$ with  
$c_0=q+\frac{3}{L(A)}$, $c_0$ being a constant depending only on  $A$,
the conclusion of this proposition follows  from that of Proposition~\ref{abstract prison break}.
\end{proof}

\bigskip

\begin{proof}[Proof of Proposition~\ref{N-rho comparison proposition}]
The second inequality reduces to applying
the first one to the inverse cocycle $B^{-1}$.
Thus, from now on  we will focus on the first.

Since $A\in \Diagast$ we may assume without loss of generality that  $A$ is a diagonal cocycle,
$$ A_i=\begin{bmatrix}
a_i & 0 \\ 0 & a_i^{-1}
\end{bmatrix}\qquad \text{ for }\; 1\leq i\leq k $$
and so $\hat e_+(A)=(1:0)$ and $\hat e_-(A)=(0:1)$.
In the coordinate system $\psi_+$ associated with the diagonal cocycle $A$, the projective action of the matrix $A_i$ is given by $\hat A_i \, x = a_i^{-2}\, x$. 
In these coordinates, the random walk
$\xi_n^{\hat v}= \xi_n^{\hat v}(A)$ becomes a  random walk $\xi^x_n$ on the real line with starting point 
$x$, where $\hat v = (1\colon x)$. 

Next consider the i.i.d. process
$\eta_n$, where $\eta_n= \log \sabs{a_i}$ with probability $p_i$, and denote by $S_n=\sum_{j=0}^{n-1}\eta_j$ the corresponding sum process.
A simple verification shows that  
$$ \EE[\frac{1}{n}\, S_n]=\EE[\eta_0]=L(A) . $$

By Hoeffding's inequality there exist positive constants $n_0\in\N$ and $c_1$  depending only on $A$ such that for all $n\geq n_0$,
\begin{align}
\label{L(A) bound}
\Pp\left[ \,  \exists j\geq n , \;  \frac{1}{j}\,S_j<\frac{9}{10}\, L(A)  \, \right] < e^{-c_1\,n}   .
\end{align}

Let  $c_0=c_0(A)$ and $n_B=c_0\, \log \rho_B^{-1}$ be the constants provided by  Proposition~\ref{prison break main}. Making $\Vscr_A$  small enough, since by Proposition~\ref{prop d(B, Diag) < rho(B)}  $\rho_B\lesssim d(A,B)$,  we can assume that
$n_B\geq n_0$. Then  take the union $\Bscr=\Bscr_1\cup \Bscr_2$  of the sets defined in~\eqref{prison break main bound} and
\eqref{L(A) bound}, i.e.,
\begin{align*}
\Bscr_1 &:= \left\{ \omega\in X\colon \exists j \geq n_B \quad \xi^{\hat x}_j(B)(\omega)  \in \Po \right\}, \\
\Bscr_2 &:= \left\{ \omega\in X\colon \frac{1}{n}\,S_n(\omega) < \frac{9}{10}\, L(A) \,  
\text{ for some }\,  n \geq n_B\right\}.
\end{align*}

For each matrix $B_j$ consider the projective function
$\varphi_{B_j}(\hat v):= \log \norm{B_j\,v}$.
By N5 and N9, for all $\hat v\in\Dscr_+(A,2\delta)$ and
$1\leq j\leq k$ 
$$ \abs{
  \varphi_{B_{j}}(\hat v) 
-\log \sabs{a_{j}} }
= \abs{
\log  \norm{ B_{j} v } 
-\log \norm{A_j\,e_+(A)}  }  \leq \frac{L(A)}{12} . $$
Given  $\omega=\{\omega_j\}_{j\in\Z}\notin \Bscr$ and $\hat v\in \Pp$,
because $\xi^{\hat v}_j(B)\in \Po\comp =\overline{\Dscr_+(\delta)}\subset \Dscr_+(A,2\delta)$ for all  $n\geq n_B$,
\begin{align*}
\abs{\frac{1}{n}\,\log \norm{B^{(n)}\,v} - \frac{1}{n}\, S_n } &\leq 
\frac{1}{n}\,\sum_{j=0}^{n-1} \abs{
 \varphi_{B_{\omega_j}}(\xi^{\hat v}_j(B)) 
-\log \sabs{a_{\omega_j}}} \\
\qquad & \leq \frac{n_B}{n}\,  2\,\log L 
+ \frac{n-n_B}{n}\, \frac{L(A)}{12}\\
\qquad & \leq  2\,\frac{n_B}{n}\, \log L 
+  \frac{L(A)}{12}\leq \frac{L(A)}{10}
\end{align*}
where the last inequality holds for $n\geq \frac{120\,\log L}{L(A)}\,  n_B$.

Hence, since $\omega\notin \Bscr_2$ and   $n\geq n_0$,
\begin{align*}
\frac{1}{n}\,\log \norm{B^{(n)} v} \geq
\frac{1}{n}\, S_n -\frac{L(A)}{10}
\geq
\left( \frac{9}{10} -\frac{1}{10}\right)\,L(A)
>\frac{7}{10}\, L(A) .
\end{align*}
Therefore, integrating we have for every $\hat v\in\Pp$ and
$n\geq \frac{120\,\log L}{L(A)}\,  n_B$,
\begin{align*}
\EE\left[ \frac{1}{n}\,\log \norm{B^{(n)} v} \right] &\geq -(\log L)\, \Pp(\Bscr)
+(1-\Pp(\Bscr))\,\frac{7\,L(A)}{10}\\
&\geq \frac{2\,L(A)}{3} >\frac{L(B)}{2}
\end{align*}
where the last inequality holds by assumption N2,
and the one next-to-last is also valid provided
\begin{align}
\label{P(B) bound}
\Pp(\Bscr) <  \frac{L(A)}{30\,\log L + 21\, L(A)} = 2 r.
\end{align} 
Note from the above that in particular we also have
\begin{equation}
\label{E(log ||B||)>=2L(A)/3}
L(B)\geq \frac{2 L(A)}{3}
\end{equation}
 for all $B\in\Vscr_A$.
By~\eqref{L(A) bound} and Proposition~\ref{prison break main}, the inequality
$$ \Pp(\Bscr) < r +   e^{-c_1 n_B}   < 2\,r$$
is equivalent to
\begin{align*}
  n_B  &\geq  	\frac{1}{c_1}\,  
  \log \left(42 + \frac{60\,\log L}{L(A)}\right)   . 
\end{align*}
Since the right-hand-side of this inequality depends only on $A$,
reducing  the neighborhood $\Vscr_A$,
we can assume that it holds for all $B\in\Vscr_A$.
Therefore, setting  $K=  \frac{120\,\log L}{L(A)}$, for all $n\geq K\, n_B$ and $\hat v\in \Pp$,
 relation ~\eqref{P(B) bound} holds which in turn implies that
$$ \EE\left[ \frac{1}{n}\,\log \norm{B^{(n)} v} \right]   >\frac{L(B)}{2} .$$
By Definition~\ref{def N(A)} this proves that
$$N(B)\leq K\, n_B
\leq K\, c_0\,  \log (\rho_-(B)^{-1} ) . $$
The proposition is established with  
$c=K^{-1}\,c_0^{-1} $.
\end{proof}

\begin{remark}
\label{continuity}
As a bi-product of the previous proof we derive the fact (to be used later) that  $L(B)\geq 2 L(A)/3$. 

An improvement of this 
argument  leads to the lower semi-continuity of the LE, which when combined with the upper-semicontinuity shows that  the LE function $A\mapsto L(A)$ is continuous at any
diagonalizable cocycle $A\in \SL_2(\R)^k$  with $L(A)>0$.
As remarked in~\cite{BD}, E. Le Page's Theorem 
on  the H\"older continuity of the LE holds for  quasi-irreducible $\SL_2$-cocycles $A$ such that $L(A)>0$.
For any non diagonalizable $\SL_2$-cocycle  $A$ such that $L(A)>0$ either $A$ or else
the inverse cocycle $A^{-1}$ are quasi-irreducible. Finally, since the continuity of the LE at $A$ is equivalent to the continuity at $A^{-1}$, combining these facts we get an alternative proof of
C. Boker-Neto and M. Viana's Theorem~\cite{BV} on the general
continuity of the LE for 
$\SL_2(\R)$ and $\GL_2(\R)$ random cocycles in the finite support setting.
\end{remark}

\section{Quantitative LDT for irreducible cocycles}\label{irredldt}

In this section we establish uniform LDT estimates
for non-diagonalizable cocycles in a neighborhood
of a diagonalizable one.


\begin{theorem}\label{quant LDT Berboulli}
Given a cocycle $A\in\Diag$ with $L(A)>0$
there exists a neighborhood $\Vscr_A$ of $A$ in $\cocycles=\SL_2(\R)^k$  such that for any cocycle $B\in\Vscr_A$  with $\rho_-(B)>0$ 
\begin{equation*}
\Pp \left[ \, \left| \frac{1}{n} \, \log \norm{\Bn{n} } - L (B)  \right| > n^{-1/6} \, \right] <  e^{- n^ {1/6}} 
\end{equation*}
for all $n \ge \left[ \, c^{-1}\, \log \rho_-(B)^{-1}   \, \right]^{54}$.
\end{theorem}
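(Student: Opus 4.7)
The plan is to combine the quantitative LDT for quasi-irreducible $\SL_2(\R)$-cocycles established in Theorem~4.1 of~\cite{DK-31CBM} with the comparison between irreducibility measurements provided by Proposition~\ref{N-rho comparison proposition}. The cocycle $B$ plays two roles: it is close enough to the diagonalizable reference $A$ for all the uniform estimates in Section~\ref{irredm} (the conditions N0--N11) to hold, and the quantitative measurement $\rho_-(B)$ controls exactly how far $B$ is from being reducible.

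First I would verify that under the hypotheses the cocycle $B$ is quasi-irreducible with $L(B)>0$. Since $A$ is diagonalizable with $L(A)>0$ and $B\in\Vscr_A$, Remark~\ref{continuity} gives $L(B) \ge \tfrac{2}{3}L(A) > 0$. The assumption $\rho_-(B) > 0$ forces $B$ to be non-diagonalizable, so by the dichotomy of Lemma~\ref{dichotomy} either $B$ or $B^{-1}$ is quasi-irreducible, and the construction of $\rho_-$ via the action on $\hat e_-(B_i)$ places $B$ itself in the quasi-irreducible class to which Proposition~\ref{N-rho comparison proposition} applies.

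Next I would invoke the quasi-irreducible LDT from~\cite{DK-31CBM}. That estimate has the form
\begin{equation*}
\Pp \left[ \, \left| \tfrac{1}{n} \log \norm{\Bn{n}} - L(B) \right| > n^{-1/6} \, \right] < e^{-n^{1/6}}
\end{equation*}
valid for all $n \ge N(B)^{54}$, where the two exponents $1/6$ and the exponent $54$ are universal, and the remaining implicit constants are uniform over $\Vscr_A$ (using the uniform bounds N2--N5 on the norms, the uniform lower bound $L(B)\ge\tfrac{2}{3}L(A)$, and the locally Lipschitz control on the projective dynamics over the cones $\Dscr_\pm$). Proposition~\ref{N-rho comparison proposition} then yields $N(B) \le c^{-1}\log\rho_-(B)^{-1}$, so the threshold becomes $n \ge \bigl[c^{-1}\log\rho_-(B)^{-1}\bigr]^{54}$, which is precisely the statement.

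The main obstacle is verifying that every constant entering the quasi-irreducible LDT of~\cite{DK-31CBM}---most delicately the polynomial exponent relating the minimum number of iterates to $N(B)$---is genuinely uniform over a fixed neighborhood $\Vscr_A$, so that the only way the threshold depends on $B$ is through $N(B)$. This requires tracing the dependence of the Le~Page-type contraction estimates on the spectral gap, on $\norm{B^{\pm 1}}_\infty$, and on $N(B)$ itself; once this uniformity is secured, the theorem reduces to the algebraic substitution provided by Proposition~\ref{N-rho comparison proposition}.
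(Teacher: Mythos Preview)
Your overall strategy is exactly the paper's: establish a quantitative LDT for $B$ whose threshold is a polynomial in $N(B)$, then convert to $\rho_-(B)$ via Proposition~\ref{N-rho comparison proposition}. However, the step you label as ``invoke the quasi-irreducible LDT from~\cite{DK-31CBM}'' in the form $n\ge N(B)^{54}$ is not available off the shelf, and your final paragraph correctly identifies this as the crux without resolving it. Theorem~4.1 of~\cite{DK-31CBM} is stated uniformly near a \emph{fixed quasi-irreducible} cocycle; here $B$ approaches the diagonalizable set, so the relevant constants degenerate and must be tracked as functions of $N(B)$. That tracking is the entire content of Section~\ref{irredldt} of the paper and is not a routine bookkeeping exercise.

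The specific obstruction is this: if one simply feeds the Markov-operator spectral constants (which the paper computes as $\alpha\asymp N^{-2}$, $1-\sigma\asymp N^{-3}$, $C\asymp L$) into the standard perturbation machinery~\cite[Proposition~5.12]{DK-book} for the Laplace--Markov family $\Qscr_{B,t}$, one obtains $C_L\lesssim (c_1 N^4)^{c_0 N^2}$, a \emph{super-exponential} bound in $N$, valid only for $|t|$ exponentially small in $N$. The paper states this explicitly. Such bounds would force the LDT threshold to be exponential in $N(B)$, not polynomial, and the argument would collapse. The paper's new ingredient is the trade-off encoded in Proposition~\ref{subexpon bounds lemma}: by relaxing the contraction rate from $\sigma^n$ to $\sigma^{\sqrt{n}}$ one gains a polynomial bound $C_L\asymp N^6$ valid for $|t|\lesssim N^{-20}$, which then yields $h\asymp N^{26}$ in Proposition~\ref{parametric LDT} and ultimately the exponent $54$. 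Your proposal does not anticipate this difficulty or supply this mechanism, so as written it is a correct outline with a genuine gap at the decisive technical point.
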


\bigskip

The proof will be based on a functional analytic argument.

\smallskip

Let $(\Bscr,\norm{\cdot}_{\Bscr})$ be a Banach space
and $\Qscr\colon \Bscr\to\Bscr$   a bounded linear operator on $\Bscr$. We say that $\Qscr$ is {\em quasi-compact and simple} if there exists a $\Qscr$-invariant decomposition $\Bscr=E_0\oplus E_1$ into closed subspaces $E_0$, $E_1$ such that for some constants
$0<\sigma<\lambda$:
\begin{enumerate}
\item $\dim E_1=1$ and  $\Qscr(\phi)=\lambda\,\phi$ for all $\phi\in E_1$;
\item The operator $\Qscr\vert_{E_0}\colon E_0\to E_0$ has spectral radius $<\sigma$.
\end{enumerate}
Denoting by $P\colon \Bscr\to E_1$ the spectral projection  there is a constant $C<\infty$ such that 
for all $\phi\in \Bscr$,
\begin{enumerate}
\item $\Qscr( P(\phi)) =P( \Qscr(\phi))=\lambda\,P(\phi) $;
\item $\norm{P(\phi)}_\Bscr\leq C\,\norm{\phi}_{\Bscr}$;
\item $\norm{\Qscr^n(\phi)-\lambda^n\,P(\phi)}_{\Bscr}
\leq C\,\sigma^n\, \norm{\phi}_{\Bscr}$, for any $n\in\N$.
\end{enumerate}
We will refer to $\sigma$, $\lambda$ and $C$ as the {\em spectral constants} of the quasi-compact and simple operator $\Qscr$.

\bigskip

Let $\Sigma=\{1,\ldots, k\}$ and $\Pp=\Pp(\R^2)$.
Define the space $L^\infty(\Sigma\times \Pp)$ of all complex bounded measurable functions $\phi\colon \Sigma\times \Pp\to\C$ and let $\norm{\phi}_\infty$ denote the usual sup norm
of an observable $\phi\in L^\infty(\Sigma\times \Pp)$.

Given $0<\alpha\leq 1$ define also
$$v_\alpha(\phi):= \max_{i\in \Sigma}
\sup_{\hat p\neq \hat q}\frac{\abs{\phi(i,\hat p) - \phi(i,\hat q) } }{d(\hat p,\hat q)^\alpha} . $$

A function $\phi\in L^\infty(\Sigma\times\Pp)$ is said to be $\alpha$-H\"older continuous if $v_\alpha(\phi)<+\infty$.
Denote by $\Hscr_\alpha(\Sigma \times\Pp)$ the space of $\alpha$-H\"older continuous observables
$$ \Hscr_\alpha(\Sigma\times\Pp):=\left\{
\phi\in L^\infty(\Sigma\times\Pp)\, \colon
\, v_\alpha(\phi)<+\infty \,  \right\}   $$
and consider on this space the norm
$$ \norm{\phi}_\alpha := \norm{\phi}_\infty + v_\alpha(\phi). $$
These spaces form a scale of Banach algebras~\cite[Propositions 5.10 and 5.19]{DK-book}. 
Denote by 
$\Hscr_\alpha(\Pp)$, respectively
$L^\infty(\Pp)$, the Banach sub-algebras of the previous algebras formed by functions $\phi(i,\hat x)$
which do not depend on the variable $i$.

\bigskip

Throughout the rest of this section let $A\in \SL_2(\R)^k$ be a diagonali\-zab\-le cocycle such that $L(A)>0$ and $p=(p_1,\ldots, p_k)\in\R^k_+$ be the associated probability vector underlying the Bernoulli measure $\Pp_p=p^\Z$ on $\Sigma^\Z$.
Let $\Vscr_A$ be the neighborhood of  $A$
fixed in Section~\ref{irredm}. Then
Proposition~\ref{N-rho comparison proposition} holds for all
cocycles $B\in\Vscr_A$. Throughout the proofs of this section the neighborhood $\Vscr_A$ of $A$ may be shrunk in order to ensure that some relations regarding certain measurements of $A$ hold.

Given a cocycle $B\in\Vscr_A$, its projective action determines the Markov operator
$\Qscr_B\colon  L^\infty(\Sigma\times\Pp)
\to L^\infty(\Sigma\times\Pp)$ defined by
\begin{equation}
\label{Markov operator}
 \Qscr_B(\phi)(i,\hat x) = \int_\Sigma  \phi( l, \hat B_i\,\hat x)\, d p(l) . 
\end{equation}
This operator is associated to the kernel
$K_B\colon \Sigma\times\Pp\to \Prob(\Sigma\times\Pp)$,
$  K_B(i,\hat x):= \int_\Sigma  \delta_{(l,\hat B_i\hat x)}\, dp(l)$.
The cocycle $B$ also determines the observable $\xi_B:\Sigma\times\Pp\to\Pp$,
$\xi_B(i,\hat x):=\log \norm{B_i x}$, and the family of Laplace-Markov operators
$\Qscr_{B,t}\colon L^\infty(\Sigma\times\Pp)\to 
 L^\infty(\Sigma\times\Pp)$
\begin{equation}
\label{Laplace-Markov operator}  (\Qscr_{B,t} \phi)(i,\hat x):= \int_\Sigma  e^{t\,\xi_B(l, \hat B_i \hat x)}\, \phi(l, \hat B_i\,\hat x) \, dp(l), 
\end{equation}
which includes the Markov operator at $t=0$, i.e.,
$\Qscr_{B,0}=\Qscr_B$.

\bigskip

If $L(B)>0$ and $\rho_-(B)>0$ then the random cocycle  $B$
is quasi-irreducible. By~\cite[Proposition 4.6]{DK-31CBM}
there exists $\alpha>0$ small enough such that the Banach  sub-algebra $\Hscr_\alpha(\Sigma\times\Pp)$ is invariant under the Markov operator $\Qscr_B$, and moreover $\Qscr_B$ acts on this space as a quasi-compact and simple operator. We will  estimate the spectral constants of $\Qscr_B$ in terms of the irreducibility measurement $N=N(B)$ (see Definition~\ref{def N(A)}, Proposition~\ref{N-rho comparison proposition}).
 
By spectral continuity, for all small $t\approx 0$,
the Laplace-Markov operator $\Qscr_{B,t}$ is also a quasi-compact and simple. We will  derive bounds on the spectral constants of $\Qscr_{B,t}$ and on the size of the neighborhood
of $t=0$ where these bounds hold. Again, all these estimates are expressed in terms of the measurement $N=N(B)$.

Finally in the last part of this section we use the previous
bounds on spectral constants  to prove  Theorem~\ref{quant LDT Berboulli}.

\subsection*{Spectral constants of the Markov operator }
\label{spec MO}

Take any cocycle $B\in \Vscr_A$ such that $\rho_-(B)>0$ and write $N=N(B)$. 
Let $L$ be the upper bound introduced in Section~\ref{irredm} (not to be confused with the LE $L(B)$),   before the definition of the neighborhood $\Vscr_A$, for which we have
$$\max_{1\leq i\leq k}\,\max \{ \norm{B_i},\norm{B_i^{-1}}\} <L .$$

Given $0<\alpha\leq 1$ and $n\in\N$  define 
\begin{align*}
 \kappa^n_\alpha(B)  &:= \sup_{\hat x\neq \hat y}\,  \EE\left[ \left( \frac{d(\hat B^{(n)} \hat x, \hat B^{(n)}  \hat y) }{d(\hat x, \hat y)}\right)^\alpha  \right] \\
 & = \sup_{\hat x\neq \hat y} \sum_{j_n=1}^k\ldots
\sum_{j_1=1}^k
p_{j_n}\cdots p_{j_1}\,\left[ \frac{d(\hat B_{j_n}\cdots \hat B_{j_1} \hat x, \hat B_{j_n}\cdots \hat B_{j_1}  \hat y) }{d(\hat x, \hat y)}\right]^\alpha 
\end{align*}
This measurement 
plays a key role in the determination of the spectral constants of the operator $\Qscr_B$.
First, a straightforward calculation uncovers its sub-multiplicative behavior, i.e., for all $n,m\in\N$,
\begin{equation}
\label{kappa multip}
\kappa^{n+m}_\alpha(B)\leq \kappa^n_\alpha(B)\, \kappa^m_\alpha(B) .
\end{equation}
When $\alpha$ is small these quantities are bounded.

\begin{lemma}
\label{kappa bounded}
If \, $0<\alpha<\frac{1}{4n}$ \, then
 \; $ \kappa_\alpha^n(B) \leq L$.
\end{lemma}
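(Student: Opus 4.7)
The plan is to exploit the explicit formula for the projective action's contraction/expansion factor, namely that for $g\in \SL_2(\R)$ and unit vectors $x,y$,
\[
\frac{d(\hat g\,\hat x,\hat g\,\hat y)}{d(\hat x,\hat y)} \,=\, \frac{1}{\norm{g x}\,\norm{g y}},
\]
which is the content of Remark~\ref{d(g x, g y)=L^2 d(x,y)}. Applied iteratively along a word $\omega=(j_1,\ldots,j_n)\in\Sigma^n$, this gives, for unit representatives $x\in\hat x$, $y\in\hat y$,
\[
\frac{d(\hat B_{j_n}\cdots \hat B_{j_1}\hat x, \hat B_{j_n}\cdots \hat B_{j_1}\hat y)}{d(\hat x,\hat y)} \,=\, \frac{1}{\norm{B_{j_n}\cdots B_{j_1}\,x}\,\norm{B_{j_n}\cdots B_{j_1}\,y}}.
\]

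First I would use assumption N3, which provides the uniform bound $\norm{B_i^{-1}}<L$ for every $i$ and every $B\in\Vscr_A$, to conclude that $\norm{(B_{j_n}\cdots B_{j_1})^{-1}}<L^n$. This immediately yields the pointwise lower bound $\norm{B_{j_n}\cdots B_{j_1}\,v}\geq L^{-n}$ for any unit vector $v$, hence the deterministic (i.e.\ uniform in $\omega$) bound
\[
\frac{d(\hat B_{j_n}\cdots \hat B_{j_1}\hat x, \hat B_{j_n}\cdots \hat B_{j_1}\hat y)}{d(\hat x,\hat y)} \,\leq\, L^{2n}.
\]

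Raising to the $\alpha$ power and taking expectation against $p^{\otimes n}$ (which is the explicit sum defining $\kappa_\alpha^n(B)$), we get $\kappa_\alpha^n(B)\leq L^{2n\alpha}$. Under the hypothesis $\alpha<\frac{1}{4n}$ we have $2n\alpha<\frac{1}{2}$, and since $L\geq 1$ it follows that $L^{2n\alpha}<L^{1/2}\leq L$, as claimed. The argument is essentially a one-step deterministic estimate, so there is no real obstacle; the only point worth checking is that the operator norm bound $\norm{B_i^{-1}}<L$ from N3 transfers to all cocycles $B$ in $\Vscr_A$ uniformly (which it does by construction), so that the conclusion holds uniformly in $B\in\Vscr_A$ and not just for a single cocycle.
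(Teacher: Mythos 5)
Your proof is correct. The paper itself does not include an argument here (it simply cites~\cite[Lemma 5.7]{DK-book}), but the deterministic chain you give is a valid, self-contained verification: for $g=B_{j_n}\cdots B_{j_1}\in\SL_2(\R)$ and unit $v$, $\norm{gv}\geq\norm{g^{-1}}^{-1}\geq L^{-n}$ by N3, so Remark~\ref{d(g x, g y)=L^2 d(x,y)} gives the uniform bound $L^{2n}$ on the contraction ratio, and $\alpha<\tfrac{1}{4n}$ with $L\geq 1$ yields $\kappa_\alpha^n(B)\leq L^{2n\alpha}<L^{1/2}\leq L$. This is in fact slightly stronger than what the lemma asserts, and it is the natural direct route; the only small remark is that one can equivalently pass through Lemma~\ref{kappa=max norm} (since $\norm{B^{(n)}v}^{-2\alpha}\leq L^{2n\alpha}$ pointwise), which makes it transparent why the expectation and the supremum over directions are both harmless.
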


\begin{proof}
See~\cite[Lemma 5.7]{DK-book}.
\end{proof}

Next lemma discloses the spectral character of the numbers $\kappa_\alpha^n(B)$.

\begin{lemma}
\label{kappa spectral}
For all $\phi\in \Hscr_\alpha(\Sigma\times\Pp)$,
$$ v_\alpha(\Qscr_B^n \phi) \leq L^{2 \alpha}\, \kappa^{n-1}_\alpha(B)\, v_\alpha(\phi) .$$\
\end{lemma}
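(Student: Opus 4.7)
The plan is to expand $\Qscr_B^n\phi$ explicitly from the iterated definition of the Markov operator, estimate the $\alpha$-H\"older oscillation pointwise in the index variable $i$, and then separate one step of the projective action from the remaining $n-1$ steps; the latter factor is exactly what $\kappa^{n-1}_\alpha(B)$ is designed to control, while the isolated step contributes the $L^{2\alpha}$.

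More concretely, from~\eqref{Markov operator} a straightforward induction gives, for any $\phi\in \Hscr_\alpha(\Sigma\times\Pp)$,
\[
(\Qscr_B^n\phi)(i,\hat x)=\sum_{l_1,\dots,l_n} p_{l_1}\cdots p_{l_n}\,\phi\!\left(l_n,\hat B_{l_{n-1}}\cdots \hat B_{l_1}\,\hat B_i\,\hat x\right).
\]
Fix $i\in\Sigma$ and $\hat x\neq \hat y$ in $\Pp$. Subtracting the corresponding expression at $\hat y$, applying the definition of $v_\alpha(\phi)$ term by term, and using $\sum_{l_n}p_{l_n}=1$ to eliminate the outermost summation, one obtains
\[
\abs{(\Qscr_B^n\phi)(i,\hat x)-(\Qscr_B^n\phi)(i,\hat y)} \le v_\alpha(\phi)\sum_{l_1,\dots,l_{n-1}}p_{l_1}\cdots p_{l_{n-1}}\,d\!\left(\hat B_{l_{n-1}}\cdots \hat B_{l_1}\hat x',\,\hat B_{l_{n-1}}\cdots \hat B_{l_1}\hat y'\right)^{\!\alpha},
\]
where $\hat x':=\hat B_i\hat x$ and $\hat y':=\hat B_i\hat y$.

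By definition of $\kappa^{n-1}_\alpha(B)$, the displayed sum is bounded by $\kappa^{n-1}_\alpha(B)\,d(\hat x',\hat y')^\alpha$. On the other hand, Remark~\ref{d(g x, g y)=L^2 d(x,y)} combined with the uniform bound $\norm{B_i},\norm{B_i^{-1}}<L$ yields $d(\hat B_i\hat x,\hat B_i\hat y)\le L^2\,d(\hat x,\hat y)$, hence $d(\hat x',\hat y')^\alpha\le L^{2\alpha}\,d(\hat x,\hat y)^\alpha$. Putting these two bounds together,
\[
\abs{(\Qscr_B^n\phi)(i,\hat x)-(\Qscr_B^n\phi)(i,\hat y)}\le L^{2\alpha}\,\kappa^{n-1}_\alpha(B)\,v_\alpha(\phi)\,d(\hat x,\hat y)^\alpha,
\]
and dividing by $d(\hat x,\hat y)^\alpha$ and taking the supremum over $i$ and $\hat x\neq \hat y$ yields the stated inequality. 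There is no genuine obstacle here: the only subtle point is bookkeeping, namely that the outer $l_n$-sum collapses (the function $\phi$ in the difference depends on $l_n$ only through its first coordinate, which is the same on both sides), so one really does save a single factor of $\kappa$ and is left to handle the extra application of $\hat B_i$ by the Lipschitz-type estimate for the projective action.
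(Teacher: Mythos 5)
Your argument is correct and is essentially the same as the paper's: expand $\Qscr_B^n\phi$ as an iterated sum, apply the $\alpha$-H\"older bound on $\phi$ (collapsing the innermost sum since $\phi$'s first coordinate agrees in the two terms), recognize the remaining $(n-1)$-fold average as $\kappa^{n-1}_\alpha(B)\,d(\hat B_i\hat x,\hat B_i\hat y)^\alpha$, and bound the last factor by $L^{2\alpha}\,d(\hat x,\hat y)^\alpha$ via Remark~\ref{d(g x, g y)=L^2 d(x,y)} and the bound $\norm{B_i^{-1}}<L$. The only cosmetic difference is that the paper writes the intermediate factor as $\norm{B_i^{-1}}^2$ before invoking $\norm{B_i^{-1}}<L$, but the content is identical.
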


\begin{proof}
A simple computation shows that
$$ \Qscr_B^n\phi (i,\hat x)=
\sum_{j_n=1}^k\ldots
\sum_{j_1=1}^k
p_{j_n}\cdots p_{j_1}\, \phi( j_n, \hat B_{j_{n-1}} \cdots \hat B_{j_{1}} \hat B_i \hat x) .  $$
Therefore
\begin{align*}
& \frac{\abs{(\Qscr_B^n \phi)(i,\hat x) - (\Qscr_B^n \phi)(i,\hat y)} }{d(\hat x, \hat y)^\alpha}  \leq \\
&\qquad \leq v_\alpha(\phi)\, 
\sum_{j_{n}=1}^k\ldots
\sum_{j_1=1}^k
p_{j_n}\cdots p_{j_1}\, \left[ \frac{d(\hat B_{j_{n-1}}\cdots 
\hat B_{j_{1}} \hat B_i \hat x, \hat B_{j_{n-1}}\cdots 
\hat B_{j_{1}} \hat B_i  \hat y ) }{d(\hat x, \hat y)}\right]^\alpha\\
&\qquad \leq v_\alpha(\phi)\, 
\EE\left[  \left( \frac{d( \hat B^{(n-1)} \hat B_i \hat x, \hat B^{(n-1)} \hat B_i  \hat y ) }{d(\hat   B_i \hat x, \hat   B_i \hat y)}\right)^\alpha \right]\, \left( \frac{d(  \hat B_i \hat x,  \hat B_i  \hat y ) }{d( \hat x,  \hat y)}\right)^\alpha  .
\end{align*}
Hence by Remark~\ref{d(g x, g y)=L^2 d(x,y)}
$$    d(\hat B_i \hat x, \hat B_i \hat y)  \leq \norm{B_i^{-1}}^2\, d(\hat x, \hat y) , $$
which implies that \,
$ v_\alpha(\Qscr_B^n \phi) \leq L^{2 \alpha}\, \kappa^{n-1}_\alpha(B)\, v_\alpha(\phi)$.
\end{proof}

The spectral bound $\kappa^n_\alpha(B)$ can be estimated
as the following one variable maximum.
\begin{lemma}\quad 
\label{kappa=max norm}
$\displaystyle  \kappa_\alpha^n(B) = \max_{\| x\|=1} \EE\left[ \norm{B^{(n)}\, x}^{-2\,\alpha} \,\right] $ 
\end{lemma}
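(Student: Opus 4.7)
The plan is to reduce the two-variable supremum defining $\kappa_\alpha^n(B)$ to the one-variable expression on the right using the exact formula for how projective distance transforms under a linear map together with the fact that $B^{(n)} \in \SL_2(\R)$, and then to pinch the two sides by Cauchy--Schwarz and a limiting argument.

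First I would invoke Remark~\ref{d(g x, g y)=L^2 d(x,y)}: for every $g \in \GL_2(\R)$ and unit vectors $x,y \in \R^2$ representing distinct projective points,
$$ \frac{d(\hat g\,\hat x, \hat g\,\hat y)}{d(\hat x, \hat y)} = \frac{|\det g|}{\norm{g x}\,\norm{g y}}. $$
Applied fiberwise to $g = B^{(n)}(\omega) \in \SL_2(\R)$, whose determinant has absolute value $1$, this identity rewrites
$$ \kappa_\alpha^n(B) = \sup_{\hat x \neq \hat y} \EE\!\left[\, \norm{B^{(n)} x}^{-\alpha}\, \norm{B^{(n)} y}^{-\alpha}\,\right], $$
where $x,y$ are the unit representatives of $\hat x,\hat y$.

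Next I would prove the upper bound $\kappa_\alpha^n(B) \leq \max_{\|x\|=1} \EE[\norm{B^{(n)} x}^{-2\alpha}]$ by Cauchy--Schwarz applied pointwise in $\omega$ and then integrated:
$$ \EE\!\left[\,\norm{B^{(n)} x}^{-\alpha}\norm{B^{(n)} y}^{-\alpha}\,\right] \leq \sqrt{\EE[\norm{B^{(n)} x}^{-2\alpha}]}\,\sqrt{\EE[\norm{B^{(n)} y}^{-2\alpha}]} \leq \max_{\|v\|=1} \EE[\norm{B^{(n)} v}^{-2\alpha}]. $$
Taking the supremum over $\hat x \neq \hat y$ gives the inequality $\leq$.

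For the reverse direction I would let $\hat y$ approach $\hat x$. For fixed $\omega$ the map $\hat z \mapsto \norm{B^{(n)}(\omega) z}^{-\alpha}$ is continuous on the unit sphere, so pointwise $\norm{B^{(n)} y}^{-\alpha} \to \norm{B^{(n)} x}^{-\alpha}$ as $y \to x$. Since $\norm{B^{(n)}(\omega)^{-1}} \leq L^n$ uniformly in $\omega$, we have the uniform bound $\norm{B^{(n)}(\omega) z}^{-\alpha} \leq L^{n\alpha}$, and dominated convergence yields
$$ \EE\!\left[\,\norm{B^{(n)} x}^{-\alpha}\norm{B^{(n)} y}^{-\alpha}\,\right] \longrightarrow \EE[\norm{B^{(n)} x}^{-2\alpha}] $$
as $\hat y \to \hat x$. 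Hence $\kappa_\alpha^n(B) \geq \EE[\norm{B^{(n)} x}^{-2\alpha}]$ for every unit $x$, and the maximum on the right is attained by compactness of the sphere and continuity in $x$, giving the reverse inequality. This is the only slightly delicate step, since the defining supremum excludes the diagonal $\hat x = \hat y$, but the uniform bound makes the passage to the limit painless.
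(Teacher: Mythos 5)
Your proof is correct, and it is the natural argument behind the cited result (the paper defers to Lemma~4.5 of the reference rather than proving it in place). The key step is precisely the one you identify: applying the identity from Remark~\ref{d(g x, g y)=L^2 d(x,y)} pointwise to $g = B^{(n)}(\omega) \in \SL_2(\R)$ to turn the projective-distance ratio into $\norm{B^{(n)}x}^{-1}\norm{B^{(n)}y}^{-1}$, after which Cauchy--Schwarz gives the upper bound and continuity up to the diagonal (with the uniform bound $\norm{B^{(n)}(\omega) z}^{-\alpha} \le L^{n\alpha}$ justifying dominated convergence) gives the lower bound. Your remark that the off-diagonal supremum in the definition of $\kappa_\alpha^n$ is no obstacle, because the integrand extends continuously across $\hat x = \hat y$, is exactly the point that needs to be made explicit, and you make it.
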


\begin{proof}
See~\cite[Lemma 4.5]{DK-31CBM}.
\end{proof}

\begin{proposition} 
\label{prop: MO qcs constants}
Let $\alpha=\frac{1}{4\,L\,(\log^2 L)\,N^2}$ and 
$\sigma=\left(1-\frac{1}{8\,L\,(\log^2 L)\,N^2} \right)^{1/N}$.
Then the  operator $\Qscr_B\colon \Hscr_\alpha(\Sigma\times\Pp)\to
 \Hscr_\alpha(\Sigma\times\Pp)$ is quasi-compact and simple with 
 spectral constants 
$\lambda=1$, $\sigma$ and  $C \asymp 2\,L$ as
$N\to+\infty$.
\end{proposition}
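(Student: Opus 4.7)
The whole statement reduces to the single spectral contraction
$$\kappa_\alpha^{N}(B) \;\le\; 1 - \tfrac{\alpha}{2} \;=\; \sigma^N,$$
after which the classical Ionescu-Tulcea-Marinescu quasi-compactness theorem\textemdash in combination with Lemma~\ref{kappa spectral} and the sub-multiplicativity~\eqref{kappa multip}\textemdash yields the desired spectral decomposition. The constant function $\mathbf{1}$ is plainly a fixed point of the Markov operator $\Qscr_B$; conversely, the contraction forces any $\phi\in\Hscr_\alpha(\Sigma\times\Pp)$ with $\Qscr_B\phi=\phi$ to have $v_\alpha(\phi) = \lim_n v_\alpha(\Qscr_B^n \phi) = 0$, hence to be constant in $\hat x$ for each $i$, and a further application of $\Qscr_B$ forces constancy in $i$ too. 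So $E_1 = \C\cdot\mathbf{1}$ is one-dimensional. The spectral projection is $P(\phi) = \int \phi\, d\mu$, where $\mu$ is the unique $\Qscr_B$-stationary probability on $\Sigma\times\Pp$ (obtained from the quasi-compact structure by Krylov-Bogolyubov); evidently $\norm{P(\phi)}_\alpha = \abs{P(\phi)} \le \norm{\phi}_\infty$.

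\textbf{The key estimate.} By Lemma~\ref{kappa=max norm},
$$\kappa_\alpha^{N}(B) \;=\; \max_{\norm{x}=1} \EE\!\left[\,\norm{\Bn{N}x}^{-2\alpha}\,\right],$$
so fix a unit $x$ and set $Y := \log \norm{\Bn{N}x}$. Assumption N3 gives $\abs{Y}\le N \log L$ almost surely, while Definition~\ref{def N(A)} combined with the lower bound $L(B)\ge \tfrac{2}{3}L(A)$ of~\eqref{E(log ||B||)>=2L(A)/3} gives $\EE[Y] \ge \tfrac{N L(B)}{2}$. The elementary Taylor bound $e^{s} \le 1 + s + \tfrac{s^2}{2}e^{\abs{s}}$ applied at $s = -2\alpha Y$, followed by taking expectations, yields
$$\EE[e^{-2\alpha Y}] \;\le\; 1 - 2\alpha \EE[Y] + 2\alpha^2 (N\log L)^2\, e^{2\alpha N \log L}.$$
With the prescribed $\alpha = 1/(4L(\log^2 L)N^2)$, the exponent $2\alpha N\log L = 1/(2L(\log L)N)$ is arbitrarily small once $\Vscr_A$ is shrunk enough to force $N$ large, so $e^{2\alpha N \log L} \le 2$. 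The linear term $2\alpha \EE[Y] \gtrsim 1/N$ then handily absorbs the quadratic correction $\lesssim 1/N^2$, and the net bound is $\kappa_\alpha^N(B) \le 1 - \alpha/2 = \sigma^N$.

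\textbf{Passing from $v_\alpha$ to $\norm{\cdot}_\alpha$.} The $v_\alpha$-contraction $v_\alpha(\Qscr_B^n\phi) \le L^{2\alpha}\kappa_\alpha^{n-1}(B) v_\alpha(\phi) \lesssim L\,\sigma^n\, v_\alpha(\phi)$ is immediate from Lemma~\ref{kappa spectral}, the sub-multiplicativity~\eqref{kappa multip} and Lemma~\ref{kappa bounded} applied to the short residue of length less than $N$; combined with the Markov bound $\norm{\Qscr_B\phi}_\infty \le \norm{\phi}_\infty$ this is the Doeblin-Fortet hypothesis. For the full $\norm{\cdot}_\alpha$-decay of $\Qscr_B^n\phi - \lambda^n P(\phi) = \Qscr_B^n\psi$, where $\psi := \phi - P(\phi) \in \ker P$, the observation is that $\mu(\Qscr_B^n\psi) = 0$ and
$$\Qscr_B^{n+1}\psi(i,\hat x) \;=\; \int \Qscr_B^n\psi(l,\hat B_i\hat x)\,dp(l),$$
so $\norm{\Qscr_B^{n+1}\psi}_\infty$ is controlled by the $v_\alpha$-seminorm of $\Qscr_B^n\psi$ up to a constant tending to $1$ as $\alpha \to 0$. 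Tracking constants, the overall spectral constant $C$ is asymptotic to $2L$ as $N\to\infty$: the $2$ from $\norm{\phi - P(\phi)}_\infty \le 2\norm{\phi}_\infty$, and the $L$ from the single use of the Lipschitz constant of the projective action that bridges $v_\alpha$-decay to $\norm{\cdot}_\infty$-decay. I expect the main obstacle to lie precisely in this final bookkeeping\textemdash propagating the $v_\alpha$-contraction into a full $\norm{\cdot}_\alpha$-decay while keeping explicit numerical control\textemdash because $\Sigma$ is a discrete space with no intrinsic H\"older structure, so decay in the symbolic variable has to be channeled through one iteration of the Markov operator.
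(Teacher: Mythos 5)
Your proof is correct and follows essentially the same route as the paper: the identical Taylor bound $e^s\le 1+s+\frac{s^2}{2}e^{|s|}$ applied to $s=-2\alpha\log\|B^{(N)}v\|$, combined with $\EE[\log\|B^{(N)}v\|]>\frac{N L(B)}{2}$ from Definition~\ref{def N(A)} and the a.s. bound $|\log\|B^{(N)}v\||\le N\log L$, gives $\kappa_\alpha^N(B)\le 1-\alpha/2=\sigma^N$; the sub-multiplicativity~\eqref{kappa multip}, Lemma~\ref{kappa bounded} for the residue, and Lemma~\ref{kappa spectral} then propagate this to $v_\alpha(\Qscr_B^n\phi)\lesssim L\sigma^n v_\alpha(\phi)$. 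The only divergence is cosmetic: where you invoke Ionescu-Tulcea--Marinescu for the abstract quasi-compact decomposition and then isolate $E_1=\C\cdot\one$ by the $v_\alpha$-contraction, the paper bypasses ITM and constructs the spectral projection $P(\phi)=(\int\phi\,d\mu_B)\one$ explicitly, passing from $v_\alpha$- to $\|\cdot\|_\alpha$-decay via the chain $\|\cdot\|_\infty\le\osc\le v_0\le v_\alpha$ with one application of $\Qscr_B$ precisely as you anticipate (``channeling through one iteration''); this yields $C\asymp C_N(1+\sigma^{-1})\asymp 2L$, where the factor of $2$ comes from $1+\sigma^{-1}\to 2$ rather than from $\|\phi-P\phi\|_\infty\le 2\|\phi\|_\infty$ as you suggest, though both bookkeepings lead to the same asymptotic constant.
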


\begin{proof}
As we decrease the size of $\Vscr_A$, $N=N(B)$ tends to $+\infty$ and by~\eqref{E(log ||B||)>=2L(A)/3}
  $L(B)^{-1}$ is bounded away from $+\infty$. Hence we can assume that
$\Vscr_A$ is small enough so that $N(B) > L(B)^{-1 }$ for all $B\in\Vscr_A$.
By Definition~\ref{def N(A)}, for any unit vector $v\in\R^2$,
$$ \frac{1}{N}\,\EE\left[\,\log\norm{\Bn{N}\, v}^{-2}\, \right]
=- \frac{2}{N}\,\EE\left[\,\log\norm{\Bn{N}\, v} \, \right] < -L(B) $$
which implies that
\begin{equation}
\label{EE[log ||BNv||(-2)<=-1}
\EE\left[\,\log\norm{\Bn{N}\, v}^{-2}\, \right]\leq -N\, L(B) < -1 .
\end{equation} 

A standard calculation (see  the proof of~\cite[Proposition 4.6]{DK-31CBM}) shows that 
for any unit vector $v\in\R^2$ and for any $\alpha>0$ 
letting $x= \alpha\, \log ( \norm{\Bn{N}\, v}^{-2})$
\begin{align*}
 \norm{\Bn{N}\, v}^{-2\,\alpha } &= e^x\leq 1+ x+\frac{x^2}{2}\,e^{\sabs{x}} \\
&\leq  1+\alpha\, \log  \norm{\Bn{N}\, v}^{-2}  + \frac{\alpha^2}{2}\, \norm{\Bn{N}}^{2\alpha} \, \log^2 \norm{\Bn{N}}^2  \\
&\leq  1+\alpha\, \log  \norm{\Bn{N}\, v}^{-2}  + 2\,\alpha^2\,L^{2\alpha N}\,  N^2 \,\log^2 L \;.
\end{align*}
Hence taking the expected value and using~\eqref{EE[log ||BNv||(-2)<=-1} we get
\begin{align*}
\EE\left[\, \norm{\Bn{N}\, v}^{-2\,\alpha }\, \right] 
&\leq  1-\alpha + 2\,\alpha^2\,L^{2\alpha N}\,  N^2 \,\log^2 L \;.
\end{align*} 
Fix now $\alpha=\frac{1}{4\,L\,(\log^2 L)\,N^2}$. 
If $N$ is large enough  then
$L^{2\alpha N}=L^{\frac{1}{2 L \log^2 L}\,\frac{1}{N^2}}\ll L
$  and so
\begin{align*}
2\,\alpha^2\,L^{2\alpha N}\,  N^2 \,\log^2 L
< \frac{1}{8\,L\,(\log^2 L)\,N^2}=\frac{\alpha}{2} .
\end{align*}
Therefore
 $$ \EE\left[\,\log\norm{\Bn{N}\, v}^{-2\,\alpha }\, \right]\leq 1-\alpha + \frac{\alpha}{2} = 1-\frac{\alpha}{2} .$$
Thus, by Lemma~\ref{kappa=max norm} one  has
$\kappa_\alpha^N(B)\leq 1-\frac{\alpha}{2}$, which is equivalent to $ \kappa_\alpha^N(B)\leq \sigma^N$. Given $n\geq 1$ write
$n-1=q\,N+r$ with $q\in \N$ and $0\leq r <N$. Because $r<N$, we have $0<\alpha\ll \frac{1}{4N}<\frac{1}{4r}$,
which by Lemma~\ref{kappa bounded} implies that $\kappa_\alpha^r(B)\leq L$.
Hence, by Lemma~\ref{kappa spectral} and the sub-multiplicative relation~\eqref{kappa multip}
 we have
\begin{align*}
v_\alpha(\Qscr_B^n\phi) &\leq v_\alpha(\phi)\, L^{2\alpha} \kappa_\alpha^{n-1}(B) \\
&\leq  v_\alpha(\phi)\, L^{\bigo(\frac{1}{N^2})}\, \kappa_\alpha^N(B)^q\, \kappa_\alpha^r(B)\\
&\leq v_\alpha(\phi)\, L^{1+\bigo(\frac{1}{N^2}) }\, \sigma^{N\,q}
\leq v_\alpha(\phi)\, L^{1+\bigo(\frac{1}{N^2}) }\, \sigma^{n-N}\\
&\leq C_N\, \sigma^n\, v_\alpha(\phi) ,
\end{align*}
with $C_N:=\frac{L^{1+\frac{1}{2\,L\,(\log^2 L)\, N^2}}}{\sigma^N} = \frac{L^{1+\bigo(\frac{1}{N^2})}}{1-\frac{\alpha}{2}}\asymp L$
as $N\to +\infty$.

Let now $\nu=\nu_B\in\Prob(\Pp)$ be the (unique) stationary measure
of the cocycle $B$ and define $\mu_B:=(\sum_{l=1}^k p_l\,\delta_l)\times\nu_B\in \Prob(\Sigma\times\Pp)$, i.e.,
$$ \int \phi\,d\mu_B := \sum_{l=1}^k\, p_l \, \int_\Pp \phi(l,\hat x)\, d\nu_B(\hat x) .$$
Consider the operator $P\colon \Hscr_\alpha(\Sigma\times\Pp)\to \Hscr_\alpha(\Sigma\times\Pp)$ defined by
$P(\phi) := (\int \phi\,d\mu_B)\,\one$, where $\one$ stands for the constant function $1$. This operator is the spectral  projection onto the eigen-space $E_1\subset \Hscr_\alpha(\Sigma\times\Pp)$ of constant functions. It  has norm $1$ because
$$ \norm{P(\phi)}_\alpha =\norm{P(\phi)}_\infty
=\abs{\smallint \phi\,d\mu} \leq \norm{\phi}_\infty\leq \norm{\phi}_\alpha.$$ 

By the spectral character of the projection $P$,
$\Qscr_B\circ P = P\circ \Qscr_B=P$. Hence, denoting the kernel of $P$ by $E_0:= \{\, \phi\in \Hscr_\alpha(\Sigma\times\Pp)\colon P(\phi)=0 \,\}$ we get a direct sum decomposition $\Hscr_\alpha(\Sigma\times\Pp)=E_0\oplus E_1$
which is invariant under the Markov operator $\Qscr_B$.

To finish note that
\begin{align*}
v_\alpha( \Qscr_B^n\phi  - P(\phi) ) &= v_\alpha( \Qscr_B^n\phi   ) \leq C_N\, \sigma^n\, v_\alpha(\phi) \leq C_N\, \sigma^n\, \norm{\phi}_\alpha .
\end{align*}
We need a similar bound on the decay
of $\norm{\Qscr_B^n\phi  - P(\phi)}_\infty$.
For that  we introduce two seminorms on $L^\infty(\Sigma\times\Pp)$.
\begin{align*}
v_0(\phi)& :=\max_{1\leq i\leq k} \, \sup_{\hat x \neq \hat y}  \, \abs{\phi(i,\hat x) - \phi(i,\hat y)} , \\
 \osc(\phi) &:=\max_{1\leq i,j\leq k} \, \sup_{\hat x , \hat y\in\Pp}  \, \abs{\phi(i,\hat x) - \phi(j,\hat y)} .
\end{align*} 
It is easily  seen that 
for all $\phi\in L^\infty(\Sigma\times\Pp)$
\begin{enumerate}
\item  $v_0(\phi)\leq v_\alpha(\phi)$\, for all $\alpha>0$
\item $\osc(\Qscr_B\phi)\leq v_0(\phi)$,
\item $\norm{\phi}_\infty\leq \osc(\phi)$ if $\int\phi\,d\mu=0$.
\end{enumerate}
The first inequality holds because $\Pp$ has diameter $1$.  
For the second note that
\begin{align*}
\abs{\Qscr_B(i,\hat x) - \Qscr_B(j,\hat y) } &\leq
\sum_ {l=1}^k p_l\, \abs{ \phi(l,\hat B_i \hat x) 
-   \phi(l,\hat B_j \hat y) }\\
& \leq \sum_ {l=1}^k p_l\, v_0(\phi) = v_0(\phi).
\end{align*}
Finally the third inequality holds because
\begin{align*}
\abs{\phi(i, \hat x)} &= \abs{ \phi(i, \hat x) - \smallint \phi\, d\mu }\\
&\leq \int \abs{ \phi(i, \hat x) - \phi(j, \hat y) }\, d\mu(j,\hat y) \leq \osc(\phi).
\end{align*}
Using these three inequalities we have
\begin{align*}
\norm{\Qscr_B^n\phi - P(\phi) }_\infty &\leq
\osc( \Qscr_B^n\phi - P(\phi) )
= \osc( \Qscr_B( \Qscr_B^{n-1} \phi - P(\phi)) )\\
&\leq v_0( \Qscr_B^{n-1} \phi - P(\phi))
\leq v_\alpha( \Qscr_B^{n-1} \phi - P(\phi))\\
&\leq C_N\,\sigma^{n-1} \, v_\alpha(\phi)
=  \sigma^{-1}\,C_N \,\sigma^n\, v_\alpha(\phi) .
\end{align*}
Therefore
\begin{align*}
\norm{\Qscr_B^n\phi - P(\phi) }_\alpha
&\leq C_N\,(1+\sigma^{-1})\,\sigma^n\, v_\alpha(\phi)\\
&\leq C_N\,(1+\sigma^{-1})\,\sigma^n\, \norm{\phi}_\alpha  ,
\end{align*}
with $C_N \,(1+\sigma^{-1}) \asymp 2\,L$.
\end{proof}

\bigskip

\subsection*{Spectral constants of the Laplace-Markov operator }
\label{spec LMO}
Consider a cocycle $B\in \Vscr_A$ such that $\rho_-(B)>0$ and write $N=N(B)$. 
From the previous section,  the spectral constants of the Markov operator $\Qscr=\Qscr_B$ acting on $\Hscr_\alpha(\Sigma\times\Pp)$ are
$\lambda_{M}=1$, $\sigma_{M}=(1-\frac{\alpha}{2})^{1/N}$ and $C_{M}<3L$, where  $\alpha=\frac{1}{4\,L\,(\log^2 L)\,N^2}$.
The Laplace-Markov operator $\Qscr_t=\Qscr_{B,t}$
is a positive operator, i.e., $\Qscr_t(\phi)\geq 0$ whenever $\phi\geq 0$.
Hence for $t\approx 0$
it has a positive maximal eigenvalue
$\lambda(t)=1+\bigo(t)$ $(t\to 0)$.

In this section we use a continuity argument to derive the spectral constants for $\Qscr_t$ on the same space $\Hscr_\alpha(\Sigma\times\Pp)$. 
The subscript $M$ in the spectral constants $\lambda_M,\sigma_M$ and $C_M$ of the Markov operator $\Qscr_B$ stands for `Markov'.
Likewise we will use a subscript $L$ for `Laplace' for the 
spectral constants $\lambda_L,\sigma_L$ and $C_L$ of the Laplace-Markov operator $\Qscr_{B,t}$.
By simply applying~\cite[Proposition 5.12]{DK-book} with $\varepsilon=\frac{1-\sigma}{3}$, so that $\sigma<\sigma+\varepsilon <1-\varepsilon <1$,
one   gets spectral constants for $\Qscr_t$
of the following magnitude:
$\lambda_{L} > 1-\varepsilon$, $\sigma_{L}<\sigma+\varepsilon$ and
$C_{L}<  ( c_1\, N^4)^{c_0\, N^2}$, for some positive constants $c_0$ and $c_1$ depending only on the cocycle $A$. Note that the bound on $C_{L}$ grows super-exponentially as $N\to +\infty$.
Even worse, these bounds hold only for   exponentially small parameters $t$, i.e., for 
$\vert t\vert\ll ( c_1\, N^4)^{-c_0\, N^2}$.
Therefore these  bounds are far from enough to prove Theorem~\ref{quant LDT Berboulli}.
 The strategy to overcome this difficulty is a trade-off between the bounds $\sigma_{L}$ and $C_{L}$. Allowing $\sigma_{L}$ to be much closer to $1$  we obtain for $C_{L}$ a  polynomial bound in $N$, which holds for polynomially small parameters $t$.

 The distance from $\sigma$ to $1$ will be a reference measurement, asymptotically (as $N\to+\infty$) given by
\begin{equation}
\label{1-sigma asymp}
1-\sigma \asymp-\log \sigma=
-\frac{1}{N}\,\log(1-\frac{1}{8\, L\,(\log^2 L)\,N^2})\asymp \frac{1}{8  L (\log^2 L) N^3}   .
\end{equation}

\bigskip

The next proposition provides a  tool for implementing this  trade-off.
 
\begin{proposition}
\label{subexpon bounds lemma}
Given     $0<C < \infty $ and $\frac{2}{3}\leq \sigma<1$,
there exist  constants $C'\lesssim \frac{C^2}{(1-\sigma)^2}$ and  $\beta_0 \gtrsim \frac{(1-\sigma)^4}{C^2}$ such that given  $0<\beta <\beta_0$ if  $\{c_n'\}_{n\geq 0}$ is a sequence with $c'_0=C$ and for all $n\geq 1$
$$
 0 < c'_n    \leq    C\,\sigma^n + \beta\,\left(  c'_0 \,c'_{n-1}  + c'_1 \,c'_{n-2}  + \ldots +
c'_{n-1} \,c'_{0}   \right) 
$$
then  for all $n\geq 0$,
\begin{align*}
  c'_n \leq C\,\sigma^n +  C'\,\beta\, \sigma^{\sqrt{n}} 
  \, <\,  C'\,    \sigma^{\sqrt{n}} .
 \end{align*}
\end{proposition}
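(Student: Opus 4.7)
The plan is to reduce the inequality to an equality-based majorant sequence and then extract the bound via generating functions and Catalan numbers.

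First, I would introduce $\{d_n\}_{n\ge 0}$ defined by $d_0 := C$ and, for $n \ge 1$, by equality in the recursion: $d_n := C\sigma^n + \beta \sum_{k=0}^{n-1} d_k\, d_{n-1-k}$. A straightforward induction using positivity gives $0 < c'_n \le d_n$ for all $n$, so it suffices to bound $\{d_n\}$. The convolution structure makes $g(z) := \sum_{n\ge 0} d_n z^n$ satisfy the functional equation
\begin{equation*}
g(z) \;=\; \frac{C}{1-\sigma z} \;+\; \beta z\, g(z)^2,
\end{equation*}
whose branch with $g(0) = C$ is $g(z) = \bigl(1 - \sqrt{1-4\beta z C/(1-\sigma z)}\bigr)/(2\beta z)$. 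Recognizing the Catalan generating function $\sum_n C_n x^n = (1-\sqrt{1-4x})/(2x)$ with $x = \beta z C/(1-\sigma z)$, and expanding $(1-\sigma z)^{-(n+1)} = \sum_m \binom{m+n}{n}\sigma^m z^m$, a standard coefficient extraction yields
\begin{equation*}
d_k \;=\; \sum_{n=0}^k C_n\, \beta^n\, C^{n+1}\, \binom{k}{n}\, \sigma^{k-n}.
\end{equation*}

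Applying the classical bound $C_n \le 4^n$ and separating the $n=0$ contribution (which equals $C\sigma^k$), and then using the elementary inequality $(a+h)^k - a^k \le k\, h\, (a+h)^{k-1}$ with $a = \sigma$ and $h = 4\beta C$, I would obtain
\begin{equation*}
d_k - C\sigma^k \;\le\; C\bigl[(\sigma + 4\beta C)^k - \sigma^k\bigr] \;\le\; 4\beta\, C^2\, k\, \tau^{k-1},
\end{equation*}
where $\tau := \sigma + 4\beta C$. Taking $\beta_0 := (1-\sigma)/(8C)$ (which is $\gtrsim (1-\sigma)^4/C^2$ in the regime $\sigma \in [2/3, 1)$, so the statement's lower bound on $\beta_0$ is automatically satisfied) ensures $\tau \le (1+\sigma)/2 < 1$ and $|\log \tau| \ge (1-\sigma)/2$. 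The last step is to establish a uniform estimate of the form $\sup_{k\ge 1} k\, \tau^{k-1}\, \sigma^{-\sqrt{k}} \lesssim 1/(1-\sigma)^2$, which together with the previous display gives $d_k - C\sigma^k \le C'\beta\,\sigma^{\sqrt{k}}$ with $C' \lesssim C^2/(1-\sigma)^2$. Since $\sigma^n \le \sigma^{\sqrt{n}}$ for $n \ge 1$, enlarging $C'$ by at most a factor of $2$ also absorbs the $C\sigma^n$ term and delivers the strict inequality $c'_n \le C'\sigma^{\sqrt{n}}$.

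The main obstacle is precisely the last supremum estimate. Taking logarithms, one must bound $f(k) := \log k - (k-1)|\log\tau| + \sqrt{k}\,|\log\sigma|$ uniformly over $k \ge 1$; $f$ is concave in the variable $u = \sqrt{k}$, and the critical point satisfies $\sqrt{k^\ast} \asymp |\log\sigma|/|\log\tau|$, which is $O(1)$ here since both $|\log\sigma|$ and $|\log\tau|$ are of order $1-\sigma$. The maximum value of $f$ is then of order $\log(1/(1-\sigma))$, which exponentiates to the claimed $1/(1-\sigma)^2$ bound. The delicate feature is the competition between the exponential decay of $\tau^{k-1}$ and the sub-exponential growth of $\sigma^{-\sqrt{k}}$, and keeping explicit quantitative track of the dependence on $1-\sigma$ throughout; the rest of the argument is algebraic bookkeeping.
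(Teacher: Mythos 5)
Your approach via generating functions and Catalan numbers is genuinely different from the paper's proof, and it is a natural and attractive route. The paper works with the same majorant sequence (there called $c_n^\ast$, your $d_n$) but proves the bound $|c_n^\ast - C\sigma^n| \le C'\beta\,\sigma^{\sqrt{n}}$ by direct induction, using as a key ingredient the estimate $\sum_{j=0}^n \sigma^{\sqrt{j}+\sqrt{n-j}-\sqrt{n}} \le 20\,(1-\sigma)^{-2}$ (Lemma~\ref{sum sub expon}). Your route instead solves the functional equation $g(z) = C/(1-\sigma z) + \beta z\, g(z)^2$, extracts the closed formula $d_k = \sum_{n=0}^k C_n\,\beta^n C^{n+1}\binom{k}{n}\sigma^{k-n}$, and deploys the Catalan bound $C_n\le 4^n$ to get $d_k - C\sigma^k \le C\bigl[(\sigma+4\beta C)^k - \sigma^k\bigr]$. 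All of this is correct and arguably more transparent, since the structure of the recursion is made explicit rather than handled inductively.

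There is, however, a genuine error in your analysis of the final supremum, even though the conclusion you want happens to be true. Setting $a := |\log\sigma|$, $b := |\log\tau|$, $u := \sqrt{k}$, the stationary point of $f(u) = 2\log u - (u^2-1)b + ua$ solves $2 - 2u^2 b + u a = 0$. You claim $u^\ast \asymp a/b = O(1)$; this comes from balancing $2u^2 b$ against $ua$ and \emph{dropping} the constant term $2$. But with $a\asymp b \asymp 1-\sigma$ small, the term $2$ is not negligible: the balance is in fact between $2$ and $2u^2 b$, giving $u^\ast \asymp b^{-1/2} \asymp (1-\sigma)^{-1/2}$, i.e.\ $k^\ast \asymp (1-\sigma)^{-1}$, not $O(1)$. (Your stated outcome $\max f \asymp \log\bigl(1/(1-\sigma)\bigr)$ is internally inconsistent with $k^\ast = O(1)$, which would yield $\max f = O(1)$.) The corrected computation gives $\max f \approx \log(1/b) + O(1)$, hence $\sup_{k\ge 1} k\,\tau^{k-1}\sigma^{-\sqrt{k}} \lesssim (1-\sigma)^{-1}$, which is stronger than the $(1-\sigma)^{-2}$ you need. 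A crude way to close this is to write $k\,e^{-kb+\sqrt{k}\,a} = u^2 e^{-b(u-a/(2b))^2 + a^2/(4b)}$, bound $e^{a^2/(4b)} = O(1)$ using $a\lesssim b$, and maximize $u^2 e^{-b(u-a/(2b))^2}$ directly. One further, lesser point: your $\beta_0 = (1-\sigma)/(8C)$ satisfies $\beta_0 \gtrsim (1-\sigma)^4/C^2$ only when $C \gtrsim (1-\sigma)^3$; this is not ``automatic'' from $\sigma\in[2/3,1)$ alone (it fails if $C$ is allowed to be arbitrarily small), though it holds in the regime relevant to the paper, where $C$ is bounded below by $1$.
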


\begin{proof} The  proof is based on 
Lemma~\ref{sum sub expon} below.
Let us set 
\begin{align*}
& c_n := C\,\sigma^n , \\
& \hat c_n := c_n + 
\beta\,\left(  c_0\,c_{n-1} + c_1\,c_{n-2} + \ldots +
c_{n-1}\,c_{0}  \right) .
\end{align*}
Define also (recursively) $c_0^\ast :=C$ and for all $n\geq 1$,
$$c_n^\ast := c_n  + 
\beta\,\left(  c_0^\ast\,c_{n-1}^\ast + c_1^\ast\,c_{n-2}^\ast + \ldots +
c_{n-1}^\ast\,c_{0}^\ast  \right) .$$
By induction we have $c_n'\leq c_n^\ast$ for all $n\geq 0$.
Consider the constant
\begin{equation}
C':=\frac{6\,C^2}{(1-\sigma)^2} .
\end{equation}
We claim that for all $j\in\N$,
\begin{equation}
\label{claim ind hyp}
 \sabs{c_j^\ast-c_j}  \leq C'\beta\, \sigma^{\sqrt{j}}  .
\end{equation}

This implies that  
\begin{equation}
\label{claim2 ind hyp}
c'_j \leq c_j^\ast  \leq   \sabs{c^\ast_j-c_j} + 
\sabs{c_j} \leq (C'\beta + C)\sigma^{\sqrt{j}} 
<  C'\sigma^{\sqrt{j}}  ,
\end{equation}

where  the last inequality  holds because
\begin{equation*}
C'\beta + C <C' 
\end{equation*}
something obvious for $\beta>0$ small enough. Thus   claim~\eqref{claim ind hyp} implies this proposition.

Let us now prove the claim. Assume, by induction hypothesis, that~\eqref{claim ind hyp}, and in particular~\eqref{claim2 ind hyp}, hold  for all $0\leq j\leq n$.
Using   Lemma~\ref{sum sub expon},
  we have
\begin{align*}
\abs{c^\ast_{n+1} - c_{n+1} }&\leq  
\abs{c^\ast_{n+1} - \hat c_{n+1} } +
\abs{\hat c_{n+1} - c_{n+1} } \\
&\leq  
\beta\, \sum_{j=0}^{n} \abs{ c^\ast_j\,c^\ast_{n-j} - c_j\,c_{n-j} } +
\beta\, \sum_{j=0}^{n} c_j\,c_{n-j} \\
&\leq  
\beta\, \sum_{j=0}^{n} \left\{ c^\ast_j\,\abs{ c^\ast_{n-j} - c_{n-j} } +
 c_{n-j} \,\abs{ c^\ast_j  - c_j } \right\}  +
\beta\, \sum_{j=0}^{n} C^2\,\sigma^n \\
&\leq  
2\,\beta^2\, C'^2  \sum_{j=0}^{n}   \sigma^{\sqrt{j}+\sqrt{n-j}}   +
 \beta\, C^2\, (n+1)\,\sigma^n \\
&\leq  \frac{40\,\beta^2\, C'^2}{(1-\sigma)^2}\,\sigma^{\sqrt{n}}  +
 \beta\, C^2\, (n+1)\,\sigma^{n/8} \,\sigma^{\sqrt{n+1}} \\
 &\leq  \beta^2\, \frac{40\,  C'^2 }{\sigma\, (1-\sigma)^2}\,\sigma^{\sqrt{n+1}}  +
 \beta\, \frac{3\,C^2}{ \sigma^{1/8}\,(1-\sigma)}\,\sigma^{\sqrt{n+1}} \\
 &\leq  \beta \, \frac{40\, \beta\,  C'^2  + 3\, C^2}{\sigma\, (1-\sigma)^2}\,\sigma^{\sqrt{n+1}}  
 < \beta\, C'\,  \sigma^{\sqrt{n+1}}  .
\end{align*}
We have used above the following inequalities
\begin{enumerate}
\item $\displaystyle \frac{n}{8}+\sqrt{n+1}\leq n$,\quad $\forall\; n\geq 2$
\item $\displaystyle  \sigma^{\sqrt{n}} \leq \frac{\sigma^{\sqrt{n+1}} }{\sigma}$,\quad $\forall\; n\geq 0$,\\
\item $\displaystyle  (n+1)\, \sigma^{n/8}\leq \frac{-8}{e\,\sigma^{1/8}\, \log \sigma }
< \frac{3}{ \sigma^{1/8}\,(1-\sigma) }$, \quad $\forall\; n\geq 0 $, 
\item $\displaystyle \frac{40\, \beta\, (C')^2    + 3\, C^2}{(1-\sigma)^2 }    <\sigma\, C' $
\end{enumerate}

The first two inequalities are clear. Inequality (3) is obtained computing the maximum of the function $f(x)=(x+1)\,\sigma^{x/8}$ over the half-line $]0,+\infty[$.

Finally, since \, $\frac{2}{3}\leq \sigma<1$ we have
\begin{align*}
(4)\Leftrightarrow \;  40\, \beta\, C'^2 &  + 3\, C^2 <
6\,\sigma\,C^2 \,   \Leftarrow \;
40\, \beta\,  C'^2    <  C^2 \\
 & \Leftrightarrow \;
40\times 36\, \beta\, C^4    <
C^2\,(1-\sigma)^4 \\
 & \Leftrightarrow \;
 1440\,\beta\, C^2    <
(1-\sigma)^4 \\
 & \Leftrightarrow \; \beta < \beta_0:= \frac{(1-\sigma)^4}{1440\,C^2}  .
\end{align*} 
Hence inequality (4) holds for all $0<\beta <\beta_0$.
\end{proof}

\begin{lemma}
\label{sub expon series}
Given $0<\sigma<1$,
$$ \sum_{n=0}^\infty \sigma^{\sqrt{n}}\leq \frac{2}{(1-\sigma)^2} + \frac{1}{1-\sigma} . $$
\end{lemma}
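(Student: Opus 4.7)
The plan is a direct block-summation. I would partition the integers $n \geq 0$ according to the integer part $k := \lfloor \sqrt{n} \rfloor$ of their square root. For each $k \geq 0$, the set of integers $n$ with $\lfloor \sqrt{n}\rfloor = k$ consists of exactly the values $k^2, k^2+1, \ldots, (k+1)^2-1$, a total of $2k+1$ terms, and on this block one has $\sqrt{n} \geq k$, hence $\sigma^{\sqrt{n}} \leq \sigma^k$ since $0 < \sigma < 1$.

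Therefore
\begin{equation*}
\sum_{n=0}^\infty \sigma^{\sqrt{n}} \;\leq\; \sum_{k=0}^\infty (2k+1)\,\sigma^k \;=\; 2\sum_{k=0}^\infty k\, \sigma^k + \sum_{k=0}^\infty \sigma^k \;=\; \frac{2\sigma}{(1-\sigma)^2} + \frac{1}{1-\sigma},
\end{equation*}
using the standard closed forms for the geometric series and its derivative. Since $\sigma < 1$, we have $\frac{2\sigma}{(1-\sigma)^2} \leq \frac{2}{(1-\sigma)^2}$, yielding the stated bound.

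There is no real obstacle here; the only tiny subtlety is the choice to compare $\sqrt{n}$ to its floor (rather than its ceiling), which gives an upper bound on $\sigma^{\sqrt{n}}$ in the right direction. The elementary identities $\sum_{k\geq 0} \sigma^k = (1-\sigma)^{-1}$ and $\sum_{k\geq 0} k\sigma^k = \sigma(1-\sigma)^{-2}$ then finish the argument immediately.
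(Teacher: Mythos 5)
Your proof is correct and follows essentially the same argument as the paper: partitioning integers by $\lfloor\sqrt{n}\rfloor = k$, counting $2k+1$ terms per block, bounding $\sigma^{\sqrt{n}}$ by $\sigma^k$, and summing the resulting arithmetico-geometric series. No differences worth noting.
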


\begin{proof}
Given $n\in\N$,
$\lfloor \sqrt{n}\rfloor=j$ \, $\Leftrightarrow$\,  
$j^2\leq n < (j+1)^2$, so that
$$\abs{ \{n\in\N\colon \lfloor \sqrt{n}\rfloor=j\} }=(j+1)^2-j^2= 2j+1.$$
Thus
\begin{align*}
\sum_{n=0}^\infty \sigma^{\sqrt{n}}  & \leq
\sum_{n=0}^\infty \sigma^{\lfloor \sqrt{n}\rfloor }  
= \sum_{j=0}^\infty \abs{ \{n\in\N\colon \lfloor \sqrt{n}\rfloor=j\} }\, \sigma^j \\
&= \sum_{j=0}^\infty (2j+1)\, \sigma^j 
= \frac{2\,\sigma}{(1-\sigma)^2} +
 \frac{1}{1-\sigma} .
\end{align*}
\end{proof}

\begin{lemma} 
\label{sum sub expon}
Given $0<\sigma<1$ \; 
$\displaystyle  \sum_{j=0}^n \sigma^{\sqrt{j}+\sqrt{n-j}-\sqrt{n}}  \leq \frac{20}{(1-\sigma)^{2}}$,\, for all $n\geq 1$.
\end{lemma}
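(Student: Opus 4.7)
The strategy is to exploit the symmetry $j \leftrightarrow n-j$ of the exponent $\sqrt{j}+\sqrt{n-j}-\sqrt{n}$, then bound this exponent below by a constant multiple of $\sqrt{\min(j,n-j)}$ so that a direct comparison with Lemma~\ref{sub expon series} (applied to a suitable power of $\sigma$) can be carried out.

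First I would use the symmetry to write
\[
\sum_{j=0}^n \sigma^{\sqrt{j}+\sqrt{n-j}-\sqrt{n}} \;\le\; 2 \sum_{0\le j\le n/2} \sigma^{\sqrt{j}+\sqrt{n-j}-\sqrt{n}}.
\]
The key algebraic step is then to establish that for $0 \leq j \leq n/2$,
\[
\sqrt{j}+\sqrt{n-j}-\sqrt{n} \;\ge\; c\,\sqrt{j}, \qquad c := 2-\sqrt{2}.
\]
This follows by writing $\sqrt{n}-\sqrt{n-j} = j/(\sqrt{n}+\sqrt{n-j})$, using $\sqrt{n-j}\ge \sqrt{n/2}$ to bound the denominator from below by $(1+1/\sqrt{2})\sqrt{n}$, and using $j \le \sqrt{j}\,\sqrt{n}/\sqrt{2}$ (again from $j \le n/2$) to bound the numerator from above; a short computation then gives $\sqrt{n}-\sqrt{n-j}\le \sqrt{j}/(\sqrt{2}+1)$, whence the claim with $c = \sqrt{2}/(\sqrt{2}+1) = 2-\sqrt{2}$.

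With this inequality in hand, $\sigma^{\sqrt{j}+\sqrt{n-j}-\sqrt{n}} \le (\sigma^c)^{\sqrt{j}}$, and Lemma~\ref{sub expon series} applied to $\sigma^c$ gives
\[
\sum_{j=0}^\infty (\sigma^c)^{\sqrt{j}} \;\le\; \frac{2}{(1-\sigma^c)^2} + \frac{1}{1-\sigma^c}.
\]
Because $c \mapsto \sigma^c$ is convex on $[0,1]$ (its second derivative is $(\log\sigma)^2\sigma^c>0$), the chord inequality yields $1-\sigma^c \ge c\,(1-\sigma)$ for $c\in[0,1]$. Substituting and using $1/(1-\sigma) \le 1/(1-\sigma)^2$ gives
\[
\sum_{j=0}^n \sigma^{\sqrt{j}+\sqrt{n-j}-\sqrt{n}}
\;\le\; \frac{4/c^2 + 2/c}{(1-\sigma)^2} \;=\; \frac{8+5\sqrt{2}}{(1-\sigma)^2} \;<\; \frac{20}{(1-\sigma)^2}.
\]

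The main obstacle is pinning down a sufficiently sharp constant $c$ in the lower bound on $\sqrt{j}+\sqrt{n-j}-\sqrt{n}$. A naive estimate such as $\sqrt{n}-\sqrt{n-j}\le j/\sqrt{n}$ gives only $c = 1-1/\sqrt{2}$, which leads to $4/c^2 \approx 46.6$ and misses the target constant $20$; the sharper choice $c = 2-\sqrt{2}$ obtained by keeping $\sqrt{n-j}$ in the denominator of $j/(\sqrt{n}+\sqrt{n-j})$ is what makes the stated bound go through.
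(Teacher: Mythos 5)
Your proof is correct and follows the same overall strategy as the paper's: symmetrize to reduce to $0\le j\le n/2$, lower bound the exponent $\sqrt{j}+\sqrt{n-j}-\sqrt{n}$ by $c\sqrt{j}$ for some absolute constant $c$, then invoke Lemma~\ref{sub expon series} at $\sigma^c$ and translate $1-\sigma^c$ into $1-\sigma$. The two junctures where your details diverge are worth noting. For the exponent bound, the paper first rewrites $\sqrt{j}+\sqrt{n-j}$ as $\sqrt{n+2\sqrt{j(n-j)}}$, rationalizes to get $\sqrt{j}+\sqrt{n-j}-\sqrt{n}\ge \sqrt{j(n-j)}/\sqrt{2n}$, and then uses $(n-j)/(2n)\ge 1/4$ to reach $c=1/2$; you instead rationalize $\sqrt{n}-\sqrt{n-j}=j/(\sqrt{n}+\sqrt{n-j})$ directly and keep $\sqrt{n-j}$ in the denominator, giving the sharper $c=2-\sqrt{2}\approx 0.586$. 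Both land comfortably under $20$, so the improvement does not matter here, but your route is slightly more economical. For the passage from $1-\sigma^c$ to $1-\sigma$, the paper exploits the exact factorization $1-\sigma=(1-\sigma^{1/2})(1+\sigma^{1/2})$, which is tied to $c=1/2$; your convexity bound $1-\sigma^c\ge c(1-\sigma)$ works for any $c\in[0,1]$, which is the more robust choice and is what lets you profit from your larger $c$. Both proofs are complete and correct.
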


\begin{proof}
For $n=1$ the sum equals $2$ and the inequality is obvious.
From now on  we assume that $n\geq 2$.
For   $0\leq j\leq n$, one has
$\sqrt{j}\,\sqrt{n-j}\leq \frac{n}{2}$, which implies that
$\sqrt{n+2\,\sqrt{j}\,\sqrt{n-j}}\leq \sqrt{2\,n}$, and hence  that
$$ \frac{1}{\sqrt{2\,n}}\leq  \frac{1}{\sqrt{n+2\,\sqrt{j}\,\sqrt{n-j}} } \leq
\frac{2}{\sqrt{n+2\,\sqrt{j}\,\sqrt{n-j}}+\sqrt{n}} \, . $$
Given $0<\sigma <1$ and using the previous inequality we get
\begin{align*}
\sum_{j=0}^n \sigma^{\sqrt{j}+\sqrt{n-j}-\sqrt{n}} &=
\sum_{j=0}^n \sigma^{\sqrt{( \sqrt{j}+\sqrt{n-j})^2} -\sqrt{n}}\\
 &=
\sum_{j=0}^n \sigma^{\sqrt{n+2\,\sqrt{j} \sqrt{n-j}} -\sqrt{n}}\\
 &=
\sum_{j=0}^n \sigma^{\frac{  2\,\sqrt{j} \sqrt{n-j}}{ \sqrt{n+2\,\sqrt{j} \sqrt{n-j}} + \sqrt{n} } } \\
 &\leq 
\sum_{j=0}^n \sigma^{\frac{  \sqrt{j} \sqrt{n-j}}{   \sqrt{2\,n} } }    \leq 
2\, \sum_{j=0}^{n/2} \sigma^{\sqrt{ \frac{   j\,(n-j)}{ 2\,n } }} .
\end{align*}

For  $0\leq j\leq \frac{n}{2}$, we have $\frac{n-j}{2\,n}\geq \frac{1}{4}$. Hence 
$$ \sqrt{\frac{j\,(n-j)}{2\,n}} \geq  \sqrt{\frac{j}{4}} . $$
Thus by Lemma~\ref{sub expon series}
\begin{align*}
\sum_{j=0}^{n/2} \sigma^{\sqrt{ \frac{   j\,(n-j)}{ 2\,n } }} \leq
\sum_{j=0}^{n/2} \sigma^{  \sqrt {\frac{j}{4}} }\leq
\sum_{j=0}^{\infty} \sigma^{  \frac{  \sqrt{j}}{2} } 
\leq 
\frac{2}{(1-\sigma^{\frac{1}{2}})^2} + \frac{1}{1-\sigma^{\frac{1}{2}}}
\end{align*}

On the other hand 
$$\frac{1}{1-\sigma^{\frac{1}{2}}}  =
\frac{1+\sigma^{\frac{1}{2}}}{1-\sigma}  
\leq \frac{2}{1-\sigma} .  $$
Therefore
\begin{align*}
\sum_{j=0}^n \sigma^{\sqrt{j}+\sqrt{n-j}-\sqrt{n}} 
  & \leq 2\,\sum_{j=0}^{n/2} \sigma^{\sqrt{ \frac{   j\,(n-j)}{ 2\,n } }} \\
 & \leq    
\frac{16}{(1-\sigma)^2} + \frac{4 }{1-\sigma} \leq    
\frac{20}{(1-\sigma)^2}  .
\end{align*}
\end{proof}

Finally we establish  the spectral constants
of the Laplace-Markov family of operators.

\begin{proposition}
\label{qcs}
There are positive constants $\sigma_L$, $\lambda_L$,
$C_L$ and $b_L$ with the following properties.
For every  $z\in\overline{\mathbb{D}}_{b_L}(0)$  
the space $\Hscr_\alpha(\Sigma\times\Pp)$ admits a one dimensional subspace  $E_z$ and a co-dimension $1$ subspace  $H_z$, 
there exist a number $\lambda(z)\in\C$ and a linear map $P_z\in \mathcal{L}(\Hscr_\alpha(\Sigma\times\Pp))$  
such that for all $f\in \Hscr_\alpha(\Sigma\times\Pp)$,
\begin{enumerate}
\item    $b_L = N^{-20}$,\, $1-\sigma_L\asymp N^{-8}$,\,  $1-\lambda_L\asymp N^{-10}$ \, and \, $C_L\asymp N^6$ as $N\to+\infty$.

\item  $\Hscr_\alpha(\Sigma\times\Pp)=E_{z}\oplus H_{z}$ is a $\Qscr_{z}$-invariant decomposition,
\item  $P_{z}$ is a projection onto $E_{z}$, parallel to $H_{z}$,
\item   $\Qscr_{z}( P_{z} f) = P_{z}( \Qscr_{z} f) =\lambda(z)\,P_{z} f$,
\item  $\Qscr_{z} f = \lambda(z)\, f$ if $f\in E_{z}$,
\item  $z\mapsto \lambda(z)$ is analytic in a neighborhood of $\overline{\mathbb{D}}_{b_L}(0)$,
\item  $\displaystyle \abs{\lambda(z)}\geq \lambda_L$, 

\item  $\displaystyle \abs{\lambda'(z)}\leq 2\,C_L$, 

\item  $\displaystyle \abs{\lambda''(z)}\leq 4\,b_L^{-1}\,C_L$,

\item  $\displaystyle \norm{ \Qscr_{z}^n f - \lambda(z)^n\,  P_{z} f }
\leq C_L \, \sigma^{\sqrt{n}}\,\norm{f}$,
\item  $\displaystyle \norm{ \Qscr_{z}^n f - \lambda(z)^n\,  P_{z} f }
\leq 2\,C_L\, (\sigma_L)^n  \,\norm{f}$,
 
\item  
there is a spectral gap: 
$\displaystyle \sigma_L  < \lambda_L^2$,

\item  $\displaystyle \norm{ P_z\,f } \leq C_L\,\norm{f}$,
\item  $\displaystyle \norm{ P_z\,f -P_0\,f } \leq C_L \, \vert z\vert\,\norm{f}$.
\end{enumerate}
\end{proposition}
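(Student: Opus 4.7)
The strategy is to combine Kato-type analytic perturbation theory for $\Qscr_z$ around the Markov operator $\Qscr_0=\Qscr_B$ (whose quantitative spectral data are supplied by Proposition~\ref{prop: MO qcs constants}) with the sub-exponential trade-off encapsulated in Proposition~\ref{subexpon bounds lemma}, in order to convert the super-exponential-in-$N$ bounds produced by the na\"ive perturbative approach into the polynomial-in-$N$ bounds claimed in item (1). Expanding the exponential yields
\begin{equation*}
\Qscr_z=\Qscr_0+\sum_{k\geq 1}\tfrac{z^k}{k!}V_k,\quad V_k\phi(i,\hat x):=\int\xi_B(l,\hat B_i\hat x)^k\,\phi(l,\hat B_i\hat x)\,dp(l);
\end{equation*}
since the $\alpha$-H\"older seminorm of $\xi_B$ on $\Vscr_A$ is controlled by a constant depending only on $A$, each $V_k$ acts boundedly on $\Hscr_\alpha(\Sigma\times\Pp)$ with norm at most $c(A)^k$, so $z\mapsto\Qscr_z$ is operator-analytic on a disk of radius depending only on $A$ and satisfies $\|\Qscr_z-\Qscr_0\|_{\Hscr_\alpha}\leq c(A)\,|z|$.

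Items (2)--(6) and (13) are standard analytic perturbation theory applied to the simple isolated eigenvalue $1$ of $\Qscr_0$: it persists as a holomorphic branch $\lambda(z)$ with a rank-one holomorphic spectral projection $P_z$ on any disk on which the resolvent $(\Qscr_0-\mu)^{-1}$ on a contour separating $1$ from the rest of the spectrum dominates the perturbation. Proposition~\ref{prop: MO qcs constants} gives a gap $1-\sigma_M\asymp N^{-3}$ with projection norm $\|P_0\|\lesssim L$, so the resolvent on a separating circle has norm $\lesssim N^3$, and the definitions of $P_z$, $\lambda(z)$, and of the invariant splitting $\Hscr_\alpha=E_z\oplus H_z$ extend throughout $|z|\leq b_L:=N^{-20}$ after further shrinking $\Vscr_A$.

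The heart of the argument is item (10). Set $T_z:=\Qscr_z-\lambda(z)P_z$; since $P_zT_z=T_zP_z=0$, induction gives $T_z^n=\Qscr_z^n-\lambda(z)^n P_z$, so $c_n:=\|T_z^n\|_{\Hscr_\alpha}$ is submultiplicative. Iterating the Duhamel expansion
\begin{equation*}
\Qscr_z^n=\Qscr_0^n+\sum_{k=0}^{n-1}\Qscr_0^{n-1-k}(\Qscr_z-\Qscr_0)\Qscr_z^k
\end{equation*}
a second time (so as to produce two factors of the perturbation $\Qscr_z-\Qscr_0$), subtracting $\lambda(z)^n P_z$ and carefully accounting for the $P_0$--$P_z$ mismatch, I expect to derive the estimate
\begin{equation*}
c_n\leq C_M\,\sigma_M^n+\beta\sum_{k=0}^{n-1}c_k\,c_{n-1-k},\qquad \beta\asymp|z|.
\end{equation*}
Proposition~\ref{subexpon bounds lemma} with $C=C_M\asymp L$ and $\sigma=\sigma_M$ then applies for $\beta<\beta_0\asymp(1-\sigma_M)^4/C_M^2\asymp N^{-12}$ and yields $c_n\leq C_L\,\sigma_M^{\sqrt n}$ with $C_L\asymp C_M^2/(1-\sigma_M)^2\asymp N^6$, proving item (10) and justifying $b_L=N^{-20}$ with ample margin. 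The main obstacle will be to extract the genuinely \emph{quadratic} convolution $c_k\,c_{n-1-k}$ rather than the merely linear one $\sigma_M^k\,c_{n-1-k}$ that appears from a single Duhamel step; this should follow from reapplying the same identity to the $\Qscr_z^k$ factor inside and absorbing the $P_z$-contributions into the known spectral decay of $\Qscr_0$.

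Items (11), (7)--(9), and (12) are straightforward consequences. For (11), submultiplicativity combined with (10) shows that for an $n_0$ of size polynomial in $N$ one has $c_{n_0}\leq \tfrac12$; splitting $n=q n_0+r$ yields $c_n\leq 2C_L\,\sigma_L^n$ with $1-\sigma_L$ polynomial in $N^{-1}$, of the claimed order $N^{-8}$ after optimizing $n_0$. Items (7)--(9) are Cauchy estimates for the analytic function $\lambda(z)$ on the disk of radius $b_L$, using $\lambda(0)=1$ and the first-order perturbation formula $\lambda(z)-1=z\int\xi_B\,d\mu_B+\bigo(|z|^2)$; this yields $1-\lambda_L\asymp N^{-10}$. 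Finally the spectral gap (12) reduces to the numerical inequality $1-\lambda_L^2\asymp N^{-10}<N^{-8}\asymp 1-\sigma_L$, valid for $N$ sufficiently large, i.e., $\Vscr_A$ sufficiently small.
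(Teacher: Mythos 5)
Your outline of the perturbation-theoretic skeleton (items (2)--(6), (13), (14) via the spectral projection obtained by contour integration, with the resolvent and gap data from Proposition~\ref{prop: MO qcs constants}) matches the paper's use of the quantitative spectral continuity lemma in \cite{DK-book}. Your handling of items (7)--(9), (11), (12) is also in the right spirit: the paper bounds $\lambda(z)-1$ via the explicit formula $\lambda(z)=\langle L_z\one,\mu_B\rangle/\langle P_z\one,\mu_B\rangle$ and Cauchy's integral formula on the disk $\overline{\mathbb{D}}_{4b_L}(0)$, and then deduces (11) by first finding $n_0\asymp N^8$ with $C'\sigma^{\sqrt{n_0}}\leq\sigma_L^{n_0}$ and iterating.

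The genuine gap is in item (10). You claim that by iterating the Duhamel identity twice so as to produce two factors of $\Qscr_z-\Qscr_0$, and then ``carefully accounting for the $P_0$--$P_z$ mismatch,'' you will obtain
\begin{equation*}
c_n\leq C_M\,\sigma_M^n+\beta\sum_{k=0}^{n-1}c_k\,c_{n-1-k},\qquad\beta\asymp|z|,
\end{equation*}
but this cannot emerge from that route: a twice-iterated Duhamel expansion produces two factors of $\Qscr_z-\Qscr_0$ (each of norm $\lesssim|z|$), so the double sum naturally carries coefficient $|z|^2$, not $|z|$; and the remaining pieces are expressed in powers of $\Qscr_0$, not of $\Nscr_z=\Qscr_z-\lambda(z)P_z$, so they are not directly of the form $c_k c_{n-1-k}$. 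You acknowledge the obstruction but the proposed resolution (``reapplying the same identity and absorbing the $P_z$-contributions'') does not produce the claimed $\beta\asymp|z|$. The paper's mechanism is different and cleaner: it does not telescope between $\Qscr_z$ and $\Qscr_0$ at all. Instead it differentiates the operator family $z\mapsto\Nscr_z^n$ in the complex parameter and uses the product rule,
\begin{equation*}
\frac{d}{dz}\bigl(\Nscr_z^n\bigr)=\sum_{j=1}^n\Nscr_z^{\,n-j}\Bigl(\tfrac{d}{dz}\Nscr_z\Bigr)\Nscr_z^{\,j-1},
\end{equation*}
which yields $\lVert\frac{d}{dz}(\Nscr_z^n)\rVert\leq C_L\sum_{j=0}^{n-1}c_j'\,c_{n-1-j}'$ with a single factor of $C_L$, and then the mean value theorem over the disk of radius $4b_L$ gives exactly $\beta=4C_Lb_L\asymp N^{-14}$ in front of the quadratic convolution, which feeds into Proposition~\ref{subexpon bounds lemma}.

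One more remark worth making: the quadratic convolution (and with it the sub-exponential trade-off of Proposition~\ref{subexpon bounds lemma}) is not forced upon you. A \emph{single} telescoping identity between $\Nscr_z$ and $\Nscr_0$,
\begin{equation*}
\Nscr_z^n-\Nscr_0^n=\sum_{k=0}^{n-1}\Nscr_0^{\,n-1-k}\bigl(\Nscr_z-\Nscr_0\bigr)\Nscr_z^{\,k},
\end{equation*}
gives the \emph{linear} convolution $c_n'\leq C\sigma^n+\beta'\sum_{k}\sigma^{n-1-k}c_k'$ with $\beta'\asymp C\,\|\Nscr_z-\Nscr_0\|\lesssim C\,C_L\,b_L\asymp N^{-14}$, and since $\beta'\ll1-\sigma\asymp N^{-3}$ this closes to a plain geometric bound $c_n'\leq 2C\,\tau^n$ with $\tau=\sigma+2\beta'$. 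Far from being ``merely linear and insufficient,'' this recursion would bypass Proposition~\ref{subexpon bounds lemma} altogether and in fact give exponential decay of $\Nscr_z^n$ with rate $\tau$ essentially equal to $\sigma$, which is stronger than both (10) and (11) as stated. So the concrete gap in your write-up is twofold: the claimed recursion with $\beta\asymp|z|$ does not follow from iterated Duhamel, and the ``obstacle'' you identify is illusory --- the linear convolution you already have from one step is enough.
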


\begin{proof}
Consider the abstract setting discussed in~\cite[Section 5.2.1]{DK-book}.  The scale of Banach algebras $\Hscr_\alpha(\Sigma\times\Pp)$, with $\alpha\in [0,1]$,
satisfies assumptions (B1)-(B7).

For each cocycle $B\in \Vscr_A$ consider the stochastic kernel $K_B$, the corresponding stationary measure
$\mu_B$ on $\Sigma\times \Pp$  and the observable $\xi_B\colon\Sigma\times \Pp\to \R$,
$$ \xi_B(i,\hat x):= \log \norm{B_i\,x}  $$
where $x$ stands for a unit representative of $\hat x\in\Pp$.
In~\cite{DK-book}
we refer to the tuple  $(K_B,\mu_B, \xi_B)$ as an observed Markov space on $\Sigma\times \Pp$. Denote by $\Xfrk$ the space of all observed Markov spaces of the form
$(K_B,\mu_B, \pm\,\xi_B)$ with $B\in \Vscr_A$ and $\rho_-(B)>0$.
This space satisfies assumptions (A1)-(A3) in~\cite[Section 5.2.1]{DK-book}.

Assumption (A1) follows automatically from the definition of $\Xfrk$. Assumption (A2) is a consequence of Proposition~\ref{prop: MO qcs constants} with the constants therein. Let us now focus on assumption (A3).
A simple calculation shows that $\norm{\xi_B}_\infty\leq \log L$
and 
\begin{align*}
\abs{ \log \norm{B_i \,x} - \log \norm{B_i \,y}  }&\leq
\frac{ \abs{  \norm{B_i \,x} - \norm{B_i \,y}  } }{L^{-1}} 
\leq L\,\norm{ B_i x - B_i y}\\
& \leq L^2 \, \norm{x-y} \leq \sqrt{2}\, L^2 \, d(\hat x, \hat y) 
\end{align*}
which implies that $v_\alpha(\xi_B)\leq v_1(\xi_B) \leq \sqrt{2}\, L^2 $. Therefore  $\norm{\xi_B}_\alpha <2\, L^2$.
The family of Laplace-Markov operators
$\Qscr_z\colon \Hscr_\alpha(\Sigma\times \Pp)\to \Hscr_\alpha(\Sigma\times \Pp)$ in~\eqref{Laplace-Markov operator} can be written as
$\Qscr_z(f) = \Qscr_0(f\,e^{z\,\xi_B})$ and  since $\Hscr_\alpha(\Sigma\times \Pp)$ is a Banach algebra,
the map $\C\ni z\mapsto \Qscr_z\in \mathscr{L}(\Hscr_\alpha(\Sigma\times\Pp))$ is an entire (analytic) function.
By Proposition~\ref{prop: MO qcs constants}
$\norm{\Qscr_0 f}_\alpha \leq (1+ C\, \sigma)\,\norm{f}_\alpha$
with $1+C\,\sigma\asymp 2\,L+1$.
Hence, for $i=0,1,2$,
\begin{align*}
\norm{\Qscr_z(f\,\xi_B^i)}_\alpha &=\norm{\Qscr_0(f\,\xi_B^i \, e^{z\,\xi_B})}_\alpha
\leq (2L+1)\,\norm{ f\,\xi_B^i\, e^{z\,\xi_B} }_\alpha\\
&\leq (2L+1)\,  \norm{\xi_B}_\alpha^i \, e^{\vert z\vert \,\norm{\xi_B}_\alpha} \, \norm{ f}_\alpha \\
&\leq (2L+1)\, (2\,L^2)^2\,  e^{2\,\vert z\vert \,L^2} \, \norm{ f}_\alpha .
\end{align*}
Therefore, setting $b=  (10 L^2)^{-1}$ and $M= 10\, L^5$, we have  for all $z\in \mathbb{D}_{b}$
$$\norm{\Qscr_z(f\,\xi_B^i)}_\alpha\leq 8\, e^{1/5}\,L^5\,\norm{f}_\alpha \leq M\, \norm{f}_\alpha  $$
which proves (A3).

Item (1) will follow  from the choices made below.

Except (7)-(12), all other items of this proposition 
follow from~\cite[Proposition 5.12 and Remark 5.3]{DK-book}.
Actually, apart from  (7)-(12), everything else follows from a general spectral continuity argument and the quantitative lemma~\cite[Lemma 5.2]{DK-book}. Items (13) and (14) for instance hold with $C_L =\frac{(3+3 C)^2}{(1-\sigma)^2}\, M$, with $C=2 L$ and
$M=10  L^5$. Using the asymptotic expression~\eqref{1-sigma asymp} for $1-\sigma$ we see that we can take $C_L= \kappa_0\, L^{10}\, N^6$, where $\kappa_0$ stands for some absolute constant.
Moreover, these bounds hold for all $\vert z \vert \leq L^{-{10}}\, N^{-6}$.

Let us now prove (10). From now on we will focus our  analysis of the  family of operators $\Qscr_z$ on the smaller disk $\mathbb{D}_{4\,b_L}(0)$ with  radius $4\, N^{-20}$, i.e., $b_L:=N^{-20}$.
Consider the family of operators \,
$\Nscr_z:=\Qscr_z-\lambda(z)\,P_z $.
The same lemma,~\cite[Lemma 5.2]{DK-book}, invoked to justify (14) shows that 
$\norm{\frac{d}{dz} \Nscr_z}\leq C_L$ for all $z\in \overline{\mathbb{D}}_{4 b_L}(0)$.
Defining
 $$ c_j':= \sup_{z\in\overline{\mathbb{D}}_{4 b_L}(0)} \norm{\Nscr_z^j}  \qquad (j\geq 0) $$
the previous inequality shows that
\begin{align*}
\norm{ \frac{d}{dz} ( \Nscr_z^n) } & \leq  \sum_{j=1}^n\norm{\Nscr_z^{n-j}}\,\norm{\frac{d}{dz}\Nscr_z}\, \norm{\Nscr_z^{j-1}}  \leq C_L\,\sum_{j=1}^{n}
c_j'\,c_{n-j}'.
\end{align*}
Next set $c_j:=C\,\sigma^j$, so that
$\norm{\Nscr_0^j}\leq c_j$.  Then, for all $z\in \overline{\mathbb{D}}_{4 b_L}(0)$,
$$
 \norm{ \Nscr_z^n }  \leq  \norm{ \Nscr_0^n } +  4\,C_L\,b_L\,\sum_{j=1}^{n}
c_j'\,c_{n-j}'  \leq  c_n  +  4\,C_L\,b_L\,\sum_{j=1}^{n}
c_j'\,c_{n-j}'  .
$$
Setting $\beta:= 4\,C_L\,b_L$, this  implies that  
$$ c_n' \leq c_n + \beta\,(c_0'\, c_{n-1}' + c_1'\, c_{n-2}' + \ldots + c_{n-1}'\, c_{0}' ). $$ 
By Proposition~\ref{subexpon bounds lemma}, since for large $N$ we have that
$$ \beta=4\,C_L\,b_L\asymp \frac{L^{10}\, N^6}{N^{20}} \ll
\frac{1}{2^{14}\, L^6 \, (\log^8 L)\, N^{12}}= \frac{(1-\sigma)^4}{ C^2 }=:\beta_0 $$
we have for all $n\geq 0$ and $z\in\overline{\mathbb{D}}_{4 b_L}(0)$, \,
$$\norm{\Qscr_z^n-\lambda(z)^n\, P_z } = \norm{\Nscr_z^n} \leq c_n' \leq  C'\, \sigma^{\sqrt{n}}
\leq  C_L\, \sigma^{\sqrt{n}}  $$
which proves (10).
Note that increasing if necessary the value of  $\kappa_0$ we ensure  that
$$ C'\asymp C^2\,(1-\sigma)^{-2}  
= 2^8\, L^4\, (\log^4 L)\, N^6  < \kappa_0\, L^{10}\, N^6=C_L . $$

Consider the numbers  
$$ \sigma_L:=\exp(-\frac{1}{N^8})\; \text{ and } \;
\lambda_L:=\exp(-\frac{1}{N^{10}}) $$
which satisfy
$1-\sigma_L\asymp N^{-8}$ and $1-\lambda_L\asymp N^{-10}$.
With these definitions item (12) holds because \,
$2\,N^{-10}\ll N^{-8}$.

To prove item (11) consider the integer $n_0=N^{8}$.
The inequality
\begin{equation*}
C'\,\sigma^{\sqrt{n_0}}=C'\,\exp\left(-\sqrt{n_0} \,\log \sigma^{-1} \right)
\leq \exp\left(- \frac{n_0}{N^8} \right) = \sigma_L^{n_0} 
\end{equation*}
holds because
$$ \sqrt{n_0} \,\log \sigma^{-1}\asymp \frac{N^{4}}{  N^3} 
\asymp  N  \gg 1 = \frac{n_0}{N^{8}} . $$
Hence $\norm{\Nscr_z^{n_0}}\leq \sigma_L^{n_0}$, for all $z\in \overline{\mathbb{D}}_{4 b_L}(0)$. Given  $n\geq 1$, performing an integer division we  write $n= q\,n_0 + r$ with $0\leq r <n_0$.
Thus, by item (10) we get for all $z\in \overline{\mathbb{D}}_{4 b_L}(0)$
\begin{align*}
\norm{\Nscr_z^n}  &\leq \norm{\Nscr_z^r}\, \norm{\Nscr_z^{n_0}}^q
 \leq   C_L\,(\sigma_L)^{q\,n_0}\\
 &\leq  C_L\,(\sigma_L)^{n-n_0}\lesssim C_L\,(\sigma_L)^{n} 
\end{align*}
which proves (11). Notice that $\sigma_L^{n_0}=e^{-1}\asymp 1$.

We  focus now on the proof of items (7)-(9).
Consider the family of  operators $L_z=\Qscr_z\circ P_z$.
By~\cite[Lemma 5.2]{DK-book}, for all $z\in \overline{\mathbb{D}}_{4 b_L}(0)$
$$ \norm{L_z-L_0}\leq C_L\, \abs{z} . $$
Let $\one$ denote the constant function $1$ on $\Sigma\times\Pp$
and $\mu_B$ be the stationary probability measure on $\Sigma\times \Pp$. Then for all $z\in\overline{\mathbb{D}}_{4 b_L}(0)$,
\begin{equation*} 
\lambda(z) = \frac{\langle L_{z}\one, \mu_B  \rangle}{\langle P_{z}\one, \mu_B  \rangle}\;.
\end{equation*}
The denominator above does not vanish because
$$ \langle P_{z}\one,\mu_B \rangle \geq 
\langle P_{0}\one,\mu_B \rangle -\norm{P_{z}\one -P_{0}\one}_\infty 
\geq 1-C_L\,b_L\asymp 1\;.$$
Hence, for all  $z\in \overline{\mathbb{D}}_{4 b_L}(0)$,
\begin{align*}
\abs{\lambda(z)-1} &\lesssim \abs { 
 \langle L_{z}\one,\mu_B \rangle -
 \langle P_{z}\one,\mu_B \rangle   } \\
&\leq  
 \abs{ \langle L_{z}\one-L_{0}\one ,\mu_0 \rangle} +
\abs{ \langle P_{z}\one-P_{0}\one,\mu_0 \rangle}     \\
&\leq  
  2\,C_L\,b_L\asymp  N^{-14}\ll N^{-10}\;.
\end{align*}
This proves (7).

By Cauchy's integral formula,  for all  $z\in \overline{\mathbb{D}}_{2 b_L}(0)$
\begin{align*}
\vert\lambda'(z)\vert &\leq  \frac{1}{2\pi}\, \int_{\vert \zeta-z\vert =4 b_L} \frac{\vert\lambda(\zeta)-1\vert}{\vert\zeta-z\vert^2}\, \vert d\zeta\vert  
\leq \frac{8 \pi b_L}{2\pi} \frac{2 C_L b_L}{(2 b_L)^2} =2\, C_L ,
\end{align*}
which proves (8). Similarly, for all  $z\in \overline{\mathbb{D}}_{b_L}(0)$
\begin{align*}
\vert\lambda''(z)\vert &\leq  \frac{1}{2\pi}\, \int_{\vert \zeta-z\vert =2 b_L} \frac{\vert\lambda'(\zeta)\vert}{\vert\zeta-z\vert^2}\, \vert d\zeta\vert  
\leq \frac{4 \pi b_L}{2\pi} \frac{2 C_L}{b_L^2} =4\, b_L^{-1}\, C_L ,
\end{align*}
which proves (9).
\end{proof}

\subsection*{Proving large deviation estimates}

\begin{proposition}\label{parametric LDT}
Given a cocycle $A\in\Diag$ with $L(A)>0$
there exists a neighborhood $\Vscr_A$ of $A$ in $\cocycles=\SL_2(\R)^k$ such that for any cocycle $B\in\Vscr_A$  with $\rho_-(B)>0$, if\,  $N=N(B)$ there are positive constants  $h\asymp N^{26}$ and $C\asymp N^6$ 
such that for all $0<\varepsilon< 1$ and  $n\geq 1$,
\begin{equation*}
\Pp \left[ \, \left| \frac{1}{n} \, \log \norm{\Bn{n} } - L (B)  \right| > \varepsilon \, \right] \leq C\,   e^{- \frac{\varepsilon^2}{2 h}\,n } .
\end{equation*}
\end{proposition}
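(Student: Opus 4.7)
The strategy is the classical exponential Chebyshev/Chernoff approach, implemented through the Laplace--Markov operator $\Qscr_{B,t}$ whose spectral data were quantified in Proposition~\ref{qcs}. The key point is that because $\Qscr_{B,t}$ acts as a quasi-compact, spectrally simple operator on $\Hscr_\alpha(\Sigma\times\Pp)$, its dominant eigenvalue $\lambda(t)$ plays the role of the moment generating function of the ergodic sum $\log\norm{\Bn{n} v}$. The quadratic coefficient in the Taylor expansion of $\log\lambda(t)$ at $t=0$ will produce the constant $h$, and the bounds of Proposition~\ref{qcs} will force $h\asymp N^{26}$ and $C\asymp N^6$.

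First, I would fix a unit vector $v\in\R^2$ and, using the telescoping identity $\log\norm{\Bn{n}(\omega) v}=\sum_{j=0}^{n-1}\xi_B\bigl(\omega_j,\hat B_{\omega_{j-1}}\cdots\hat B_{\omega_0}\hat v\bigr)$ together with~\eqref{Laplace-Markov operator}, obtain the MGF representation
\begin{equation*}
\EE\bigl[e^{t\log\norm{\Bn{n} v}}\bigr]=\int_\Sigma (\Qscr_{B,t}^{n}\one)(i,\hat v)\,dp(i).
\end{equation*}
Applying the spectral decomposition of Proposition~\ref{qcs}(10)--(13) for $|t|\le b_L\asymp N^{-20}$,
\begin{equation*}
(\Qscr_{B,t}^{n}\one)(i,\hat v)=\lambda(t)^{n}(P_t\one)(i,\hat v)+R_n(i,\hat v),\qquad \norm{R_n}_\alpha\le 2 C_L\,\sigma_L^{n},
\end{equation*}
so that $\EE[e^{t\log\norm{\Bn{n}v}}]\le C_L\lambda(t)^{n}+2C_L\sigma_L^{n}$.

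Next I would Taylor expand $\log\lambda(t)$ around $t=0$. Since $\lambda(0)=1$, differentiating the MGF representation at $t=0$ and invoking Lemma~\ref{qirred unif conv} and the definition of $N(B)$ yields $\lambda'(0)=L(B)$. Combining the bounds $|\lambda'(z)|\le 2 C_L$, $|\lambda''(z)|\le 4b_L^{-1}C_L$ and $|\lambda(z)|\ge \lambda_L\asymp 1$ from Proposition~\ref{qcs}(7)--(9), the identity $(\log\lambda)''=\lambda''/\lambda-(\lambda'/\lambda)^{2}$ gives
\begin{equation*}
\sup_{|z|\le b_L}|(\log\lambda)''(z)|\lesssim b_L^{-1}C_L\asymp N^{26}=:h,
\end{equation*}
hence $\log\lambda(t)\le tL(B)+\tfrac12 h t^{2}$ for $t\in[0,b_L]$. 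Now Markov's inequality gives, for every $t\in(0,b_L]$,
\begin{equation*}
\Pp\bigl[\log\norm{\Bn{n} v}>n(L(B)+\varepsilon)\bigr]\le C_L\,e^{n(\frac{h}{2}t^{2}-t\varepsilon)}+2C_L\,\sigma_L^{n}\,e^{-tn(L(B)+\varepsilon)}.
\end{equation*}
Choosing $t=\varepsilon/h$ (which stays in $(0,b_L]$ since $\varepsilon<1\le hb_L\asymp N^{6}$) yields the main term $C_L\,e^{-n\varepsilon^{2}/(2h)}$; the residual term is absorbed because the spectral gap $\sigma_L<\lambda_L^{2}$ from Proposition~\ref{qcs}(12) makes $\sigma_L^{n}$ decay faster than the Gaussian exponent. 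The lower tail is handled symmetrically with $t\in[-b_L,0)$.

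Finally, to pass from $\log\norm{\Bn{n} v}$ to $\log\norm{\Bn{n}}$, I would use the $\SL_2$ identity $\norm{g}^{2}\le\norm{g e_1}^{2}+\norm{g e_2}^{2}$, which gives $\log\norm{\Bn{n}}\le\max_{i=1,2}\log\norm{\Bn{n} e_i}+\tfrac12\log 2$, so a union bound on the previous estimate applied to $e_1,e_2$ controls the upper tail; the lower tail is immediate from $\norm{\Bn{n}}\ge\norm{\Bn{n} e_1}$. For small $n$ the estimate is trivial after enlarging $C$ by an absolute factor, since $|\tfrac1n\log\norm{\Bn{n}}-L(B)|\le 2\log L$ deterministically. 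The main obstacle is the tight trade-off imposed by Proposition~\ref{qcs}: the analyticity disk has radius only $b_L\asymp N^{-20}$ while $|\lambda''|$ can be as large as $b_L^{-1}C_L$, which is precisely what forces the (suboptimal but sufficient) exponent $h\asymp N^{26}$; one must verify that the Chernoff optimum $t=\varepsilon/h$ never leaves the disk, and that the spectral-remainder term $\sigma_L^{n}$ is dominated by the main Gaussian term, both of which are delicate checks that rely on the explicit scalings $1-\sigma_L\asymp N^{-8}$, $1-\lambda_L\asymp N^{-10}$ supplied by Proposition~\ref{qcs}(1).
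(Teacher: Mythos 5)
Your proposal is correct and follows essentially the same route as the paper: exponential Chebyshev implemented via the Laplace--Markov operator, spectral simplicity from Proposition~\ref{qcs}, Taylor control of $\log\lambda(t)$ on the disk $|t|\le b_L$, and Chernoff optimization at $t=\varepsilon/h$. The only cosmetic differences are that the paper normalizes $L(B)=0$ (so $c'(0)=0$) while you keep $\lambda'(0)=L(B)$ general, and the paper absorbs the spectral remainder into the main term using $\lambda(t)\ge1$ (proved via Jensen) whereas you carry it separately and observe it decays faster than the Gaussian term; both are minor and neither changes the mechanism.
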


\begin{proof}
The proof of this result reduces to~\cite[Theorem 4.1]{DK-31CBM} or~\cite[Theorem 5.3]{DK-book}. We outline it here for the reader's convenience.

As before let $X=\Sigma^\Z$ and $T:X\to X$
denote the shift homeomorphism. Define the skew-product map
$F_B:X\times \Pp\to X\times \Pp$ by
$F_B(x,\hat p):=(T x, \hat B_{x_0} \hat p)$.
Recall that $\nu_B\in\Prob(\Pp)$ stands for the (unique) stationary measure of the cocycle $B$ on $\Pp$.
Then $\Pp_B=(p^\Z)\times \nu_B$ is an $F_B$ -invariant probability measure on $X\times \Pp$. Next consider the random process
$e_n:X\times \Pp\to \Sigma\times\Pp$, 
$e_n(x,\hat p):= (x_n,\hat B^{(n)}(x)\, p)$, which satisfies
$e_n=e_0\circ F_B^n$ for all $n\geq 0$.
Given any probability  $\theta\in\Prob(\Sigma\times\Pp)$
there is a unique probability measure $\Pp_\theta \in \Prob(X\times\Pp)$ such that for any measurable set $E\subset \Sigma\times \Pp$,
\begin{enumerate}
	\item $\Pp_\theta[\, e_0\in E\,]=\theta(E) $,
	\item $\Pp_\theta[\, e_{n+1}\in E\, \vert e_n=(i,\hat p)\, ]
	=K_B((i,\hat p),E) $.
\end{enumerate}
We will refer to $\Pp_\theta$ as the Kolmogorov extension of $\theta$.
Because $\mu_B=(\sum_{l\in\Sigma} p_l\,\delta_l)\times \nu_B$ is the stationary measure of the stochastic kernel $K_B$,
the measure $\Pp_B$ is the Kolmogorov extension of $\theta=\mu_B$.
We denote by $\Pp_{(i,\hat p)}$ the Kolmogorov extension of 
a Dirac mass $\theta=\delta_{(i,\hat p)}$.
Averaging these Dirac masses we recover the invariant probability
\begin{equation}
\label{PB average}
\Pp_B = \int_{\Sigma\times \Pp}  \Pp_{(i,\hat p)}\, d\mu_B(i,\hat p)
.
\end{equation}

Let $\xi_B:\Sigma\times\Pp\to\R$ be the observable defined by
$\xi_B(i,\hat p):=\log \norm{ B_i\, p}$
where $p$ stands for a unit vector representative of $\hat p$.
This observable determines the random process $\eta_n:X\times\Pp\to\R$,
$\eta_n:= \xi_B\circ e_n$, and a corresponding sum process
$S_n=\sum_{j=0}^{n-1} \eta_j$. For all
$(x,\hat p)\in X\times\Pp$,
\begin{equation}
\label{Sn = log norm Bn}
S_n(x,\hat p)=\sum_{j=0}^{n-1}
\log \norm{ B_{x_j}\, \frac{B_{x_{j-1}}\ldots B_{x_0} \,p}{\norm{B_{x_{j-1}}\ldots B_{x_0} \,p}} } = \log \norm{ B^{(n)}(x)\, p }
 \end{equation} 
where $p$ is a unit vector representative of $\hat p$. This relates the additive process $S_n$ to the sub-additive one $\log \norm{B^{(n)}}$. The process $\{\eta_n\}_{n\geq 0}$ is stationary
w.r.t. $\Pp_B$ and by Furstenberg's formula we have 
$$ \EE[ \eta_n  ]=\int_{\Sigma\times\Pp} \xi_B\, d\mu_B = L(B) . $$
In particular $\frac{1}{n}\,\EE( S_n)=L(B)$ for all $n\geq 1$.
We may assume that $L(B)=0$ (otherwise we work with the $\GL_2$-cocycle $\tilde{B}$ obtained by 
multiplying  $B$ with the factor $e^{-L(B)}$).

By Chebyshev's inequality we get the following upper bound 
\begin{equation}
\label{Chebyshev}
 \Pp_{(i,\hat p)}\left[\, \frac{1}{n}\, S_n\geq \varepsilon \, \right]\leq e^{-n\, t\, \varepsilon}\, \EE_{(i,\hat p)}[ e^{t\, S_n} ]
\end{equation}
where $t$ and $\varepsilon$ are positive parameters and  $\EE_{(i,\hat p)}$ stands for  expectation w.r.t.
the probability measure $\Pp_{(i,\hat p)}$. The right-hand-side expected value can be expressed in terms of the  operator $\Qscr_{B,t}$:
$$ \EE_{(i,\hat p)}[ e^{t\, S_n} ] = (\Qscr_{B,t}^n\one )(i,\hat p). $$
For $t\in\R$, the Laplace-Markov operators $\Qscr_{B,t}$
are positive operators on the Banach lattice 
$\Hscr_\alpha(\Sigma\times\Pp)$ with $\alpha= (4\,L\,(\log^2 L)\, N^2)^{-1} $. By items (7) and (11) of Proposition~\ref{qcs} they are quasi-compact and simple operators for all
$0<t<b_L = N^{-20}$.
Hence $\Qscr_{B,t}$ has a unique positive maximal eigenvalue
$\lambda(t)$ which depends analytically on $t\in \overline{\mathbb{D}}_{b_L}(0)$. The uniform spectral decomposition
of this family of operators implies that
\begin{align*}
(\Qscr_{B,t}^n\one )(i,\hat p) &=\lambda(t)^n + 
\abs{(\Qscr_{B,t}^n \one)(i, \hat p)-\lambda(t)^n} \\
& \leq 
\lambda(t)^n +  \norm{ \Qscr_t^n \one-\lambda(t)^n\,\one}_\alpha\\
&\leq \lambda(t)^n  + \lambda(t)^n 
\norm{ P_t \one- \one}_\alpha +  \norm{ \Qscr_t^n \one-\lambda(t)^n\,P_t\one}_\alpha \\
&\leq \lambda(t)^n (1 +  C_L\,\vert t\vert) + C_L\,\sigma_L^n 
\leq 2\, C_L\,\lambda(t)^n .
\end{align*}
The last inequality holds because $\sigma_L< 1\leq \lambda(t)$, a claim proven below. Hence  
\begin{equation}
\label{EE exp t Sn}
 \EE_{(i,\hat p)}[ e^{t\, S_n} ] = (\Qscr_{B,t}^n\one )(i,\hat p) \leq 2\,C_L\,\lambda(t)^n = 2\,C_L\, e^{n\, c(t)}  
\end{equation}
with \, $c(t):=\log \lambda(t)$ \, for $t\in \overline{\mathbb{D}}_{b_L}(0)$.
The function $c(t)$ satisfies $c(0)=0$ because $\lambda(0)=1$,
and $c'(0)=0$ because $L(B)=0$. Now,    $\lambda(t)\geq 1$ is equivalent  to $c(t)\geq 0$ which follows by Jensen's inequality and the concavity of the logarithmic function.
In fact
\begin{align*}
c(t) &= \lim_{n\to +\infty} \frac{1}{n}	\,\log \norm{\Qscr_{B,t}^n} \geq \lim_{n\to +\infty} \frac{1}{n}	\,\log \norm{\Qscr_{B,t}^n\one } _\infty\\
& \geq \lim_{n\to +\infty} \frac{1}{n}	\,\log \EE_{(i,\hat p)}[ e^{t\, S_n} ] \geq \lim_{n\to +\infty} \frac{1}{n}	\, \EE_{(i,\hat p)}[  t\, S_n  ] =0 .
\end{align*}
Note that by~\cite[Lemma 4.3]{DK-31CBM},
 $\frac{1}{n}	\, \EE_{(i,\hat p)}[ S_n  ] = \frac{1}{n}	\, \EE[ \log \norm{B^{(n)}\,\frac{ B_i\,p}{ \norm{B_i\,p}}}  ]$ converges to $L(B)=0$  uniformly in $\hat p$.

By item (9) of Proposition~\ref{qcs} we have 
$$h:=\max_{t\in \overline{\mathbb{D}}_{b_L}(0)} \vert c''(t)\vert \leq 4\, b_L^{-1}\, C_L\asymp N^{26}.$$
For all $0<t< b_L$  one has $ c(t)\leq \frac{h}{2}\,t^2 $  and
combining~\eqref{Chebyshev} with~\eqref{EE exp t Sn}
$$ \Pp_{(i,\hat p)}\left[\, \frac{1}{n}\, S_n\geq \varepsilon \, \right]\leq 2\,C_L\,  e^{-n\, (t\, \varepsilon -\frac{h}{2}\,t^2)} .$$
Choosing the value of $t$ which minimizes the right-hand-side of the previous inequality leads to the following upper bound
$$\Pp_{(i,\hat p)}\left[\, \frac{1}{n}\, S_n\geq \varepsilon \, \right]\leq  2\, C_L\,  e^{-n\,\hat \varphi(\varepsilon) }\; \text{ with  } \; \hat \varphi(\varepsilon):=\max_{\vert t\vert<b_L} \left( t\, \varepsilon - \frac{h}{2}\, t^2\right)$$
where $\hat \varphi(\varepsilon)$ is the Legendre transform
of the quadratic function $\varphi:(-b_L,b_L)\to\R$, $\varphi(t):= \frac{h}{2}\, t^2$. A simple calculation shows that $\hat \varphi(\varepsilon)=\frac{\varepsilon^2}{2\,h}$
for all $\vert \varepsilon\vert <h\, b_L\asymp N^4$.
Averaging these inequalities in $(i,\hat p)$ w.r.t. $\mu_B$, through~\eqref{PB average} we get for all $n\geq 1$ and 
$0<\varepsilon<N^4$ 
$$ \Pp_{B}\left[\, \frac{1}{n}\, S_n\geq \varepsilon \, \right] \leq 2\,C_L\,  e^{-n\, \frac{ \varepsilon^2}{2\,h} } . $$
This provides an estimate for deviations of $\frac{1}{n}\, S_n$ above the average. Applying the same method to the symmetric process, the same bound holds for deviations below the average. We have assumed so far that $L(B)=0$. In the general case by~\eqref{Sn = log norm Bn} we have for all $n\geq 1$
$$ \Pp_{B}\left[\, \abs{ \frac{1}{n}\, \log \norm{B^{(n)}\, p} -L(B)  } \geq \varepsilon \, \right] \leq 4\,C_L\,  e^{-n\, \frac{ \varepsilon^2}{2\,h} } .$$
Finally, a standard argument allows us to substitute in the previous LDT estimate $n^{-1}\, \log \norm{B^{(n)}\, p}$ by $n^{-1}\, \log \norm{B^{(n)}}$, see the proof of~\cite[Theorem 4.1]{DK-31CBM}. This concludes the proof.
\end{proof}

\bigskip

\begin{proof}[Proof of Theorem~\ref{quant LDT Berboulli}]
Take $\varepsilon= n^{-1/6}$ with
$n \ge \left[ \, c^{-1}\, \log \rho_-(B)^{-1}   \, \right]^{54}$.
By Proposition~\ref{N-rho comparison proposition} this implies that $n\geq N^{54}$. Then by Proposition~\ref{parametric LDT}, the event
$\Delta_n:= \left[ \, \left|  n^{-1} \, \log \norm{\Bn{n} } - L (B)  \right| > n^{-1/6} \, \right]$ has probability
\begin{align*}
\Pp(\Delta_n) & \lesssim N^6\,   e^{- \frac{n^{-1/3}\, N^{-26}}{2}\,n }  \leq n^{1/9}\,   e^{- \frac{1}{2}\,n^{1-\frac{1}{3}-\frac{26}{54}}  }  \\
&\leq n^{1/9}\,   e^{- \frac{1}{2}\,n^{\frac{10}{54}}  }
\ll   e^{-  n^{\frac{1}{6}}  }
\end{align*}
The last inequality holds for all sufficiently large $n$,
so decreasing the size of the neighborhood $\Vscr_A$  we can always assume that $N$ is large enough that this holds for all $n\geq N^{54}$.
This proves the theorem. 
\end{proof}

\section{The bridging argument}
\label{bridging}
In this section we prove a uniform LDT estimate in the vicinity of a diagonalizable cocycle (for which an LDT estimate already holds at all scales) up to a certain {\em finite scale} that depends on the proximity to the set of diagonalizable cocycles. 

For every cocycle $A\in\SLtwoR^k$ and scale $n\in\N$, the number
$$\LE{n}(A) := \EE  \left[   \frac{1}{n} \, \log \norm{\An{n} } \right]$$
will be referred to as the  {\em finite scale} LE of $A$.
  
Note that as $n\to\infty$,  $\LE{n}(A) \to L(A)$ (the infinite scale LE).

\begin{proposition}\label{initial scale}
Let $A \in \Diag$ with $L(A)>0$. There are constants $\delta = \delta (A) > 0$ and $\nldt = \nldt (A) \in \N$ such that if $B \in \SLtwoR^k$ is any cocycle with $d (B, A) \le \delta$, then for all scales $n$ in the range $$\nldt \le n \le \left( \log d (B, \Diag)^{-1} \right)^{3/4}$$ we have 
\begin{align*}
 & \LE{n}(B) \ge \frac{L(A)}{3} ,\\
 & \abs{\LE{2n}(B)-\LE{n}(B)}  \le 2 n^{-1/5} ,\\
 & \Pp \left[  \left| \frac{1}{n} \, \log \norm{\Bn{n} } - \LE{n} (B)  \right| > n^{-1/5}  \right]  \le e^{- n^\frac{1}{3}} \, .
\end{align*}
\end{proposition}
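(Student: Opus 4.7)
The plan is to transfer Theorem~\ref{ldt diag} from a nearby diagonalizable cocycle $D$ to $B$ via a deterministic proximity estimate, using that the admissible range of scales is exactly where this transfer remains effective. First I would pick $D \in \Diag$ with $d(B, D) \le 2\,d(B, \Diag)$; by choosing $\delta$ small enough, one may arrange that $D$ lies in the neighborhood of $A$ supplied by Theorem~\ref{ldt diag} and Remark~\ref{continuity LE in Diag}, so $L(D) \ge \tfrac{9}{10} L(A)$. Let $L$ uniformly bound the norms of $B^{\pm 1}$ and $D^{\pm 1}$, and set $K := \log L$.

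The core deterministic bound comes from the telescoping identity $\|B^{(n)}(x) - D^{(n)}(x)\| \le n\,L^{n-1}\,d(B, D)$. Since $\|B^{(n)}\|, \|D^{(n)}\| \ge 1$ (as we are in $\SL_2(\R)$) and $|\log a - \log b| \le |a - b|$ for $a, b \ge 1$, this yields $|\tfrac{1}{n}\log\|B^{(n)}(x)\| - \tfrac{1}{n}\log\|D^{(n)}(x)\|| \le L^n\,d(B, D)$. Writing $M := \log d(B, D)^{-1}$, the constraint $n \le (\log d(B, \Diag)^{-1})^{3/4}$ forces $Kn - M \le K M^{3/4} - M \le -M/2$ for $d(B, \Diag)$ sufficiently small, so
\begin{equation}\label{bridge pointwise}
\bigl|\tfrac{1}{n}\log\|B^{(n)}(x)\| - \tfrac{1}{n}\log\|D^{(n)}(x)\|\bigr| \le d(B, D)^{1/2}
\end{equation}
holds pointwise in $x$, and integrating gives $|\LE{n}(B) - \LE{n}(D)| \le d(B, D)^{1/2}$.

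On the diagonal side, the pointwise bound from the proof of Theorem~\ref{ldt diag}, namely $|\tfrac{1}{n}\log\|D^{(n)}\| - |\tfrac{1}{n}\sum_{j=0}^{n-1}\log|\theta_D\circ T^j||| \le C(A)/n$, combined with the $L^2$ variance estimate for the centered i.i.d.\ sum $\tfrac{1}{n}\sum\log|\theta_D| - L(D)$ (and a standard splitting according to whether $\bar S_n$ has the sign of its mean), yields $|\LE{n}(D) - L(D)| \le C_1(A)\,n^{-1/2}$. Hence $|\LE{n}(B) - L(D)| \le d(B, D)^{1/2} + C_1(A)\,n^{-1/2}$, which for $\nldt$ large and $\delta$ small delivers $\LE{n}(B) \ge L(D) - L(A)/20 \ge L(A)/3$; the same estimate applied with $2n$ gives $|\LE{2n}(B) - \LE{n}(B)| \le 2\,d(B, D)^{1/2} + 2 C_1(A)\,n^{-1/2} \le 2\,n^{-1/5}$, using that in the admissible range $d(B, D) \le 2\,e^{-n^{4/3}}$ is super-exponentially small in $n$.

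Finally, by~\eqref{bridge pointwise} together with the $\LE{n}$ bounds just obtained, the event $\{|\tfrac{1}{n}\log\|B^{(n)}\| - \LE{n}(B)| > n^{-1/5}\}$ is contained, for $n \ge \nldt$ in the admissible range, in $\{|\tfrac{1}{n}\log\|D^{(n)}\| - L(D)| > n^{-1/5}/2\}$. Applying Theorem~\ref{ldt diag} to $D$ with $\epsilon = n^{-1/5}/2$ then dominates this probability by $e^{-c(A)\,n^{3/5}/4} \le e^{-n^{1/3}}$ for $n$ large, while the hypothesis $n \ge C(A)/\epsilon$ reduces to $n \ge (2C(A))^{5/4}$, absorbed into $\nldt$. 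The principal obstacle is the careful bookkeeping among three competing error scales---$d(B, \Diag)^{1/2}$, $n^{-1/2}$, and the target $n^{-1/5}$---and verifying that the exponent $3/4$ in the upper endpoint of the admissible range is calibrated exactly so that the proximity error $d(B, \Diag)^{1/2}$ is dominated by every polynomial error in $n^{-1}$ throughout the range.
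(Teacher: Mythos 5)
Your proof is correct and follows essentially the same strategy as the paper's: pick a diagonalizable cocycle $D$ with $d(B,D)\lesssim d(B,\Diag)$, transfer the Hoeffding-type LDT for $D$ (Theorem~\ref{ldt diag}) to $B$ via the deterministic telescoping bound $\bigl|\tfrac{1}{n}\log\|B^{(n)}\|-\tfrac{1}{n}\log\|D^{(n)}\|\bigr|\le e^{Kn}d(B,D)$, and exploit that the upper endpoint $(\log d(B,\Diag)^{-1})^{3/4}$ of the admissible range keeps this proximity error super-exponentially small in $n$. The only bookkeeping differences are that you estimate $|\LE{n}(D)-L(D)|$ separately by a variance bound (the paper instead integrates the high-probability LDT event to get $|\LE{n}(B)-L(B_\flat)|<n^{-1/5}$ directly), and you do not explicitly treat $d(B,\Diag)=0$, which the paper handles with a sequence $D_n\to B$; near a cocycle $A$ with $L(A)>0$, however, $d(B,\Diag)=0$ forces $B\in\Diag$, so your choice $D=B$ covers that case anyway.
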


\begin{proof}
By Theorem~\ref{ldt diag}, for every cocycle $D\in\Diagast$, if $\ep >0$ and $n\ge \frac{C(D)}{\ep}$ we have
\begin{equation*}
\Pp \left[  \left| \frac{1}{n} \, \log \norm{D^{(n)} } - L(D)  \right| > \ep  \right] \le e^{- c(D)\,\ep^2 \, n} \,,
\end{equation*}
where $C, c \colon \Diagast \to (0,\infty)$ are locally bounded functions. 

Therefore, there are $\delta_0=\delta_0 (A)>0$ and $\nldt=\nldt(A)\in\N$ such that if  $D\in\Diag$ with $d(A,D)<\delta_0$ and if $n\ge\nldt$, putting $\ep=n^{-1/4}$ above, we have
\begin{equation}\label{bridging eq1}
\Pp \left[  \left| \frac{1}{n} \, \log \norm{D^{(n)} } - L(D)  \right| > n^{-1/4}  \right] \le e^{- n^{1/3}} \,.
\end{equation} 

Since by Remark~\ref{continuity LE in Diag}, on $\Diag$ the LE is continuous, by possibly decreasing $\delta_0$ we may assume that $L(D) \ge \frac{L(A)}{2}$ for all cocycles $D\in \Diag$ with $d(A,D)\le\delta_0$.

Let $K=K(A)<\infty$ be such that $\log \norm{B}_\infty \le K$ for all cocycles $B\in\SLtwoR^k$ with $d(A,B)\le\delta_0$.

An easy calculation then shows that for any two cocycles $B$ and $D$ in this $\delta_0$-neighborhood of $A$, for all scales $m\in\N$ and all points $x\in X$ we have
\begin{equation}\label{proximity eq0}
\left| \frac{1}{m} \log \norm{\Bn{m} (x)} -  \frac{1}{m} \log \norm{D^{(m)} (x)}    \right| \le
e^{K m} \, d (B, D) \, .
\end{equation}

 

Let $\delta = \delta (A) > 0$ be such that $\delta < \min \{e^{-\nldt^2}, \frac{\delta_0}{3} \}$. 

Fix any cocycle $B \in \SLtwoR^k$ such that $d (B, A) \le \delta$. 

Note that since $A \in \Diag$, we have $d  (B, \Diag) \le d (B, A) \le \delta$, 
but $d  (B, \Diag)$ could be significantly smaller than $\delta$ (even zero). In order to get a (uniform) LDT estimate for $B$ by proximity, at scales $n$ in a reasonably long range, we will use the proximity of $B$ to a `quasi-closest' diagonalizable cocycle $B_\flat$, rather than to $A$.
We will distinguish between the cases $d (B, \Diag)=0$ and $d (B, \Diag)>0$.

If $d (B, \Diag)=0$ then for every $n$ there is $D_n \in \Diag$ such that $d (B, D_n) \le e^{- K n}\,n^{-1/4} < \delta_0$ if $n$ is large enough.

Then applying~\eqref{proximity eq0} with $D=D_n$ we get that for all $x\in X$,
 $$\left| \frac{1}{n} \log \norm{\Bn{n} (x)} -  \frac{1}{n} \log \norm{D_n^{(n)} (x)}    \right| \le
e^{K n} \, d (B, D_n)  < n^{-1/4} \,.$$ 

On the other hand, the LDT estimate~\eqref{bridging eq1} is applicable to the diagona\-lizable cocycle $D_n$ and for all $n\ge\nldt$ we have
$$\Pp \left[  \left| \frac{1}{n} \, \log \norm{D_n^{(n)} } - L(D_n)  \right| > n^{-1/4}  \right] \le e^{- n^{1/3}} \,.$$

Combining the previous two inequalities, we have that
$$\left| \frac{1}{n} \log \norm{\Bn{n} (x)} - L(D_n)  \right| < 2 n^{-1/4} < n^{-1/5}$$
with probability $ > 1 - e^{- n^{1/3}}$, and provided $n$ is large enough.

Integrating in $x$ we get
$$\abs{ \LE{n} (B) - L(D_n)} \le 2 n^{-1/4} + 2 \, K \,  e^{- n^{1/3}} < n^{-1/5} \, ,$$
 provided that $n\ge\nldt$ (if necessary, we slightly increase $\nldt$ so that asymp\-totic inequalities like the ones above hold for all $n \ge \nldt$).
 
 Then $B$ satisfies the following LDT estimate
 $$\Pp \left[  \left| \frac{1}{n} \, \log \norm{B^{(n)} } - \LE{n}(B)  \right| > n^{-1/5}  \right] \le e^{- n^{1/3}} $$
 for all $n\ge\nldt$, and we are done with the case $d (B, \Diag)=0$.

Now assume that $d (B, \Diag)>0$, so there is $B_\flat \in \Diag$ such that 
$$d (B, B_\flat) \le 2 \, d  (B, \Diag) \, (\le 2 \delta) \, .$$

Of course,  $d (B, B_\flat) \ge d  (B, \Diag)$, hence
$$d (B, B_\flat) \asymp d  (B, \Diag) \, .$$

Moreover, by the triangle inequality we also have
$$d (A, B_\flat) \le 3 \delta < \delta_0 ,$$
so \eqref{bridging eq1} applies to $B_\flat$ and we have that for all $n \ge \nldt$,
\begin{equation}\label{bridging eq2}
\Pp \left[  \left| \frac{1}{n} \, \log \norm{B_\flat^{(n)} } - L (B_\flat)  \right| > n^{-1/4}  \right] \le e^{- n^\frac{1}{3}} .
\end{equation}

We transfer the LDT~\eqref{bridging eq2} to $B$ by {\em proximity}, in a certain range of scales.
Put 
$$n_0 := \left( \log d(B, \Diag)^{-1}\right)^{1/4}, \ \text{ so } \ d(B, \Diag) = e^{-n_0^4} $$
and note that since  $\delta < e^{-\nldt^2}$, we have $\nldt \le n_0^2$. 
Now we derive an LDT for $B$ at any scale $n$ with $\nldt \le n \le n_0^3$, using the same procedure as above.
Applying~\eqref{proximity eq0} with $D=B_\flat$ we get that for all $x\in X$,
 $$\left| \frac{1}{n} \log \norm{\Bn{n} (x)} -  \frac{1}{n} \log \norm{B_\flat^{(n)} (x)}    \right| \le
e^{K n} \, d (B, B_\flat)  < e^{K n_0^3} \, e^{-n_0^4} <  n^{-1/4} .$$ 

Combining the above with~\eqref{bridging eq2} we get
$$ \left| \frac{1}{n} \, \log \norm{B^{(n)} } - L (B_\flat)  \right| <  2 n^{-1/4} < n^{-1/5}$$
with probability $ > 1 - e^{- n^{1/3}}$. 

Integrating in $x$ we have $$\abs{ \LE{n}(B) - L(B_\flat)} <  n^{-1/5} \,.$$ 

Then $\abs{\LE{2n}(B)-\LE{n}(B)}  \le 2 n^{-1/5}$ and   $B$ satisfies the LDT estimate
 $$\Pp \left[  \left| \frac{1}{n} \, \log \norm{B^{(n)} } - \LE{n}(B)  \right| > n^{-1/5}  \right] \le e^{- n^{1/3}} $$
 for all scales $n$ in the range  $\nldt \le n \le n_0^3 = \left( \log d(B, \Diag)^{-1}\right)^{3/4}$.
 
Finally, in the same range of scales,
$$\LE{n}(B)>L(B_\flat)-n^{-1/5}\ge\frac{L(A)}{2}-n^{-1/5}>\frac{L(A)}{3}$$
provided $\nldt$ is larger than a negative power of $L(A)$. 
\end{proof}

The next goal is to extend the range of the LDT (at the cost of weake\-ning the estimate), for scales up to an arbitrary power of $ \log d(B, \Diag)^{-1}$.

For that we use the Avalanche Principle (AP). Let us recall its statement (see for instance Theorem 2.1. in~\cite{DK-31CBM}).

\begin{lemma}\label{AP}
Let $0 < \kappa \ll \epsilon$. Given $g_0, g_1, \ldots, g_{n-1} \in \SL_2(\R)$ a sequence of matrices, if the geometric conditions 
\begin{align*} 
 & \text{(angle)}    & \frac{\norm{g_i \, g_{i-1}}}{\norm{g_i} \, \norm{g_{i-1}} }  \ge \epsilon  	\phantom{MMMMMMMMMMMMMMM} \\
 \\
 & \text{(gap)}    & \norm{g_i}   \ge \frac{1}{\kappa} 
\phantom{MMMMMMMMMMMMMMM}
\end{align*}
are satisfied for all indices $i$, then the following holds:
\begin{align*} 
\log \norm{ g^{(n)} }   & = -   \sum_{i=1}^{n-2} \log \norm{g_i} +  \sum_{i=1}^{n-1} \log \norm{ g_i   g_{i-1}}  + \bigo \left( n \, \frac{\kappa^2}{\ep^2} \right) . 
\end{align*}
\end{lemma}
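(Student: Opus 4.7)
The plan is to prove the Avalanche Principle by establishing a three-matrix identity and then iterating it. Since we work in $\SL_2(\R)$, each $g_i$ with $\norm{g_i}\ge 1/\kappa$ admits a singular value decomposition $g_i = \norm{g_i}\,u_i^+ (v_i^+)^\top + \norm{g_i}^{-1}\, u_i^- (v_i^-)^\top$, where $\{u_i^\pm\}$ and $\{v_i^\pm\}$ are orthonormal bases of $\R^2$ encoding the most/least expanding directions on the image and source sides respectively. Thus $g_i$ is a rank-one operator up to an additive perturbation of operator norm $\norm{g_i}^{-1}\le\kappa$, and all subsequent estimates will exploit this asymptotic rank-one structure.

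First I would establish a two-matrix lemma: if $g,h\in\SL_2(\R)$ both have norm $\ge 1/\kappa$ and satisfy $\tfrac{\norm{gh}}{\norm{g}\,\norm{h}}\ge\ep$, then expanding $gh$ in the singular bases of $g$ and $h$ yields
\[
\frac{\norm{gh}}{\norm{g}\,\norm{h}} = \bigl|\langle v_g^+, u_h^+\rangle\bigr| + \bigo(\kappa^2),
\]
and moreover the most expanding direction of $gh$ on the source side equals $v_h^+$ up to $\bigo(\kappa^2/\ep)$, while on the image side it equals $u_g^+$ up to the same error. The calculation is a direct two-by-two expansion using orthonormality of the singular bases, with the angle lower bound $\ep$ controlling the denominators that arise when one solves for the new extremal directions.

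The key step is the three-matrix identity
\[
\log\norm{g_i\,g_{i-1}\,g_{i-2}} = \log\norm{g_i\,g_{i-1}} + \log\norm{g_{i-1}\,g_{i-2}} - \log\norm{g_{i-1}} + \bigo(\kappa^2/\ep^2),
\]
which I would derive by applying the two-matrix lemma twice: the input singular direction of $g_i g_{i-1}$ is essentially $v_{g_{i-1}}^+$, so the angle between $g_i g_{i-1}$ and $g_{i-2}$ coincides up to $\bigo(\kappa/\ep)$ with the angle between $g_{i-1}$ and $g_{i-2}$; hence the angle hypothesis carries over to the grouped pair. Iterating this identity by peeling one factor at a time from the partial product, a telescoping cancellation of the $\log\norm{g_i}$ terms (each $g_i$ with $1\le i\le n-2$ appears once with a plus sign from $\log\norm{g_{i+1}g_i}$ plus $\log\norm{g_i g_{i-1}}$ and once with a minus sign) yields
\[
\log\norm{g^{(n)}} = \sum_{i=1}^{n-1}\log\norm{g_i\,g_{i-1}} - \sum_{i=1}^{n-2}\log\norm{g_i} + \bigo\bigl(n\,\kappa^2/\ep^2\bigr).
\]
The main obstacle is bookkeeping so that the errors remain linear in $n$ rather than compounding geometrically. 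This requires verifying inductively that every intermediate product $g_{n-1}\cdots g_j$ still satisfies a norm lower bound essentially of the form $\ep^{\,n-j-1}\prod_{i\ge j}\norm{g_i}$ and that its extremal singular directions remain within $\bigo(\kappa/\ep)$ of $u_{g_{n-1}}^+$ and $v_{g_j}^+$ respectively, so that the two-matrix lemma can be reapplied at each step with essentially unchanged parameters $(\ep,\kappa)$ and a fresh additive error of size $\bigo(\kappa^2/\ep^2)$ — this is where the hypothesis $\kappa\ll\ep$ is crucially used.
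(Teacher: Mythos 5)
Your argument takes the classical Goldstein--Schlag route to the Avalanche Principle: singular value decomposition, a two-matrix lemma expressing $\frac{\norm{gh}}{\norm{g}\,\norm{h}}$ as $\abs{\langle v_g^+, u_h^+\rangle}+\bigo(\kappa^2)$ together with tracking of the extremal singular directions of $gh$, then a three-matrix identity and an induction that peels one factor off the partial product while maintaining norm and direction estimates. The paper itself does not prove Lemma~\ref{AP} but cites Theorem~2.1 of~\cite{DK-31CBM}, where the proof is cast in a geometric, projective-metric framework that works in any dimension; your direct SVD computation for $\SL_2(\R)$ is a legitimate and somewhat more elementary alternative, and the overall structure (two-matrix lemma, three-matrix identity, induction with direction tracking) is sound.

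One point where the bookkeeping must be tightened. In your two-matrix lemma you correctly state that the extremal singular direction of $gh$ drifts from $v_h^+$ (respectively $u_g^+$) by $\bigo(\kappa^2/\ep)$; however, two sentences later (when transferring the angle hypothesis to the grouped pair) and again in the inductive invariant you record this drift as $\bigo(\kappa/\ep)$. These are not interchangeable here. A drift of order $\kappa/\ep$ produces, after taking logarithms and dividing by the angle lower bound $\ep$, a per-step error of order $\kappa/\ep^2$ and hence a total error $\bigo(n\,\kappa/\ep^2)$ -- this is the original Goldstein--Schlag bound, and it moreover forces the stronger hypothesis $\kappa\ll\ep^2$ for the angle condition of the grouped pairs to survive. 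The stronger conclusion $\bigo(n\,\kappa^2/\ep^2)$ in the statement you are proving, together with the weaker hypothesis $\kappa\ll\ep$, requires the sharper $\bigo(\kappa^2/\ep)$ drift which you yourself derived; you should carry that bound, not $\bigo(\kappa/\ep)$, through the induction.
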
  

We now formulate the main result of this section.

\begin{theorem}\label{thm:bridging}
Let $A \in \Diag$ with $L(A)>0$ and fix any integer $p \in \N$. There are constants $\delta = \delta (A) > 0$ and $\nldt = \nldt (A) \in \N$ such that if $B \in \SLtwoR^k$ is any cocycle with $d (B, A) \le \delta$, then 
\begin{equation*}
\Pp \left[  \left| \frac{1}{n} \, \log \norm{\Bn{n} } - \LE{n} (B)  \right| > n^{-1/5}  \right] \le e^{- n^\frac{1}{20 p}}
\end{equation*}
holds for all scales $n$ with $\nldt \le n \le \left( \log d (B, \Diag)^{-1} \right)^p$.
\end{theorem}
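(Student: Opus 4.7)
\smallskip

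The strategy is a bootstrap argument: starting from Proposition~\ref{initial scale}, which provides an LDT at scales up to $N_\ast := (\log d(B,\Diag)^{-1})^{3/4}$, I will iteratively extend the range of validity by applying the Avalanche Principle (Lemma~\ref{AP}). Define a sequence of scales $n_0 := N_\ast$ and $n_{k+1} := \lceil n_k^{3/2} \rceil$ (the exact exponent $3/2$ is tunable; it is a compromise between the number of iterations and the degradation of the tail exponent). Since $n_k = N_\ast^{(3/2)^k} = (\log d(B,\Diag)^{-1})^{(3/4)(3/2)^k}$, after $k_\ast \asymp \log p$ iterations the scale $n_{k_\ast}$ exceeds the target $(\log d(B,\Diag)^{-1})^p$.

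The inductive step from scale $n_k$ to $n_{k+1}$ goes as follows. Write $n_{k+1} = m \cdot n_k + r$ with $0 \le r < n_k$ and $m \asymp n_k^{1/2}$, and set $g_i := B^{(n_k)}(T^{i n_k} x)$ for $0 \le i \le m-1$. A decisive feature of the Bernoulli setting is that each $g_i$ depends only on the block of symbols $x_{i n_k}, \ldots, x_{(i+1) n_k -1}$, so the $g_i$ are \emph{i.i.d.}\ $\SLtwoR$-valued random matrices. The LDT at scale $n_k$ gives the gap bound $\|g_i\| \ge e^{n_k L(A)/4}$ for all $i$ with probability $\ge 1 - m e^{-n_k^{\beta_k}}$, while the LDT at scale $2 n_k$ together with the companion control $|L^{(2n_k)}(B) - L^{(n_k)}(B)| \le 2 n_k^{-1/5}$ yields the angle bound $\|g_{i+1} g_i\|/(\|g_{i+1}\|\,\|g_i\|) \ge e^{-C n_k^{4/5}}$ for all $i$ with comparable probability. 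Thus $\kappa/\epsilon \le e^{-c n_k}$, so Lemma~\ref{AP} applies and its error term is entirely negligible.

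With AP in force, we have
\begin{equation*}
\tfrac{1}{n_{k+1}} \log \| B^{(n_{k+1})} \| \;=\; -\tfrac{1}{n_{k+1}} \sum_{i=1}^{m-2} \log \|g_i\| \;+\; \tfrac{1}{n_{k+1}} \sum_{i=1}^{m-1} \log \|g_{i+1} g_i\| \;+\; o(1),
\end{equation*}
and the two sums are, up to splitting the second into even and odd index subsets to restore independence, Birkhoff averages of bounded i.i.d.\ random variables (bounded by $\log L$). Hoeffding's inequality (Lemma~\ref{Hoeffding}) then yields concentration of each sum around its mean with rate $e^{-c\, \eta^2 m}$ for deviation $\eta$. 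Taking expectations in the displayed identity identifies the combined mean with $L^{(n_{k+1})}(B)$ up to $O(n_k^{-1/5})$, so choosing $\eta = n_{k+1}^{-1/5}$ gives the LDT at scale $n_{k+1}$ with a new tail exponent that, after unioning with the $me^{-n_k^{\beta_k}}$ from the AP hypothesis verification, satisfies roughly $\beta_{k+1} \ge \tfrac{2}{3}\beta_k$ (the factor coming from $n_{k+1} = n_k^{3/2}$). Starting from $\beta_0 = 1/3$, after $k_\ast \asymp \log p$ iterations one has $\beta_{k_\ast} \gtrsim (2/3)^{k_\ast}/3 \gtrsim 1/(20 p)$, which is the advertised exponent. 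The same argument also propagates the two auxiliary inductive hypotheses $\LE{n_{k+1}}(B) \ge L(A)/3$ and $|\LE{2n_{k+1}}(B) - \LE{n_{k+1}}(B)| \le 2\, n_{k+1}^{-1/5}$, keeping the induction alive.

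The main technical obstacle is the careful bookkeeping of constants across the $O(\log p)$ iterations: one must verify that the chosen scale-step exponent $3/2$ simultaneously (i) allows Hoeffding to give a concentration rate at least as strong as the union bound inherited from the gap/angle verification, (ii) keeps the deterioration of the companion estimate $|\LE{2n_{k+1}}(B) - \LE{n_{k+1}}(B)|$ under $n_{k+1}^{-1/5}$ after each step, and (iii) aggregates into precisely the tail exponent $n^{1/(20 p)}$ at the final scale. All of this is achievable by the same kind of explicit tracking as in Proposition~\ref{initial scale}, but constitutes the only delicate accounting in the argument.
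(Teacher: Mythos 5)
Your iterative-bootstrap approach differs structurally from the paper's single-pass argument, but the inductive step contains a quantitative flaw that cannot be repaired by bookkeeping. You assert that the block random variables $\log\norm{g_i}$, with $g_i = \Bn{n_k}(T^{i n_k}x)$, are bounded by $\log L$; in fact $\norm{g_i}\le L^{n_k}$ by submultiplicativity, so the bound is $n_k\log L$, not $\log L$. With the correct bound, Hoeffding applied to $m\asymp n_k^{1/2}$ i.i.d.\ blocks, targeting a deviation of $n_{k+1}^{-1/5}$ in $\frac{1}{n_{k+1}}\sum\log\norm{g_i}$, gives a tail of order
$$\exp\Bigl(-\tfrac{1}{2K^2}\,n_{k+1}^{-2/5}\,m\Bigr)=\exp\Bigl(-\tfrac{1}{2K^2}\,n_k^{-3/5}\cdot n_k^{1/2}\Bigr)=\exp\Bigl(-\tfrac{1}{2K^2}\,n_k^{-1/10}\Bigr),$$
which tends to $1$ as $n_k\to\infty$: the exponent is \emph{negative}. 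This is not a matter of "explicit tracking" of constants, as you suggest at the end; with step exponent $3/2$ there are simply too few blocks to concentrate a sum whose per-block range grows linearly in the block length. (One can check that no step exponent $\gamma<5/3$ gives a non-trivial Hoeffding tail here, and the very first step actually requires $\gamma\ge 20/9$ to beat the union-bound exponent $\beta_0=1/3$.) A secondary gap is that your induction only produces the LDT along the geometric subsequence $\{n_k\}$, whereas the theorem requires it at \emph{every} scale $n$ in $[\nldt,(\log d(B,\Diag)^{-1})^p]$; you would need an extra argument, e.g.\ applying AP with block length $n_{k-1}$ to each intermediate $N\in[n_k,n_{k+1}]$, together with the companion control of $\LE{2N}-\LE{N}$.

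The paper avoids both issues by applying the Avalanche Principle exactly once, with matrix blocks of a \emph{fixed small} length $n_0\asymp(\log d(B,\Diag)^{-1})^{1/4}$, for any target scale $N$ up to $n_0^{4p}$. The number of blocks is then $n\asymp N/n_0\gtrsim n_0^2$ (and can be as large as $n_0^{4p-1}$), and since each $\log\norm{\Bn{n_0}(T^{i n_0}x)}$ is bounded by the same $n_0K$, Hoeffding gives a tail $\exp(-n^{1/2}/(2K^2))$ that is easily dominated by the union bound $\lesssim n_0^{4p}e^{-n_0^{1/3}}$ coming from Proposition~\ref{initial scale}. The resulting tail $\exp(-n_0^{1/5})$ beats $\exp(-N^{1/(20p)})$ precisely because $N\le n_0^{4p}$ gives $n_0\ge N^{1/(4p)}$, and the same construction covers every $N$ in the bridged range simultaneously. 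If you want to salvage an iterative scheme, the correct move is to keep the block length tied to the \emph{initial} scale $n_0$ rather than letting it grow with $k$ — which, in effect, collapses the iteration into the paper's single application.
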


\begin{proof} The constants $\delta$, $\nldt$, as well as the bound $K$ are the same as in Proposition~\ref{initial scale}. Fix any cocycle $B\in\SLtwoR^k$ with $d (B, A) \le \delta$. 

When $d (B, \Diag)=0$ there is nothing to prove, as the LDT in Proposition~\ref{initial scale} holds for all scales $n\ge\nldt$.

Assume that $d (B, \Diag)>0$ and put $n_0:=\left( \log d (B, \Diag)^{-1} \right)^{1/4}$.  Then by Proposition~\ref{initial scale}, for $m$ in the range $\nldt\le m\le n_0^3$ and for  $x\notin\B_m$, where  $\Pp [ \B_m] \le e^{- m^{1/3}}$,  we have that
\begin{equation}\label{bridge eq20}
\LE{m}(B) \ge \frac{L(A)}{3}
\end{equation}
and the following uniform LDT holds:
\begin{equation}\label{bridge eq10}
- m^{-1/5} \le  \frac{1}{m} \, \log \norm{\Bn{m} (x)} - \LE{m} (B)   \le m^{-1/5} \, .
\end{equation}


Let $N \in \N$ with $n_0^3 \le N \le n_0^{4 p} = \left( \log \imeas{B}^{-1} \right)^p$. 
Write $N = (n-1) \, n_0 + m_0$, with $n_0 \le m_0 < 2 n_0$, so that $n \asymp \frac{N}{n_0} \le n_0^{4 p -1}$.

The LDT~\eqref{bridge eq10} will allow us to apply the Avalanche Principle to matrix blocks of length $n_0$ and $m_0$, for a large enough set of phases $x$. 

Indeed, fix $x\in X$ and define
\begin{align*}
g_i  = g_i (x)  & :=   \Bn{n_0} (\transl^{i \, n_0} \, x) \quad \text{ if } \, 0\le i \le n-2 \quad \text{and}\\
g_{n-1}  = g_{n-1} (x)  & := \Bn{m_0} (\transl^{(n-1) \, n_0} \, x) 
\end{align*}

Then clearly $$g^{(n)} = \Bn{N} (x)$$ and for all $1 \le i \le n-2$,
\begin{align*}
g_i \, g_{i-1} &= \Bn{n_0} (\transl^{n_0} \ \transl^{(i-1) \, n_0} \, x) \, \Bn{n_0} (\transl^{(i-1) \, n_0} \, x) \\
&=  \Bn{2 n_0} (\transl^{(i-1) n_0}  x) , \quad \text{while}\\
g_{n-1} \, g_{n-2} &= \Bn{n_0+m_0} (\transl^{(n-2) n_0}  x) .
\end{align*}

The validity of the gap condition in the AP in Lemma~\ref{AP}, for a large number of phases $x$, is due to the left hand side of~\eqref{bridge eq10}  and the lower bound~\eqref{bridge eq20}, both applied at scales $n_0, m_0$. 

Indeed, if $x \notin \B_{n_0}$, then
\begin{align*}
\frac{1}{n_0} \log \norm{\Bn{n_0} (x)} & > \LE{n_0} (B) - n_0^{-1/5} > \frac{L(A)}{4} =: c > 0 \,,
\end{align*}
and the same holds  at scale $m_0$. 

Then for $x \notin \cup_{i=0}^{n-2} \, T^{- i n_0} \, \B_{n_0} \cup \transl^{- (n-1) \, n_0} \, \B_{m_0}$, which is a set of measure $\less n_0 \, e^{- n^{1/3}}$, and for all indices $0\le i\le n-1$, we have
$$\norm{g_i} = \norm{g_i (x)} \ge e^{c \, n_0} =: \frac{1}{\kappa} .$$ 

The validity of the angle condition in the AP is derived in a similar way. If $x \notin \B_{2 n_0} \cup \B_{n_0} \cup \transl^{- n_0} \B_{n_0} =: \Bt_{n_0}$, then
\begin{align*}
\frac{\norm{\Bn{2 n_0} (x)}}{\norm{\Bn{n_0} (\transl^{n_0} x)} \ \norm{\Bn{n_0} (x)}}  & \ge \frac{e^{2 n_0 (\LE{2 n_0} (B) - n_0^{-1/5})}}{e^{2 n_0 (\LE{n_0} (B) + n_0^{-1/5})}} \\
\\
& \kern-3em  = e^{2 n_0 (\LE{2 n_0}(B) - \LE{n_0}(B)) - 4 n_0^{4/5}} \ge e^{- 8 n_0^{4/5}} =: \ep .
\end{align*}

Then for all $x \notin \cup_{i=0}^{n-2} \, T^{- i n_0} \, \Bt_{n_0}$, and for all indices $1 \le i \le n-2$,
$$\frac{\norm{g_i \, g_{i-1}}}{\norm{g_i} \, \norm{g_{i-1}} }  \ge e^{- 8 n_0^{4/5}} =: \ep \,,$$

A similar estimate holds for the index $i=n-1$. 

Note also that
$$\frac{\kappa}{\ep} = \frac{e^{-c n_0}}{e^{- 8 n_0^{4/5}}} < e^{- c/2 \, n_0} \ll 1 \,.$$

We may then apply the AP to the list of matrices $g_0, \ldots, g_{n-2}, g_{n-1}$ and get
\begin{align}\label{linearization}
 \log \norm{\Bn{N} (x) } =  &  - \sum_{i=1}^{n-2} \, \log \norm{\Bn{n_0} (\transl^{i n_0} \, x) }  + \sum_{i=1}^{n-2} \, \log \, \norm{\Bn{2 n_0} (\transl^{(i-1) n_0} \, x) } \notag \\
 & + \log \, \norm{\Bn{n_0+m_0} (\transl^{(n-2) n_0} \, x) } +
 \bigo (n \, e^{- c \, n_0}) \, ,
 \end{align} 
 for all $x$ outside a set $\B_0$ with $\Pp [ \B_0 ] < e^{-n_0^{1/3}} \, n_0^{4 p} < e^{-n_0^{1/4}}$.


The right hand side of \eqref{linearization} can be broken down into three sums of i.i.d. random variables. 

Indeed, since the matrix blocks $\Bn{n_0} (\transl^{i n_0} \, x) $ use {\em disjoint} sets of $x$-coordinates for distinct values of the index $i$, the sequence of random variables $\left\{ \log \norm{\Bn{n_0} (\transl^{i n_0} \, x)}  \colon i = 1, \ldots, n-2 \right\}$ is already independent (and of course identically distributed). The common expected value of these random variables is $n_0 \, \LE{n_0}(B)$ and their common (and uniform) $L^\infty$ bound is $n_0 \, K$. Hoeffding's inequality (Lemma~\ref{Hoeffding}) is then applicable and it implies, putting $\ep=n_0 \, n^{-1/4}$,
\begin{align*}
\Pp \left[  \Big| \sum_{i=1}^{n-2} \, \log \norm{\Bn{n_0} (\transl^{i n_0} \, x) } - (n-2) \, n_0 \, \LE{n_0}(B) \Big| > n_0 \, n^{3/4} \right] \\
& \kern-7em < e^{- \frac{1}{2 K^2} \, n^{1/2}} < e^{- n_0} \, .
\end{align*}

Let us denote by $\B_1$ the event above, so $\Pp [ \B_1 ] < e^{- n_0}$.

The other sequence of random variables 
$$\left\{ \log \, \norm{\Bn{ 2 n_0} (\transl^{i n_0} \, x) }  \colon i = 1, \ldots, n-2 \right\}$$
is not independent, as consecutive matrix blocks defining  the terms of the  sequence depend on interlacing sets of $x$-coordinates. However, we may split it into two sequences, corresponding to the even and respectively the odd values of the indices $i$: 
\begin{align*}
\left\{ \log \, \norm{\Bn{ 2 n_0} (\transl^{i n_0} \, x) } \colon i = 1, \ldots, n-2, \ i \text{ odd } \right\},  \\
\left\{ \log \, \norm{\Bn{ 2 n_0} (\transl^{i n_0} \, x) }  \colon i = 1, \ldots, n-2, \ i \text{ even } \right\} ,
\end{align*}
each of which consisting now of independent random variables, as the corresponding matrix blocks depend of disjoint sets of $x$-coordinates.

The common expected value is now $2 n_0 \, \LE{2 n_0} (B)$ and the uniform $L^\infty$ bound is $2 n_0 \, K$. Applying Hoeffding's inequality with $\ep=2 n_0 \, n^{-1/4}$ to each of the two i.i.d. sequences above and then summing, we conclude as before that
$$\Pp \left[  \Big| \sum_{i=1}^{n-2} \, \log \norm{\Bn{2 n_0} (\transl^{i n_0} \, x) } - (n-2) \, 2 n_0 \, \LE{2 n_0}(B) \Big| > 4 n_0 \, n^{3/4} \right] 
< e^{- n_0} .$$

Let us denote by $\B_2$ the event above, so $\Pp [ \B_2 ] < e^{- n_0}$.

Combining~\eqref{linearization} with the previous two estimates derived via Hoeff\-ding's inequality, we obtain
\begin{align}\label{linearization 2}
 \log \norm{\Bn{N} (x) } = & - (n-2) \, n_0 \, \LE{n_0}(B) + (n-2) \, 2 n_0 \, \LE{2 n_0}(B) +  \bigo(n_0 \, n^{3/4}) \notag\\
 + & \log \, \norm{\Bn{n_0+m_0} (\transl^{(n-2) n_0} \, x) } +
 \bigo (n \, e^{- c \, n_0})
\end{align}
 for all $x\notin\B:=\B_0\cup\B_1\cup\B_2$, where $\Pp [ \B ] < e^{- n_0^{1/4}}+e^{- n_0}+e^{- n_0}<e^{- n_0^{1/5}}$.
 
 Note that since $N \asymp n \, n_0 \asymp n_0^{4 p} < e^{c \, n_0}$, then
\begin{align*}
 \frac{(n-2) \, n_0}{N} = 1 + \bigo(n^{-1}), &  \quad \frac{(n-2) \, 2 n_0}{N} = 2 + \bigo(n^{-1}), \\
 \frac{n_0 \, n^{3/4}}{N} = \bigo(n^{-1/4}), & \qquad \frac{n \, e^{- c \, n_0}}{N} = \bigo(n^{-1}) \, .
\end{align*}

Dividing both sides of~\eqref{linearization 2} by $N$ we then have: 
\begin{align*}
\frac{1}{N} \,  \log \norm{\Bn{N} (x) } = &  - \LE{n_0}(B)  + 2  \LE{2 n_0}(B) + \bigo(n^{-1/4})
\end{align*}
for all $x\notin\B$, where $\Pp [ \B ] < e^{- n_0^{1/5}} < n^{-1/4}$.

Integrating in $x$,
\begin{align*}
\LE{N}(B) & =  - \LE{n_0}(B)  + 2  \LE{2 n_0}(B) + \bigo(n^{-1/4}) + \bigo \left( K \, \Pp [ \B ] \right)\\
& =  - \LE{n_0}(B)  + 2  \LE{2 n_0}(B) + \bigo(n^{-1/4}) .
\end{align*}

We may then conclude that if $x\notin\B$ then
$$\frac{1}{N} \,  \log \norm{\Bn{N} (x) } = \LE{N}(B) + \bigo(n^{-1/4}) .$$

Thus
\begin{equation*}
\Pp \left[  \left| \frac{1}{N} \, \log \norm{\Bn{N} } - \LE{N} (B)  \right| > N^{-1/5}  \right] \le \Pp [ \B ] < e^{- n_0^{1/5}} <
e^{- N^\frac{1}{20 p}} ,
\end{equation*}
which completes the proof.
\end{proof}

\section{Proofs of the main results }\label{proofs}
We combine the results of the previous sections to derive a uniform LDT estimate in the space of all random cocycles over a finite set of symbols, and as a consequence of that, a modulus of continuity for the Lyapunov exponent.

\begin{proof}[Proofs of Theorems~\ref{main-uniform-ldt} and~\ref{main-continuity}]
By Remark~\ref{reduction diag remark}, it is enough to establish uniform (relative to the whole space $\SLtwoR^k$) sub-exponential LDT estimates for diagonalizable $\SLtwoR$ cocycles with positive LE.

Let $A\in\Diag$ with $L(A)>0$. 

By Theorem~\ref{ldt diag}, there are constants $\delta_1>0$, $c>0$ and $C<\infty$ depending on $A$, 
such that if $D\in\Diag$ is any diagonalizable cocycle with $d(D,A)\le\delta_1$, then
\begin{equation*}
\Pp \left[  \left| \frac{1}{n} \, \log \norm{D^{(n)} } - L(D)  \right| > \ep  \right] \le e^{- c(A)\,\ep^2 \, n}  
\end{equation*}
for all $\ep >0$ and $n\ge \frac{C(A)}{\ep}$. 

In particular, putting $\ep:=n^{-1/4}$, there is $\nldt_1=\nldt_1(A)\in\N$ such that 
for all $n\ge\nldt_1$, we have $$\Pp \left[  \left| \frac{1}{n} \, \log \norm{D^{(n)} } - L(D)  \right| > n^{-1/4}  \right] \le e^{- n^{1/3}} \,. $$

Integrating in $x$, we get (for a constant $K=K(A)<\infty$),
$$\abs{\LE{n}(D)-L(D)}\le n^{-1/4} + K \, e^{- n^{1/3}} < 2 \, n^{-1/4} \ll n^{-1/5} \, ,$$
so we can essentially replace $L(D)$ by $\LE{n}(D)$.

We conclude the following. There are constants $\delta_1>0$ and $\nldt_1\in\N$ depending on $A$, such that if $D\in\Diag$ and $d(D,A)\le\delta_1$, then for all $n\ge\nldt_1$ we have the following:
\begin{equation}\label{proofs diag}
\Pp \left[  \left| \frac{1}{n} \, \log \norm{D^{(n)} } - \LE{n}(D)  \right| > n^{-1/5}  \right] \le e^{- n^{1/3}} \, .
\end{equation}

If $B\notin\Diag$, then $\rho(B)=\max\{ \rho_-(B), \rho_+(B)\} > 0$, so either $\rho_-(B)>0$ or $\rho_+(B)>0$. But since $\rho_-(B^{-1})\asymp\rho_+(B)$, the latter case is equivalent to $\rho_-(B^{-1})>0$.

By Theorem~\ref{quant LDT Berboulli}, there are constant $\delta_2>0$ and $c>0$ depending on $A$, such that if $B \in \SLtwoR^k$ is any cocycle with $d (B, A) \le \delta_2$ and $\rho_-(B)>0$, then
\begin{equation}\label{proofs quantldt}
\Pp \left[ \, \left| \frac{1}{n} \, \log \norm{\Bn{n} } - L (B)  \right| > n^{-1/6} \, \right] <  e^{- n^ {1/6}} 
\end{equation}
holds for all $n \ge \left[ \, c^{-1}\, \log \left(1/\rho_-(B)\right)   \, \right]^{54}$. 

As before, we may essentially replace in~\eqref{proofs quantldt}  $L(B)$ by $\LE{n}(B)$. 

Moreover, $A^{-1}$ is also diagonalizable and $L(A^{-1}) = L(A)>0$, so \eqref{proofs quantldt} holds for $B^{-1}$ too, provided it is close enough to $A^{-1}$. 
Since the map $\SLtwoR^k \ni B \mapsto B^{-1}\in\SLtwoR^k$ is locally bi-Lipschitz, that proxi\-mity is ensured by the proximity of $B$ to $A$ (we may have to decrease $\delta_2$ slightly). Furthermore, by Lemma~\ref{inverse ldt}, any LDT that holds for a cocycle holds for its inverse. 

From these considerations we derive the following. 
There are cons\-tants $\delta_2>0$ and $c>0$ depending on $A$, such that if $B \in \SLtwoR^k$ is any cocycle with $d (B, A) \le \delta_2$ and $\rho_-(B^\pm)>0$, then
\begin{equation}\label{proofs eq2}
\Pp \left[ \, \left| \frac{1}{n} \, \log \norm{\Bn{n} } - \LE{n}(B)  \right| > n^{-1/7} \, \right] <  e^{- n^ {1/6}} 
\end{equation}
holds for all $n \ge \left[ \, c^{-1}\, \log \left(1/\rho_-(B^\pm)\right)   \, \right]^{54}$.

Recall from Proposition~\ref{diag rho} that there are constants $\delta_3>0$ and $C<\infty$  depending on $A$, such that for all cocycles $B$ with $d(B,A)\le\delta_3$, either $d(B,\Diag) \le C\, \rho_-(B)$ or $d(B,\Diag) \le C\, \rho_-(B^{-1})$.

Choose the sign $\varepsilon=\pm$ so that $d(B,\Diag) \le C\, \rho_-(B^\varepsilon)$. 

Let $\delta_4:=\min \{\delta_2, \delta_3\}$. If $B\notin\Diag$ (so in particular $d (B, \Diag)>0$) and if $d(B,A)\le\delta_4$, then~\eqref{proofs eq2} holds for all $n \ge \left[ \, c^{-1}\, \log \left(1/\rho_-(B^\varepsilon)\right)   \, \right]^{54}$.

But  $c^{-1}\, \log \left(1/\rho_-(B^\varepsilon)\right) \le c^{-1} \, \log C + c^{-1} \, \log d(B, \Diag)^{-1}$.

Since $d (B, \Diag)\le d(B,A)\le\delta_4\ll 1$, by possibly decreasing $\delta_4$, we may assume that 
$$c^{-1} \, \log C + c^{-1} \, \log d(B, \Diag)^{-1}\le\left( \log d (B, \Diag)^{-1}\right)^{1+1/9} .$$

Consequently, for any cocycle $B\in\SLtwoR^k$ with $d (B, A)\le\delta_4$, if $B\notin\Diag$ then
\begin{equation}\label{proofs eq3}
\Pp \left[ \, \left| \frac{1}{n} \, \log \norm{\Bn{n} } - \LE{n}(B)  \right| > n^{-1/7} \, \right] <  e^{- n^ {1/6}} 
\end{equation}
holds for all $n\ge\left( \log d (B, \Diag)^{-1}\right)^{60}$.

\smallskip

Finally, we apply the bridging argument in Theorem~\ref{thm:bridging} with $p=60$. There are constants $\delta_5>0$ and $\nldt_2\in\N$ depending on $A$,  such that if $B \in \SLtwoR^k$ is any cocycle with $d (B, A) \le \delta_5$ and if $B\notin\Diag$, then 
\begin{equation}\label{proofs bridging}
\Pp \left[  \left| \frac{1}{n} \, \log \norm{\Bn{n} } - \LE{n} (B)  \right| > n^{-1/5}  \right] \le e^{- n^\frac{1}{1200}}
\end{equation}
holds for all scales $n$ with $\nldt_2 \le n \le \left( \log d (B, \Diag)^{-1} \right)^{60}$.

From~\eqref{proofs eq3} and~\eqref{proofs bridging} we derive a uniform LDT for non diagonalizable cocycles, while~\eqref{proofs diag} provides the same for diagonalizable cocycles. We conclude: there are constants $\delta>0$ and $\nldt\in\N$ depending on $A$, so that for any cocycle $B\in\SLtwoR^k$ with $d (B, A)\le\delta$ and for all $n\ge\nldt$, we have
\begin{equation}\label{proofs eq5}
\Pp \left[  \left| \frac{1}{n} \, \log \norm{\Bn{n} } - \LE{n} (B)  \right| > n^{-1/5}  \right] \le e^{- n^\frac{1}{1200}} \, .
\end{equation}

This is the form
 of the uniform LDT required by our abstract continuity theorem (ACT) (see Section 3.2 in~\cite{DK-31CBM})\footnote{In~\cite{DK-31CBM} the uniform LDT is exponential, and as a result, the modulus of continuity of the LE and that of the Oseledets decomposition are H\"older. As alluded to at the end of Section 3.2 of~\cite{DK-31CBM} (or using our more general ACT in~\cite{DK-book}), a sub-exponential uniform LDT leads to a weak-H\"older modulus of continuity.}. Consequently, the continuity properties of the LE and of the Oseledets decomposition formulated in Theorem~\ref{main-continuity} follow from~\eqref{proofs eq5} by means of the ACT (Theorem 3.3 in~\cite{DK-31CBM}).
 The estimate~\eqref{proofs eq5} also implies (see Proposition 3.4 in~\cite{DK-31CBM}) a uniform rate of convergence of the finite scale LE 
 $\LE{n}(B)$ to $L(B)$. Therefore, we may essentially replace in~\eqref{proofs eq5} $\LE{n}(B)$ by $L(B)$, and derive a uniform LDT as formulated in Definition~\ref{def:weakLDT}, with $a=-1/6$ and $b=1/1200$.
\end{proof}

\begin{proof}[Proof of Theorem~\ref{cont-IDS}] Let $\{v_n\}_{n\in\Z}$ be an i.i.d. sequence of random variables with common probability distribution $\mu$ and let $\{w_n\}_{n\in\Z}$ be another i.i.d. sequence of random variables with distribution $\nu$, such that moreover $\{v_n\}$ and $\{w_n\}$ are independent from each other. Consider the 
random Jacobi operator $H_\om$ acting on $l^2(\Z)$ and given by
\begin{equation}\label{proofs Jacobi-op}
\big[ H_\om \psi \big]_n  := - (w_{n+1} (\om) \psi_{n+1} + w_n (\om) \psi_{n-1}) 
+ v_n (\om)  \psi_n  \, .
\end{equation}

We assume that the supports of the distributions $\mu$ and $\nu$ are finite,  and that $\EE ( \log \sabs{w_1} ) > - \infty$, that is, $0 \notin \text{ supp }  (\nu)$. 

The Schr\"odinger (or eigenvalue) equation $H_\om \, \psi = E \, \psi$ for $E\in\R$ and $\psi = \{\psi_n\}_{n\in\Z} \in \R^\Z$, is a second order finite difference equation. It can be written as the matrix first order finite difference equation
$$
\left[\begin{array}{@{}c@{}}
    \psi_{n+1} \\
    \psi_{n}  
\end{array} \right]
= g_n^E \, 
\left[\begin{array}{@{}c@{}}
    \psi_{n} \\
    \psi_{n - 1}  
\end{array} \right] \, ,
$$
 where
\begin{equation*}
g_n^E = \begin{bmatrix}  
\frac{v_n-E}{w_{n+1}} &  - \frac{w_n}{w_{n+1}}  \\
& \\
1 & 0
\end{bmatrix} \, .
\end{equation*}

The Lyapunov exponent $L^+(E)$ of the operator~\eqref{proofs Jacobi-op} at energy $E$ is thus the Lyapunov exponent $L^+ (g^E)$ of the i.i.d. multiplicative process  
$g^E := \{g_n^E\}_{n\in\Z}$.
This process can be conjugated with the i.i.d. $\SLtwoR$ process $\tilde{g}^E:=\{\tilde{g}^E_n\}_{n\in\Z}$, where
\begin{equation*}
\tilde{g}_n^E = \begin{bmatrix}  
\frac{v_n-E}{w_{n}} &  - w_n  \\
\\
\frac{1}{w_n} & 0
\end{bmatrix} \, ,
\end{equation*}
in the sense that
$$g^E_n =  
\begin{bmatrix}  
w_{n+1} &  0 \\
0 & 1
\end{bmatrix}^{-1} 
\,
\tilde{g}^E_n
\,
\begin{bmatrix}  
w_{n} &  0 \\
0 & 1
\end{bmatrix} \, .
$$

Thus for all $E\in\R$ we have 
$$L^+ (E)  = L^+(g^E) = L (\tilde{g}^E) \ge 0 \, .$$

Theorem~\ref{main-continuity} is applicable to $\tilde{g}^E$ and it implies that the function $E\mapsto L (\tilde{g}^E) = L^+(E)$ is continuous. In particular, if $E_0\in\R$ is such that $L^+(E_0)>0$, there is an open interval $\I\ni E_0$ such that $\restr{L^+}{\I}>0$ and $\restr{L^+}{\I}$ is weak H\"older continuous. 

By means of the Thouless formula, the  weak H\"older continuity of the IDS also follows.

Finally, let us discuss the dynamical localization of the operator~\eqref{proofs Jacobi-op}. Theorem 5.1 in~\cite{Chapman-Stolz} establishes such a result (for the more general se\-tting of block Jacobi operators) under the assumptions of Furstenberg's theory. However, these assumptions are only needed insofar as to guaran\-tee the positivity of the LE and the H\"older continuity of the IDS, where the latter is used to derive a Wegner estimate. In our case, the positivi\-ty of the LE is assumed, while the weak H\"older continuity of the IDS still implies a (good enough, albeit weaker) Wegner estimate. 
\end{proof}

\medskip

\subsection*{Acknowledgments}

The first author was supported  by Funda\c{c}\~{a}o para a Ci\^{e}ncia e a Tecnologia, under the projects: UID/MAT/04561/2013 and   PTDC/MAT-PUR/29126/2017.
The second author has been supported in part by the CNPq research grant 306369/2017-6 (Brazil) and by a research productivity grant from his current institution (PUC-Rio). He would also like to acknowledge the support of the Isaac Newton Institute for Mathematical Sciences in Cambridge, UK, where the work on this project first started, during the PEPW04 workshop in 2015.

\bigskip


\providecommand{\bysame}{\leavevmode\hbox to3em{\hrulefill}\thinspace}
\providecommand{\MR}{\relax\ifhmode\unskip\space\fi MR }
\providecommand{\MRhref}[2]{%
  \href{http://www.ams.org/mathscinet-getitem?mr=#1}{#2}
}
\providecommand{\href}[2]{#2}

\end{document}